\newtheorem{theorem}{Theorem}[section]
\newtheorem{lemma}[theorem]{Lemma}
\newtheorem{prop}[theorem]{Proposition}
\newtheorem{cor}[theorem]{Corollary}
\newtheorem{conj}[theorem]{Conjecture}
\theoremstyle{remark}
\newtheorem{rem}{Remark}
\def\N{\mathbb{N}}
\def\Q{\mathbb{Q}}
\def\R{\mathbb{R}}
\def\lb{[\![}
\def\rb{]\!]}
\def\dist{\mathrm{dist}}
\def\d{\mathrm{d}}
\def\dH{\mathrm{d_H}}
\def\on{\underset{\as{n}{\infty}}{o}}
\def\oeps{\underset{\varepsilon\rightarrow0}{o}}
\def\m{\!-\!}
\def\mm{\!\!-\!\!}
\def\p{\!+\!}
\def\pp{\!\!+\!\!}
\newcommand{\as}[2]{#1\rightarrow#2}
\newcommand{\cv}[2]{\underset{\as{#1}{#2}}{\longrightarrow}}
\def\E{\mathbb{E}}
\def\P{\mathbf{P}}
\def\Leb{\mathrm{Leb}}
\def\meas{\mathcal{M}_\text{fin}}
\def\prob{\mathcal{M}_\text{prob}}
\def\carre{[0,1]^2}
\def\LIS{\mathrm{LIS}}
\def\LISt{\widetilde{\LIS}}
\def\LDS{\mathrm{LDS}}
\def\LDSt{\widetilde{\LDS}}
\def\inv{\mathrm{inv}}
\def\invt{\widetilde{\inv}}
\def\id{\mathrm{id}}
\def\S{\mathfrak{S}}
\def\sample{\mathrm{Sample}}
\def\perm{\mathrm{Perm}}
\def\per{\mathcal{M}_\text{unif}}
\def\preper{\mathcal{M}_\text{cont}}
\def\RS{\mathrm{RS}}
\def\RSt{\mathrm{\widetilde{RS}}}
\def\shape{\mathrm{sh}}
\def\shapet{\mathrm{\widetilde{sh}}}
\def\lamt{\widetilde{\lambda}}
\def\row{\mathrm{row}}
\def\col{\mathrm{col}}
\def\Fom{\mathcal{F}}
\def\top{\mathrm{top}}
\def\rig{\mathrm{right}}
\def\bot{\mathrm{bot}}
\def\lef{\mathrm{left}}
\def\ord{\mathrm{ord}}
\def\block{\mathrm{blocks}}
\def\rev{\mathrm{rev}}
\def\incr{\mathrm{incr}}
\def\decr{\mathrm{decr}}
\def\sub{\mathrm{sub}}
\title{Increasing subsequences of linear size in random permutations and the Robinson--Schensted tableaux of permutons}
\author[*]{Victor Dubach}
\affil[*]{Universit\'e de Lorraine, IECL, Nancy, France}
\date{}
\begin{document}

\maketitle

\begin{abstract}
    The study of longest increasing subsequences (LIS) in permutations led to that of Young diagrams via Robinson--Schensted's (RS) correspondence.
    In a celebrated paper, Vershik and Kerov obtained a limit theorem for such diagrams and found that the LIS of a uniform permutation of size $n$ behaves as $2\sqrt{n}$. 
    Independently and much later, Hoppen et al.~introduced the theory of permutons as a scaling limit of permutations.
    In this paper, we extend in some sense the RS correspondence of permutations to the space of permutons.
    When the ``RS-tableaux'' of a permuton are non-trivial, we show that the RS-tableaux of random permutations sampled from this permuton exhibit a linear behavior, in the sense that their first rows and columns have lengths of linear order.
    In particular, the LIS of such permutations behaves as a multiple of $n$. 
    We also prove some large deviation results for these convergences.
    Finally, by studying asymptotic properties of Fomin's algorithm for permutations, we show that the RS-tableaux of a permuton satisfy a partial differential equation.
\end{abstract}

\noindent{\bf Keywords:} random permutations, permutons, longest increasing subsequence, Robinson--Schensted correspondence, Fomin's local rules

\section{Background}

\subsection{The theory of permutons and sampled permutations}\label{section_intro_permutons}

A Borel probability measure $\mu$ on $\carre$ is called a \textit{permuton} when each of its marginals is uniform, \textit{i.e.}~for any $0\leq~x\leq~y\leq1$:
\begin{equation*}
    \mu\big( [x,y]\times[0,1] \big) = \mu\big( [0,1]\times[x,y] \big) = y-x.
\end{equation*}
The theory of permutons was first studied in \cite{HKMRS13}, named in \cite{GGKK15}, and is now widely studied (see e.g.~\cite{KP12,M15,BBFGP18,BBFGMP22} and references therein). It serves as a scaling limit model for permutations: if $\sigma$ is a permutation of size $n\in\N^*$, one can associate it with the permuton $\mu_\sigma$ having density
\begin{equation*}
    n\sum_{i=1}^n \mathbf{1}_{\left[\frac{i\m1}{n},\frac{i}{n}\right] \times \left[\frac{\sigma(i)\m1}{n},\frac{\sigma(i)}{n}\right]}
\end{equation*}
with respect to Lebesgue measure on $\carre$. Putting weak convergence topology on the space of permutons, this gives a meaning to the convergence of a sequence of permutations towards a limit permuton. 
This convergence is notably characterized by the asymptotics of pattern densities, 
cementing the theory of permutons within the general framework of sampling limit theory (see e.g.~\cite{LS06} for dense graphs, \cite{J11} for posets, \cite{ET22} and \cite{G23trees} for trees).

\medskip

\begin{figure}
    \centering
    \includegraphics[scale=0.88]{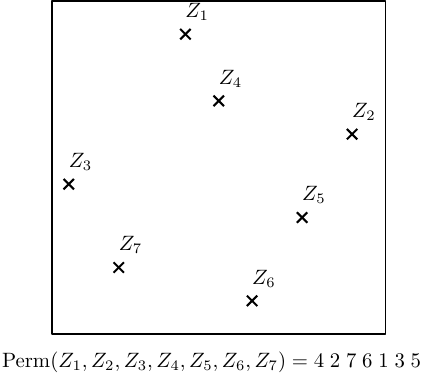}
    \caption{The permutation $\sigma$ associated to a family of points, written in one-line notation. Here $\sigma(1)=4$ because the first point from the left, $Z_3$, is the fourth point point from the bottom, and so on.}
    \label{fig_perm_points}
\end{figure}

Fundamental to permuton theory is the fact that each permuton gives rise to a model of random permutations.
Consider points $Z_1,\dots,Z_n$ in the unit square $\carre$ whose $x$-coordinates and $y$-coordinates are all distinct. 
One can then naturally define a permutation $\sigma$ of size $n$ in the following way: for any $1\leq i,j\leq n$, let $\sigma(i)=j$ whenever the point with $i$-th lowest $x$-coordinate has $j$-th lowest $y$-coordinate.
We denote by $\perm(Z_1,\dots,Z_n)$ this permutation; see \Cref{fig_perm_points} for an example.
When $\mu$ is a permuton and $Z_1,\dots,Z_n$ are random i.i.d.~points distributed under $\mu$, we write $\sample_n(\mu)$ for the law of this random permutation.
Since the marginals of $\mu$ are continuous, this permutation is almost surely well defined.

\begin{theorem}[\cite{HKMRS13}]\label{th_HKMRS}
    Let $\mu$ be a permuton and $(\sigma_n)_{n\in\N^*}$ be a sequence of random permutations such that for each $n\in\N^*, \sigma_n \sim \sample_n(\mu)$.
    Then the following weak convergence holds almost surely:
    \begin{equation*}
        \mu_{\sigma_n} \cv{n}{\infty} \mu.
    \end{equation*}
\end{theorem}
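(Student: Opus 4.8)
The plan is to reduce the almost-sure weak convergence to the almost-sure convergence of \emph{pattern densities}, which form a convergence-determining class for permutons. Recall (this is part of the theory of \cite{HKMRS13}) that for a finite permutation $\pi$ of size $k$ one sets $\dens(\pi,\mu) := \P[\sample_k(\mu) = \pi]$ and, for a permutation $\sigma$ of size $n\geq k$, $\dens(\pi,\sigma)$ equal to the proportion of $k$-subsets of positions of $\sigma$ that induce the pattern $\pi$; then $\mu_{\sigma_n}\to\mu$ weakly is equivalent to $\dens(\pi,\sigma_n)\to\dens(\pi,\mu)$ for every finite pattern $\pi$. Since there are only countably many such $\pi$, it suffices to fix one pattern $\pi$, prove $\dens(\pi,\sigma_n)\to\dens(\pi,\mu)$ almost surely, and finally intersect the corresponding almost-sure events over all $\pi$.

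So fix $\pi$ of size $k$, fix $n\geq k$, and realize $\sigma_n = \perm(Z_1,\dots,Z_n)$ with $Z_1,\dots,Z_n$ i.i.d.\ under $\mu$ (almost surely with pairwise distinct coordinates, since the marginals of $\mu$ are continuous). Sorting the points by abscissa gives, for each realization, a bijection between $k$-subsets of positions of $\sigma_n$ and $k$-subsets $J\subseteq\{1,\dots,n\}$ of labels, under which a subset of positions induces $\pi$ exactly when $\perm\big((Z_j)_{j\in J}\big) = \pi$. Hence, almost surely, $\dens(\pi,\sigma_n) = \binom{n}{k}^{-1}\sum_{J} Y_J$, where the sum is over $k$-subsets $J$ and $Y_J := \mathbf{1}\{\perm((Z_j)_{j\in J}) = \pi\}$; this is a $U$-statistic of degree $k$ in the i.i.d.\ variables $Z_1,\dots,Z_n$. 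Since $(Z_j)_{j\in J}$ is a $k$-tuple of i.i.d.\ $\mu$-points, $\E[Y_J] = \dens(\pi,\mu)$, so $\E[\dens(\pi,\sigma_n)] = \dens(\pi,\mu)$ exactly. Modifying a single $Z_i$ affects only the $\binom{n-1}{k-1}$ subsets $J\ni i$ and changes each $Y_J$ by at most $1$, so $\dens(\pi,\sigma_n)$ is a function of $Z_1,\dots,Z_n$ with bounded differences $k/n$. McDiarmid's bounded-differences inequality then gives $\P\big(|\dens(\pi,\sigma_n)-\dens(\pi,\mu)|\geq t\big)\leq 2\exp(-2nt^2/k^2)$ for every $t>0$, which is summable in $n$; by Borel--Cantelli, $\dens(\pi,\sigma_n)\to\dens(\pi,\mu)$ almost surely.

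Intersecting over the countably many patterns $\pi$ yields an almost-sure event on which $\dens(\pi,\sigma_n)\to\dens(\pi,\mu)$ for all $\pi$ simultaneously, hence $\mu_{\sigma_n}\to\mu$ weakly, which is the claim. The probabilistic part here is soft (a $U$-statistic law of large numbers together with Borel--Cantelli), so the step carrying the real weight is the appeal to the equivalence ``weak convergence $\iff$ pattern-density convergence'': if one wants a self-contained argument rather than citing \cite{HKMRS13}, one must establish that a permuton is determined by its pattern densities, essentially by showing that the finite-dimensional rectangle marginals $\mu\big([0,a]\times[0,b]\big)$ are recovered from them. Alternatively, one can bypass pattern densities altogether and argue directly that $\mu_{\sigma_n}\big([0,a]\times[0,b]\big)\to\mu\big([0,a]\times[0,b]\big)$ almost surely for all $a,b$; the main obstacle in that route is a two-dimensional Glivenko--Cantelli argument controlling, uniformly in $a$ and $b$, the discrepancy between the rank-defined rectangles appearing in $\mu_{\sigma_n}$ and the coordinate-defined ones, after which the usual law of large numbers for i.i.d.\ points finishes the proof.
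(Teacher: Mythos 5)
This theorem is quoted from the literature and the paper gives no proof of it, so there is nothing internal to compare against; your argument is correct and is essentially the standard proof from \cite{HKMRS13}: reduce weak convergence to convergence of the countably many pattern densities, observe that $\dens(\pi,\sigma_n)$ is an unbiased $U$-statistic of the i.i.d.\ sample, apply a bounded-differences concentration inequality, and conclude by Borel--Cantelli. One small virtue of your route worth keeping in mind is that the summable deviation bounds depend only on the marginal law of each $\sigma_n$, so the almost-sure convergence holds for an arbitrary coupling of the $\sigma_n$'s --- a point the paper explicitly relies on later (after \Cref{lem_HKMRS_increasing}); the only external input you still owe is the equivalence between weak convergence of $\mu_{\sigma_n}$ and convergence of all pattern densities, which you correctly flag as the step carrying the real weight.
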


Note that this model can be generalized to more than just permutons.
As in \cite{D23} we say that a Borel probability measure on $\carre$ is a \textit{pre-permuton} when none of its marginals have any atom; then the law $\sample_n(\mu)$ is still well defined.
This notion has the advantage of being stable by induced sub-measures, which is why we use it for some results.
Going from pre-permutons to permutons and conversely is fairly easy: one can associate any pre-permuton $\mu$ with a unique permuton $\hat{\mu}$ such that random permutations sampled from $\mu$ or $\hat{\mu}$ have the same law (see e.g.~Remark 1.2 in \cite{BDMW22}).
Moreover this is done via a transformation of the unit square which is nondecreasing in each coordinate, hence most quantities defined in later sections have the same values on pre-permutons as on their associated permutons.

\subsection{Longest increasing subsequences and Robinson--Schensted's correspondence}\label{section_intro_RS}

Let $w=w_1\dots w_n$ be a finite word with positive integer letters. 
We say that a subset of indices $\{i_1<\dots<i_\ell\}$ is an \textit{increasing subsequence of $w$} if $w_{i_1} <\dots< w_{i_\ell}$.
The maximum length of such a sequence is called length of the longest increasing subsequence of $w$ and denoted by $\LIS(w)$.
More generally for $k\in\N^*$, define the length of the longest $k$-increasing subsequence of $w$ as
\begin{equation*}
    \LIS_k(w)
    := \max_{I_1,\dots,I_k} \vert I_1\cup\dots\cup I_k\vert
\end{equation*}
where the maximum is taken over all families $I_1,\dots,I_k$ (not necessarily disjoint) of increasing subsequences of $w$.
Also define $\LDS_k$ similarly by replacing ``increasing'' with ``decreasing'' in the previous definition.
The study of longest increasing subsequences is intimately related to a bijective map called Robinson--Schensted's correspondence, which we present next.

\medskip

A \textit{(Young) diagram} is a visual representation of an integer partition: it consists of a sequence of left-justified rows, containing nonincreasing numbers of boxes.
A \textit{(Young) tableau} is a filling of a diagram, called its \textit{shape}, with positive integers.
When the filling is nondecreasing (resp.~increasing) in each row and increasing in each column, we call the tableau \textit{semistandard} (resp.~\textit{standard}).
Robinson--Schensted's (RS) correspondence is a map between words and pairs of tableaux sharing the same shape, see \Cref{fig_ex_tableaux}.
The first tableau, called the P-tableau, is semistandard and filled with the letters of $w$, while the second one, called the Q-tableau, is standard and filled with the integers from $1$ to the length of $w$.
If $\sigma$ is a permutation, seen as a word via one-line notation, then its P-tableau is standard.

There are many ways to define this correspondence, for which we refer the reader to \cite{F96}.
Below we recall its equivalent description given by Greene's theorem.
For any word $w$, define
\begin{equation*}
    \shape(w) 
    = \left( \shape^\row(w) \,,\, \shape^\col(w) \right)
    := \Big( 
    \big( \LIS_k(w)-\LIS_{k\m1}(w) \big)_{k\in\N^*} \;,\;
    \big( \LDS_k(w)-\LDS_{k\m1}(w) \big)_{k\in\N^*}
    \Big)
\end{equation*}
with convention $\LIS_0(w)=\LDS_0(w)=0$.

\begin{figure}
    \centering
    $P(\sigma)=$
    \begin{ytableau}
    1&3&5\\
    2&6\\
    4&7
    \end{ytableau}
    \quad,\quad
    $Q(\sigma)=$
    \begin{ytableau}
    1&3&7\\
    2&4\\
    5&6
    \end{ytableau}
    \quad,\quad
    $\shape(\sigma)=$
    \begin{ytableau}
    \ &\ &\\
    \ &\\
    \ &
    \end{ytableau}
    \caption{The standard tableaux associated with the permutation $\sigma = 4\,2\,7\,6\,1\,3\,5$, and their common shape. The first row has length $3$, and indeed $\LIS(\sigma)=3$ (attained e.g.~with the increasing subsequence $2\,3\,5$). The second row has length $2$, and indeed $\LIS_2(\sigma)=3+2=5$ (attained e.g.~with the $2$-increasing subsequence $4\,2\,6\,3\,5$). Finally, the shape has a total of three rows, and indeed $\sigma$ is $3$-increasing (one can e.g.~decompose it into $4\,6$, $2\,7$, and $1\,3\,5$). Greene's theorem can also be checked for the column lengths.}\label{fig_ex_tableaux}
\end{figure}

\begin{theorem}[\cite{G74}]\label{th_Greene}
    For any word $w$, the sequences $\shape^\row(w)$ and $\shape^\col(w)$ are respectively the row lengths and column lengths of the RS-shape of $w$.
    In particular, those sequences are nonincreasing.
\end{theorem}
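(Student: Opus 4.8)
The plan is to rest the proof on the two standard pillars of Robinson--Schensted theory: that $\LIS_k(w)$ and $\LDS_k(w)$ depend only on the Knuth-equivalence class of $w$, together with an explicit evaluation of these quantities on the canonical representative of each class, the reading word of a tableau. Recall that two words have the same $P$-tableau precisely when they are connected by elementary Knuth transformations, and that if $w(T)$ denotes the row reading word of a semistandard tableau $T$ (each row read left to right, the rows taken from bottom to top) then $P(w(T))=T$, so $w(T)$ has RS-shape equal to the shape of $T$. \textbf{Step 1 (Knuth invariance):} I would first check that a single elementary Knuth move changes neither $\LIS_k$ nor $\LDS_k$, for every $k$. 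Given a family $I_1,\dots,I_k$ of increasing subsequences of the word before the move whose union is as large as possible, one produces a family of the same total size for the word after the move: the subsequences disjoint from the two positions exchanged by the move are kept verbatim, and those meeting one of these positions are rerouted, a short case analysis driven precisely by the inequalities (of the form $x\le y<z$, resp.~$x<y\le z$) that license the move showing this is always possible. The case of $\LDS_k$ is the mirror image. Since $w$ is Knuth-equivalent to $w(P(w))$ and the two have the same RS-shape, this reduces the theorem to the case $w=w(T)$ for $T$ a tableau of shape $\lambda$.

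\textbf{Step 2 (the reading word of a tableau):} Here the crucial observation is diagrammatic: in $w(T)$, the entries of any fixed \emph{column} of $T$ occur, in word order, as a strictly decreasing run (columns increase downward while lower rows are read earlier), whereas the entries of any fixed \emph{row} occur as a (weakly) increasing contiguous block. Consequently the first $k$ rows of $T$ are $k$ disjoint increasing subsequences of $w(T)$ of total length $\lambda_1+\dots+\lambda_k$, so $\LIS_k(w(T))\ge\lambda_1+\dots+\lambda_k$; conversely an increasing subsequence meets each column of $T$ at most once, hence a union of $k$ of them has at most $\sum_c\min(k,\lambda'_c)=\lambda_1+\dots+\lambda_k$ entries — the sum counting the cells of $\lambda$ in its first $k$ rows — which gives the reverse bound. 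The symmetric statements (the first $k$ columns versus decreasing subsequences, and rows meeting a decreasing subsequence at most once) give $\LDS_k(w(T))=\lambda'_1+\dots+\lambda'_k$. Pulling back to the original $w$, one obtains $\LIS_k(w)=\lambda_1+\dots+\lambda_k$ and $\LDS_k(w)=\lambda'_1+\dots+\lambda'_k$ for all $k$, so $\shape^\row(w)$ and $\shape^\col(w)$ are the successive differences, namely $\lambda$ and its conjugate $\lambda'$; since these are partitions, both sequences are nonincreasing, which is the final assertion. (For the full generality of arbitrary words one must read ``increasing'' weakly and ``decreasing'' strictly, or vice versa; for permutations, where all letters differ, the distinction is vacuous.)

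\textbf{Main obstacle.} The technical core is the Knuth invariance of $\LIS_k$ and $\LDS_k$ in Step 1: one must verify, in each configuration describing how an optimal family of $k$ monotone subsequences meets the two swapped letters, that the move neither destroys nor creates the ability to cover a prescribed set of positions; everything else is bookkeeping with Young diagrams. An alternative that bypasses the plactic monoid is to induct on the length of $w$, appending one letter at a time: via the insertion algorithm and the patience-sorting description of the rows, one shows that appending a letter increases $\LIS_k$ by exactly $1$ when the new box of the shape is created in a row of index at most $k$, and leaves it unchanged otherwise — and dually for $\LDS_k$ — which yields both identities directly.
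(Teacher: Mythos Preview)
The paper does not prove \Cref{th_Greene}; it is quoted as a classical result from \cite{G74} and used as a black box throughout. There is therefore nothing to compare your argument against in the paper itself.

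That said, your sketch is a correct outline of one of the standard proofs of Greene's theorem, the one that passes through the plactic monoid (this is essentially the argument in Fulton's \emph{Young Tableaux}). Step~2 is clean and complete: the column/row dichotomy in the reading word gives both the lower bound (rows as increasing subsequences) and the upper bound (each increasing subsequence meets each column at most once, hence $k$ of them cover at most $\sum_c \min(k,\lambda'_c)=\lambda_1+\cdots+\lambda_k$ cells). Your caveat about weak versus strict monotonicity for general words is also on point.

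The only place where your write-up is genuinely incomplete is Step~1, as you yourself flag. The rerouting argument for $\LIS_k$ under a single Knuth move is not entirely routine: when an optimal family uses both swapped letters, or when the ``intermediate'' letter $y$ is already occupied by another subsequence in the family, one has to argue more carefully (typically by exchanging segments between two subsequences of the family). This does work, but it is a genuine case analysis rather than a one-liner, and a reader who has not seen it before would not be able to reconstruct it from your description alone. If you intend this as a proof rather than a sketch, that case analysis --- or a reference to where it is carried out, e.g.\ Fulton \cite{F96} or Sagan \cite{S01} --- should be supplied.
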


Note that the definition of $\shape$ is redundant, as the sequence of row lengths can be deduced from the sequence of column lengths and vice versa (these are usually called \textit{conjugate partitions}).
However we shall see that this property is lost in the permuton limit.

\medskip

Greene's theorem can also be used to define the RS-tableaux.
A standard tableau of size $n$ can be equivalently defined as an increasing sequence of $n$ diagrams, where the $k$-th diagram is the shape of the sub-tableau containing the boxes with values at most $k$.
For example in \Cref{fig_ex_tableaux}, the $4$-th diagram defining $P(\sigma)$ has row lengths $(2,1,1)$ and the $4$-th diagram defining $Q(\sigma)$ has row lengths $(2,2)$.

Let $\sigma$ be a permutation of $\{1,\dots,n\}$. 
For any $1\leq i,j\leq n$ define its sub-word $\sigma^{i,j}$ as the sequence of letters $\sigma(h)$ with $h\leq i$ and $\sigma(h)\leq j$.
One can see that for each $1\leq k\leq n$, the RS-shape of $\sigma^{n,k}$ is the $k$-th diagram defining $P(\sigma)$ and the RS-shape of $\sigma^{k,n}$ is the $k$-th diagram defining $Q(\sigma)$.
This motivates the following alternative definition of Robinson--Schensted's tableaux:
\begin{equation}\label{def_RS_permutations}
    \RS(\sigma) := \big( \lambda^{\sigma}(n,\cdot) , \lambda^{\sigma}(\cdot,n) \big)
\end{equation}
where, for any $1\leq i,j\leq n$,
\begin{equation*}
\lambda^{\sigma}(i,j) = \big( \lambda^{\sigma}_{k}(i,j) \big)_{k\in\N^*}
:= \shape^\row\left( \sigma^{i,j} \right).
\end{equation*}

\subsection{LIS and RS-shapes of random permutations}

Ulam formulated the following question in the 60's: consider for each $n\in\N^*$ a uniformly random permutation $\sigma_n$ of size $n$, and write
\begin{equation*}
    \ell_n := \E\big[ \LIS(\sigma_n) \big] .
\end{equation*}
Then what can we say about the asymptotic behavior of $\ell_n$ as $\as{n}{\infty}$?
The study of longest increasing subsequences has since then been a surprisingly fertile research subject with unexpected links to diverse areas of mathematics; see \cite{R15} for a nice introduction to this topic. 

A solution to Ulam's problem was found by Vershik and Kerov through the use of Robinson--Schensted's correspondence. 
More precisely they encoded diagrams $\lambda$ by some continuous functions $\omega_\lambda:\R\rightarrow\R$ and established the following limit theorem (also simultaneously proved in \cite{LS77}).
See \Cref{fig_LSKV} for an illustration.

\begin{theorem}[\cite{VK77}]
    For each integer $n$, let $\sigma_n$ be a uniformly random permutation of size $n$ and write
    \begin{equation*}
        \omega_n(\cdot) := \frac{\omega_{\shape(\sigma_n)}(\sqrt{n}\;\cdot)}{\sqrt{n}}.
    \end{equation*}
    Then the sequence $(\omega_n)_{n\in\N^*}$ converges uniformly in probability to some explicit limit curve.
\end{theorem}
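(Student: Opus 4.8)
The plan is to follow the variational strategy of Logan--Shepp and Vershik--Kerov. Since $\sigma_n$ is uniform on $\S_n$ and the RS correspondence is a bijection, $\shape(\sigma_n)$ has the Plancherel distribution $\P\big(\shape(\sigma_n)=\lambda\big)=(f^\lambda)^2/n!$ over partitions $\lambda\vdash n$, where $f^\lambda$ counts standard Young tableaux of shape $\lambda$; so it suffices to understand the typical Plancherel shape. I would encode a diagram $\lambda$ by its Russian-convention boundary profile $\omega_\lambda\colon\R\to\R$ --- a $1$-Lipschitz function with $\omega_\lambda\ge|u|$ everywhere and $\omega_\lambda(u)=|u|$ off a compact set --- so that, in the standard normalization where the area between $\omega_\lambda$ and $u\mapsto|u|$ equals $|\lambda|$, the rescaling $\omega_n=\omega_{\shape(\sigma_n)}(\sqrt n\,\cdot)/\sqrt n$ lies in the set $\mathcal D$ of \emph{continual diagrams} (such profiles enclosing area $1$). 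A first-moment bound --- $\P\big(\LIS(\sigma_n)\ge(1+\varepsilon)e\sqrt n\big)\to0$ for every $\varepsilon>0$, obtained by a union bound over the $\binom n\ell$ index sets of size $\ell$ (each increasing with probability $1/\ell!$), and the same for $\LDS(\sigma_n)$ after transposing --- shows that with probability tending to $1$ the diagram $\shape(\sigma_n)$ has first row and first column of length $O(\sqrt n)$. On this good event $\omega_n$ agrees with $|u|$ outside a fixed compact interval; writing $\mathcal D_0\subset\mathcal D$ for the set of such profiles, $\mathcal D_0$ is compact for the uniform norm by Arzel\`a--Ascoli.

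Next I would extract the large-deviation functional. From the hook-length formula $f^\lambda=n!/\prod_{c\in\lambda}h(c)$ and Stirling's formula,
\[
\tfrac1n\log\P\big(\shape(\sigma_n)=\lambda\big)=-1-\tfrac2n\sum_{c\in\lambda}\log\!\big(h(c)/\sqrt n\big)+o(1),
\]
and reorganizing the hook-length sum along the boundary path of $\lambda$ exhibits it as a Riemann sum converging, uniformly over $\lambda$ whose rescaled profile lies in $\mathcal D_0$, to an explicit integral functional of the rescaled profile that is strictly convex and of logarithmic-energy type. Collecting the Stirling terms, this gives $\tfrac1n\log\P\big(\shape(\sigma_n)=\lambda\big)=-\mathcal F(\omega_\lambda)+o(1)$ with the $o(1)$ uniform on the good event, where $\mathcal F$ is lower semicontinuous (the $\log$-singularity on the diagonal of the energy integral is integrable and is handled by truncation). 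I would then solve the variational problem $\min_{\mathcal D}\mathcal F$: its Euler--Lagrange condition is a singular integral equation for $\tfrac12\omega'$ whose solution is the arcsine-type curve
\[
\Omega(u)=\begin{cases}\dfrac2\pi\Big(u\arcsin\dfrac u2+\sqrt{4-u^2}\Big),&|u|\le2,\\[2mm]|u|,&|u|\ge2,\end{cases}
\]
and strict convexity of the energy gives $\mathcal F\ge0$ with equality precisely at $\omega=\Omega$. (As a byproduct, reading $\lambda_1$ off the right endpoint $u=2$ of the support of $\Omega-|u|$ recovers the $\LIS(\sigma_n)\sim2\sqrt n$ estimate.)

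For the concentration step I would use a union bound. There are only $p(n)=e^{o(n)}$ partitions of $n$, so for every $\varepsilon>0$
\[
\P\big(\|\omega_n-\Omega\|_\infty>\varepsilon\big)\le\P(\text{good event fails})+p(n)\cdot\!\!\!\max_{\omega\in\mathcal D_0,\ \|\omega-\Omega\|_\infty>\varepsilon}\!\!\!\exp\!\big(-n\,\mathcal F(\omega)+o(n)\big).
\]
Since $\mathcal F$ is lower semicontinuous on the compact set $\mathcal D_0$ and vanishes only at $\Omega$, the quantity $\delta_\varepsilon:=\inf\{\mathcal F(\omega):\omega\in\mathcal D_0,\ \|\omega-\Omega\|_\infty\ge\varepsilon\}$ is strictly positive, so the right-hand side is $o(1)+e^{-n\delta_\varepsilon+o(n)}\to0$ --- and only the upper bound $\P(\lambda)\le e^{-n\mathcal F(\omega_\lambda)+o(n)}$ is needed for this. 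As the $\omega_n$ are uniformly $1$-Lipschitz with common compact support on the good event, this is exactly the asserted uniform convergence in probability of $\omega_n$ to $\Omega$.

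The main obstacle will be the second step. Turning the hook-length product into a bona fide functional of the profile, with error bounds uniform enough to survive the union bound --- the delicate point being the logarithmic singularity on the diagonal of the energy integral --- and then explicitly solving the constrained variational problem to identify $\Omega$, is where the real work lies; the reduction to the Plancherel measure and the final union bound are comparatively soft.
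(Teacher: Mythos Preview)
The paper does not prove this theorem at all: it is stated as a classical result attributed to Vershik--Kerov \cite{VK77} (and, as the paper notes, simultaneously to Logan--Shepp \cite{LS77}), and is included only as background motivating the study of RS-shapes of random permutations. There is therefore nothing to compare your proposal against in this paper.

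That said, your outline is the standard variational argument of those original papers: push forward the uniform measure to Plancherel via RS, use the hook-length formula and Stirling to extract a large-deviation functional on continual diagrams, solve the Euler--Lagrange equation to identify the arcsine curve $\Omega$, and conclude by a union bound over the $e^{o(n)}$ partitions of $n$. The sketch is correct at this level of detail; as you yourself flag, the genuinely hard step is the uniform approximation of the hook product by the hook integral near the diagonal singularity, and the explicit solution of the variational problem --- both of which require substantial work that your outline acknowledges but does not carry out.
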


\begin{figure}
    \centering
    \includegraphics[scale=0.4]{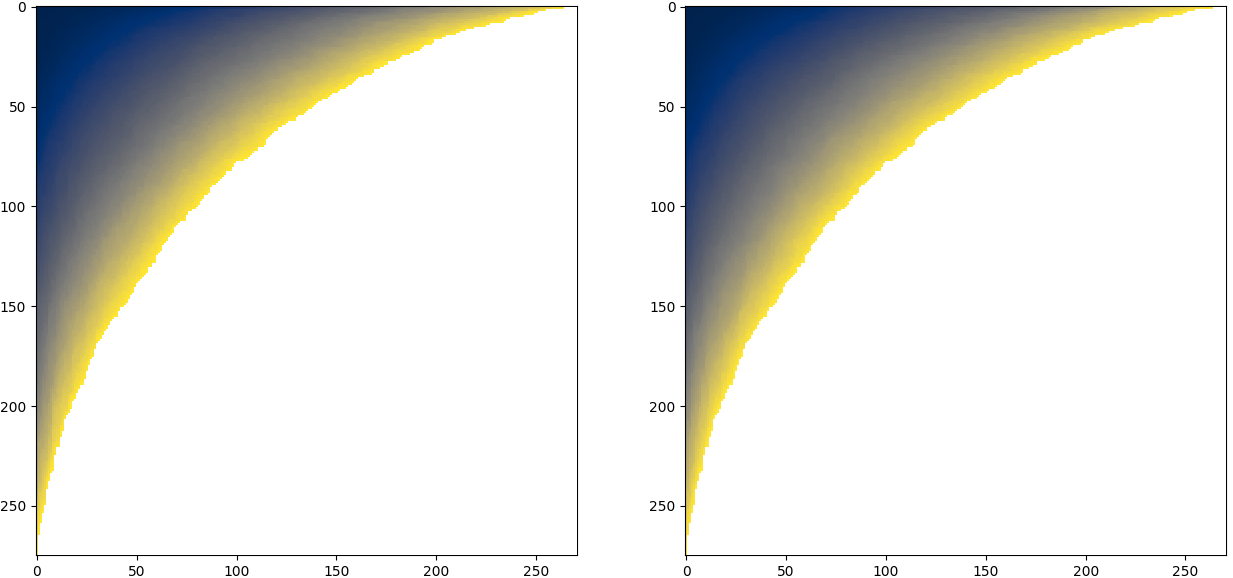}
    \caption{The RS-tableaux of a size $20000$ uniformly random permutation. Higher values in the tableaux are represented by brighter colors.}
    \label{fig_LSKV}
\end{figure}

By \Cref{th_Greene} this convergence corresponds to explicit non-trivial limits as $n$ goes to infinity for $\frac{1}{n}\LIS_{\alpha\sqrt{n}}(\sigma_n)$, $\alpha\in[0,2]$.
This also implies $\liminf \frac{1}{\sqrt n}\ell_n \ge 2$, and with an additional argument for the upper bound we can deduce:

\begin{cor}[\cite{VK77}]
For each $n$, let $\sigma_n$ be a uniformly random permutation of size $n$ and write $\ell_n:=\E[\LIS(\sigma_n)]$. Then 
\begin{equation*}
    \frac{1}{\sqrt{n}}\ell_n \cv{n}{\infty} 2.
\end{equation*}
\end{cor}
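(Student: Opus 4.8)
The plan is to deduce the corollary from the Vershik--Kerov limit-shape theorem for $\omega_n$, using Greene's theorem (\Cref{th_Greene}) to translate the limit shape into a statement about $\LIS$, and then upgrading convergence in probability to convergence of expectations by a separate tightness/concentration argument. First I would recall that by \Cref{th_Greene}, $\LIS(\sigma_n) = \LIS_1(\sigma_n) = \shape^\row_1(\sigma_n)$ is exactly the length of the first row of the RS-shape of $\sigma_n$, which is the rightmost extent of the Young diagram $\shape(\sigma_n)$ along the horizontal axis. Under the Vershik--Kerov encoding $\lambda\mapsto\omega_\lambda$, this rightmost extent is (up to the standard $45^\circ$ rotation and the $\sqrt 2$ normalization conventions) read off from where the rescaled profile $\omega_n$ departs from the constant function $|x|$; since $\omega_n$ converges uniformly in probability to the explicit limit curve $\Omega$, whose support is $[-2,2]$ in the appropriate coordinates, one gets $\frac{1}{\sqrt n}\LIS(\sigma_n)\to 2$ in probability. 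This is the ``soft'' half and is essentially a bookkeeping exercise in matching the coordinate conventions of the limit-shape theorem with the combinatorial quantity $\LIS$.

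The genuine work is the passage from convergence in probability to convergence of the expectation $\tfrac{1}{\sqrt n}\ell_n$. For this I would establish uniform integrability of the family $\bigl(\tfrac{1}{\sqrt n}\LIS(\sigma_n)\bigr)_{n\ge 1}$. The cleanest route is an \emph{upper-tail large deviation bound}: there exist constants $c>0$ and $n_0$ such that $\P\bigl(\LIS(\sigma_n)\ge t\sqrt n\bigr)\le e^{-c t \log t \sqrt n}$ (or some comparably fast decay) for all $t\ge 3$ and $n\ge n_0$. Such a bound follows from a first-moment estimate: the expected number of increasing subsequences of length $\ell$ in a uniform permutation of size $n$ is $\binom{n}{\ell}\frac{1}{\ell!}\le \frac{n^\ell}{(\ell!)^2}$, and by Stirling this is $\le \bigl(\tfrac{e\sqrt n}{\ell}\bigr)^{2\ell}$ up to polynomial factors, which is summable and exponentially small once $\ell=t\sqrt n$ with $t>e$. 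Integrating this tail bound gives $\sup_n \E\bigl[(\tfrac{1}{\sqrt n}\LIS(\sigma_n))^2\bigr]<\infty$, hence uniform integrability; combined with convergence in probability to the constant $2$, this yields $\tfrac{1}{\sqrt n}\ell_n\to 2$.

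The main obstacle I anticipate is purely conventional: the Vershik--Kerov curve is usually stated with a particular normalization (the rotated ``Russian'' coordinates, the $\sqrt{2}$ factors, and the limit function $\Omega(u)=\tfrac{2}{\pi}\bigl(u\arcsin(u/2)+\sqrt{4-u^2}\bigr)$ for $|u|\le 2$), and one must carefully check that the horizontal extent of this limit profile corresponds to the value $2$ for $\tfrac{1}{\sqrt n}\LIS$ rather than $2\sqrt 2$ or $\sqrt 2$. I would handle this by an explicit low-tech sanity check on the lower bound: the same first-moment computation that gives the upper tail also shows, via a second-moment argument (or simply by invoking the limit-shape theorem), that $\LIS(\sigma_n)\ge (2-\varepsilon)\sqrt n$ with high probability, pinning down the constant unambiguously. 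Everything else---the continuity of the map ``diagram $\mapsto$ first-row length'' under the $\omega$-encoding, and the integration of the tail bound---is routine.
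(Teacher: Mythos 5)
First, a remark on scope: the paper offers no proof of this corollary — it is quoted verbatim from \cite{VK77} as background — so your proposal stands or falls on its own. The second half of your argument is correct and genuinely needed: the first-moment bound $\P(\LIS(\sigma_n)\ge\ell)\le\binom{n}{\ell}\frac{1}{\ell!}\le(e\sqrt n/\ell)^{2\ell}$ does give $\sup_n\E\big[(\LIS(\sigma_n)/\sqrt n)^2\big]<\infty$, hence uniform integrability, hence the passage from convergence in probability to convergence of $\ell_n/\sqrt n$.

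The gap is in the step you dismiss as bookkeeping. Uniform convergence of the rescaled profile $\omega_n$ to the limit curve does \emph{not} imply the upper bound $\limsup\LIS(\sigma_n)/\sqrt n\le 2$. After the $45^\circ$ rotation and the $1/\sqrt n$ rescaling, a first row of length $L$ contributes a sliver of height $O(1/\sqrt n)$ to the profile; a diagram with $\lambda_1=3\sqrt n$ can therefore lie within sup-distance $O(1/\sqrt n)$ of one whose profile matches the limit curve exactly, so the statement ``$\omega_n\to\Omega$ uniformly in probability'' is compatible with $\LIS(\sigma_n)/\sqrt n\not\to 2$. The support $[-2,2]$ of the limit curve yields only the \emph{lower} bound $\liminf\LIS(\sigma_n)/\sqrt n\ge 2$ (there the argument is sound: $\Omega>|\cdot|$ strictly on $(-2,2)$ forces $\omega_n(2-\delta)>|2-\delta|$ eventually, hence a first row of length at least $(2-\delta)\sqrt n$). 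Your fallback, the first-moment tail bound, gives $\limsup\le e\approx 2.718$, not $2$; and your ``low-tech sanity check'' only re-derives the lower bound, which was never the issue. Closing the upper bound is precisely the hard part of the Vershik--Kerov/Logan--Shepp theorem: it requires input beyond the sup-norm limit-shape statement, e.g.\ the hook-length estimate showing that the Plancherel weight $(f^\lambda)^2/n!$ of every individual diagram with $\lambda_1\ge(2+\delta)\sqrt n$ is small enough to beat the $e^{O(\sqrt n)}$ count of partitions of $n$. As written, your proof establishes $2\le\liminf\ell_n/\sqrt n\le\limsup\ell_n/\sqrt n\le e$, not the claimed limit.
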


The study of $\ell_n$ later culminated with the work of Baik, Deift and Johansson \cite{BDJ99}, who found its second-order asymptotics.

\medskip

One can then naturally ask: is it possible to obtain similar asymptotics for sampled random permutations?
One of the first advances on this question was obtained by Deuschel and Zeitouni who proved:
\begin{theorem}[\cite{DZ95}]
Let $\mu_\rho$ be a permuton with a $C_b^1$ density $\rho$ which is bounded below on $\carre$. 
If for each integer $n$, $\sigma_n\sim\sample_n(\mu_\rho)$, then:
\begin{equation*}
    \frac{1}{\sqrt{n}}\LIS(\sigma_n)
    \cv{n}{\infty} K_\rho
\end{equation*}
in probability, for some positive constant $K_\rho$ defined by a variational problem.
\end{theorem}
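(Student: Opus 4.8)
The plan is to recast $\LIS(\sigma_n)$ as a geometric quantity of the random cloud and trap it between two expressions converging to the same variational constant. Writing $Z_1,\dots,Z_n$ for the i.i.d.\ points with density $\rho$ from which $\sigma_n$ is built, $\LIS(\sigma_n)$ is exactly the largest number of these points that can be listed with both coordinates strictly increasing, i.e.\ the longest increasing chain of the cloud. The natural candidate limit is
\begin{equation*}
    K_\rho \;:=\; 2\sup_{\phi}\int_0^1 \sqrt{\rho\big(x,\phi(x)\big)\,\phi'(x)}\;\d x,
\end{equation*}
the supremum being over nondecreasing absolutely continuous $\phi:[0,1]\to[0,1]$; for $\rho\equiv1$ the Cauchy--Schwarz inequality $\int_0^1\sqrt{\phi'}\leq\sqrt{\phi(1)-\phi(0)}\leq1$ gives $K_\rho=2$, recovering the Vershik--Kerov estimate, and since $\rho$ is bounded below the diagonal $\phi=\id$ already yields $K_\rho\geq2\int_0^1\sqrt{\rho(x,x)}\,\d x>0$. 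It then remains to prove the two matching bounds $\P(\LIS(\sigma_n)\geq(K_\rho-\varepsilon)\sqrt n)\to1$ and $\P(\LIS(\sigma_n)\leq(K_\rho+\varepsilon)\sqrt n)\to1$ for every $\varepsilon>0$, which together give convergence in probability.

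\textbf{Lower bound.} Fix a nondecreasing $C^1$ curve $\phi$ nearly attaining the supremum, and a mesh $\delta=\delta_n$ with $\delta\to0$ and $n\delta^2\to\infty$. I would tile a thin neighbourhood of the graph of $\phi$ by rectangles $R_1\prec R_2\prec\cdots$ ($\asymp1/\delta$ of them, pairwise ordered from lower-left to upper-right), with $R_i$ roughly of size $\delta\times\phi'(x_i)\delta$ around $(x_i,\phi(x_i))$. Conditionally on their number, the $Z_j$ in $R_i$ are i.i.d.\ with the renormalised restriction of $\rho$ to $R_i$, which by the $C_b^1$ hypothesis is within $o(1)$ of uniform, while their number concentrates around $n\mu_\rho(R_i)\asymp n\delta^2\rho(x_i,\phi(x_i))\phi'(x_i)\to\infty$ (this is where boundedness below of $\rho$ enters). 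Applying the Vershik--Kerov estimate inside each $R_i$ gives, with probability $\to1$ (a union bound over the $\asymp1/\delta$ rectangles being afforded by the concentration of $\LIS$ for uniform permutations), an increasing subsequence of length $(2-o(1))\sqrt{n\mu_\rho(R_i)}$ supported there; concatenating along $R_1\prec R_2\prec\cdots$ produces a genuine increasing subsequence of $\sigma_n$ of total length
\begin{equation*}
    (2-o(1))\sqrt n\sum_i\delta\sqrt{\rho(x_i,\phi(x_i))\phi'(x_i)}\;\longrightarrow\;2\sqrt n\int_0^1\sqrt{\rho(x,\phi(x))\phi'(x)}\,\d x,
\end{equation*}
the last step being convergence of a Riemann sum.

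\textbf{Upper bound.} Fix a grid of mesh $1/N$ with $N=N_n$ such that $N_n\to\infty$ and $N_n=o(\sqrt n)$, and consider the event that for every axis-parallel rectangle $R$ with corners on the grid, $\LIS(\{Z_j\}\cap R)\leq(2+\varepsilon)\sqrt{n\mu_\rho(R)}+C$. A union bound over the $\mathrm{poly}(N)$ such rectangles, together with the classical exponential upper-tail estimate for the LIS of a uniform permutation, shows this event has probability $\to1$. On it, any increasing subsequence $S$ decomposes, following the grid cells its points visit, along a monotone staircase, the portion of $S$ in the relevant sub-rectangle of each cell being controlled by the displayed bound; summing and optimising over all monotone staircases and over the aspect ratios of these sub-rectangles --- the latter optimisation being exactly what upgrades $\sqrt\rho$ to the sharp integrand $\sqrt{\rho\,\phi'}$ --- yields a deterministic bound of the form $(2+\varepsilon)\sqrt n$ times a discrete Riemann-type maximum which, by uniform continuity of $\rho$, converges to $K_\rho+O(\varepsilon)$ as $N\to\infty$.

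The hard part is the upper bound, specifically extracting the \emph{sharp} constant $K_\rho$ rather than the crude $2\sqrt{\|\rho\|_\infty}$ that a naive cell-by-cell argument delivers: one must correctly couple the local density with the local slope of the path --- equivalently, tune the eccentricity of the approximating rectangles to $\phi'$ --- and let the resulting discrete variational problems converge to the continuous one uniformly in the randomness. The cleanest way to organise this, and the one taken in \cite{DZ95}, is to establish a large deviation principle for the empirical measure of $Z_1,\dots,Z_n$ (or for the occupation measures of increasing paths through the cloud), identify the rate function, and read off both the law of large numbers and the variational description of $K_\rho$ from a contraction principle. A softer route to convergence in probability, once $\E[\LIS(\sigma_n)]/\sqrt n\to K_\rho$ is known, is Talagrand's inequality for certifiable functions, giving fluctuations of order $n^{1/4}=o(\sqrt n)$.
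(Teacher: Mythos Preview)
This theorem is not proved in the paper: it is a background result attributed to Deuschel and Zeitouni \cite{DZ95} and is stated without proof in the introductory section on prior work. There is therefore no proof in the paper to compare your proposal against.

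That said, your sketch is a reasonable outline of the actual Deuschel--Zeitouni argument. The variational expression you write for $K_\rho$ is the one they obtain, and the two-sided strategy (tiling a near-optimal curve by small rectangles for the lower bound; discretising and optimising over monotone staircases for the upper bound, with the uniform $\LIS\sim 2\sqrt{n}$ estimate as local input) is precisely their approach. You are also right that the sharp upper bound is the delicate half and that in \cite{DZ95} it is organised via a large deviation principle rather than a bare union bound. As written, your proposal is a proof \emph{sketch} rather than a proof: the uniformity of the Vershik--Kerov estimate over many rectangles, the control of the error when replacing the local density by a constant, and the passage from the discrete staircase optimisation to the continuous variational problem all require genuine work that you have flagged but not carried out. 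None of this is a gap in the sense of a wrong idea; it is simply that the details are deferred to the cited reference, which is exactly what the paper itself does.
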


This question of generalization is still an active research subject. 
In the recent paper \cite{S22}, the author studies the $\sqrt{n}$ scaling limit for the RS-shape of sampled permutations when the sampling permuton is absolutely continuous.

\medskip

The goal of this paper is to extend in some sense the function $\LIS$, as well as the first rows and columns of RS-shape and tableaux, to the space of permutons. 
This will allow us to deduce linear asymptotics of these quantities for sampled permutations; therefore our results are different in nature from the $\sqrt{n}$ regime of \cite{DZ95,S22}, and yield non-trivial asymptotics for certain types of permutons displaying a singular part with respect to the Lebesgue measure on $\carre$.
We introduce the RS-shape of permutons in \Cref{section_shape_results} and present these asymptotics. 
Then in \Cref{section_ldp_results} we investigate large deviation principles related to those convergences, exhibiting a difference of behavior between the upper and lower tails.
In \Cref{section_tableaux_results} we define the RS-tableaux of a permuton and explain how a discrete algorithm translates in this continuous setting, then in \Cref{section_injectivity_results} we present a few results about the injectivity of permutons' RS-tableaux.
Finally, we illustrate these objects with several examples in \Cref{section_examples}.

\section{Our results}

\subsection{The RS-shape of (pre-)permutons}\label{section_shape_results}

Throughout the paper we denote by $\per \subset \preper \subset \prob \subset \meas$
the sets of permutons, pre-permutons, probability measures and finite measures on $\carre$.
Say a subset $A\subset\carre$ is nondecreasing (resp.~nonincreasing) if
\begin{equation*}
    \text{for any }(x_1,y_1),(x_2,y_2)\in A,\quad
    (x_2-x_1)(y_2-y_1)\geq 0 
    \quad\big(\text{resp. }(x_2-x_1)(y_2-y_1)\leq 0\big)
\end{equation*}
and for any $k\in\N^*$, say $A$ is $k$-nondecreasing (resp.~$k$-nonincreasing) if it can be written as a union of $k$ (not necessarily disjoint) nondecreasing (resp.~nonincreasing) subsets of $\carre$. 
Denote by $\mathcal{P}_K^{k\nearrow}(\carre)$ and $\mathcal{P}_K^{k\searrow}(\carre)$ respectively the sets of closed $k$-nondecreasing and $k$-nonincreasing subsets, and define for any $\mu\in\meas$:
\begin{equation*}
    \LISt_k(\mu) := \sup_{A\in\mathcal{P}_K^{k\nearrow}(\carre)}\mu(A)
    \quad,\quad
    \LDSt_k(\mu) := \sup_{A\in\mathcal{P}_K^{k\searrow}(\carre)}\mu(A).
\end{equation*}
When $k=1$ we may drop the subscript $k$.
By convention we set $\LISt_0(\mu)=\LDSt_0(\mu):=0$.

\begin{prop}\label{prop_semicon_LISt}
For any $\mu\in\meas$, the supremum in the definition of $\LISt_k(\mu)$ is reached. 
Moreover the function $\LISt_k$ is upper semi-continuous on $\prob$, \textit{i.e.} for any sequence $(\mu_n)_{n\in\N}$ weakly converging to $\mu$ in $\prob$ one has $\limsup \LISt_k( \mu_n ) \leq \LISt_k(\mu)$.
The same holds for $\LDSt_k$.
\end{prop}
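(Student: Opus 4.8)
The plan is to put the Hausdorff distance $\dH$ on the set $\mathcal{P}_K^{k\nearrow}(\carre)$, show that this makes it a compact metric space, and observe that for any fixed $\mu\in\meas$ the map $A\mapsto\mu(A)$ is upper semi-continuous on closed sets for $\dH$. Compactness would follow from two facts. First, the class of closed nondecreasing subsets of $\carre$ is stable under Hausdorff limits: if $B_n\to B$ with each $B_n$ nondecreasing, then for $(x_1,y_1),(x_2,y_2)\in B$ one picks $(x_i^n,y_i^n)\in B_n$ converging to $(x_i,y_i)$ and lets $n\to\infty$ in $(x_2^n - x_1^n)(y_2^n - y_1^n)\geq 0$. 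Second, any closed $k$-nondecreasing set is a union $\overline{B^1}\cup\dots\cup\overline{B^k}$ of \emph{closed} nondecreasing sets — replacing the $B^i$ by their closures does not change the union since the whole set is closed. Thus, given a sequence $(A_n)$ in $\mathcal{P}_K^{k\nearrow}(\carre)$, I would write $A_n=B_n^1\cup\dots\cup B_n^k$ with closed nondecreasing pieces, apply the Blaschke selection theorem together with a diagonal extraction over $i\in\{1,\dots,k\}$ so that $B_n^i\to B^i$ for $\dH$ along a subsequence, and conclude that $A_n\to B^1\cup\dots\cup B^k\in\mathcal{P}_K^{k\nearrow}(\carre)$ (using $\dH\big(\bigcup_i C_n^i,\bigcup_i C^i\big)\leq\max_i\dH(C_n^i,C^i)$ for finite unions).

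For the upper semi-continuity of $A\mapsto\mu(A)$: if $A_n\to A$ for $\dH$ with all sets closed, then for every $\varepsilon>0$ one has $A_n\subseteq A^\varepsilon:=\{z\in\carre:\dist(z,A)\leq\varepsilon\}$ for $n$ large, hence $\limsup_n\mu(A_n)\leq\mu(A^\varepsilon)$; since $A$ is closed, $A^\varepsilon\downarrow A$ as $\varepsilon\downarrow 0$ and finiteness of $\mu$ gives $\mu(A^\varepsilon)\downarrow\mu(A)$, so $\limsup_n\mu(A_n)\leq\mu(A)$. Combining compactness of $\mathcal{P}_K^{k\nearrow}(\carre)$ with this upper semi-continuity immediately yields that a maximizing sequence has a convergent subsequence whose limit realizes the supremum, proving the first assertion.

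For the upper semi-continuity of $\LISt_k$ on $\prob$, let $\mu_n\to\mu$ weakly and pass to a subsequence along which $\LISt_k(\mu_n)\to\limsup_n\LISt_k(\mu_n)$. Using the first part, pick optimizers $A_n\in\mathcal{P}_K^{k\nearrow}(\carre)$ with $\mu_n(A_n)=\LISt_k(\mu_n)$, and extract a further subsequence with $A_n\to A$ for $\dH$, $A\in\mathcal{P}_K^{k\nearrow}(\carre)$. For fixed $\varepsilon>0$ one has $A_n\subseteq A^\varepsilon$ eventually, so $\mu_n(A_n)\leq\mu_n(A^\varepsilon)$; as $A^\varepsilon$ is closed, the portmanteau theorem gives $\limsup_n\mu_n(A^\varepsilon)\leq\mu(A^\varepsilon)$, and letting $\varepsilon\downarrow 0$ yields $\limsup_n\LISt_k(\mu_n)\leq\mu(A)\leq\LISt_k(\mu)$. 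The statements for $\LDSt_k$ follow verbatim with ``nondecreasing'' replaced by ``nonincreasing'' everywhere, or by pushing $\mu$ forward through $(x,y)\mapsto(x,1-y)$, which swaps the two classes of sets.

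I expect the main obstacle to be purely organizational: making sure the pieces $B_n^i$ can be taken closed without changing the union, carrying out the diagonal extraction correctly so that the limit set is genuinely $k$-nondecreasing, and marrying Hausdorff convergence of the optimizers with weak convergence of the measures through the intermediate closed neighbourhoods $A^\varepsilon$. Each of these is routine once phrased as above, and nothing beyond Blaschke's selection theorem and the portmanteau theorem is needed.
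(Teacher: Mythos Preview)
Your proposal is correct and follows essentially the same route as the paper: compactness of $\mathcal{P}_K^{k\nearrow}(\carre)$ for the Hausdorff distance (via compactness of $\mathcal{P}_K^\nearrow(\carre)$ and continuity of finite unions), upper semi-continuity of $A\mapsto\mu(A)$ through the closed halos $A^\varepsilon$, and then the Portmanteau argument on optimizers for the semi-continuity of $\LISt_k$. Your explicit remark that the nondecreasing pieces may be taken closed without changing the union is a point the paper uses implicitly but does not spell out.
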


The functions $\LISt_k$ are not continuous:
for instance, one can check that the permutons $\mu_\sigma$ defined in \Cref{section_intro_permutons} satisfy $\LISt(\mu_\sigma)=0$ for all permutations $\sigma$, although this family is dense in $\prob$ by \Cref{th_HKMRS}.

The next result, along with \Cref{lem_lien_lis_list} used to prove it, justifies the idea that $\LISt_k,\LDSt_k$ are generalizations of $\LIS_k,\LDS_k$ to the space of (pre-)permutons:

\begin{prop}\label{prop_conv_LISk}
Let $\mu\in\preper$ and $(\sigma_n)_{n\in\N^*}$ be such that for each $n\in\N^*$, $\sigma_n\sim\sample_n(\mu)$. For any $k\in\N^*$, the following convergences hold almost surely:
\begin{equation*}
    \frac{1}{n}\LIS_k\left(\sigma_n\right)
    \cv{n}{\infty} \LISt_k(\mu)
    \quad;\quad
    \frac{1}{n}\LDS_k\left(\sigma_n\right)
    \cv{n}{\infty} \LDSt_k(\mu).
\end{equation*}
\end{prop}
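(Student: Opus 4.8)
The plan is to prove the statement for $\LIS_k$ (the $\LDS_k$ case being identical after reversing one coordinate, which swaps nondecreasing and nonincreasing sets) via a two-sided estimate, matching the combinatorial quantity $\frac1n\LIS_k(\sigma_n)$ with the geometric quantity $\LISt_k(\mu)$ up to vanishing errors. The first ingredient is a translation lemma (this is presumably \Cref{lem_lien_lis_list} referenced in the text): if $Z_1,\dots,Z_n$ are the sample points and $\sigma_n=\perm(Z_1,\dots,Z_n)$, then an increasing subsequence of $\sigma_n$ of length $\ell$ corresponds exactly to a subset of $\ell$ of the points lying on a common nondecreasing set (in fact on the graph of a nondecreasing step function, or equivalently a nondecreasing curve through those points). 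Hence $\LIS_k(\sigma_n) = \max \#\{\,i : Z_i \in A\,\}$ over $k$-nondecreasing closed sets $A$; dividing by $n$, $\frac1n\LIS_k(\sigma_n) = \sup_{A\in\mathcal P_K^{k\nearrow}(\carre)} \mu_n^{\mathrm{emp}}(A)$ where $\mu_n^{\mathrm{emp}} = \frac1n\sum_{i=1}^n \delta_{Z_i}$ is the empirical measure. So the problem reduces to showing $\sup_A \mu_n^{\mathrm{emp}}(A) \to \sup_A \mu(A)$ almost surely.

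For the upper bound $\limsup \frac1n\LIS_k(\sigma_n) \le \LISt_k(\mu)$ a.s., I would use \Cref{prop_semicon_LISt}: by the Glivenko--Cantelli / Varadarajan theorem the empirical measures $\mu_n^{\mathrm{emp}}$ converge weakly to $\mu$ almost surely, and $\LISt_k$ is upper semi-continuous on $\prob$, so $\limsup_n \LISt_k(\mu_n^{\mathrm{emp}}) \le \LISt_k(\mu)$; combined with the identification $\frac1n\LIS_k(\sigma_n) = \LISt_k(\mu_n^{\mathrm{emp}})$ this gives the upper bound immediately. (One should check $\mu_n^{\mathrm{emp}}\in\preper$, or pass to its associated permuton, but since the $Z_i$ have a.s. distinct coordinates this is harmless; $\LISt_k$ extended to $\meas$ makes the identity literal.)

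For the lower bound $\liminf \frac1n\LIS_k(\sigma_n) \ge \LISt_k(\mu)$ a.s., fix $\varepsilon>0$ and, using that the supremum defining $\LISt_k(\mu)$ is attained (again \Cref{prop_semicon_LISt}), pick a closed $k$-nondecreasing set $A = A_1\cup\dots\cup A_k$ with $\mu(A) = \LISt_k(\mu)$. The issue is that $\mu_n^{\mathrm{emp}}(A)$ need not be close to $\mu(A)$ for a single fixed Borel set $A$ (the boundary of $A$ could carry mass, or more precisely weak convergence only controls continuity sets). I would circumvent this by a slight thickening/approximation: replace each $A_j$ by a nondecreasing set that is a finite union of axis-parallel rectangles arranged in "staircase" fashion approximating $A_j$ from inside up to $\mu$-measure $\varepsilon/k$ — such staircase regions are still nondecreasing, lie in $\mathcal P_K^{k\nearrow}$, and have boundaries that are $\mu$-null (one can perturb the finitely many coordinates to avoid the at most countably many atoms of the marginals). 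For such a set $A'$, $\mu_n^{\mathrm{emp}}(A') \to \mu(A') \ge \LISt_k(\mu) - \varepsilon$ a.s. by the strong law of large numbers applied to the bounded functions $\mathbf 1_{A'}(Z_i)$. Hence $\liminf_n \frac1n\LIS_k(\sigma_n) \ge \liminf_n \mu_n^{\mathrm{emp}}(A') \ge \LISt_k(\mu) - \varepsilon$ a.s.; since this holds a.s. simultaneously for all $\varepsilon = 1/m$, $m\in\N^*$, we conclude.

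The main obstacle is the lower bound, specifically the approximation step: one must show that an arbitrary closed $k$-nondecreasing set can be approximated from within, in $\mu$-measure, by a "nice" $k$-nondecreasing set whose boundary is $\mu$-negligible so that weak convergence (equivalently the SLLN for indicators of $\mu$-continuity sets) applies. Monotone sets are well-behaved here — a closed nondecreasing set is contained in the graph-region of a monotone curve — so the staircase approximation is geometrically natural, but writing it carefully (handling the countably many atoms of $\mu$'s marginals, ensuring the pieces remain monotone after rounding coordinates) is the technical heart of the argument. Everything else is a standard application of Glivenko--Cantelli, the strong law, and the already-established semi-continuity in \Cref{prop_semicon_LISt}.
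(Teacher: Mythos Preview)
Your approach is essentially the same as the paper's: upper bound via weak convergence plus the upper semi-continuity of $\LISt_k$, lower bound via the law of large numbers on a fixed maximizing set. For the upper bound the paper uses the diagonal permuton $\mu_{\sigma_n}^\nearrow$ rather than the empirical measure, but it explicitly remarks afterwards that empirical measures would work just as well, so this is only a cosmetic difference.

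Your lower bound, however, is needlessly complicated because of a confusion. You write that ``$\mu_n^{\mathrm{emp}}(A)$ need not be close to $\mu(A)$ for a single fixed Borel set $A$'', but this is false: for a \emph{fixed} Borel set $A$, the quantity $\mu_n^{\mathrm{emp}}(A)=\frac1n\sum_{i=1}^n\mathbf{1}_A(Z_i)$ is an average of i.i.d.\ Bernoulli$(\mu(A))$ variables, so the strong law of large numbers gives $\mu_n^{\mathrm{emp}}(A)\to\mu(A)$ almost surely with no hypothesis whatsoever on the boundary of $A$. Boundary issues only arise when one tries to deduce $\mu_n(A)\to\mu(A)$ \emph{from weak convergence} (Portmanteau), or when one wants convergence uniformly over a class of sets; neither is relevant here since $A$ is a single deterministic set chosen in advance. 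This is exactly what the paper does: pick $A$ maximizing $\mu(A)=\LISt_k(\mu)$, observe $\LIS_k(\sigma_n)\ge S_n:=\#\{i:Z_i\in A\}$, and apply the SLLN to the binomial $S_n$. Your staircase approximation and the perturbation of coordinates can simply be deleted.
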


This result does not hold for any sequence of permutations $\sigma_n$ such that $\mu_{\sigma_n}$ converges to $\mu$.
For example, it is easy to construct a sequence of permutations with sublinear $\LIS$ that converges towards the diagonal permuton.
We could expect this result to hold under appropriate regularity assumptions on the sequence $(\sigma_n)$, similar to the ones exhibited in \cite{M15} for the study of small cycles, but such assumptions remain to be discovered.

We can then extend the notion of RS-shape to the space of permutons by analogy with Greene's theorem.
For any $\mu\in\meas$, define:
\begin{equation*}
    \shapet(\mu) 
    = \left( \shapet^\row(\mu) , \shapet^\col(\mu) \right) 
    := \left( 
    \left( \LISt_k(\mu)-\LISt_{k\m1}(\mu) \right)_{k\in\N^*} ,
    \left( \LDSt_k(\mu)-\LDSt_{k\m1}(\mu) \right)_{k\in\N^*}
    \right).
\end{equation*}
An immediate consequence of \Cref{prop_conv_LISk} is the convergence of sampled permutations' shapes towards the sampling permuton's shape, component-wise.
Another one is that this quantity belongs to the Thoma simplex, which appears in the limit theory of integer partitions (see e.g.~\cite{FMN20} and references therein):

\begin{prop}\label{prop_Thoma}
Define the Thoma simplex $\Omega$ as
\begin{equation*}
    \Omega := \left\{ \Big( \, \big(\alpha_1\geq\alpha_2\geq\dots\geq0\big) \, , \, \big(\beta_1\geq\beta_2\geq\dots\geq0\big) \, \Big) \::\: \sum_{i\geq 1} \alpha_i +
    \sum_{j\geq 1} \beta_j \leq 1 \right\}.
\end{equation*}
Then for any $\mu\in\preper$, $\shapet(\mu) \in \Omega$.
\end{prop}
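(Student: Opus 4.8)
We need to verify three things: the row sequence $(\widetilde{\LIS}_k(\mu) - \widetilde{\LIS}_{k-1}(\mu))_{k \geq 1}$ is nonincreasing, the analogous column sequence is nonincreasing, and the total mass $\sum_k \widetilde{\shape}^\row_k(\mu) + \sum_k \widetilde{\shape}^\col_k(\mu)$ is at most $1$. Let me think about each.

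For the monotonicity of the row sequence, the natural plan is to transfer Greene's theorem (\Cref{th_Greene}) through the sampling convergence of \Cref{prop_conv_LISk}. For a fixed permuton $\mu \in \preper$ and sampled permutations $\sigma_n \sim \sample_n(\mu)$, \Cref{th_Greene} tells us that for each $n$ the finite sequence $(\LIS_k(\sigma_n) - \LIS_{k-1}(\sigma_n))_k$ is nonincreasing, i.e. $\LIS_{k+1}(\sigma_n) - \LIS_k(\sigma_n) \leq \LIS_k(\sigma_n) - \LIS_{k-1}(\sigma_n)$, equivalently $\LIS_{k+1}(\sigma_n) + \LIS_{k-1}(\sigma_n) \leq 2\LIS_k(\sigma_n)$. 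Dividing by $n$ and letting $n \to \infty$ along the almost sure convergence of \Cref{prop_conv_LISk}, each term converges, and the inequality passes to the limit: $\widetilde{\LIS}_{k+1}(\mu) + \widetilde{\LIS}_{k-1}(\mu) \leq 2\widetilde{\LIS}_k(\mu)$, which is exactly $\widetilde{\shape}^\row_{k+1}(\mu) \leq \widetilde{\shape}^\row_k(\mu)$. The same argument with $\LDS$ gives the column sequence. (Alternatively, one can prove concavity of $k \mapsto \widetilde{\LIS}_k$ directly: given optimal $A \in \mathcal{P}_K^{(k+1)\nearrow}$ and $B \in \mathcal{P}_K^{(k-1)\nearrow}$, one wants to combine them into two $k$-nondecreasing sets whose masses sum to at least $\mu(A) + \mu(B)$ — but the combinatorial rearrangement of nondecreasing pieces is essentially a continuous analogue of the proof of Greene's theorem, so routing through the discrete statement is cleaner.)

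For the mass bound, note first that each $\widetilde{\LIS}_k(\mu) \leq 1$ since $\mu$ is a probability measure, so both sums telescope to limits that are at most $1$ individually; but we need their \emph{sum} to be at most $1$. The key point is that a set which is simultaneously $j$-nondecreasing and $j'$-nonincreasing must have small $\mu$-measure — specifically, at the discrete level, if $I_1, \dots, I_j$ are increasing subsequences and $J_1, \dots, J_{j'}$ are decreasing ones, then $|(\bigcup I_a) \cap (\bigcup J_b)| \leq j j'$ because any increasing and any decreasing subsequence meet in at most one index. Passing this to the permuton limit via \Cref{prop_conv_LISk} gives that for any $A \in \mathcal{P}_K^{j\nearrow}(\carre)$ and $B \in \mathcal{P}_K^{j'\searrow}(\carre)$, $\mu(A \cap B) = 0$. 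Therefore, taking near-optimal $A$ for $\widetilde{\LIS}_j(\mu)$ and $B$ for $\widetilde{\LDS}_{j'}(\mu)$, inclusion-exclusion gives $\widetilde{\LIS}_j(\mu) + \widetilde{\LDS}_{j'}(\mu) = \mu(A) + \mu(B) = \mu(A \cup B) \leq 1$ for all $j, j'$. Letting $j, j' \to \infty$ yields $\sum_k \widetilde{\shape}^\row_k(\mu) + \sum_k \widetilde{\shape}^\col_k(\mu) = \lim_j \widetilde{\LIS}_j(\mu) + \lim_{j'} \widetilde{\LDS}_{j'}(\mu) \leq 1$, which is the Thoma simplex condition.

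The main obstacle is the claim that $\mu(A \cap B) = 0$ when $A$ is $j$-nondecreasing and $B$ is $j'$-nonincreasing. The honest way to get it is again through sampling: if $\mu(A \cap B) = c > 0$, then for large $n$ a positive fraction ($\approx cn$) of the points $Z_1, \dots, Z_n$ lie in $A \cap B$; decomposing $A$ into $j$ nondecreasing pieces and $B$ into $j'$ nonincreasing pieces, the points of $A \cap B$ get sorted into $jj'$ "cells" each of which is simultaneously nondecreasing and nonincreasing, hence totally ordered by a relation that forces the corresponding sub-permutation to be... wait — a set that is both nondecreasing and nonincreasing, under a continuous measure, can only carry points that are pairwise comparable in a contradictory way, so it supports at most one point almost surely; thus $A \cap B$ carries at most $jj'$ of the $Z_i$ almost surely, contradicting $cn \to \infty$. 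Making "supports at most one point a.s." precise requires a small measure-theoretic argument (a nondecreasing-and-nonincreasing set lies on a union of a horizontal and a vertical line through any of its points, which is $\mu$-null by the marginal conditions), and care is needed since $A, B$ are merely closed, not finite unions of nice curves — but the closedness and the definition of $k$-monotone sets as finite unions of monotone pieces make this tractable. This is the step I would write out in most detail; the rest is bookkeeping with \Cref{prop_conv_LISk} and \Cref{th_Greene}.
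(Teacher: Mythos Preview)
Your proposal is correct. The monotonicity argument (concavity of $k\mapsto\LIS_k(\sigma_n)$ via Greene's theorem, then pass to the limit through \Cref{prop_conv_LISk}) is exactly what the paper does.

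For the sum condition your route differs from the paper's. The paper stays discrete: by Greene's theorem, $\LIS_k(\sigma_n)+\LDS_k(\sigma_n)\leq n + A_{k,n}$ where $A_{k,n}\leq k^2$ counts boxes in the first $k$ rows and first $k$ columns of the RS-shape, then divides by $n$ and lets $n\to\infty$ for each fixed $k$. You instead work directly on the measure: a set that is both nondecreasing and nonincreasing lies on a single horizontal or vertical line, hence is $\mu$-null by the pre-permuton hypothesis, so $\mu(A\cap B)=0$ for any $A\in\mathcal{P}_K^{j\nearrow}$ and $B\in\mathcal{P}_K^{j'\searrow}$, giving $\LISt_j(\mu)+\LDSt_{j'}(\mu)\leq 1$ immediately. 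This is a clean alternative; in fact the paper uses precisely this ``horizontal-plus-vertical line'' observation later, in the proof of \Cref{th_decomposition}. Your approach has the advantage of not invoking sampled permutations at all for the sum bound, while the paper's has the advantage of being a single uniform ``pass Greene to the limit'' argument covering both the monotonicity and the sum conditions at once.

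Two minor clean-ups: you do not need ``near-optimal'' $A$ and $B$, since \Cref{prop_semicon_LISt} gives actual maximizers; and the detour through sampling (``a positive fraction of the $Z_i$ lie in $A\cap B$, at most $jj'$ points survive\dots'') is unnecessary once you have the direct line argument, so you can drop it entirely.
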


\begin{rem}\label{rem_taquin}
Propositions \ref{prop_conv_LISk} and \ref{prop_Thoma} can be related to a law of large numbers for random partitions due to Vershik and Kerov.
Let $(\alpha,\beta)\in\Omega$ and $\gamma:= 1 - \sum_i\alpha_i - \sum_i\beta_i$.
In \cite{VK81} the authors defined a measure $M_{\alpha,\beta,\gamma}^n$ on Young diagrams of size $n$ and proved that $\alpha_i$ (resp.~$\beta_i$) is the asymptotic proportion of boxes in the $i$-th row (resp.~column). 
In \cite{VK86} they showed that this measure could be interpreted as the RS-shape of some random word.
For this, fix an alphabet $\mathbb{L}=\{a_1,a_2,\dots\}\sqcup\{b_1,b_2,\dots\}\sqcup[0,1]$ where $a_i$'s are called \textit{row letters}, $b_i$'s are called \textit{column letters}, and elements of $[0,1]$ are neutral letters.
Define a measure $m_{\alpha,\beta,\gamma}^n$ on words of length $n$ by choosing independently each letter with a probability $\alpha_i$ of being $a_i$, $\beta_i$ of being $b_i$, and $\gamma$ of being uniform in $[0,1]$.
Adapt Robinson--Schensted's algorithm on such words by requiring that each row of the P-tableau contains each column letter at most once, and each column of the P-tableau contains each row letter at most once (but rows may contain one row letter multiple times and columns may contain one column letter multiple times).
Then the measure $M_{\alpha,\beta,\gamma}^n$ is the push-forward by RS-shape of $m_{\alpha,\beta,\gamma}^n$.
Equivalently, it is possible to interpret random diagrams under $M_{\alpha,\beta,\gamma}^n$ as the RS-shapes of certain sampled permutations.
For this, associate each row letter $a_i$ with an upward diagonal of mass $\alpha_i$ and each column letter $b_i$ with a downward diagonal of mass $\beta_i$, then place them in $\carre$ and interpose uniform mass according to the order on $\mathbb{L}$.
This defines a permuton $\mu_{\alpha,\beta,\gamma}$ whose sampled permutations are standardizations of words under $m_{\alpha,\beta,\gamma}^n$ (see e.g.~Section 3.7 in \cite{S14} for a definition), hence their RS-shapes follow the law $M_{\alpha,\beta,\gamma}^n$ and we can recover with \Cref{prop_conv_LISk} the asymptotics of \cite{VK81}.
See \Cref{fig_taquin} for a representation of the permuton $\mu_{\alpha,\beta,\gamma}$.
\end{rem}

\begin{figure}
    \centering
    \includegraphics[scale=0.44]{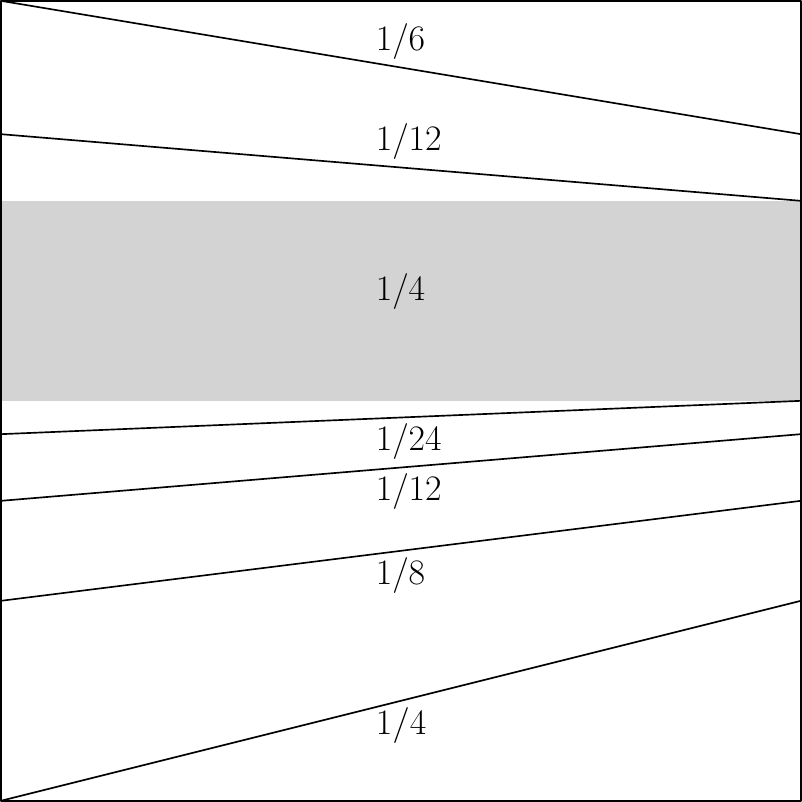}
    \caption{The permuton $\mu_{\alpha,\beta,\gamma}$ when the alphabet $\mathbb{L}$ puts the order of $\N$ on row letters, the order of $-\N$ on column letters, and row letters are lower than neutral letters which are lower than column letters.
    In this example the parameters are $\alpha=(1/4,1/8,1/12,1/24)$, $\beta=(1/6,1/12)$, $\gamma=1/4$.
    This alphabet is called the ``jeu de taquin alphabet'' in \cite{S14}.}
    \label{fig_taquin}
\end{figure}

\begin{rem}
    Our results also have some connections to \textit{graphon} theory.
    This theory was introduced in \cite{LS06} for the study of dense graph sequences and is arguably the primary motivation for the introduction of permuton theory in \cite{HKMRS13} and \cite{GGKK15}.
    These two theories share several similarities as well as a formal link; the \textit{inversion graph} of permutations (see e.g.~Section 1.2 in \cite{BBDFGMP21} for a definition) can naturally be extended into a continuous map $\invt$ from permutons to graphons.
    The function $\LISt$ is then analogous to the \textit{independence number} $\widetilde{\alpha}$, which was proved to be upper semi-continuous in \cite{HR17}.
    Moreover the counterpart of \Cref{prop_conv_LISk} for $k=1$, \textit{i.e.}~the a.s.~convergence of $\alpha\big(\sample_n(W)\big)/n$ towards $\widetilde\alpha(W)$, was established in \cite{BBDFGMP21}.
    These convergence results allow to see that $\invt$ indeed maps $\LISt$ to $\widetilde\alpha$.
\end{rem}

\medskip

The results of this section are proved in \Cref{section_topo,section_shape_proofs}.
From now on we will focus on the study of $\LISt_k$, since results on $\LDSt_k$ can be deduced by mirroring the permuton:
\begin{equation*}
    \mu^\rev := \psi_*\mu
    \quad\text{where for any }(x,y)\in\carre,\quad
    \psi(x,y):=(1\mm x,y).
\end{equation*}
Since $\psi$ exchanges nonincreasing subsets with nondecreasing ones, we have for any $k\in\N^*$ and $(x,y)\in\carre$:
\begin{equation*}
    \LISt_k\left( \mu^\rev\vert_{[0,x]\times[0,y]} \right)
    = \LDSt_k\left( \mu\vert_{[1\m x,1]\times[0,y]} \right).
\end{equation*}

\subsection{Large deviation principles}\label{section_ldp_results}

In this section we present partial large deviation principles concerning the convergences of \Cref{prop_conv_LISk}. 
First we state an upper tail in \Cref{th_upper_tail}: the simple idea is that long increasing subsequences are formed when many points appear in a maximal increasing subset of the permuton. 
However this idea does not work for a lower tail, since having few points in a maximal increasing subset does not prevent long increasing subsequences from forming somewhere else: this difference of behavior is made formal in \Cref{prop_contre_exemple}. 
In the $\sqrt{n}$ regime when the permuton has a regular density, this asymmetry translates into a change of speed between the upper and lower tails \cite{DZ99}. 
Here in the linear regime, we prove in \Cref{th_speed_equiv} that if there exists a lower tail large deviation then the speed is the same as for the upper tail.

\medskip

For any $p\in[0,1]$, define the Legendre transform of the Bernoulli law ${\rm Ber}(p)$ as:
\begin{equation*}
    \text{for any }q\in[0,1],\quad
    \Lambda_p^*(q) 
    %= \sup_{t\in\R} \left(tq - \log\E\left[ e^{tB} \right]\right)
    := q\log\frac{q}{p} + (1-q)\log\frac{1-q}{1-p} \quad\in [0,+\infty] .
\end{equation*}
Cramér's theorem (recalled in \Cref{th_Cramer}) states that this is the good rate function for the large deviation principle of binomial laws. 
We show that this is also the rate function for the following upper tail:

\begin{theorem}\label{th_upper_tail}
Let $\mu\in\per$, $k\in\N^*$, and $\sigma_n\sim\sample_n(\mu)$ for each $n\in\N^*$.
Then for any $\alpha \in \left( \LISt_k(\mu),1 \right]$:
\begin{equation*}
    \frac{1}{n}\log\P\big(\LIS_k(\sigma_n) \geq \alpha n\big) 
    \cv{n}{\infty} -\Lambda_{\LISt_k(\mu)}^*(\alpha).
\end{equation*}
\end{theorem}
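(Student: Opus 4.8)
The plan is to prove the two bounds separately. For the lower bound on the probability (upper tail is "easy"), fix a closed $k$-nondecreasing set $A\in\mathcal{P}_K^{k\nearrow}(\carre)$ achieving $\mu(A)=\LISt_k(\mu)$, which exists by \Cref{prop_semicon_LISt}. Among the $n$ i.i.d.\ points $Z_1,\dots,Z_n$ sampled under $\mu$, let $N_n:=\#\{i:Z_i\in A\}$, so $N_n\sim\mathrm{Bin}(n,\LISt_k(\mu))$. Since $A$ is a union of $k$ nondecreasing subsets, the points falling in $A$ form, after reading in order of $x$-coordinate, a union of $k$ increasing subsequences of $\sigma_n$; hence $\LIS_k(\sigma_n)\geq N_n$ almost surely. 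Therefore $\P(\LIS_k(\sigma_n)\geq\alpha n)\geq\P(N_n\geq\alpha n)$, and Cramér's theorem (\Cref{th_Cramer}) gives
\begin{equation*}
    \liminf_{n\to\infty}\frac{1}{n}\log\P\big(\LIS_k(\sigma_n)\geq\alpha n\big)\geq -\Lambda_{\LISt_k(\mu)}^*(\alpha).
\end{equation*}

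For the upper bound on the probability, I would use a covering/discretization argument. Fix a small $\varepsilon>0$ and partition $\carre$ into an $m\times m$ grid of cells for $m=m(\varepsilon)$ large; a $k$-increasing subsequence of $\sigma_n$ of length $\ell$ naturally sits inside some $k$-nondecreasing \emph{union of grid cells} (a "staircase" of cell-blocks), up to an error of at most $O(km)$ points lying in the "diagonal" cells where the monotone structure can fail — those points number $O(m)$ per monotone chain, hence $o(n)$. There are only finitely many (at most $\binom{m^2}{\le km}$, i.e.\ a constant depending on $\varepsilon$ but not $n$) such $k$-nondecreasing unions of cells $B$, and each satisfies $\mu(B)\le\LISt_k(\mu)+\delta(\varepsilon)$ with $\delta(\varepsilon)\to 0$ as $\varepsilon\to 0$, because $B$ is itself a closed $k$-nondecreasing set up to adjusting its boundary. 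By a union bound over these finitely many $B$,
\begin{equation*}
    \P\big(\LIS_k(\sigma_n)\geq\alpha n\big)\;\leq\;C(\varepsilon)\max_{B}\P\big(\mathrm{Bin}(n,\mu(B))\geq\alpha n - o(n)\big),
\end{equation*}
and applying Cramér's theorem together with the monotonicity of $q\mapsto\Lambda_p^*(q)$ on $[p,1]$ yields $\limsup_n \frac1n\log\P(\LIS_k(\sigma_n)\ge\alpha n)\le -\Lambda^*_{\LISt_k(\mu)+\delta(\varepsilon)}(\alpha)$. Letting $\varepsilon\to 0$ and using continuity of $p\mapsto\Lambda_p^*(\alpha)$ in $p$ (for $\alpha>\LISt_k(\mu)$, where the value is finite) finishes the proof.

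The main obstacle I expect is making the discretization step fully rigorous: precisely quantifying how a near-optimal $k$-increasing subsequence of $\sigma_n$ is approximated by a $k$-nondecreasing union of grid cells, controlling the number of "lost" points in boundary cells uniformly in $n$, and verifying that the resulting cell-unions $B$ genuinely have $\mu$-measure close to $\LISt_k(\mu)$ (this uses that $\mu$ is a permuton, so the grid cells have controlled marginals, together with the upper semi-continuity and the fact that a union of cell-blocks can be slightly fattened into a closed $k$-nondecreasing set). One clean way to organize this is to first prove the upper bound for \emph{pre-permutons supported on finitely many cells} (where $\LIS_k$ is literally a sum of binomials up to $O(m)$ corrections) and then transfer via the coupling between $\sigma_n$ and the "rounded" permuton, controlling the number of points that get reassigned. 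The lower bound and the appeal to Cramér's theorem are routine; all the real work is in the combinatorial-geometric approximation underlying the upper bound.
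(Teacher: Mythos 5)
Your lower bound is exactly the paper's: count the sample points falling in a maximizer $A$ of $\mu$ over $\mathcal{P}_K^{k\nearrow}(\carre)$ and apply Cramér. Your upper bound, however, takes a genuinely different route. The paper conditions on the event $\{\LIS_k(\sigma_n)\geq\alpha n\}$, considers the law $\nu_n$ of the (random) maximal $k$-nondecreasing subset of the conditioned sample in the compact space $\mathcal{P}_K^{k\nearrow}(\carre)$, extracts a weak limit, picks a set $B_0$ in its support, and plays Portmanteau against Cramér applied to $\mathrm{Bin}(n,\mu(B_0^\varepsilon))$. Your discretization-plus-union-bound over finitely many cell staircases avoids the conditioning and the extraction of weak limits of measures on a space of compact sets, and is arguably more elementary and quantitative; both arguments ultimately rest on the same ingredients (compactness of $\mathcal{P}_K^{k\nearrow}(\carre)$, upper semi-continuity of $A\mapsto\mu(A)$, halos $A^\delta$, Cramér), so the approach is viable.

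One step of your sketch as written would fail, though. The union $B$ of the grid cells visited by $k$ monotone chains is \emph{not} a $k$-nondecreasing set (two horizontally adjacent cells already violate the definition), and your proposed repair --- discarding the ``diagonal'' cells at a cost of $O(km)$ points --- miscounts: a single cell can contain order $n/m$ sample points (its count is $\mathrm{Bin}(n,\mu(\text{cell}))$ with $\mu(\text{cell})$ as large as $1/m$), so throwing away $O(km)$ cells can lose $\Theta(kn)$ points, not $o(n)$. The correct organization is to discard nothing: all $\geq\alpha n$ points of the subsequence lie in $B$, and one instead bounds $\mu(B)$. Since the cells visited by a chain form a componentwise weakly increasing family, $B$ is contained in $D^{\sqrt2/m}$ for some closed $k$-nondecreasing polyline $D$, so $\mu(B)\leq g_k(\sqrt2/m)$ where $g_k(\delta):=\sup_{A\in\mathcal{P}_K^{k\nearrow}(\carre)}\mu(A^\delta)$. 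The estimate you need is the \emph{uniform} statement $g_k(\delta)\downarrow\LISt_k(\mu)$ as $\delta\downarrow0$ (uniformity over $A$ is essential because the staircases proliferate as $m\to\infty$); this follows from \Cref{lem_mesure_mesurable} and \Cref{lem_compact_k} by extracting a Hausdorff-convergent subsequence from any putative counterexample. With that in hand, the union bound over the $C(m)$ staircases, stochastic domination by $\mathrm{Bin}(n,g_k(\sqrt2/m))$, Cramér, and continuity of $p\mapsto\Lambda^*_p(\alpha)$ complete your argument.
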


In particular when $\alpha=1$ and $k=1$ we get the following convergence:

\begin{cor}\label{cor_puissance_1/n}
    For any permuton $\mu$ and $\sigma_n\sim\sample_n(\mu)$:
    \begin{equation*}
        \P( \sigma_n=\id_n )^{1/n} \cv{n}{\infty} \LISt(\mu).
    \end{equation*}
\end{cor}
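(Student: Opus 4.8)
The plan is to obtain \Cref{cor_puissance_1/n} as the special case $k=1$, $\alpha=1$ of \Cref{th_upper_tail}, after identifying the event $\{\LIS(\sigma_n)\geq n\}$ with $\{\sigma_n=\id_n\}$. First I would observe that for a permutation $\sigma_n$ of size $n$ one has $\LIS(\sigma_n)\leq n$ always, with equality if and only if $\sigma_n$ is the identity permutation $\id_n$; indeed an increasing subsequence using all $n$ indices forces $\sigma_n(1)<\dots<\sigma_n(n)$, hence $\sigma_n=\id_n$. Therefore
\begin{equation*}
    \P\big(\LIS(\sigma_n)\geq n\big) = \P\big(\sigma_n=\id_n\big).
\end{equation*}

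Next I would apply \Cref{th_upper_tail} with $k=1$ and $\alpha=1$. This is legitimate provided $\LISt(\mu)<1$, in which case $\alpha=1$ lies in the interval $(\LISt(\mu),1]$ and the theorem yields
\begin{equation*}
    \frac1n\log\P\big(\sigma_n=\id_n\big)\cv{n}{\infty} -\Lambda^*_{\LISt(\mu)}(1).
\end{equation*}
By the definition of the Legendre transform, $\Lambda^*_p(1) = 1\cdot\log\frac1p + 0 = -\log p$, so the right-hand side equals $\log \LISt(\mu)$; exponentiating gives $\P(\sigma_n=\id_n)^{1/n}\to\LISt(\mu)$, which is the claim. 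The one case not covered by this argument is $\LISt(\mu)=1$: here $\alpha=1$ is not in the open-at-the-left interval, but one argues directly. Since $\LISt(\mu)=1$ means $\mu$ is supported on a closed nondecreasing set, I would use \Cref{prop_conv_LISk} (with $k=1$): $\frac1n\LIS(\sigma_n)\to\LISt(\mu)=1$ almost surely. Combined with the trivial bound $\P(\sigma_n=\id_n)\le 1$ and a lower bound showing $\P(\sigma_n = \id_n)^{1/n}\to 1$ — for instance by noting that on the event that all sampled points land in the nondecreasing support, which has probability $1$, the permutation is automatically increasing hence equal to $\id_n$, so in fact $\P(\sigma_n=\id_n)=1$ for all $n$ when the support is genuinely a nondecreasing curve; more carefully one reduces to the generic situation via the lower semicontinuity already packaged in \Cref{prop_conv_LISk} — one gets $\P(\sigma_n=\id_n)^{1/n}\to 1=\LISt(\mu)$.

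The main obstacle is purely bookkeeping: making sure the boundary case $\alpha=1$ is handled, since \Cref{th_upper_tail} is stated for $\alpha\in(\LISt_k(\mu),1]$ and thus already includes $\alpha=1$ \emph{whenever} $\LISt(\mu)<1$; the only genuine gap is $\LISt(\mu)=1$, and there the statement is essentially degenerate. So there is no real analytic difficulty here — the corollary is a one-line consequence of \Cref{th_upper_tail} together with the elementary identity $\Lambda^*_p(1)=-\log p$ and the identification $\{\LIS=n\}=\{\sigma=\id_n\}$, plus a short remark disposing of the trivial case $\LISt(\mu)=1$.
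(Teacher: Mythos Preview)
Your approach is exactly the paper's: the corollary is presented there as the immediate specialization $k=1$, $\alpha=1$ of \Cref{th_upper_tail}, using $\{\LIS(\sigma_n)\geq n\}=\{\sigma_n=\id_n\}$ and $\Lambda_p^*(1)=-\log p$. Your treatment of the degenerate case $\LISt(\mu)=1$ is correct and in fact more explicit than the paper's (which simply opens the proof of \Cref{th_upper_tail} with ``If $\LISt_k(\mu)=1$, there is nothing to prove''); you can streamline it by directly invoking \Cref{prop_semicon_LISt} to get a closed nondecreasing $A$ with $\mu(A)=1$, whence a.s.\ all sampled points lie in $A$ and, having distinct coordinates, force $\sigma_n=\id_n$.
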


For the lower tail we restrict our study to $\LIS$ for convenience, but this could be generalized to $\LIS_k$. 
We begin with an upper bound in the same flavor as in \Cref{th_upper_tail}:

\begin{prop}\label{prop_lower_tail}
Let $\mu\in\per$ and $\sigma_n\sim\sample_n(\mu)$ for each $n\in\N^*$. 
Then for any $\beta\in\left( 0,\LISt(\mu) \right)$:
\begin{equation*}
    \limsup_{\as{n}{\infty}}\frac{1}{n}\log\P\big( 
    \LIS(\sigma_n) < \beta n
    \big)
    \;\leq\; -\! \sup_{j\geq1,\, j\beta<\LISt_j(\mu)}
    \Lambda_{\LISt_j(\mu)}^*(j\beta).
\end{equation*}
\end{prop}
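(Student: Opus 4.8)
The key observation is a deterministic inequality relating $\LIS$ to the higher $\LIS_j$: for any word $w$ and any $j \in \N^*$, one has $\LIS_j(w) \leq j \cdot \LIS(w)$, simply because a union of $j$ increasing subsequences has total length at most $j$ times the length of a single longest one. Consequently, for each fixed $j$, the event $\{\LIS(\sigma_n) < \beta n\}$ is contained in the event $\{\LIS_j(\sigma_n) < j\beta n\}$. So it suffices to bound, for each $j$ with $j\beta < \LISt_j(\mu)$, the probability $\P(\LIS_j(\sigma_n) < j\beta n)$ by $\exp(-n\Lambda^*_{\LISt_j(\mu)}(j\beta) + o(n))$, and then take the best (largest) exponent over all such $j$.

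To get that per-$j$ bound I would exploit the optimal $k$-nondecreasing set. By \Cref{prop_semicon_LISt}, the supremum defining $\LISt_j(\mu)$ is attained: fix a closed $j$-nondecreasing set $A$ with $\mu(A) = \LISt_j(\mu) =: p_j$. Write $A = A_1 \cup \dots \cup A_j$ with each $A_i$ nondecreasing. When we sample $Z_1,\dots,Z_n$ i.i.d.\ from $\mu$, the points falling in $A_i$ form (after reading in increasing $x$-order) an increasing subsequence of $\sigma_n$ — this is exactly the correspondence between nondecreasing subsets of the square and increasing subsequences that underlies \Cref{prop_conv_LISk}. Hence the number $N_n := \#\{m \leq n : Z_m \in A\}$ of sampled points landing in $A$ satisfies $\LIS_j(\sigma_n) \geq N_n$ (the points in $A$ split among the $A_i$'s into $j$ increasing subsequences covering all of them). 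Now $N_n \sim \mathrm{Bin}(n, p_j)$, so
\begin{equation*}
    \P\big(\LIS_j(\sigma_n) < j\beta n\big) \leq \P\big(N_n < j\beta n\big) = \P\big(\mathrm{Bin}(n,p_j) < j\beta n\big).
\end{equation*}
Since $j\beta < p_j$ by assumption, Cramér's theorem (\Cref{th_Cramer}) gives $\limsup \frac1n \log \P(\mathrm{Bin}(n,p_j) < j\beta n) \leq -\Lambda^*_{p_j}(j\beta)$. Combining with the inclusion of events from the first paragraph and optimizing over admissible $j$ yields the claimed bound.

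The main obstacle — and it is a mild one — is the measure-zero issue of whether sampled points land exactly on the boundary between the pieces $A_i$, or whether the $x$- or $y$-coordinates are distinct; but these are almost sure non-events under a permuton (continuous marginals), so $\sigma_n = \perm(Z_1,\dots,Z_n)$ is a.s.\ well-defined and the inequality $\LIS_j(\sigma_n) \geq N_n$ holds a.s. One should also double-check the harmless edge case where no admissible $j$ exists, i.e.\ where the supremum on the right-hand side is over the empty set; but since $j=1$ gives $1\cdot\beta = \beta < \LISt(\mu) = \LISt_1(\mu)$ by the hypothesis $\beta \in (0, \LISt(\mu))$, the index $j=1$ always qualifies and the supremum is nonempty. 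Everything else is a routine application of Cramér's theorem together with the elementary fact $\LIS_j \leq j\cdot\LIS$ and the already-established attainment of the supremum in \Cref{prop_semicon_LISt}.
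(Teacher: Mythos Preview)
Your proposal is correct and follows essentially the same route as the paper: use the deterministic bound $\LIS_j\leq j\cdot\LIS$ to reduce to $\{\LIS_j(\sigma_n)<j\beta n\}$, compare with the binomial count of sampled points in a maximal $j$-nondecreasing set, apply Cram\'er, and optimize over admissible $j$. Your added remarks on measure-zero issues and the nonemptiness of the index set are fine but not needed for the argument.
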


This upper bound is in general better than the simpler one $\Lambda_{\LISt(\mu)}^*(\beta)$; take for example $\mu$ such that $\LISt_2(\mu)=2\LISt(\mu)>0$.
Since it is non-zero whenever $\LISt(\mu)>0$, it implies that the $n$ speed for a lower tail is not ``too fast''.
Conversely the following result states that, except in trivial cases, this speed is not ``too slow'' either:

\begin{theorem}\label{th_speed_equiv}
    Let $\mu\in\per$ and $\sigma_n\sim\sample_n(\mu)$ for each $n\in\N^*$.
    Let $\beta\in (0,1]$ and $k\in\N^*$ satisfying $\frac{1}{k\p1} < \beta \leq \frac{1}{k}$.
    Then the following assertions are equivalent:
    \begin{itemize}
        \item[(i)] $\liminf \frac{1}{n}\log\P\big(\LIS(\sigma_n) < \beta n\big) = -\infty$ ;
        \item[(ii)] For all $n\in\N^*$, $\P\big( \LIS(\sigma_{n}) \geq \beta n \big) = 1$;
        \item[(iii)] $\LISt_k(\mu) = 1$.
    \end{itemize}
\end{theorem}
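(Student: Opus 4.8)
The plan is to prove the cycle of implications $(iii)\Rightarrow(ii)\Rightarrow(i)\Rightarrow(iii)$, of which only the last is substantial. For $(iii)\Rightarrow(ii)$, suppose $\LISt_k(\mu)=1$. By \Cref{prop_semicon_LISt} the supremum is attained, so there is a closed $k$-nondecreasing set $A=A_1\cup\dots\cup A_k$ with $\mu(A)=1$; hence almost surely $Z_1,\dots,Z_n\in A$. Assigning each sample point to one part $A_i$ containing it and using that the coordinates are almost surely pairwise distinct, the points assigned to $A_i$ form a strictly increasing sequence and thus index an increasing subsequence of $\sigma_n$. These $k$ increasing subsequences cover $\{1,\dots,n\}$, so $\LIS_k(\sigma_n)=n$, and by pigeonhole $\LIS(\sigma_n)\ge n/k\ge\beta n$ since $\beta\le 1/k$. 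As this holds for almost every realization, $(ii)$ follows; and $(ii)\Rightarrow(i)$ is immediate, because $(ii)$ says $\P(\LIS(\sigma_n)<\beta n)=0$ for every $n$, whence $\frac{1}{n}\log\P(\LIS(\sigma_n)<\beta n)=-\infty$ for all $n$.

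I would prove $(i)\Rightarrow(iii)$ by contraposition, so assume $\LISt_k(\mu)<1$; the goal is to find $c<\infty$ with $\P(\LIS(\sigma_n)<\beta n)\ge e^{-cn}$ for all large $n$, which rules out $\liminf=-\infty$. The key first step is a Dilworth-type dichotomy applied to $S:=\mathrm{supp}(\mu)$, which satisfies $\mu(S)=1$. Given $p\in S$, let $h(p)$ be the largest $m$ for which $S$ contains points $p=p^{(1)},\dots,p^{(m)}$ with strictly increasing $x$-coordinates and strictly decreasing $y$-coordinates. If $S$ contained no such $(k+1)$-tuple then $h$ would take values in $\{1,\dots,k\}$, and each level set $h^{-1}(j)$ would be a nondecreasing subset of $\carre$ --- since two of its points $p,q$ with $x_p<x_q$, $y_p>y_q$ would extend a maximal decreasing chain starting at $q$ to one starting at $p$ of length $h(q)+1$, forcing $h(p)>j$ --- and since the nondecreasing property passes to closures, $\bigcup_{j=1}^{k}\overline{h^{-1}(j)}$ would be a closed $k$-nondecreasing set of $\mu$-measure $1$, contradicting $\LISt_k(\mu)<1$. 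Hence $S$ contains $p_1,\dots,p_{k+1}$ with strictly increasing $x$-coordinates and strictly decreasing $y$-coordinates; shrinking, we obtain disjoint boxes $Q_1,\dots,Q_{k+1}$ with $p_i\in Q_i$, whose $x$-projections are ordered in one direction and $y$-projections in the other, and $\mu(Q_i)>0$ for all $i$.

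It then remains to bound $\P(\LIS(\sigma_n)<\beta n)$ below by a single configuration event. For $n$ large enough, choose integers $m_1,\dots,m_{k+1}\ge 0$ with $\sum_i m_i=n$ and $\max_i m_i<\beta n$ --- possible since $(k+1)\beta>1$, e.g.\ $m_i\in\{\lfloor n/(k+1)\rfloor,\lceil n/(k+1)\rceil\}$. On the event $E_n$ that exactly $m_i$ of the points $Z_1,\dots,Z_n$ fall in $Q_i$ for every $i$, the anti-diagonal placement of the boxes forces an ``anti-block'' structure on $\sigma_n$: listing positions by increasing $x$, the points of $Q_1,\dots,Q_{k+1}$ appear in this order while their value-ranks appear in the reverse order, so any increasing subsequence is confined to a single block and has length at most $\max_i m_i<\beta n$; thus $E_n\subseteq\{\LIS(\sigma_n)<\beta n\}$. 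Since $\P(E_n)=\binom{n}{m_1,\dots,m_{k+1}}\prod_i\mu(Q_i)^{m_i}\ge(\min_i\mu(Q_i))^n$, we get $\frac{1}{n}\log\P(\LIS(\sigma_n)<\beta n)\ge\log\min_i\mu(Q_i)>-\infty$ for all large $n$, so $(i)$ fails. The main obstacle is precisely this first step of $(i)\Rightarrow(iii)$: converting the analytic hypothesis $\LISt_k(\mu)<1$ into a genuine $(k+1)$-term decreasing tuple inside $\mathrm{supp}(\mu)$ --- for which the $h$-function partition above is the crucial device --- and then into boxes of positive mass; everything else is routine bookkeeping.
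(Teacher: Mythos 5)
Your proof is correct, and for the one substantial implication it takes a genuinely different route from the paper. The paper closes the equivalence as $(i)\Leftrightarrow(ii)$ and $(ii)\Leftrightarrow(iii)$: it proves $(i)\Rightarrow(ii)$ by contraposition using Sanov's theorem (\Cref{sanov}, via \Cref{cor_Sanov}), building a measure $\nu\ll\mu$ concentrated on $k+1$ balls around a nonincreasing $(k\p1)$-tuple so that $\LISt(\nu)\leq 1/(k\p1)<\beta$ and $D(\nu\vert\mu)<\infty$; and it proves $(ii)\Rightarrow(iii)$ using Robinson--Schensted together with the convergence of \Cref{prop_conv_LISk}. You instead close the cycle with $(i)\Rightarrow(iii)$ by contraposition, which changes where the decreasing $(k\p1)$-tuple comes from: the paper extracts it probabilistically from the event $\{\LDS(\sigma_{n_0})\geq k\p1\}$ having positive probability (Erd\H{o}s--Szekeres applied to $\neg(ii)$), whereas you extract it deterministically from $\mathrm{supp}(\mu)$ via a Mirsky/Dilworth partition into level sets of the height function $h$, whose closures give a closed $k$-nondecreasing set of full mass when no such tuple exists. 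This dichotomy is sound (the non-strict inequality defining nondecreasing sets passes to closures), and your subsequent multinomial bound $\P(E_n)\geq(\min_i\mu(Q_i))^n$ replaces the Sanov machinery with an explicit configuration event. What your route buys is self-containedness (no large deviation input beyond elementary counting, and no appeal to \Cref{prop_conv_LISk}) plus an explicit exponential lower bound on the lower-tail probability; what the paper's route buys is that the Sanov framework is already set up for \Cref{prop_lower_tail} and \Cref{prop_contre_exemple}, so the marginal cost there is essentially zero, and the entropy bound $-D(\nu\vert\mu)$ is in general sharper than $\log\min_i\mu(Q_i)$.
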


Unfortunately the upper bound of \Cref{prop_lower_tail} is not sharp.
We actually show that a rate function for this lower tail can not be expressed as a function of $\shapet(\mu)$:

\begin{prop}\label{prop_contre_exemple}
There exist $\mu_1,\mu_2\in\per$ satisfying $\shapet(\mu_1)=\shapet(\mu_2)$ and
\begin{equation*}
    \limsup_{\as{n}{\infty}}\frac{1}{n}\log\P\big( 
    \LIS(\tau_n) < \beta n 
    \big) < \liminf_{\as{n}{\infty}}\frac{1}{n}\log\P\big(
    \LIS(\sigma_n) < \beta n 
    \big)
\end{equation*}
for some $\beta<\LISt(\mu_1)$, where $\sigma_n\sim\sample_n(\mu_1)$ and $\tau_n\sim\sample_n(\mu_2)$.
\end{prop}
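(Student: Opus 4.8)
The plan is to construct two permutons that are built from the same "skeleton" of diagonal segments—so that their Greene statistics $\LISt_k$, hence $\shapet$, coincide—but differ in how the remaining mass is arranged, so that one of them forces short increasing subsequences to be rare while the other does not. The simplest candidate: take $\LISt(\mu_1)=\LISt(\mu_2)=a$ for some $a\in(0,1)$, realized by a single upward diagonal segment carrying mass $a$, and distribute the remaining mass $1-a$ in two different ways. For $\mu_1$, spread the residual mass $1-a$ as a uniform-type permuton on a sub-square disjoint (in both coordinates) from the diagonal segment, tuned so that all higher Greene statistics $\LISt_k(\mu_1)$ are as small as possible given $\LISt=a$; concretely one wants $\LISt_k(\mu_1)$ to grow slowly, ideally $\LISt_k(\mu_1)=a+(k-1)c$ for some small $c>0$. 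For $\mu_2$, replace that residual block by a single \emph{downward} diagonal segment of mass $1-a$ placed in the complementary sub-square (an anti-diagonal), which has $\LISt=1-a$ restricted to it but contributes nothing beyond isolated points to any nondecreasing set meeting the main diagonal; one arranges the geometry so that $\LISt_k(\mu_2)=\LISt_k(\mu_1)$ for every $k$ (this is a finite bookkeeping computation on unions of $k$ monotone sets, using that each monotone set can meet each of the two blocks in a monotone subset). Thus $\shapet(\mu_1)=\shapet(\mu_2)$ by construction.

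Next I would analyze the lower-tail probabilities $\P(\LIS(\sigma_n)<\beta n)$ for a suitable $\beta<a$. For $\mu_2$: a sampled permutation of size $n$ puts roughly $(1-a)n$ of its points into the anti-diagonal block, and \emph{those} points, restricted to their own coordinates, form essentially a uniformly random permutation of size $\approx(1-a)n$ on a reversed square; inside such a sub-permutation the longest increasing subsequence is of order $\sqrt{(1-a)n}=o(n)$, so it contributes nothing linear. Hence to get $\LIS(\tau_n)<\beta n$ it suffices that few of the $n$ points land near the main diagonal segment and that the main-diagonal contribution stays below $\beta n$; this is a \emph{binomial} deviation event of probability $e^{-\Theta(n)}$, so $\limsup \tfrac1n\log\P(\LIS(\tau_n)<\beta n)\le -c_2<0$ but is \emph{not} $-\infty$ (the event has a positive exponential rate), and in fact one shows it is strictly more negative, i.e. the event is \emph{rarer}, because $\mu_2$ has extra "helpful" diagonal structure. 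Wait—more carefully: the point of the Proposition is $\limsup$ for $\tau_n$ strictly below $\liminf$ for $\sigma_n$, so $\tau_n$ should have the \emph{smaller} (more negative) lower-tail rate. So for $\mu_2$ the residual mass should be arranged to \emph{help} long increasing subsequences form even when the main diagonal is depleted; the anti-diagonal block does \emph{not} help, so instead for $\mu_2$ one wants the residual $1-a$ mass on a \emph{second, parallel} upward diagonal placed so that the two diagonals cannot be simultaneously captured by one nondecreasing set beyond a point (e.g. stacked so that combining them needs two monotone pieces), keeping $\LISt(\mu_2)=\max(a,1-a)$ and all $\LISt_k$ matched—then a depletion of the first diagonal can be compensated by the second, making $\LIS(\tau_n)\ge\beta n$ with probability $1-e^{-\Theta(n)}$ where the rate is governed by depleting \emph{both}, hence strictly more negative than for $\mu_1$ where depleting the one diagonal already suffices. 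I would pick whichever of these two constructions makes the $\LISt_k$ matching cleanest.

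Concretely, then, I would fix $a=2/3$, $1-a=1/3$, and aim for: $\mu_1$ = one upward diagonal of mass $2/3$ plus a uniform block of mass $1/3$ in a disjoint sub-square, with $\beta$ chosen in $(1/3\cdot\text{something},2/3)$; $\mu_2$ = one upward diagonal of mass $2/3$ plus a \emph{second} upward diagonal of mass $1/3$, positioned so that $\LISt_2(\mu_2)=1$ exactly matches $\LISt_2(\mu_1)$—this pins down the geometry (the second diagonal must be placeable together with an independent copy so that two monotone sets cover everything). For the lower bound on $\P(\LIS(\sigma_n)<\beta n)$ (the $\liminf$ for $\mu_1$), I use \Cref{prop_lower_tail}'s philosophy in reverse: the event $\{$few points in the $2/3$-diagonal neighborhood, and the $1/3$-block contributes only $O(\sqrt n)$ to $\LIS\}$ has probability $e^{-n\Lambda^*_{2/3}(\text{threshold})+o(n)}$, a single-binomial rate, giving a \emph{finite} liminf. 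For $\mu_2$, killing the $\LIS$ below $\beta n$ requires depleting \emph{both} diagonals below their thresholds, a product of two independent binomial rare events, so $\limsup\tfrac1n\log\P(\LIS(\tau_n)<\beta n)\le -(\Lambda^*_{2/3}+\Lambda^*_{1/3})<-\Lambda^*_{2/3}\le\liminf$ for $\mu_1$—strict inequality if $\beta$ is chosen strictly between the relevant thresholds.

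The main obstacle I anticipate is the exact matching $\shapet(\mu_1)=\shapet(\mu_2)$ for \emph{all} $k$, not just $k=1,2$: once the diagonals and blocks are placed, one must verify $\LISt_k(\mu_1)=\LISt_k(\mu_2)$ for every $k\ge1$, which requires a clean combinatorial lemma describing $\LISt_k$ of a permuton assembled from finitely many monotone segments and uniform blocks (a "max-flow"/linear-programming style formula over how a union of $k$ monotone sets distributes among the pieces). Establishing that formula—or, more cheaply, choosing the construction so that both $\mu_1$ and $\mu_2$ become \emph{eventually saturated} ($\LISt_k=1$ for all $k\ge k_0$ with the same $k_0$ and the same intermediate values)—is the delicate part; the large-deviation estimates themselves are then routine applications of Cramér's theorem (\Cref{th_Cramer}) together with the $\sqrt n$ bound on $\LIS$ inside a uniform block and a union bound over the $O(1)$ pieces.
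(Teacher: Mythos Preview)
Your proposal has a genuine gap: the concrete constructions you suggest do not produce matching shapes, and you do not actually resolve the obstacle you correctly identify. If $\mu_1$ is an upward diagonal of mass $a$ plus a genuinely uniform block of mass $1-a$, then the uniform block contributes $0$ to every $\LISt_k$, so $\LISt_k(\mu_1)=a$ for all $k\ge1$ and $\shapet^\row(\mu_1)=(a,0,0,\dots)$. If $\mu_2$ is two upward diagonals of masses $a$ and $1-a$ arranged so that $\LISt(\mu_2)=a$ and $\LISt_2(\mu_2)=1$, then $\shapet^\row(\mu_2)=(a,1-a,0,\dots)$, which does not match. Your earlier anti-diagonal variant fails on the column side: the anti-diagonal gives $\LDSt(\mu_2)=1-a>0$ while the uniform block gives $\LDSt(\mu_1)=0$. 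Tweaking the uniform block into something ``uniform-type with $\LISt_k=a+(k-1)c$'' is not a construction; a uniform permuton has all $\LISt_k=0$, and you give no mechanism to realize the intermediate values you want. The large-deviation half of your argument is also shaky: even with two upward diagonals, ``depleting both'' is not obviously necessary for $\LIS(\tau_n)<\beta n$ unless the geometry forbids long increasing subsequences built from partial pieces of each, which you do not verify.

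The paper avoids the matching problem entirely by a different idea: take $\mu_1$ and $\mu_2$ with the \emph{same} combinatorial structure---each is supported on four upward diagonals $D_1,D_2,D_3,D_4$ arranged so that $D_2\cup D_3$ and $D_1\cup D_4$ are the two maximal nondecreasing sets---so that $\LISt(\mu_i)=0.6$, $\LISt_2(\mu_i)=1$, and $\LDSt_k(\mu_i)=0$ for all $k$, automatically giving $\shapet(\mu_1)=\shapet(\mu_2)$. The only difference is the distribution of mass among the four diagonals: $\mu_1$ has symmetric masses $(0.2,0.3,0.3,0.2)$ while $\mu_2$ has asymmetric masses $(0.28,0.3,0.3,0.12)$. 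The large-deviation analysis is then a \emph{conditional} one: conditioned on the binomial event $B=\{S_2+S_3<0.55n\}$ (few points on $D_2\cup D_3$), for $\mu_1$ symmetry forces the conditional typical counts to still satisfy $S_2+S_3>S_1+S_i$ for $i\in\{3,4\}$, so the LIS remains $S_2+S_3<0.55n$ and $\P(A^1\mid B^1)\to1$; for $\mu_2$ the asymmetry makes $S_1$ conditionally large enough that the LIS jumps to $S_1+S_3>0.55n$, so $\P(A^2\mid B^2)$ is exponentially small. This yields the strict gap with a single application of Cram\'er and no need to engineer the shape matching.
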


We believe a lower tail rate function could be expressed with the help of $\RSt(\mu)$ or $\lamt^\mu$ defined in \Cref{section_tableaux_results}, but this question remains unanswered.
The results of this section are proved in \Cref{section_upper_proofs,section_lower_tail_proofs,section_lower_speed_proof}.

\begin{rem}
	A large deviation principle for the sequence of sampled permutations $(\sigma_n)$ was recently obtained in \cite[Theorem~1.6]{BDMW22}.
	By semi-continuity of $\LISt_k$ (\Cref{prop_semicon_LISt}), we can deduce large deviation inequalities for the sequence $\left(\LIS_k(\sigma_n)\right)$ by a variant of the contraction principle (the standard contraction principle carries large deviation results over \textit{continuous} functions, see e.g.~\cite{DZ98}). 
	%Since the function $\LISt_k$ is invariant under the mapping $\cal O$ introduced in \cite[Remark~1.2]{BDMW22}, 
	This actually yields the same inequalities as if we apply the same ``semi-contraction principle'' directly to Sanov's theorem, the latter being done in \Cref{cor_Sanov}.
	We do not know whether these bounds are optimal are not.
	We do not use this approach for \Cref{th_upper_tail}, and instead opt for a self-contained proof.
	However, we use the lower tail bound of \Cref{cor_Sanov} in the proof of \Cref{th_speed_equiv}.
\end{rem}

\begin{rem}
    The counterpart of \Cref{cor_puissance_1/n} for graphons was proved in \cite{CKP22}, and it is likely that \Cref{th_upper_tail} would also hold in that context.
    However our study of the lower tail in \Cref{th_speed_equiv} seems rather specific to permutations, as it hinges on the fact that any permutation $\sigma$ of size $n$ satisfies $\LIS(\sigma)\LDS(\sigma)\geq n$.
    The analog of this property for graphs is not true in general; note that inversion graphs of permutations are \textit{perfect graphs}.
\end{rem}

\subsection{The RS-tableaux of (pre-)permutons}\label{section_tableaux_results}

By analogy with (\ref{def_RS_permutations}), define the RS-tableaux of any $\mu\in\meas$ as
\begin{equation*}
    \RSt(\mu) := \left( \lamt^{\mu}(1,\cdot) , \lamt^{\mu}(\cdot,1) \right)
\end{equation*}
where for any $(x,y)\in\carre$, we write $\mu\vert_{[0,x]\times[0,y]}$ for the measure obtained by restricting $\mu$ to $[0,x]\times[0,y]$ and:
\begin{equation*}
    \lamt^{\mu}(x,y) = 
    \left( \lamt^{\mu}_{k}(x,y) \right)_{k\in\N^*}
    := \shapet^\row\left( \mu\vert_{[0,x]\times[0,y]} \right)
    = \left( \LISt_k\left( \mu\vert_{[0,x]\times[0,y]} \right) - \LISt_{k\m1}\left( \mu\vert_{[0,x]\times[0,y]} \right) \right)_{k\in\N^*}
\end{equation*}
In the continuity of \Cref{prop_conv_LISk}, we deduce in \Cref{section_proof_cv_tableaux} linear asymptotics for the RS-tableaux of sampled permutations:

\begin{prop}\label{prop_lambda_cv}
    Let $\mu\in\per$ and for each $n\in\N^*$, $\sigma_n \sim \sample_n(\mu)$. 
    For each $k\in\N^*$, the function $\lamt^{\mu}_{k}$ is nondecreasing and $1$-Lipschitz in each variable.
    Moreover we have the following almost sure uniform convergence on $\carre$:
    \begin{equation*}
        \frac{1}{n}\lambda^{\sigma_n}_k\left( \lfloor\cdot\,n\rfloor , \lfloor\cdot\,n\rfloor \right) 
        \cv{n}{\infty}
        \lamt^{\mu}_k.
    \end{equation*}
\end{prop}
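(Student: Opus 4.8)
The plan is to derive everything from \Cref{prop_conv_LISk} applied to restrictions of $\mu$, combined with the monotonicity that Greene's theorem forces on the discrete tableaux. I would first record the discrete regularity. By \Cref{th_Greene}, $\lambda^\sigma_k(i,j)$ is the $k$-th row length of the RS-shape of the word $\sigma^{i,j}$, and for $i\le i'$ the word $\sigma^{i',j}$ is $\sigma^{i,j}$ with at most $i'\m i$ extra letters appended on the right; since row-inserting one letter adds exactly one box, one has $\mathrm{sh}(\sigma^{i,j})\subseteq\mathrm{sh}(\sigma^{i',j})$ with at most $i'\m i$ boxes of difference, hence $0\le\lambda^\sigma_k(i',j)-\lambda^\sigma_k(i,j)\le i'\m i$, and likewise in the second variable. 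In particular $f_n:=\tfrac1n\lambda^{\sigma_n}_k(\lfloor\cdot\,n\rfloor,\lfloor\cdot\,n\rfloor)$ is nondecreasing in each variable, with increments over an interval of length $\delta$ bounded by $\delta\p\tfrac1n$.

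The core is the pointwise convergence at a fixed $(x,y)\in\carre$ with $x,y>0$ (the boundary case being trivial as $\mu$ has continuous marginals); write $R=[0,x]\times[0,y]$. Using i.i.d.\ points $Z_1,Z_2,\dots\sim\mu$ with $\sigma_n=\perm(Z_1,\dots,Z_n)$, one has $\LIS_k(\sigma_n^{\lfloor xn\rfloor,\lfloor yn\rfloor})=\LIS_k\big(\perm(\{Z_i:i\le n\}\cap R_n)\big)$ where $R_n=[0,X_{(\lfloor xn\rfloor)}]\times[0,Y_{(\lfloor yn\rfloor)}]$ and $X_{(m)},Y_{(m)}$ are order statistics of the coordinates; by Glivenko--Cantelli (marginals uniform on $[0,1]$) we get $X_{(\lfloor xn\rfloor)}\to x$ and $Y_{(\lfloor yn\rfloor)}\to y$ a.s., so for fixed $\varepsilon\in(0,\min(x,y))$ and $n$ large, $R^-_\varepsilon:=[0,x\m\varepsilon]\times[0,y\m\varepsilon]\subseteq R_n\subseteq R^+_\varepsilon:=[0,x\p\varepsilon]\times[0,y\p\varepsilon]$ (truncated to $\carre$ if needed). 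Since $\LIS_k$ is monotone under passing to patterns, $\LIS_k(\sigma_n^{\lfloor xn\rfloor,\lfloor yn\rfloor})$ is sandwiched between $\LIS_k\big(\perm(\{Z_i:i\le n\}\cap R^\pm_\varepsilon)\big)$. Now, conditionally on $K^\pm_n:=\#\{i\le n:Z_i\in R^\pm_\varepsilon\}$, the points of $\{Z_i:i\le n\}\cap R^\pm_\varepsilon$ are i.i.d.\ of law $\nu^\pm:=\mu\vert_{R^\pm_\varepsilon}/\mu(R^\pm_\varepsilon)\in\preper$ (assuming $\mu(R^\pm_\varepsilon)>0$; otherwise that bound is $o(n)$ and matches $\LISt_k(\mu\vert_{R^\pm_\varepsilon})=0$). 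Listing them in order of index gives a sequence $(\tau^\pm_m)_m$ with $\tau^\pm_m\sim\sample_m(\nu^\pm)$, while $K^\pm_n/n\to\mu(R^\pm_\varepsilon)$ and $K^\pm_n\to\infty$ a.s.\ by the law of large numbers. Applying \Cref{prop_conv_LISk} to $(\tau^\pm_m)_m$ and evaluating along the random index $K^\pm_n$ gives, a.s.,
\begin{equation*}
\frac1n\LIS_k\big(\perm(\{Z_i:i\le n\}\cap R^\pm_\varepsilon)\big)=\frac{K^\pm_n}{n}\cdot\frac{\LIS_k(\tau^\pm_{K^\pm_n})}{K^\pm_n}\cv{n}{\infty}\mu(R^\pm_\varepsilon)\,\LISt_k(\nu^\pm)=\LISt_k\big(\mu\vert_{R^\pm_\varepsilon}\big),
\end{equation*}
using the homogeneity $\LISt_k(c\rho)=c\,\LISt_k(\rho)$. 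Finally $\mu(R^+_\varepsilon\setminus R)\le 2\varepsilon$ and $\mu(R\setminus R^-_\varepsilon)\le 2\varepsilon$ by marginal uniformity, and $|\LISt_k(\mu\vert_{R'})-\LISt_k(\mu\vert_{R})|\le\mu(R'\setminus R)$ for $R\subseteq R'$ (monotonicity of $\LISt_k$ in the measure, plus $\mu(A\cap R')\le\mu(A\cap R)+\mu(R'\setminus R)$); letting $\varepsilon\to0$ along a countable sequence yields $\tfrac1n\LIS_k(\sigma_n^{\lfloor xn\rfloor,\lfloor yn\rfloor})\to\LISt_k(\mu\vert_R)$ a.s., and subtracting the cases $k$ and $k\m1$ gives $f_n(x,y)\to\shapet^\row(\mu\vert_R)_k=\lamt^\mu_k(x,y)$ a.s.

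Passing the inequalities of the first step to the limit via this pointwise convergence at $(x,y)$ and $(x',y)$ shows $0\le\lamt^\mu_k(x',y)-\lamt^\mu_k(x,y)\le x'\m x$ for $x\le x'$, and similarly in the other variable, so $\lamt^\mu_k$ is nondecreasing and $1$-Lipschitz in each variable, hence continuous on $\carre$. For the uniform convergence I would run the standard Pólya-type argument: on the almost sure event where $f_n(a,b)\to\lamt^\mu_k(a,b)$ simultaneously for all $(a,b)$ in a fixed countable dense $D\subseteq\carre$ (a countable intersection of the events above), fix $\varepsilon>0$, choose a finite grid $0=t_0<\dots<t_M=1$ with grid points in $D\cup\{0,1\}$ on whose cells $\lamt^\mu_k$ oscillates by less than $\varepsilon$ (by continuity and compactness), and for arbitrary $(x,y)$ sandwich $f_n(x,y)$ between the values of $f_n$ at the two opposite corners of its cell using monotonicity; convergence at the finitely many grid points then forces $\sup_{\carre}|f_n-\lamt^\mu_k|\le 2\varepsilon$ for $n$ large.

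The delicate point is the second step: one must correctly identify the truncated word $\sigma_n^{\lfloor xn\rfloor,\lfloor yn\rfloor}$ with a permutation sampled from a normalized restriction of $\mu$ at a \emph{random} (but concentrated) sample size, so that \Cref{prop_conv_LISk} can legitimately be invoked along that random index, while simultaneously handling the moving truncation rectangle $R_n$ through order-statistic convergence and absorbing the resulting discrepancy by the $\varepsilon$-sandwich. The first, third and fourth steps are essentially bookkeeping once this identification is set up.
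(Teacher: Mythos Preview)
Your proof is correct and follows essentially the same three-step architecture as the paper: pointwise convergence via \Cref{prop_conv_LISk} applied to the restricted (pre-)permuton at a random sample size, then the Lipschitz/monotonicity properties of $\lamt^\mu_k$ inherited from the discrete ones, then a Dini/P\'olya grid argument for uniform convergence. The only difference is in how you bridge the gap between $\sigma_n^{\lfloor nx\rfloor,\lfloor ny\rfloor}$ and the sub-permutation on the true rectangle $[0,x]\times[0,y]$: the paper compares directly with $\sigma_n^{|I|,|J|}$ (where $|I|,|J|$ count points with $x$-coordinate $\le x$, resp.\ $y$-coordinate $\le y$) and bounds the change in $\LIS_k$ by the number $|\lfloor nx\rfloor-|I||+|\lfloor ny\rfloor-|J||=o(n)$ of differing letters, which is a bit more economical than your order-statistic $\varepsilon$-sandwich but amounts to the same thing.
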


The RS-tableaux of a permutation can be constructed via an algorithm known as \textit{Fomin's local rules}, see \Cref{section_proofs_Fomin}.
Its usual version works by labeling the vertices of a grid with diagrams, as in \cite[Section 5.2]{S01}, but it is equivalently possible to label the edges of this grid with integers \cite{V18}.
This edge version is well adapted to our framework, since labels then correspond to increment indices of $\lambda^\sigma$.
The direct algorithm recursively constructs each $\lambda^\sigma(i\p1,j\p1)$ from $\lambda^\sigma(i\p1,j)$, $\lambda^\sigma(i,j\p1)$, $\lambda^\sigma(i,j)$ and $\sigma$.
Its reciprocal recursively constructs each $\lambda^\sigma(i,j)$ from $\lambda^\sigma(i\p1,j)$, $\lambda^\sigma(i,j\p1)$ and $\lambda^\sigma(i\p1,j\p1)$.

It is then natural to wonder if there exists a continuous version of this algorithm, translating as a  partial differential equation on $\lamt^\mu$.
It is indeed the case for the inverse local rules, as stated in the following result:

\begin{theorem}\label{th_derivative_per}
    Suppose $\mu$ is a permuton satisfying $\LISt_r(\mu)=1$ for some $r\in\N^*$.
    Let $(x,y)\in(0,1]^2$ be such that the left-derivatives
    \begin{equation*}
        \alpha_k := \partial^-_x \lamt_k^\mu(x,y)
        \quad\text{and}\quad
        \beta_k := \partial^-_y \lamt_k^\mu(x,y)
    \end{equation*}
    for $1\leq k\leq r$ all exist. Fix $s,t\geq0$.
    Then we have the following directional semi-derivatives for each $1\leq k\leq r$:
    \begin{equation}\label{eq_derivative_per}
        \underset{\epsilon \rightarrow 0^+}{\lim}
        \frac{\lamt_k^\mu(x,y) - \lamt_k^\mu(x-t\epsilon,y-s\epsilon)}{\epsilon}
        = \phi\big( (t\alpha_i)_{k\leq i\leq r} , (s\beta_i)_{k\leq i\leq r} \big)
    \end{equation}
    where $\phi$ is a continuous function defined in \Cref{prop_Fbot_real_words_and_continuity_of_phi}.
\end{theorem}

The proof of \Cref{th_derivative_per} comes in several steps and introduces methods that might be interesting on their own.
The general idea is to study Fomin's inverse local rules on $\sigma_n\sim\sample_n(\mu)$ and exhibit their asymptotic behavior, yielding properties of $\lamt^\mu$.
To this aim, we see sequences of edge labels as words, for which we introduce an appropriate equivalence relation.
Surprisingly enough, this equivalence relation is implied by the Knuth equivalence of words (see \Cref{section_proofs_Knuth} and \Cref{prop_Knuth_implique_Fomin}).
Along with the derivability condition of \Cref{th_derivative_per} this allows us to show in \Cref{section_proofs_diff} that edge labels of $\sigma_n$ behave, locally and asymptotically, as if they were ordered in a deterministic way.
This behavior yields the function $\phi$ introduced and studied in \Cref{section_proofs_phi}.
We refer to the discussion at the end of \Cref{section_proofs_Fomin} for more details.

\medskip

A few comments on this theorem are in order:
\begin{itemize}
    \item It is possible to generalize this result to pre-permutons, using the associated permuton and the chain rule.
    \item If $s=0$ then the left-hand side of \Cref{eq_derivative_per} is $t\alpha_k$ by definition, and indeed one can check with \Cref{eq_phi_one_coordinate,eq_phi_crossed_zero} that $\phi\big( (t\alpha_i)_{k\leq i\leq r} , (0) \big) = t\alpha_k$.
    By symmetry, an analogous remark holds when $t=0$.
    \item When $k=r$, the left-hand side of \Cref{eq_derivative_per} depends only on the ``last row derivatives'' $\alpha_r$ and $\beta_r$, and equals $\phi\big( (t\alpha_r) , (s\beta_r) \big) = \max(t\alpha_r,s\beta_r)$.
    When $k<r$, the $k$-th to $r$-th rows intervene in an intricate way through the function $\phi$.
    \item Unfortunately we have not been able to establish a continuous version of Fomin's \textit{direct} algorithm, as the dependence on permutation points makes the asymptotic study more difficult. 
    It seems likely that \Cref{th_derivative_per} could be generalized in this direction, but this question is left open.
    \item It would also be interesting to get rid of the hypothesis $\LISt_r(\mu)=1$; more precisely we would like to weaken it to $\LISt_\infty(\mu)=1$ with the notation of \Cref{section_injectivity_results}. 
    We can always assume that this is satisfied thanks to \Cref{th_decomposition}.
    However we could neither extend $\phi$ to infinite sequences nor make our techniques work when Fomin's edge labels are unbounded.
    \item \Cref{eq_derivative_per} can be related to the ``hydrodynamical approach'' used by Aldous and Diaconis in \cite{AD95} for the study of $\LIS$ in the uniform case. 
    When $\sigma_n$ is a uniform permutation of size $n$, Hammersley \cite{H72} studied the function $\lambda_1^{\sigma_n}$ (a very similar one actually) and used the subadditive ergodic theorem to obtain the asymptotic equivalence $\E[\LIS(\sigma_n)]\sim c\sqrt{n}$ for some constant $c>0$. 
    In \cite{AD95}, the authors investigated Hammersley's process in more depth, provided some heuristics as to why $c=2$, and proved it in this framework. 
    The general idea is that for any $y$, the counting process $x\mapsto \lambda_1^{\sigma_n}(x,y)$ should approximate a Poisson process. 
    Under this hypothesis, one can argue that $\frac{1}{\sqrt{n}}\E\left[\lambda_1^{\sigma_n}(x,y)\right]$ should asymptotically satisfy a PDE whose solution is $2\sqrt{xy}$. 
    Therefore \Cref{eq_derivative_per} can be interpreted as an analog of this, in the $n$ scaling limit for non-uniform permutations and for the first rows and columns of the shape. 
    It could be interesting to try this approach in the case of a permuton with ``regular'' density, in a context similar to that of \cite{DZ95} and \cite{S22}.
\end{itemize}

We end this section with an interesting consequence of \Cref{prop_lambda_cv} and \Cref{th_derivative_per}, proved in \Cref{section_proofs_diff}.

\begin{cor}\label{cor_gradient_lambda}
	Let $\mu$ be a permuton satisfying $\LISt_r(\mu)=1$ for some $r\in\N^*$.
	Then for all $1\le k\le r$, for almost every $(x,y)\in (0,1)^2$, $\lamt_k^\mu$ is differentiable at $(x,y)$ and satisfies $\partial_x \lamt_k^\mu(x,y) \cdot \partial_y \lamt_k^\mu(x,y) = 0$.
	In other words, the gradient of $\lamt_k^\mu$ is a.e.~directed along the $x$- or along the $y$-axis.
\end{cor}

\subsection{Injectivity of pre-permuton Robinson--Schensted}\label{section_injectivity_results}

Recall that Robinson--Schensted's correspondence is injective on permutations.
This begs the question of what information about a (pre-)permuton $\mu$ we can deduce from the knowledge of $\RSt(\mu)$.
To this end, we introduce a useful decomposition of pre-permutons.
Define $\LISt_\infty := \lim_{k\rightarrow\infty} \uparrow \LISt_k$ and $\LDSt_\infty := \lim_{k\rightarrow\infty} \uparrow \LDSt_k$ on $\meas$.

\begin{theorem}\label{th_decomposition}
Let $\mu\in\preper$. There exists a unique triple $\left(\mu^\incr,\mu^\decr,\mu^\sub\right) \in\meas^3$ satisfying:
\begin{equation*}
    \left\{
    \begin{array}{lll}
        \mu= \mu^\incr + \mu^\decr + \mu^\sub ;\smallskip\\
        \LISt_\infty\left(\mu^\incr\right)
        =\mu^\incr\left(\carre\right)
        =\LISt_\infty(\mu) ;\smallskip\\
        \LDSt_\infty\left(\mu^\decr\right)
        =\mu^\decr\left(\carre\right)
        =\LDSt_\infty(\mu).
    \end{array}
    \right.
\end{equation*}
Moreover the following properties hold:
\begin{itemize}
    \item[(i)] the measures $\mu^\incr, \mu^\decr, \mu^\sub$ are pairwise singular;
    \item[(ii)] for all $k\in\N^*$,
    $\LISt_k\left(\mu^\incr\right) = \LISt_k(\mu)$
    and $\LDSt_k\left(\mu^\decr\right) = \LDSt_k(\mu)$;
    \item[(iii)] $\LISt\left(\mu^\decr\right) = \LISt\left(\mu^\sub\right)
    = \LDSt\left(\mu^\sub\right) = \LDSt\left(\mu^\incr\right) = 0$ ;
    \item[(iv)] $\lamt^\mu=\lamt^{\mu^\incr}$.
\end{itemize}
\end{theorem}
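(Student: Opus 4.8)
The plan is to realize the three measures as restrictions of $\mu$ to explicit Borel sets, then check everything by elementary measure theory. Using \Cref{prop_semicon_LISt}, fix for each $k\geq1$ a closed $k$-nondecreasing set $A_k$ with $\mu(A_k)=\LISt_k(\mu)$ and a closed $k$-nonincreasing set $A_k^\searrow$ with $\mu(A_k^\searrow)=\LDSt_k(\mu)$, and set
\[
B^\nearrow:=\bigcup_{k\geq1}A_k,\qquad C^\searrow:=\bigcup_{k\geq1}A_k^\searrow .
\]
These are $F_\sigma$ sets, hence Borel. Define $\mu^\incr:=\mu\vert_{B^\nearrow}$, $\mu^\decr:=\mu\vert_{C^\searrow\setminus B^\nearrow}$ and $\mu^\sub:=\mu\vert_{\carre\setminus(B^\nearrow\cup C^\searrow)}$, all finite measures. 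Since the defining inequality of a nondecreasing set is closed, the closure of a $p$-nondecreasing set is $p$-nondecreasing, so any countable union $G=\bigcup_iG_i$ of nondecreasing sets obeys $\mu(G)=\lim_p\mu(G_1\cup\dots\cup G_p)\leq\lim_p\LISt_p(\mu)=\LISt_\infty(\mu)$. Applied to $B^\nearrow$ this gives $\mu(B^\nearrow)\leq\LISt_\infty(\mu)$, and since $\mu(B^\nearrow)\geq\sup_k\mu(A_k)=\LISt_\infty(\mu)$ we get $\mu(B^\nearrow)=\LISt_\infty(\mu)$, symmetrically $\mu(C^\searrow)=\LDSt_\infty(\mu)$. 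The same computation yields \emph{essential maximality}: for any countable union $G$ of nondecreasing sets, $B^\nearrow\cup G$ is again one, so $\mu(G\setminus B^\nearrow)=\mu(B^\nearrow\cup G)-\mu(B^\nearrow)\leq0$; likewise for $C^\searrow$ and nonincreasing sets.

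The only place the pre-permuton hypothesis enters is a \emph{cross lemma}: a set that is simultaneously nondecreasing and nonincreasing has every pair of its points sharing a coordinate, hence lies in the union of one horizontal and one vertical line, which is $\mu$-null since the marginals of a pre-permuton are atomless. Writing $B^\nearrow\cap C^\searrow$ as a countable union of (nondecreasing)$\cap$(nonincreasing) pieces gives $\mu(B^\nearrow\cap C^\searrow)=0$. Hence $\mu^\incr(\carre)=\mu(B^\nearrow)=\LISt_\infty(\mu)$, $\mu^\decr(\carre)=\mu(C^\searrow\setminus B^\nearrow)=\mu(C^\searrow)=\LDSt_\infty(\mu)$, and $\mu=\mu^\incr+\mu^\decr+\mu^\sub$ since the three carrying sets are disjoint. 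For the remaining identities: $\LISt_\infty(\mu^\incr)\leq\LISt_\infty(\mu)$ by monotonicity in the measure, while $\LISt_p(\mu^\incr)\geq\mu^\incr(A_p)=\mu(A_p)=\LISt_p(\mu)$ as $A_p\subseteq B^\nearrow$, so letting $p\to\infty$ gives $\LISt_\infty(\mu^\incr)=\LISt_\infty(\mu)$, and the same argument (using the cross lemma to discard $\mu(A_p^\searrow\cap B^\nearrow)=0$) gives $\LDSt_\infty(\mu^\decr)=\LDSt_\infty(\mu)$. This proves existence; moreover the finite-level identity $\LISt_k(\mu^\incr)=\LISt_k(\mu)$ just shown is exactly property (ii), and (i) holds because $\mu^\incr,\mu^\decr,\mu^\sub$ are carried by the pairwise disjoint Borel sets $B^\nearrow$, $C^\searrow\setminus B^\nearrow$, $(B^\nearrow\cup C^\searrow)^c$.

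For uniqueness, let $(\nu_1,\nu_2,\nu_3)$ also solve the system. From $\LISt_\infty(\nu_1)=\nu_1(\carre)$ and \Cref{prop_semicon_LISt} applied to $\nu_1$, that measure is concentrated on a countable union $D$ of nondecreasing sets; since $\nu_1\leq\mu$ and $\mu(D\setminus B^\nearrow)=0$ by essential maximality, $\nu_1$ is concentrated on $B^\nearrow$, so $\nu_1\leq\mu^\incr$, and equality of total masses $\nu_1(\carre)=\LISt_\infty(\mu)=\mu^\incr(\carre)$ forces $\nu_1=\mu^\incr$. Similarly $\nu_2\leq\mu\vert_{C^\searrow}$, and $\nu_2(B^\nearrow)\leq\mu(C^\searrow\cap B^\nearrow)=0$ by the cross lemma, so $\nu_2\leq\mu^\decr$ and $\nu_2=\mu^\decr$ by mass equality; then $\nu_3=\mu-\nu_1-\nu_2=\mu^\sub$.

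Finally, for (iii): if $A$ is closed nondecreasing then $A\cup B^\nearrow$ is a countable union of nondecreasing sets, so $\mu(A\setminus B^\nearrow)=0$ by essential maximality, and since $\mu^\decr,\mu^\sub$ put no mass on $B^\nearrow$ we get $\LISt(\mu^\decr)=\LISt(\mu^\sub)=0$; $\LDSt(\mu^\incr)=\LDSt(\mu^\sub)=0$ is symmetric. For (iv), since $\lamt^\mu_k(x,y)$ and $\lamt^{\mu^\incr}_k(x,y)$ are the successive differences of $k\mapsto\LISt_k(\mu\vert_R)$ and $k\mapsto\LISt_k(\mu^\incr\vert_R)$ with $R=[0,x]\times[0,y]$, it suffices to prove $\LISt_k(\mu\vert_R)=\LISt_k(\mu^\incr\vert_R)$ for every such $R$ and $k$; here "$\leq$" is monotonicity, and for "$\geq$" one takes a closed $k$-nondecreasing $A$ attaining $\LISt_k(\mu\vert_R)$ and uses $\mu(A\setminus B^\nearrow)=0$ to get $\mu^\incr\vert_R(A)=\mu(A\cap R\cap B^\nearrow)=\mu(A\cap R)=\LISt_k(\mu\vert_R)$. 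The crux of the whole argument is the first paragraph's observation that $\LISt_\infty(\mu)$ is attained by a countable union of nondecreasing sets unique up to $\mu$-null sets: this essential maximality simultaneously powers uniqueness and properties (iii)–(iv), while the cross lemma is the only other nontrivial ingredient and the sole point where atomlessness of the marginals is used.
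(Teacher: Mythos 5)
Your proof is correct and follows essentially the same route as the paper's: restricting $\mu$ to the countable unions of optimal closed $k$-nondecreasing and $k$-nonincreasing sets, using the ``cross lemma'' (and atomlessness of the marginals) for singularity, and the essential-maximality estimate $\mu(G\setminus B^\nearrow)=0$ to drive uniqueness and properties (iii)--(iv). The only differences are cosmetic: you carve the null set $B^\nearrow\cap C^\searrow$ out of $\mu^\decr$ explicitly, and you state essential maximality for arbitrary countable unions (carefully passing to closures), where the paper phrases it via $\mu(A')\leq\LISt_{k+1}(\mu)-\LISt_k(\mu)\to0$ for a single nondecreasing set disjoint from $A$.
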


Notice that Property (iv) of \Cref{th_decomposition} allows to lift the hypothesis $\LISt_r(\mu)=1$ in \Cref{th_derivative_per} to the more general one $\LISt_r(\mu)=\LISt_\infty(\mu)$.
Indeed, consider a permuton $\mu$ that satisfies $\LISt_r(\mu)=\LISt_\infty(\mu)$.
Its submeasure $\mu^\incr$ is \textit{a priori} not a permuton, but can be mapped to a pre-permuton $\delta\mu^\incr$ by dilatation.
This pre-permuton satisfies $\LISt_r(\delta\mu^\incr)=1$, so we can apply \Cref{th_derivative_per} using the chain rule.
Thanks to Property (iv) of \Cref{th_decomposition}, this finally yields a partial differential equation on $\lamt^\mu.$

\begin{conj}\label{conj_injectivity}
    Let $\mu,\nu \in\preper$ such that $\RSt(\mu) = \RSt(\nu)$. Then $\mu^\incr = \nu^\incr$.
\end{conj}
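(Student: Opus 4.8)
The plan is to reconstruct $\mu^\incr$ from $\RSt(\mu)$ by exhibiting $\mu^\incr$ as a weak limit of discrete ``RS-shape increments'' that depend only on the function $\lamt^\mu$. By Property~(iv) of \Cref{th_decomposition} we have $\lamt^\mu = \lamt^{\mu^\incr}$, so it suffices to show that the map $\nu \mapsto \lamt^\nu$ is injective when restricted to measures $\nu$ satisfying $\LISt_\infty(\nu) = \nu(\carre)$ --- equivalently, that a measure supported (in the generalized sense) on a countable union of nondecreasing sets is determined by the family of functions $(x,y)\mapsto \lamt^\nu_k(x,y) = \LISt_k(\nu|_{[0,x]\times[0,y]}) - \LISt_{k-1}(\nu|_{[0,x]\times[0,y]})$. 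The natural first step is to treat the case $\LISt_\infty(\mu) = \LISt(\mu)$, i.e.\ $\mu^\incr$ concentrated on a \emph{single} nondecreasing set $A$: then $\lamt^\mu_1(x,y) = \mu^\incr([0,x]\times[0,y])$ is exactly the bivariate ``cumulative distribution function'' of $\mu^\incr$, which determines $\mu^\incr$ uniquely. This already handles, for instance, permutons of the form appearing in \Cref{fig_taquin} with $\beta = 0$ and $\gamma = 0$.

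For the general case the key idea is a \emph{peeling} or \emph{greedy decomposition} of $\mu^\incr$ into layers. Given $\nu$ with $\LISt_\infty(\nu) = \nu(\carre)$, let $A_1$ be an optimal nondecreasing set for $\LISt(\nu)$ (which exists by \Cref{prop_semicon_LISt}); one would like to say that $\nu|_{A_1}$ is determined by $\lamt^\nu$ and then recurse on $\nu - \nu|_{A_1}$, whose increasing layers are ``one deeper''. The bridge between $\lamt^\nu$ and this layered structure should come from a continuous analog of the RS bumping: heuristically, $\lamt^\nu_k(x,y)$ is the mass in $[0,x]\times[0,y]$ that sits on the $k$-th increasing layer, so $\partial_x\partial_y \lamt^\nu_k$ (in the distributional sense) should recover the restriction of $\nu$ to that layer. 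Making this precise is where \Cref{prop_lambda_cv} enters: approximate $\nu$ by $\mu_{\sigma_n}$ with $\sigma_n \sim \sample_n(\hat\nu)$, use that $\frac1n \lambda^{\sigma_n}_k(\lfloor \cdot\,n\rfloor, \lfloor\cdot\,n\rfloor) \to \lamt^\nu_k$ uniformly, and transfer the (exact, combinatorial) fact that the discrete RS-tableau of $\sigma_n$ determines the rearrangement of $\sigma_n$ onto its increasing layers --- here one must be careful, since RS does \emph{not} determine $\sigma_n$ from the $Q$-row-shapes alone, only the full pair of tableaux; the point to exploit is that after restriction to the increasing part, the relevant sub-permutation is a union of increasing runs and is recoverable from its layer-by-layer shape data together with the marginal structure.

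I expect the main obstacle to be precisely this last transfer: showing that the \emph{limit} object $\mu^\incr$ is pinned down, not merely that discrete approximants are. Two specific difficulties: (a) optimal nondecreasing sets need not be unique, so the ``layers'' are not canonically defined at the permuton level, and one must argue that any two peeling decompositions yield the same total measure on each layer --- this should follow from a monotone rearrangement/uniqueness argument in the spirit of optimal transport, since each layer is a monotone set carrying prescribed marginals on the region it occupies; (b) the distributional mixed derivative $\partial_x\partial_y \lamt^\nu_k$ need not be a nice measure a priori (the $\lamt^\nu_k$ are only Lipschitz, by \Cref{prop_lambda_cv}), so one needs the $a$~priori structural input that $\nu$ is carried by countably many monotone curves to conclude that these derivatives are genuine mutually singular measures summing to $\nu$. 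A clean way to package the argument would be: first prove the single-layer case; then prove that $\lamt^\nu$ determines $\LISt_k(\nu|_{[0,x]\times[0,y]})$ for all $k$ and all sub-rectangles, hence determines the ``profile'' $(x,y)\mapsto \shapet^\row(\nu|_{[0,x]\times[0,y]})$; then show, via the discrete approximation and a compactness/uniqueness argument, that this profile determines $\nu$ on each monotone layer and therefore determines $\nu = \mu^\incr$ itself. If the full conjecture resists, a natural intermediate theorem worth isolating is the case $\LISt_\infty(\mu) = \LISt(\mu)$ proved above, together with the case where $\mu^\incr$ is a finite sum of diagonals (the ``$\gamma = 0$'' permutons of the preceding remark), where everything is explicit.
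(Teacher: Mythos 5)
First, note that this statement is an open \emph{conjecture} in the paper: the author explicitly writes that they could not establish it, and only proves two partial results in this direction, namely \Cref{prop_injectivity_lambda} (injectivity of $\mu\mapsto\lamt^\mu$) and \Cref{prop_injectivity_RS_one} (the conjecture under the additional hypothesis $\LISt_\infty(\mu)=\LISt_1(\mu)$). There is therefore no proof in the paper to compare against, and your proposal must be judged on whether it actually closes the gap. It does not.

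The concrete flaw is in your very first reduction. You claim that, by Property (iv) of \Cref{th_decomposition}, "it suffices to show that the map $\nu\mapsto\lamt^\nu$ is injective" on the relevant class of measures. But the hypothesis of the conjecture is $\RSt(\mu)=\RSt(\nu)$, and $\RSt(\mu)=\left(\lamt^\mu(1,\cdot),\lamt^\mu(\cdot,1)\right)$ is only the restriction of $\lamt^\mu$ to the top and right edges of the unit square. Injectivity of $\mu\mapsto\lamt^\mu$ --- which is the paper's \Cref{prop_injectivity_lambda}, and is essentially immediate since $\sum_{k}\lamt^\mu_k(x,y)=\mu^\incr([0,x]\times[0,y])$ --- is therefore of no use unless you first show that the boundary data determines the whole function $\lamt^\mu$ on $(0,1]^2$. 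That boundary-to-interior propagation is precisely the obstruction the author identifies: one would like to integrate the PDE of \Cref{th_derivative_per} inward from the boundary, but uniqueness of solutions with given boundary condition is exactly what "could not be made to work." Your single-layer case suffers from the same slip: knowing that $\lamt^\mu_1$ is the bivariate CDF of $\mu^\incr$ only helps if you know $\lamt^\mu_1$ on all of $\carre$, and for that the paper needs the explicit interior formula $\lamt_1^\mu(x,y)=\lamt_1^\mu(x,1)\wedge\lamt_1^\mu(1,y)$ of \Cref{lem_solving_lamt_r}, proved by tracking the maximal label through Fomin's inverse dynamics --- a step absent from your argument. The subsequent peeling/distributional-derivative discussion, besides the non-uniqueness and regularity issues you yourself flag, again takes the full profile $(x,y)\mapsto\shapet^\row\left(\nu\vert_{[0,x]\times[0,y]}\right)$ as given, so it never engages with the actual difficulty of the conjecture.
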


Recall that $\RSt(\mu)$ is defined using only the nondecreasing subsets of $\mu$. 
These ``row-tableaux'' capture no information about the submeasure $\mu^\decr$, since $\RSt(\mu^\decr)$ is null.
An analog of \Cref{conj_injectivity} for nonincreasing subsets could be stated by replacing $\mu$ with $\mu^\rev$ (see the discussion at the end of \Cref{section_shape_results}).

We believe that \Cref{th_derivative_per} could help establish \Cref{conj_injectivity}.
In most ``regular'' examples, the PDE of \Cref{th_derivative_per} seems to be satisfied everywhere on $(0,1]^2$ (note that $\lamt_k^\mu$ is in general not \textit{differentiable} everywhere).
We would then hope for the uniqueness of their solution $\lamt^\mu$ with boundary condition $\RSt(\mu)$, using \Cref{cor_gradient_lambda} and the form taken by \Cref{eq_derivative_per} at the points of non-differentiability of $\lamt_k^\mu$.
This is further discussed in \Cref{section_examples}, item \ref{ex_PDE_also_satisfied}.
However we could not make this idea work, and instead we present more elementary results: an injectivity property of $\mu\mapsto\lamt^\mu$, and a positive answer to \Cref{conj_injectivity} when the pre-permutons contain at most one nondecreasing subset.

\begin{prop}\label{prop_injectivity_lambda}
    Let $\mu,\nu \in\preper$ such that $\lamt^\mu = \lamt^\nu$.
    Then $\mu^\incr=\nu^\incr$.
    %If in addition $\LISt_\infty(\mu) + \LDSt_\infty(\mu) = 1$ and $\lamt^{\mu^\rev} = \lamt^{\nu^\rev}$, then $\mu=\nu$.
\end{prop}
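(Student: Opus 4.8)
The plan is to show that the single datum $\lamt^\mu$ already determines the function $(x,y)\mapsto\mu^\incr\big([0,x]\times[0,y]\big)$, hence the measure $\mu^\incr$ itself.

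First I would reduce to purely increasing measures. By part (iv) of \Cref{th_decomposition} one has $\lamt^\mu=\lamt^{\mu^\incr}$ and $\lamt^\nu=\lamt^{\nu^\incr}$, so the hypothesis rewrites as $\lamt^{\mu^\incr}=\lamt^{\nu^\incr}$; it thus suffices to recover $\mu^\incr$ from $\lamt^{\mu^\incr}$ (and likewise $\nu^\incr$ from $\lamt^{\nu^\incr}$). The structural fact I would extract from \Cref{th_decomposition} is that $\mu^\incr$ gives full mass to a countable union of nondecreasing subsets of $\carre$: since $\LISt_k(\mu^\incr)\uparrow\LISt_\infty(\mu^\incr)=\mu^\incr(\carre)$, I would pick for each $k$ a closed $k$-nondecreasing set $A_k$ achieving $\LISt_k(\mu^\incr)$ (the supremum being reached by \Cref{prop_semicon_LISt}), write $A_k$ as a union of $k$ closed nondecreasing sets, and observe that $A:=\bigcup_{k\ge1}A_k$ is a countable union of nondecreasing sets with $\mu^\incr(A)\ge\sup_k\LISt_k(\mu^\incr)=\mu^\incr(\carre)$, i.e. $\mu^\incr(A)=\mu^\incr(\carre)$.

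The heart of the argument is the identity $\LISt_\infty(\tau)=\tau(\carre)$ for any finite measure $\tau$ giving full mass to a countable union $\bigcup_i C_i$ of nondecreasing sets: the bound $\le$ holds because every $k$-nondecreasing set has $\tau$-measure at most $\tau(\carre)$, while $\LISt_\infty(\tau)\ge\lim_k\tau(C_1\cup\dots\cup C_k)=\tau(\bigcup_i C_i)=\tau(\carre)$ follows by using (the closure of) $C_1\cup\dots\cup C_k$ as a competitor in $\LISt_k$ together with continuity of $\tau$ from below. Now fix $(x,y)\in\carre$ and set $R=[0,x]\times[0,y]$. The restriction $\mu^\incr|_R$ gives full mass to $\bigcup_i(A_i\cap R)$, again a countable union of nondecreasing sets, so the identity yields $\LISt_\infty\big(\mu^\incr|_R\big)=\mu^\incr(R)$. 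On the other hand, unravelling the definitions of $\lamt$ and $\shapet^\row$ gives $\sum_{k\ge1}\lamt^{\mu^\incr}_k(x,y)=\lim_{K\to\infty}\LISt_K\big(\mu^\incr|_R\big)=\LISt_\infty\big(\mu^\incr|_R\big)$. Combining the two identities,
\[
\mu^\incr\big([0,x]\times[0,y]\big)=\sum_{k\ge1}\lamt^{\mu^\incr}_k(x,y)=\sum_{k\ge1}\lamt^{\nu^\incr}_k(x,y)=\nu^\incr\big([0,x]\times[0,y]\big)
\]
for every $(x,y)\in\carre$. Since the south-west rectangles form a $\pi$-system generating the Borel $\sigma$-algebra of $\carre$ and $\carre$ is itself such a rectangle (on which both sides equal the common total mass), two finite measures agreeing on all of them must coincide, so $\mu^\incr=\nu^\incr$.

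The only place requiring genuine care is the structural step — verifying that $\mu^\incr$, and then each rectangular restriction $\mu^\incr|_R$, gives full mass to a countable union of nondecreasing sets — which is precisely where \Cref{th_decomposition} (through the defining equality $\LISt_\infty(\mu^\incr)=\mu^\incr(\carre)$) and \Cref{prop_semicon_LISt} are used; the rest is elementary measure theory. In particular, unlike \Cref{conj_injectivity}, this weaker statement needs no continuous analogue of the injectivity of Robinson--Schensted.
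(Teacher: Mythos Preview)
Your proof is correct and follows essentially the same route as the paper: both arguments hinge on the identity $\sum_{k\ge1}\lamt_k^\mu(x,y)=\LISt_\infty\big(\mu|_{[0,x]\times[0,y]}\big)=\mu^\incr\big([0,x]\times[0,y]\big)$, after which the equality of measures follows from agreement on south-west rectangles. The paper simply asserts the identity $\LISt_\infty\big(\mu|_{[0,x]\times[0,y]}\big)=\mu^\incr\big([0,x]\times[0,y]\big)$ as a direct byproduct of the construction in \Cref{th_decomposition}, whereas you spell out the supporting structural fact (that $\mu^\incr$ and its rectangular restrictions give full mass to a countable union of nondecreasing sets) more explicitly; this extra detail is sound but not a genuinely different approach.
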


\begin{prop}\label{prop_injectivity_RS_one}
    Let $\mu,\nu\in\preper$ such that $\LISt_\infty(\mu) = \LISt_1(\mu)$ and $\RSt(\mu) = \RSt(\nu)$.
    Then $\mu^\incr = \nu^\incr$.
    %If in addition $\LISt_1(\mu) + \LDSt_1(\mu) = 1$ and $\RSt\left(\mu^\rev\right) = \RSt\left(\nu^\rev\right)$, then $\mu=\nu$.
\end{prop}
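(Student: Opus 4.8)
The plan is to recover the increasing part of a pre-permuton directly from its RS-tableaux: under the stated hypothesis $\mu^\incr$ will be carried by a single nondecreasing set, and such a measure is the monotone coupling of its two marginals, which are precisely the first rows of $\RSt(\mu)$. The first thing I would check is that the hypothesis transfers to $\nu$: since $\lamt^\rho(1,1)=\shapet^\row(\rho)=\big(\LISt_k(\rho)-\LISt_{k-1}(\rho)\big)_{k\in\N^*}$ for $\rho\in\{\mu,\nu\}$, the equality $\RSt(\mu)=\RSt(\nu)$ forces by induction $\LISt_k(\mu)=\LISt_k(\nu)$ for every $k$, hence $\LISt_\infty(\mu)=\LISt_\infty(\nu)$; together with $\LISt_1(\mu)=\LISt_\infty(\mu)$ this gives $\LISt_1(\nu)=\LISt_\infty(\nu)$, so $\mu$ and $\nu$ can be handled symmetrically and I write $\rho$ for either of them.

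Next I would use \Cref{th_decomposition} to see that $\rho^\incr(\carre)=\LISt_\infty(\rho^\incr)=\LISt_\infty(\rho)=\LISt_1(\rho)$, while subadditivity of $\LISt_1$ and property (iii) give $\LISt_1(\rho)\le\LISt_1(\rho^\incr)$; hence $\LISt_1(\rho^\incr)=\rho^\incr(\carre)$, and by \Cref{prop_semicon_LISt} there is a closed nondecreasing set $\Gamma_\rho$ with $\rho^\incr(\carre\setminus\Gamma_\rho)=0$. Then, using subadditivity and monotonicity of $\LISt_1$ under the pointwise order on measures together with property (iii) once more, for every rectangle $R=[0,x]\times[0,y]$ one has $\lamt^\rho_1(x,y)=\LISt_1(\rho\vert_R)=\LISt_1(\rho^\incr\vert_R)=\rho^\incr(\Gamma_\rho\cap R)=\rho^\incr(R)$, the last steps because $\Gamma_\rho\cap R$ is itself a closed nondecreasing set carrying $\rho^\incr\vert_R$. (This is \Cref{th_decomposition}(iv) specialised to $k=1$.) In particular $x\mapsto\lamt^\rho_1(x,1)$ and $y\mapsto\lamt^\rho_1(1,y)$ are the marginal cumulative distribution functions of $\rho^\incr$, and both are encoded in $\RSt(\rho)$.

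The third step is the identity $\rho^\incr(R)=\min\big(\rho^\incr([0,x]\times[0,1]),\,\rho^\incr([0,1]\times[0,y])\big)$. Splitting $[0,x]\times[0,1]$ and $[0,1]\times[0,y]$ along $R$, this amounts to showing that one of $\rho^\incr\big([0,x]\times(y,1]\big)$ and $\rho^\incr\big((x,1]\times[0,y]\big)$ is zero. If both were positive I could pick $(a,b),(c,d)\in\Gamma_\rho$ with $a\le x<c$ and $d\le y<b$; then $c-a>0$ together with the nondecreasingness of $\Gamma_\rho$ would force $d-b\ge0$, contradicting $d<b$. Combined with the previous step this gives $\rho^\incr(R)=\min\big(\lamt^\rho_1(x,1),\lamt^\rho_1(1,y)\big)$ for every $(x,y)\in\carre$; since $\RSt(\mu)=\RSt(\nu)$ makes the right-hand side the same for $\mu$ and $\nu$, the finite measures $\mu^\incr$ and $\nu^\incr$ agree on every bottom-left rectangle and therefore $\mu^\incr=\nu^\incr$.

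The step I expect to be the main obstacle is really the first one: the argument collapses for $\nu$ unless $\RSt(\mu)=\RSt(\nu)$ already forces $\nu$ into the \emph{at most one nondecreasing set} regime, so that $\Gamma_\nu$ exists. The remaining steps are elementary, but I would be careful about the bookkeeping with the decomposition $\rho=\rho^\incr+\rho^\decr+\rho^\sub$ — in particular that $\LISt_1(\rho^\decr\vert_R)$ and $\LISt_1(\rho^\sub\vert_R)$ still vanish, which follows from monotonicity of $\LISt_1$ and property (iii) of \Cref{th_decomposition}.
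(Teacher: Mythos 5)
Your proof is correct, and while its skeleton matches the paper's (transfer the hypothesis to $\nu$ via the common shape, establish the formula $\lamt_1^\rho(x,y)=\lamt_1^\rho(x,1)\wedge\lamt_1^\rho(1,y)$, then compare the masses of bottom-left rectangles), the way you obtain the min-formula is genuinely different. The paper derives it from \Cref{lem_solving_lamt_r} applied with $r=1$ to the renormalization of $\mu^\incr$; that lemma is proved by running Fomin's inverse local rules on sampled permutations $\sigma_n$, computing $\#_r\Fom_\bot$ explicitly for the last row, and passing to the limit via \Cref{prop_lambda_cv}. You instead argue purely measure-theoretically: the hypothesis forces $\rho^\incr$ to be carried by a single closed nondecreasing set $\Gamma_\rho$, and then the elementary geometric observation that $\Gamma_\rho$ cannot meet both $[0,x]\times(y,1]$ and $(x,1]\times[0,y]$ yields $\rho^\incr([0,x]\times[0,y])=\rho^\incr([0,x]\times[0,1])\wedge\rho^\incr([0,1]\times[0,y])$ directly. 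Your route is more elementary and self-contained (no Fomin machinery, no convergence of tableaux), at the cost of being specific to the $k=1$ situation, whereas the paper's lemma holds for the last row $r$ of any permuton with $\LISt_r(\mu)=1$. Your intermediate identity $\lamt_1^\rho(x,y)=\rho^\incr([0,x]\times[0,y])$ is exactly the content of the paper's \Cref{prop_injectivity_lambda}, which you have effectively re-proved inline; the bookkeeping with subadditivity, monotonicity and property (iii) of \Cref{th_decomposition} is sound. The only unaddressed (and harmless) edge case in both arguments is $\rho^\incr(\carre)=0$, where the conclusion is trivial.
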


The results of this section are proved in \Cref{section_proofs_injectivity}.

\subsection{Examples}\label{section_examples}

In this section, we illustrate our results with several examples.
Some of them are represented in \Cref{fig_ex_permutons}.
If $x,y\in\R$, we use the standard notation $x\wedge y := \min(x,y)$ and $x\vee y := \max(x,y)$.

\begin{figure}
	\centering
	\includegraphics[scale=.374]{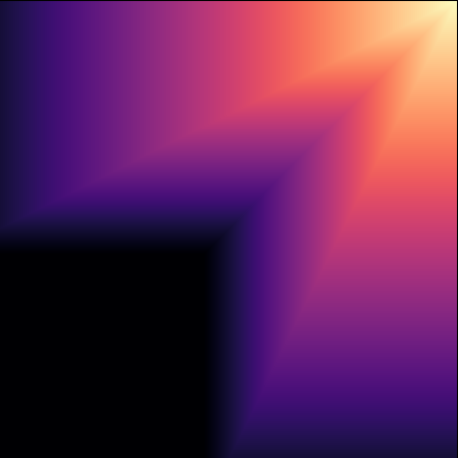}
	\qquad
	\includegraphics[scale=1]{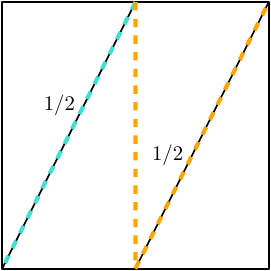}
	\qquad
	\includegraphics[scale=1]{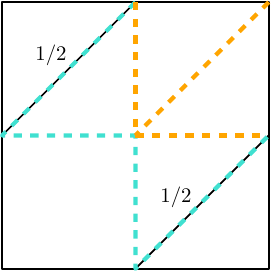}
	\caption{Representation of some of the examples of \Cref{section_examples}.
	From left to right: the locally affine function $f:\carre\to[0,1]$ constructed in \ref{ex_PDE_also_satisfied} (brighter colors indicate higher values), the permuton of \ref{ex_two_lines_first}, and the permuton of \ref{ex_two_lines_second}.
	For the last two: $\lamt_1^\mu$ is not differentiable on the dashed blue lines, and $\lamt_2^\mu$ is not differentiable on the dashed orange lines.}
	\label{fig_ex_permutons}
\end{figure}

\begin{enumerate}[label=(\alph*)]
	\item Let $\Leb_{\carre}$ be the uniform permuton, \textit{i.e.}~the Lebesgue measure on $\carre$.
	It is easy to check that for any nondecreasing $A$, we have $\Leb_{\carre}(A)=0$.
	Thus $\LISt\left(\Leb_{\carre}\right)=0$, and likewise $\LDSt\left(\Leb_{\carre}\right)=0$.
	This is consistent with the fact that the longest monotone subsequences of uniform permutations have sublinear asymptotics.
	
	More generally, let $\mu$ be a pre-permuton with a density with respect to $\Leb_{\carre}$.
	By absolute continuity, we again have $\LISt(\mu)=\LDSt(\mu)=0$.
	In particular, with the notation of \Cref{th_decomposition}: $\left(\mu^\incr,\mu^\decr,\mu^\sub\right) = (0,0,\mu)$.
	Our definitions are vacuous in this case, and our techniques are not suited to the study of such pre-permutons.
	\item\label{ex_permuton_diagonal} Let $\mu=\Leb_\Delta$ be the permuton that puts uniform mass on the diagonal $\Delta$ of $\carre$.
	Then $\LISt\left(\mu\right)=1$ and $\LDSt\left(\mu\right)=0$, and in particular $\shapet\left(\mu\right) = \big((1,0,\dots) , (0,\dots)\big)$.
	Also, $\lamt_1^\mu(x,y) = x\wedge y$ for any $(x,y)\in\carre$, and $\lamt_k^\mu(x,y)=0$ for all $k\ge2$.
	The RS-tableaux of $\mu$ are $\RSt(\mu) = \big( (\id_{[0,1]},0,\dots) , (\id_{[0,1]},0,\dots) \big)$.
	\item Let $\mu$ be the permuton that puts uniform mass on the \textit{antidiagonal} of $\carre$.
	Then $\LISt\left(\mu\right)=0$, $\LDSt\left(\mu\right)=1$, but with our definition $\RSt(\mu) = \big( (0,\dots) , (0,\dots) \big)$.
	This is because in this paper, we defined the RS-tableaux of a permuton via its \textit{row-shapes}, and not its \textit{column-shapes}.
	As discussed at the end of \Cref{section_shape_results}, it would be more relevant in this case to apply our definitions to the mirrored permuton $\mu^\rev=\Leb_\Delta$.
	\item\label{ex_two_lines_first} Let $\mu$ be the only permuton with support $\{y=2x\} \cup \{y=2x-1\}$ in $\carre$.
	This permuton has row-shape $\shapet^\row(\mu) = (1/2,1/2,0,\dots)$ and trivial column-shape.
	In particular $\mu=\mu^\incr$.
	We can compute:
	\begin{align*}
		\lamt_1^\mu(x,y) = 
		\left\{
		\begin{array}{ll}
			x \wedge (y/2) &\text{if } x\in[0,1/2];
			\\y/2 &\text{if } x\in[1/2,1];
		\end{array}
		\right.
	\end{align*}
	and 
	\begin{align*}
		\lamt_2^\mu(x,y) = 
		\left\{
		\begin{array}{ll}
			0 &\text{if } x\in[0,1/2];
			\\(x-1/2) \wedge (y/2) &\text{if } x\in[1/2,1].
		\end{array}
		\right.
	\end{align*}
	The function $\lamt_1^\mu$ is not differentiable on $\{y=2x\}$, and $\lamt_2^\mu$ is not differentiable on $\{x=1/2\} \cup \{y=2x-1\}$.
	\item\label{ex_two_lines_second} Let $\mu$ be the only permuton with support $\{y=x+1/2\} \cup \{y=x-1/2\}$ in $\carre$.
	This permuton has the same RS-shape as the previous one, but different RS-tableaux.
	We can compute:
	\begin{align*}
		\lamt_1^\mu(x,y) = 
		\left\{
		\begin{array}{llll}
			0 &\text{if } (x,y)\in[0,1/2]^2;
			\\x \wedge (y-1/2) &\text{if } (x,y)\in[0,1/2]\times[1/2,1];
			\\(x-1/2) \wedge y &\text{if } (x,y)\in[1/2,1]\times[0,1/2];
			\\(x-1/2) \vee (y-1/2) &\text{if } (x,y)\in[1/2,1]^2;
		\end{array}
		\right.
	\end{align*}
	and 
	\begin{align*}
		\lamt_2^\mu(x,y) = 
		\left\{
		\begin{array}{ll}
			0 &\text{if } (x,y)\notin[1/2,1]^2;
			\\(x-1/2) \wedge (y-1/2) &\text{if } (x,y)\in[1/2,1]^2.
		\end{array}
		\right.
	\end{align*}
	The function $\lamt_1^\mu$ is not differentiable on $\{x\le y, y=1/2\} \cup \{y=x+1/2\} \cup \{y\le x, x=1/2\} \cup \{y=x-1/2\} \cup \{y=x, y\ge 1/2\}$, and $\lamt_2^\mu$ is not differentiable on $\{x\le y, x=1/2\} \cup \{y\le x, y=1/2\} \cup \{y=x, y\ge 1/2\}$.
\end{enumerate}
Now let us illustrate \Cref{th_decomposition} with some examples of permutons having non-trivial decompositions.
\begin{enumerate}[label=(\alph*),resume]
	\item The permuton $\mu_{\alpha,\beta,\gamma}$ defined in \Cref{rem_taquin} has RS-shape $(\alpha,\beta)$ by construction.
	The decomposition of \Cref{th_decomposition} is what one would expect: $\mu_{\alpha,\beta,\gamma}^\incr$ is the restriction of $\mu_{\alpha,\beta,\gamma}$ to the upward diagonals of its support, $\mu_{\alpha,\beta,\gamma}^\decr$ is its restriction to the downward diagonals, and $\mu_{\alpha,\beta,\gamma}^\sub$ is the remaining uniform mass.
	\item Let $f:[0,1]\to[0,1]$ and define a measure $\mu_f$ as the push-forward of $\Leb_{[0,1]}$ by the mapping $x\mapsto (x,f(x))$.
	If $f$ preserves the Lebesgue measure then $\mu_f$ is a permuton, called a \textit{push-forward permuton} in \cite{FR-L23}.
	A few examples of push-forward permutons are the permutons \ref{ex_permuton_diagonal}, \ref{ex_two_lines_first}, \ref{ex_two_lines_second} here; the skew Brownian permutons \cite{B23}; and the recursive separable permutons \cite{FR-L23}.
	The RS-shapes and tableaux of push-forward permutons are not easy to compute in general, and may even be trivial (this is e.g.~the case for the Brownian separable permuton, by \cite[Theorem 1.10]{BBDFGMP21}).
	However if $f$ is a smooth function (not necessarily Lebesgue-preserving) such that the set $\{f'=0\}$ has empty interior in $[0,1]$, then $\mu_f$ is a pre-permuton with a simple decomposition:
	$\mu_f^\incr$ is the restriction of $\mu_f$ to $\{(x,f(x)) : f'(x)>0\}$ and $\mu_f^\decr$ is the restriction of $\mu_f$ to $\{(x,f(x)) : f'(x)<0\}$.
	
	Other examples of ``unidimensional'' pre-permuton can be constructed by putting uniform masses on unions of curves in $\carre$, under the assumption that those curves do not have any horizontal or vertical parts.
	\item Several permutons with non-trivial decompositions have been observed in the recent literature, and we list some of them here.
	For the runsort permuton $\mathbf{R}$ found in \cite{ADK22}, $\mathbf{R}^\incr$ is supported on a smooth increasing curve and $\mathbf{R}^\sub$ has a density above this curve.
	The limit of square permutations \cite{BS20} satisfies $\mu^\sub=0$, and its (random) RS-shape has two rows and two columns.
	Almost square permutations with a sublinear number of internal points have a similar limit \cite{BDS21}, and \cite[Conjecture~5.1.4]{B21} suggests that a linear number of internal points yields a permuton with non-trivial components $\mu^\sub$ (uniform mass inside some domain), $\mu^\incr$ and $\mu^\decr$ (supported on the boundary of this domain).
	A similar observation can be made for record-biased permutations \cite{ABNP16}: simulations suggest that, in an adequate regime, there is a permuton limit $\mu$, with $\mu^\incr$ being supported on an increasing curve and $\mu^\sub$ attributing uniform mass under this curve.
\end{enumerate}
This last example further discusses a potential proof strategy for \Cref{conj_injectivity}, mentioned in \Cref{section_injectivity_results}.
\begin{enumerate}[label=(\alph*),resume]
	\item\label{ex_PDE_also_satisfied} Take again $\mu=\Leb_\Delta$.
	\Cref{conj_injectivity} would imply that this is the only (pre-)permuton with RS-tableaux $\RSt(\mu) = \big( (\id_{[0,1]},0,\dots) , (\id_{[0,1]},0,\dots) \big)$ (this is actually true thanks to \Cref{prop_injectivity_RS_one}, but let us ignore this for the sake of the example).
	As mentioned earlier, a plausible approach would be to solve the PDE \eqref{eq_derivative_per} with boundary condition $\RSt(\mu)$.
	In this example, we aim to illustrate that this approach should rely on the behavior of $\lamt^\mu$ at its points of non-differentiability.
	More precisely, $\lamt_1^\mu(x,y)=x\wedge y$ is not the only function $f:\carre\to[0,1]$ such that:
	\begin{itemize}
	\item $f$ is Lipschitz and nondecreasing in both variables;
	\item $f$ is differentiable a.e.~and satisfies $D_{(x,y)}f(s,t) = \phi\left(\big. (t\partial_xf(x,y)) , (s\partial_yf(x,y)) \right)$ at its points of differentiability (this equation is equivalent to $\partial_xf(x,y) \cdot \partial_yf(x,y)=0$ at its points of differentiability, as proved in \Cref{cor_gradient_lambda});
	\item $f(1,\cdot)=f(\cdot,1)=\id_{[0,1]}$.
	
	\end{itemize}
	To show this, construct $f$ as follows:
	for any $z\in[0,1]$, $f$ takes the value $z$ on the broken line which goes from $(z,1)$ to $(z,\frac{z+1}{2})$, to $(\frac{z+1}{2},\frac{z+1}{2})$, to $(\frac{z+1}{2},z)$, to $(1,z)$.
	Also, set $f=0$ on $[0,1/2]^2$.
	Then $f$ is differentiable outside $\{y=\frac{x+1}{2}\} \cup \{y=x, x\ge1/2\} \cup \{x=\frac{y+1}{2}\} \cup \{y=1/2,x\le 1/2\} \cup \{x=1/2,y\le 1/2\}$, and on this domain it is locally either a function of $x$, or a function of $y$.
	Hence $f$ satisfies a.e.~the same PDE as $\lamt_1^\mu$, and the same boundary condition.
	See \Cref{fig_ex_permutons} for a representation.

\end{enumerate}

\section{Proofs of the results in Sections \ref{section_shape_results} and \ref{section_ldp_results}}

\subsection{Topological preliminaries}\label{section_topo}

Write $\d$ for the Euclidean distance on the plane.
If $A\subseteq\carre$ and $\delta>0$, define the $\delta$-halo of $A$ by:
\begin{equation*}
    A^\delta := \big\{ y\in\carre \::\: \d(y,x)\leq\delta \text{ for some } x\in A \big\}.
\end{equation*}
Note that if $A$ is a closed subset of $\carre$ then $A^\delta$ is closed as well.
We then define the Hausdorff distance by
\begin{equation*}
    \text{for all }A,B\subseteq\carre,\quad
    \dH(A,B) := \inf\left\{\delta>0 \::\: A\subseteq B^\delta \text{ and } B\subseteq A^\delta \right\}.
\end{equation*}
It is well known that the space $\mathcal{P}_K(\carre)$ of closed subsets of the unit square, equipped with the distance $\dH$, is compact.
Moreover, one can easily see that its subsets $\mathcal{P}_K^\nearrow(\carre)$ are $\mathcal{P}_K^\searrow(\carre)$ are closed, and hence compact.
The following two lemmas may be well known, but we include a short proof.

\begin{lemma}\label{lem_mesure_mesurable}
Let $\mu\in\meas$. The function $A \in \mathcal{P}_K(\carre) \mapsto \mu(A) \in [0,1]$
is upper semi-continuous for the Hausdorff distance.
\end{lemma}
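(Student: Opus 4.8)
The goal is to show that if $A_n \to A$ in Hausdorff distance, with all $A_n, A$ closed subsets of the unit square, then $\limsup_n \mu(A_n) \le \mu(A)$. The key topological fact to exploit is that the $\delta$-halo $A^\delta$ is closed and that $A^\delta$ shrinks down to $A$ as $\delta \downarrow 0$: indeed $\bigcap_{\delta > 0} A^\delta = \overline{A} = A$ since $A$ is closed. The plan is therefore to sandwich each $A_n$ inside a fixed halo of $A$ for large $n$, and then use continuity of the finite measure $\mu$ along the decreasing sequence of halos.

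In detail, I would proceed as follows. Fix $\delta > 0$. By definition of Hausdorff convergence, there is $N$ such that for all $n \ge N$ we have $\dH(A_n, A) < \delta$, hence $A_n \subseteq A^\delta$. Since $A^\delta$ is closed (as noted in the excerpt just before the lemma) it is measurable, so $\mu(A_n) \le \mu(A^\delta)$ for all $n \ge N$, giving $\limsup_n \mu(A_n) \le \mu(A^\delta)$. This holds for every $\delta > 0$. Now take a sequence $\delta_m \downarrow 0$; the sets $A^{\delta_m}$ are closed, nested decreasing, with $\bigcap_m A^{\delta_m} = A$ because $A$ is closed. Since $\mu$ is a finite measure, continuity from above yields $\mu(A^{\delta_m}) \to \mu(A)$ as $m \to \infty$. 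Combining, $\limsup_n \mu(A_n) \le \inf_m \mu(A^{\delta_m}) = \mu(A)$, which is exactly upper semi-continuity.

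The only point requiring a little care — and the closest thing to an obstacle — is verifying $\bigcap_{\delta > 0} A^\delta = A$. The inclusion $\supseteq$ is immediate. For $\subseteq$, if $x$ lies in $A^\delta$ for every $\delta > 0$, then for each $\delta$ there is $x_\delta \in A$ with $\d(x, x_\delta) \le \delta$, so $x$ is a limit of points of $A$, hence $x \in \overline{A} = A$. One should also note that halos are measurable (being closed, hence Borel) so that all the inequalities $\mu(A_n) \le \mu(A^\delta)$ make sense; this is already recorded in the excerpt. Everything else is a routine application of the standard measure-theoretic facts (monotonicity and continuity from above for finite measures) together with the definition of the Hausdorff metric.
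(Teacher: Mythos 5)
Your proof is correct and follows essentially the same route as the paper's: sandwich $A_n$ inside a halo $A^\delta$ for large $n$ using the Hausdorff convergence, then use $\bigcap_{\delta>0}A^\delta=A$ and continuity from above of the finite measure $\mu$. The only difference is cosmetic (you fix $\delta$ first and pass to the limit along $\delta_m\downarrow 0$, whereas the paper fixes $\varepsilon$ and chooses $\delta$ accordingly).
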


\begin{proof}
Let $(A_n)_{n\in\N}$ be a sequence converging to $A$ in $\mathcal{P}_K(\carre)$. Our goal is to establish $\mu(A) \geq \limsup\mu(A_n)$.
For this, fix $\varepsilon$ > 0. 
By monotony and since $\bigcap_{\delta>0}A^\delta = A$, there exists $\delta > 0$ satisfying $\mu(A^\delta) \leq \mu(A)+\varepsilon$.
Let $n_0$ be such that for any $n$ greater than $n_0$, $A^\delta$ contains $A_n$. 
Then:
\begin{equation*}
     \limsup_{\as{n}{\infty}}\mu(A_n) \leq \mu(A^\delta) \leq \mu(A) + \varepsilon
\end{equation*}
which concludes this proof, as it holds for any $\varepsilon>0$.
\end{proof}

\begin{lemma}\label{lem_union_con}
For any $k\in\N^*$, the union map $\cup$ from $\mathcal{P}_K(\carre)^k$ to $\mathcal{P}_K(\carre)$ is continuous. 
\end{lemma}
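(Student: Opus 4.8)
The plan is to prove that the union map $\cup : \mathcal{P}_K(\carre)^k \to \mathcal{P}_K(\carre)$ is continuous with respect to the Hausdorff distance. By induction on $k$ it suffices to treat the case $k=2$: if $(A_n)_{n\in\N}$ and $(B_n)_{n\in\N}$ are sequences in $\mathcal{P}_K(\carre)$ with $A_n \to A$ and $B_n \to B$, we want $A_n \cup B_n \to A \cup B$. I would work directly from the definition of the Hausdorff distance via halos, using the elementary fact that for any subsets $X, Y, Z, W$ of $\carre$ and any $\delta > 0$, if $X \subseteq Z^\delta$ and $Y \subseteq W^\delta$ then $X \cup Y \subseteq (Z \cup W)^\delta$, since $(Z \cup W)^\delta = Z^\delta \cup W^\delta$.

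The key steps, in order: first, fix $\varepsilon > 0$ and choose $n_0$ so that for all $n \geq n_0$ we have $A_n \subseteq A^\varepsilon$, $A \subseteq A_n^\varepsilon$, $B_n \subseteq B^\varepsilon$, and $B \subseteq B_n^\varepsilon$ (possible since $\dH(A_n,A) \to 0$ and $\dH(B_n,B) \to 0$, taking the max of the two thresholds). Second, combine the inclusions: $A_n \cup B_n \subseteq A^\varepsilon \cup B^\varepsilon = (A \cup B)^\varepsilon$, and symmetrically $A \cup B \subseteq A_n^\varepsilon \cup B_n^\varepsilon = (A_n \cup B_n)^\varepsilon$. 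Third, conclude $\dH(A_n \cup B_n, A \cup B) \leq \varepsilon$ for all $n \geq n_0$, which gives convergence since $\varepsilon$ was arbitrary. Finally, to pass from $k=2$ to general $k$, note $A^{(1)}_n \cup \dots \cup A^{(k)}_n = (A^{(1)}_n \cup \dots \cup A^{(k-1)}_n) \cup A^{(k)}_n$ and apply the inductive hypothesis together with the $k=2$ case (continuity of the composition of continuous maps on the product space).

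There is essentially no obstacle here: the only thing to be careful about is the identity $(Z \cup W)^\delta = Z^\delta \cup W^\delta$, which is immediate from the definition $X^\delta = \{ y : \d(y,x) \leq \delta \text{ for some } x \in X \}$ — a point lies within $\delta$ of $Z \cup W$ iff it lies within $\delta$ of $Z$ or within $\delta$ of $W$. One should also observe that $A_n \cup B_n$ is indeed closed (a finite union of closed sets), so the map is well defined into $\mathcal{P}_K(\carre)$. If one prefers, the same argument can be phrased with the supremum-of-distances formula for $\dH$ and the identity $\d(x, Z \cup W) = \min(\d(x,Z), \d(x,W))$, but the halo formulation matches the notation already set up in the excerpt and keeps the proof a few lines long.
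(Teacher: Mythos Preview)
Your proof is correct and follows essentially the same approach as the paper: reduce to $k=2$ by associativity/induction, use the halo inclusions from the definition of $\dH$, and invoke the identity $(A\cup B)^\varepsilon = A^\varepsilon \cup B^\varepsilon$ to conclude.
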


\begin{proof}
By associativity, it suffices to consider the case $k=2$. 
Let $(A_n), (B_n)$ be two sequences converging to $A,B$ respectively in $\mathcal{P}_K(\carre)$. 
Our goal is to prove the convergence of $(A_n\cup B_n)$ to $A\cup B$. 
For this, fix $\varepsilon>0$ and take $n_0$ such that:
\begin{equation*}
    \text{for all } n>n_0 ,\quad A_n\subseteq A^\varepsilon 
    \;\text{ and }\; A\subseteq A_n^\varepsilon 
    \;\text{ and }\; B_n\subseteq B^\varepsilon 
    \;\text{ and }\; B\subseteq B_n^\varepsilon .
\end{equation*}
Hence:
\begin{equation*}
    \text{for all } n>n_0 ,\quad 
    A_n\cup B_n\subseteq (A^\varepsilon) \cup (B^\varepsilon)
    \;\text{ and }\; A\cup B \subseteq (A_n^\varepsilon) \cup (B_n^\varepsilon).
\end{equation*}
As it is immediate to check the equality $(A^\varepsilon)\cup(B^\varepsilon)=(A\cup B)^\varepsilon$ for any closed subsets of $\carre$, this concludes the proof.
\end{proof}

\begin{cor}\label{lem_compact_k}
For any $k\in\N^*$, the space $\mathcal{P}_K^{k\nearrow}(\carre)$ equipped with the distance $\dH$ is compact.
\end{cor}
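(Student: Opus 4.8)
The plan is to deduce compactness of $\mathcal{P}_K^{k\nearrow}(\carre)$ as the continuous image of a compact space. First I would observe that the set $\mathcal{P}_K^\nearrow(\carre)$ of closed nondecreasing subsets of $\carre$ is a closed subset of the compact metric space $\big(\mathcal{P}_K(\carre),\dH\big)$, hence is itself compact; this is already noted in the excerpt just before \Cref{lem_mesure_mesurable}, and I would recall it briefly (if $(A_n)\to A$ with each $A_n$ nondecreasing, then for any two points $(x_1,y_1),(x_2,y_2)\in A$ one can pick approximating points in $A_n$, pass to the limit in the inequality $(x_2'-x_1')(y_2'-y_1')\geq0$, and conclude $A$ is nondecreasing). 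Consequently the product $\mathcal{P}_K^\nearrow(\carre)^k$ is compact for the product topology.

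Next I would invoke \Cref{lem_union_con}: the union map $\cup\colon \mathcal{P}_K(\carre)^k\to\mathcal{P}_K(\carre)$ is continuous, so its restriction to the compact set $\mathcal{P}_K^\nearrow(\carre)^k$ is continuous as well, and the image of a compact set under a continuous map into a metric (hence Hausdorff) space is compact. By the very definition of $k$-nondecreasing subsets, this image is exactly $\mathcal{P}_K^{k\nearrow}(\carre)$: a closed set is $k$-nondecreasing precisely when it is a union of $k$ closed nondecreasing subsets, and conversely any such union lies in $\mathcal{P}_K^{k\nearrow}(\carre)$. Therefore $\mathcal{P}_K^{k\nearrow}(\carre)$ is compact. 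The statement for $\mathcal{P}_K^{k\searrow}(\carre)$ follows identically, or simply by applying the homeomorphism $\psi$ of the unit square that swaps nondecreasing and nonincreasing subsets.

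There is no serious obstacle here — the only point requiring a little care is the claim that the image of $\cup$ restricted to $\mathcal{P}_K^\nearrow(\carre)^k$ equals $\mathcal{P}_K^{k\nearrow}(\carre)$ and not something smaller or larger; this is immediate from unwinding the definition, but worth stating explicitly. One should also make sure the compactness of $\mathcal{P}_K^\nearrow(\carre)$ is genuinely justified (closedness in a compact space), since the whole argument rests on it. Everything else is the standard fact that continuous images of compacts are compact.
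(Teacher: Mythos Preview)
Your proposal is correct and is exactly the argument the paper has in mind: the corollary is stated without proof immediately after \Cref{lem_union_con}, the intended reasoning being that $\mathcal{P}_K^{k\nearrow}(\carre)$ is the image of the compact product $\mathcal{P}_K^\nearrow(\carre)^k$ under the continuous union map. The one point you flagged---that a closed $k$-nondecreasing set is a union of $k$ \emph{closed} nondecreasing subsets---is indeed the only thing to check, and it follows by replacing each piece with its closure (which stays nondecreasing and stays inside the closed union).
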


Now we have all the necessary tools to prove \Cref{prop_semicon_LISt}.

\begin{proof}[Proof of \Cref{prop_semicon_LISt}]
Let $\mu\in\meas$ and $k\in\N^*$.
According to \Cref{lem_mesure_mesurable}, the map $\mu$ defined from $\mathcal{P}_K^{k\nearrow}(\carre)$ to $[0,1]$ is upper semi-continuous. 
It is well-known that any upper semi-continuous function defined on a compact set reaches its maximum. 
Hence, by \Cref{lem_compact_k}, the definition of $\LISt_k(\mu)$ is actually a maximum.

Now let us turn to the semi-continuity of $\LISt_k$ on the space $\prob$. Let $(\mu_n)_{n\in\N}$ be a sequence of probability measures weakly converging to some $\mu$.
We aim to prove:
\begin{equation}\label{ine_limsup_LISt_k}
    \LISt_k(\mu) \geq \limsup_{\as{n}{\infty}}\LISt_k(\mu_n).
\end{equation}
Up to extraction, suppose the right hand side is an actual limit.
For each $n\in\N$, fix $A_n\in\mathcal{P}_K^{k\nearrow}(\carre)$ such that $\LISt_k(\mu_n) = \mu_n(A_n)$.
Once again using \Cref{lem_compact_k}, up to extraction (which does not change the right hand side limit in (\ref{ine_limsup_LISt_k})), suppose the sequence $(A_n)_{n\in\N}$ converges to some $A$ in $\mathcal{P}_K^{k\nearrow}(\carre)$.
Fix an arbitrary $\varepsilon>0$. 
Recalling the definition of $\dH$, we have:
\begin{equation*}
    \lim_{\as{n}{\infty}} \LISt_k(\mu_n)
    = \lim_{\as{n}{\infty}} \mu_n(A_n)
    \leq \limsup_{\as{n}{\infty}} \mu_n\big( A^\varepsilon \big)
    \leq \mu\big( A^\varepsilon \big)
\end{equation*}
where in the fourth inequality we used Portmanteau theorem and the fact that $A^\varepsilon$ is closed. 
Since this holds for any $\varepsilon>0$ and $\mu\big( A^\varepsilon \big) \cv{\varepsilon}{0} \mu(A)$, we deduce
\begin{equation*}
    \lim_{\as{n}{\infty}} \LISt_k(\mu_n) \leq \mu(A) \leq \LISt_k(\mu).
\end{equation*}
This concludes the proof of (\ref{ine_limsup_LISt_k}), and thus of \Cref{prop_semicon_LISt}. 
The study of $\LDSt_k$ is completely similar.
\end{proof}

\subsection{Convergence of shapes}\label{section_shape_proofs}

To prove \Cref{prop_conv_LISk}, we associate each permutation $\sigma\in\S_n$ with a permuton that is appropriate to the study of longest increasing subsequences. 
The permuton $\mu_\sigma$ defined in \Cref{section_intro_permutons} satisfies $\LISt(\mu_\sigma)=0$, so we need to introduce a different one.
If $a\leq b$ are integers, we use the notation $\lb a,b\rb$ for the set $\{a,a+1\dots,b\}$.
For any $i,j\in\lb1,n\rb$, let
\begin{equation*}
    C_{i,j} := \left[\frac{i-1}{n},\frac{i}{n}\right]\times\left[\frac{j-1}{n},\frac{j}{n}\right]
\end{equation*}
and $\Delta_{i,j}$ be the diagonal of $C_{i,j}$.
Then define a permuton by putting uniform mass on these diagonals:
\begin{equation*}
    \mu_\sigma^\nearrow := \frac{1}{n}\sum_{i=1}^n \Leb_{\Delta_{i,\sigma(i)}}
\end{equation*}
where $\Leb_{\Delta_{i,\sigma(i)}}$ denotes the normalized Lebesgue probability measure on $\Delta_{i,\sigma(i)}$.
Since the Wasserstein distance between $\mu_\sigma^\nearrow$ and $\mu_\sigma$ is a $O(1/n)$, \Cref{th_HKMRS} implies:
\begin{lemma}\label{lem_HKMRS_increasing}
    Let $\mu\in\per$ and $\sigma_n\sim\sample_n(\mu)$ for each $n\in\N^*$. Then $\mu_{\sigma_n}^\nearrow \cv{n}{\infty} \mu$ almost surely.
\end{lemma}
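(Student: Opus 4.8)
The plan is to upgrade the weak convergence of \Cref{th_HKMRS} by showing that $\mu_\sigma^\nearrow$ and $\mu_\sigma$ are uniformly close, and that this closeness is deterministic in $\sigma$. The natural quantitative tool is the Wasserstein-$1$ distance $W_1$ on $\prob$ associated with the Euclidean distance $\d$, which metrizes weak convergence on the compact space $\carre$.

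First I would establish the deterministic bound: for every $n\in\N^*$ and every $\sigma\in\S_n$,
\begin{equation*}
    W_1\!\left( \mu_\sigma , \mu_\sigma^\nearrow \right) \leq \frac{\sqrt2}{n}.
\end{equation*}
To see this, observe that both measures assign mass $1/n$ to each cell $C_{i,\sigma(i)}$: indeed $\mu_\sigma$ restricted to $C_{i,\sigma(i)}$ is $\tfrac1n$ times the normalized Lebesgue measure on the square $C_{i,\sigma(i)}$, while $\mu_\sigma^\nearrow$ restricted to $C_{i,\sigma(i)}$ is $\tfrac1n$ times the normalized length measure on its diagonal $\Delta_{i,\sigma(i)}$. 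Coupling each of these pieces by any transport plan supported on $C_{i,\sigma(i)}\times C_{i,\sigma(i)}$ and summing over $i$ produces a coupling of $\mu_\sigma$ and $\mu_\sigma^\nearrow$ whose cost is at most $\max_i \operatorname{diam}(C_{i,\sigma(i)}) = \sqrt2/n$, since the cells are disjoint in each coordinate up to boundaries. (Equivalently, one can bypass $W_1$ and bound $\big|\int f\,\d\mu_\sigma - \int f\,\d\mu_\sigma^\nearrow\big|\leq \sqrt2/n$ directly for every $1$-Lipschitz $f$, using that on each cell both measures are within Euclidean distance $\sqrt2/n$ of the same point.)

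Next I would combine this with \Cref{th_HKMRS}. By that theorem, almost surely $\mu_{\sigma_n}\to\mu$ weakly, hence almost surely $W_1(\mu_{\sigma_n},\mu)\to0$ because $W_1$ metrizes weak convergence on $\carre$. On the event of probability one where this holds, the triangle inequality and the deterministic bound give
\begin{equation*}
    W_1\!\left( \mu_{\sigma_n}^\nearrow , \mu \right)
    \leq W_1\!\left( \mu_{\sigma_n}^\nearrow , \mu_{\sigma_n} \right) + W_1\!\left( \mu_{\sigma_n} , \mu \right)
    \leq \frac{\sqrt2}{n} + W_1\!\left( \mu_{\sigma_n} , \mu \right)
    \cv{n}{\infty} 0,
\end{equation*}
so $\mu_{\sigma_n}^\nearrow\to\mu$ weakly almost surely, as claimed. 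There is no real obstacle here: the only point requiring a little care is that the Wasserstein estimate must be stated for an arbitrary fixed permutation (not just the random $\sigma_n$), so that it transfers onto the full-probability event provided by \Cref{th_HKMRS}; the rest is the triangle inequality.
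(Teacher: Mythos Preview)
Your proposal is correct and follows exactly the paper's approach: the paper simply observes that the Wasserstein distance between $\mu_\sigma^\nearrow$ and $\mu_\sigma$ is $O(1/n)$ and invokes \Cref{th_HKMRS}, which is precisely what you have spelled out in detail.
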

Notice that this almost sure convergence holds regardless of the joint distribution of $\sigma_n$'s, as this is the case in \Cref{th_HKMRS}.

Instead of these permutons, we could use the empirical measures of i.i.d.~points distributed under $\mu$ (see \Cref{section_lower_speed_proof}) to prove \Cref{prop_conv_LISk}.
One advantage of introducing $\mu_\sigma^\nearrow$ is to also obtain an appropriate embedding of the set of permutations into the space of permutons.
This embedding is adapted to the study of longest increasing subsequences, as stated in the following lemma:
\begin{lemma}\label{lem_lien_lis_list}
Let $\sigma\in\S_n$. Then for any $k\in\N^*$:
\begin{equation*}
    \LISt_k(\mu_\sigma^\nearrow) = \frac{\LIS_k(\sigma)}{n}.
\end{equation*}
\end{lemma}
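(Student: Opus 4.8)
The goal is to show that for a permutation $\sigma\in\S_n$ and the diagonal-embedding permuton $\mu_\sigma^\nearrow$, we have $\LISt_k(\mu_\sigma^\nearrow) = \LIS_k(\sigma)/n$ for every $k\in\N^*$. This is a two-sided inequality, and each direction amounts to translating between $k$-increasing subsets of the unit square and families of $k$ increasing subsequences of $\sigma$.

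For the lower bound $\LISt_k(\mu_\sigma^\nearrow) \geq \LIS_k(\sigma)/n$, I would start from an optimal family $I_1,\dots,I_k$ of increasing subsequences of $\sigma$ with $|I_1\cup\dots\cup I_k| = \LIS_k(\sigma)$. For each index $h\in I_m$, the little square $C_{h,\sigma(h)}$ has its diagonal $\Delta_{h,\sigma(h)}$, and because $I_m$ is an increasing subsequence the union $A_m := \bigcup_{h\in I_m}\Delta_{h,\sigma(h)}$ is a nondecreasing subset of $\carre$ (if $h<h'$ in $I_m$ then $\sigma(h)<\sigma(h')$, so the two diagonals sit in squares that are weakly to the upper-right of one another, and any two points on them respect the nondecreasing condition — one should check the boundary corners don't cause a problem, but closed diagonals of such squares do form a nondecreasing closed set). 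Then $A := A_1\cup\dots\cup A_k$ is closed and $k$-nondecreasing, and $\mu_\sigma^\nearrow(A) = \frac1n|\{(h,\sigma(h)) : h\in I_1\cup\dots\cup I_k\}| = \frac1n|I_1\cup\dots\cup I_k| = \LIS_k(\sigma)/n$, since each diagonal carries mass $1/n$ and the diagonals used are distinct. Hence the supremum defining $\LISt_k$ is at least this value.

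For the upper bound $\LISt_k(\mu_\sigma^\nearrow) \leq \LIS_k(\sigma)/n$, take any closed $k$-nondecreasing set $A = A_1\cup\dots\cup A_k$ with each $A_m$ nondecreasing. The mass $\mu_\sigma^\nearrow(A)$ is concentrated on the $n$ diagonals $\Delta_{i,\sigma(i)}$, so only those diagonals $\Delta_{i,\sigma(i)}$ that $A$ actually meets contribute, and each contributes at most $1/n$; thus $\mu_\sigma^\nearrow(A) \leq \frac1n\,|J|$ where $J := \{i : \Delta_{i,\sigma(i)}\cap A \neq\emptyset\}$. Now I want to cover $J$ by $k$ increasing subsequences. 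For each $i\in J$ pick $m(i)\in\{1,\dots,k\}$ with $\Delta_{i,\sigma(i)}\cap A_{m(i)}\neq\emptyset$, and set $I_m := \{i\in J : m(i)=m\}$. The claim is that each $I_m$ is an increasing subsequence of $\sigma$: if $i<i'$ are both in $I_m$, pick points $p\in\Delta_{i,\sigma(i)}\cap A_m$ and $p'\in\Delta_{i',\sigma(i')}\cap A_m$; since $A_m$ is nondecreasing and $p$ has strictly smaller $x$-coordinate than $p'$ (they lie in distinct columns $C_{i,\cdot}$, $C_{i',\cdot}$ with $i<i'$), the nondecreasing condition forces $p$'s $y$-coordinate $\leq$ $p'$'s $y$-coordinate, which forces $\sigma(i)\leq\sigma(i')$ and hence $\sigma(i)<\sigma(i')$ since $\sigma$ is injective. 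The only subtlety is the degenerate case where $i$ and $i'$ share a column boundary or $p,p'$ sit exactly at touching corners; one disposes of this by noting that two distinct diagonals $\Delta_{i,\sigma(i)},\Delta_{i',\sigma(i')}$ with $i<i'$ can only touch at a single lattice corner, and a short case analysis at that corner still yields $\sigma(i)<\sigma(i')$. Then $|I_1\cup\dots\cup I_k| = |J| \leq \LIS_k(\sigma)$, so $\mu_\sigma^\nearrow(A) \leq \LIS_k(\sigma)/n$; taking the supremum over $A$ gives the bound. Combining the two inequalities — and noting that the supremum is attained by \Cref{prop_semicon_LISt} — proves the lemma.

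**Main obstacle.** The real substance is entirely in the careful handling of the boundary/corner cases: closed diagonals of adjacent little squares can touch at grid points, so "nondecreasing set" has to be reconciled with "increasing subsequence" at those contact points. The clean statement $(x_2-x_1)(y_2-y_1)\geq 0$ allows equalities, so one must verify that whenever two diagonals from distinct columns meet a common nondecreasing set, their $x$-ranges being disjoint-up-to-a-point still forces the correct strict ordering of the $\sigma$-values; I expect this to be a couple of lines of case analysis rather than a genuine difficulty, but it is where the argument could go wrong if one is cavalier. Everything else — distinctness of diagonals, each carrying mass exactly $1/n$, closedness of finite unions — is routine.
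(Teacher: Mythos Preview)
Your proof is correct and essentially identical to the paper's: both directions match the paper's constructions (unions of diagonals from increasing subsequences for the lower bound, index sets $J_i:=\{j:B_i\cap\Delta_{j,\sigma(j)}\neq\emptyset\}$ for the upper bound). You are in fact more careful than the paper, which simply asserts ``since $B_i$ is nondecreasing, $J_i$ is an increasing subsequence'' without addressing the corner-point degeneracy you flag; your anticipated short case analysis (or, more cleanly, restricting $J_i$ to those diagonals that $B_i$ meets in their relative interior, which costs no mass) is indeed all that is needed.
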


\begin{proof}
First, let $I_1,\dots,I_k$ be increasing subsequences of $\sigma$ such that $\LIS_k(\sigma) = \vert I_1\cup\dots\cup I_k\vert$.
Then define for each $j\in\lb1,k\rb$:
\begin{equation*}
    A_j := \bigcup_{i\in I_j}\Delta_{i,\sigma(i)}.
\end{equation*}
Since $I_j$ is an increasing subsequence of $\sigma$, $A_j$ is in $\mathcal{P}_K^\nearrow(\carre)$. 
As $\mu_\sigma^\nearrow$ puts mass $1/n$ on each diagonal of its support, one has:
\begin{equation}\label{premiere_ine_lis_list}
    \frac{\LIS_k(\sigma)}{n}=\frac{\vert I_1\cup\dots\cup I_k\vert }{n} =
    \mu_\sigma^\nearrow(A_1\cup\dots\cup A_k)
    \leq\LISt_k(\mu_\sigma^\nearrow).
\end{equation}
Next, using \Cref{prop_semicon_LISt}, let $B_1,\dots,B_k$ be closed nondecreasing subsets of $\carre$ such that:
\begin{equation*}
    \LISt_k(\mu_\sigma^\nearrow) = \mu_\sigma^\nearrow(B_1\cup\dots\cup B_k).
\end{equation*}
Then define for each $i\in\lb1,k\rb$:
\begin{equation*}
    J_i := \{ j\in\lb1,n\rb \::\: B_i\cap \Delta_{j,\sigma(j)}\neq\emptyset \}.
\end{equation*}
Since $B_i$ is nondecreasing, $J_i$ is an increasing subsequence of $\sigma$. 
Hence:
\begin{equation}\label{seconde_ine_lis_list}
    \LISt_k(\mu_\sigma^\nearrow) = \mu_\sigma^\nearrow(B_1\cup\dots\cup B_k)
    \leq \frac{\vert J_1\cup\dots\cup J_k\vert }{n}\leq\frac{\LIS_k(\sigma)}{n}.
\end{equation}
The lemma follows from \Cref{premiere_ine_lis_list,seconde_ine_lis_list}.
\end{proof}

\medskip

\begin{proof}[Proof of \Cref{prop_conv_LISk}]
First notice that if $\mu$ is a pre-permuton, its corresponding permuton $\hat{\mu}$ satisfies $\shape\left( \hat{\mu} \right) = \shape(\mu)$ and for each $n\in\N^*$, $\sample_n\left( \hat{\mu} \right) = \sample_n(\mu)$ (see e.g.~Remark 1.2 in \cite{BDMW22}).
Thus we can assume for this proof that $\mu$ is a permuton.

On the one hand, using \Cref{lem_lien_lis_list,lem_HKMRS_increasing} and \Cref{prop_semicon_LISt}:
\begin{equation}\label{conv_LISk_upper}
    \underset{\as{n}{\infty}}{\limsup}\;
    \frac{1}{n}\LIS_k(\sigma_n)
    = \underset{\as{n}{\infty}}{\limsup}\;
    \LISt_k\left( \mu_{\sigma_n}^\nearrow\right)
    \leq \LISt_k(\mu) 
    \quad\text{almost surely.}
\end{equation}
On the other hand, using \Cref{prop_semicon_LISt}, let $A$ be a closed $k$-nondecreasing subset of $\carre$ such that $\LISt_k(\mu)=\mu(A)$.
Let $\sigma_n := \perm\left(Z_1,\dots,Z_n\right)$ where $Z_1,\dots,Z_n$ are random i.i.d.~points distributed under $\mu$, and define $S_n$ as the number of $Z_i$'s lying in $A$. 
Since $A$ is $k$-nondecreasing, those $S_n$ points can be written as a union of $k$ nondecreasing subsets. Hence:
\begin{equation}\label{points_k_croiss}
    \LIS_k( \sigma_n ) \geq S_n.
\end{equation}
However $S_n$ follows a binomial law of parameter $\left(n,\mu(A)\right)$, so the law of large numbers imply 
\begin{equation}\label{lgn_points}
    \frac{S_n}{n} \cv{n}{\infty} \mu(A) = \LISt_k(\mu) \quad\text{almost surely.}
\end{equation}
We point out the fact that this law of large numbers comes along with exponential concentration inequalities, so almost sure convergence holds regardless of the joint distribution of $S_n$'s.
Using (\ref{points_k_croiss}) and (\ref{lgn_points}), we get:
\begin{equation}\label{conv_LISk_lower}
    \underset{\as{n}{\infty}}{\liminf}\;
    \frac{1}{n}\LIS_k(\sigma_n)
    \geq \LISt_k(\mu) 
    \quad\text{almost surely}.
\end{equation}
Inequalities (\ref{conv_LISk_upper}) and (\ref{conv_LISk_lower}) conclude the proof for $\LIS_k$.
The proof of the analogous convergence for $\LDS_k$ works the same, replacing $\mu_\sigma^\nearrow$ with a similar permuton $\mu_\sigma^\searrow$ putting mass on antidiagonals.
\end{proof}

To conclude this section, we explain how to derive \Cref{prop_Thoma} from \Cref{prop_conv_LISk}. 
While the quantity $\shape(\sigma)$ for $\sigma\in\S_n$ does not quite belong to the Thoma simplex, this becomes the case in the $\as{n}{\infty}$ limit:

\begin{lemma}\label{lem_asymp_thom}
For each $n\in\N^*$ consider a permutation $\sigma_n$ of size $n$, and define for all $k\in\N^*$:
\begin{equation*}
    \alpha_k := \limsup_{\as{n}{\infty}}\frac{\LIS_k(\sigma_n)-\LIS_{k\m1}(\sigma_n)}{n}
    \;\text{ and }\;
    \beta_k := \limsup_{\as{n}{\infty}}\frac{\LDS_k(\sigma_n)-\LDS_{k\m1}(\sigma_n)}{n}.
\end{equation*}
Then the sequences $\alpha:=(\alpha_k)_{k\geq1}$ and $\beta:=(\beta_k)_{k\geq1}$ satisfy $(\alpha,\beta) \in \Omega$.
\end{lemma}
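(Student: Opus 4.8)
The statement asserts that the limsup-increments $\alpha_k,\beta_k$ form a point of the Thoma simplex $\Omega$: i.e. each sequence is nonincreasing, and $\sum_k\alpha_k+\sum_k\beta_k\le 1$. The plan is to reduce everything to Greene's theorem (\Cref{th_Greene}) applied at finite $n$, together with elementary manipulations of limsups.

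First I would record the finite-$n$ facts. By \Cref{th_Greene}, for each fixed $n$ the sequence $\big(\LIS_k(\sigma_n)-\LIS_{k-1}(\sigma_n)\big)_{k\ge1}$ is nonincreasing in $k$ (it is the sequence of row lengths of a Young diagram), and likewise $\big(\LDS_k(\sigma_n)-\LDS_{k-1}(\sigma_n)\big)_{k\ge1}$ is nonincreasing. Also $\LIS_n(\sigma_n)=n$, so $\sum_{k\ge1}\big(\LIS_k(\sigma_n)-\LIS_{k-1}(\sigma_n)\big)=n$; the analogous identity holds for $\LDS$. To control the mixed sum I would use the classical bound that, for a permutation of size $n$, the first row and first column of its RS-shape together cannot cover more than $n+1$ boxes — more precisely, for any permutation $\tau$ of size $m$, $\LIS_j(\tau)+\LDS_j(\tau)\le m+j^2$ (a standard consequence of Greene's theorem / the hook structure of a Young diagram fitting the count of boxes plus the overlap of the first $j$ rows and first $j$ columns). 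Actually the cleanest route: for a Young diagram of size $n$, the number of boxes in the first $k$ rows plus the number of boxes in the first $k$ columns is at most $n+k^2$ (the $k\times k$ top-left corner is double-counted). Combined with Greene's theorem this gives, for every $n$ and $k$,
\begin{equation}\label{eq_rc_bound}
    \LIS_k(\sigma_n)+\LDS_k(\sigma_n)\le n+k^2.
\end{equation}

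Now the limit passage. For the monotonicity of $\alpha$: fix $k$; for each $n$ we have $\LIS_{k+1}(\sigma_n)-\LIS_k(\sigma_n)\le \LIS_k(\sigma_n)-\LIS_{k-1}(\sigma_n)$, so taking $\limsup_n$ on both sides (using that $\limsup$ is monotone) yields $\alpha_{k+1}\le\alpha_k$; likewise $\beta_{k+1}\le\beta_k$. For the sum bound, fix $K\in\N^*$ and sum the telescoping increments up to $K$: $\sum_{k=1}^{K}\big(\LIS_k(\sigma_n)-\LIS_{k-1}(\sigma_n)\big)=\LIS_K(\sigma_n)$ and similarly $\sum_{k=1}^{K}\big(\LDS_k(\sigma_n)-\LDS_{k-1}(\sigma_n)\big)=\LDS_K(\sigma_n)$, hence by \eqref{eq_rc_bound}, dividing by $n$,
\begin{equation*}
    \frac1n\sum_{k=1}^K\big(\LIS_k(\sigma_n)-\LIS_{k-1}(\sigma_n)\big)
    +\frac1n\sum_{k=1}^K\big(\LDS_k(\sigma_n)-\LDS_{k-1}(\sigma_n)\big)
    \le 1+\frac{K^2}{n}.
\end{equation*}
The one subtlety is that $\limsup$ is only subadditive, not additive, so I cannot directly pass the limsup through a finite sum of $2K$ terms and retain the bound $1$. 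The fix is a standard one: use that whenever all but one term of a sum of limsups can be upgraded to genuine limits, additivity holds for that splitting — but here none of the individual limits is known. So instead I would argue along a single subsequence $(n_j)$ realizing $\limsup_{n}\big[\sum_{k=1}^K(\LIS_k-\LIS_{k-1})/n+\sum_{k=1}^K(\LDS_k-\LDS_{k-1})/n\big]$, then pass to a further subsequence along which each of the $2K$ individual ratios converges (possible since each lies in $[0,1]$); along that sub-subsequence the sum of the $2K$ limits equals the limit of the sum, which is $\le 1$, and each individual limit is $\le$ the corresponding $\alpha_k$ resp. $\beta_k$. This gives $\sum_{k=1}^K\alpha_k+\sum_{k=1}^K\beta_k\le 1$ for every $K$, hence $\sum_k\alpha_k+\sum_k\beta_k\le1$, so $(\alpha,\beta)\in\Omega$.

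I expect the main obstacle to be exactly this non-additivity of $\limsup$: getting the bound on the sum of the partial-sum limsups requires the diagonal-subsequence extraction just described rather than a one-line inequality, and one has to be careful that the extraction respects all $2K$ coordinates simultaneously (which is fine since $K$ is finite). Everything else — Greene's theorem, the telescoping identities, the $k\times k$-corner double-counting bound \eqref{eq_rc_bound} — is routine. Finally, \Cref{prop_Thoma} follows: by \Cref{prop_conv_LISk} the relevant limsups for $\sigma_n\sim\sample_n(\mu)$ are in fact almost sure limits equal to $\LISt_k(\mu)-\LISt_{k-1}(\mu)$ and $\LDSt_k(\mu)-\LDSt_{k-1}(\mu)$, so $\shapet(\mu)=(\alpha,\beta)\in\Omega$.
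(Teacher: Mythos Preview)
Your overall strategy---Greene's theorem plus the $k\times k$ corner double-counting bound $\LIS_k(\sigma_n)+\LDS_k(\sigma_n)\le n+k^2$---is exactly the paper's. You also correctly flag the genuine subtlety that the paper glosses over: $\limsup$ is only subadditive, so from $\limsup_n(\LIS_K+\LDS_K)/n\le 1$ one cannot directly conclude $\sum_{k\le K}\alpha_k+\sum_{k\le K}\beta_k\le1$. However, your proposed fix does not work. Along your sub-subsequence you obtain limits $a_k\le\alpha_k$ and $b_k\le\beta_k$ with $\sum_k a_k+\sum_k b_k\le1$; but from $a_k\le\alpha_k$ and $\sum_k a_k\le1$ you \emph{cannot} deduce $\sum_k\alpha_k\le1$---the inequality points the wrong way.

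In fact the lemma is false as stated. Take $\sigma_n=\id_n$ for even $n$ and $\sigma_n$ the decreasing permutation for odd $n$. Then $\alpha_1=\limsup_n\LIS_1(\sigma_n)/n=1$ (achieved along even $n$) and $\beta_1=\limsup_n\LDS_1(\sigma_n)/n=1$ (achieved along odd $n$), so $\alpha_1+\beta_1=2$ and $(\alpha,\beta)\notin\Omega$. The paper's proof has the same gap. This does not affect the intended application: for \Cref{prop_Thoma}, \Cref{prop_conv_LISk} ensures the $\limsup$'s are almost-sure limits, and then $\sum_{i\le K}\alpha_i+\sum_{j\le K}\beta_j=\lim_n(\LIS_K(\sigma_n)+\LDS_K(\sigma_n))/n\le1$ is immediate. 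The clean repair is to add the hypothesis that each ratio $(\LIS_k-\LIS_{k-1})/n$ and $(\LDS_k-\LDS_{k-1})/n$ converges, or equivalently to replace $\limsup$ by $\lim$ in the statement.
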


\begin{proof}
The monotony condition is a consequence of \Cref{th_Greene}. 
To prove the sum condition, we fix an arbitrary $k\in\N^*$ and show the following:
\begin{equation}\label{ine_k_fixe}
    \sum_{i= 1}^k \alpha_i +
    \sum_{j= 1}^k \beta_j \leq 1.  
\end{equation}
For each $n\in\N^*$, denote by $A_{k,n}$ the number of boxes shared by the first $k$ rows and first $k$ columns of $\sigma_n$'s RS-shape.
By \Cref{th_Greene}, $\LIS_k(\sigma_n)$ and $\LDS_k(\sigma_n)$ are respectively the numbers of boxes in the first $k$ rows and columns of this diagram.
Therefore by inclusion-exclusion:
\begin{equation*}
    \LIS_k(\sigma_n) + \LDS_k(\sigma_n)
    \leq n + A_{k,n}.
\end{equation*}
Moreover $A_{k,n} / n \leq k^2 / n \cv{n}{\infty} 0$, so inequality (\ref{ine_k_fixe}) follows.
Since this holds for any integer $k$, the lemma is proved.
\end{proof}

Finally, \Cref{prop_Thoma} is a direct consequence of \Cref{prop_conv_LISk} and \Cref{lem_asymp_thom}.

\subsection{Upper tail large deviation}\label{section_upper_proofs}

The proof of \Cref{th_upper_tail} heavily hinges on the following celebrated result:

\begin{theorem}[\cite{C38,DZ98}]\label{th_Cramer}
Fix $p\in[0,1]$ and consider for each $n\in\N^*$ a random variable $S_n$ following a binomial law of parameter $(n,p)$. Then for any $q\in(p,1]$:
\begin{equation*}
    \frac{1}{n}\log\P\left( S_n \geq nq \right) \cv{n}{\infty} -\Lambda_p^*(q)
\end{equation*}
with the notation of \Cref{section_ldp_results}. Likewise for any $q\in[0,p)$:
\begin{equation*}
    \frac{1}{n}\log\P\left( S_n < nq \right) \cv{n}{\infty} -\Lambda_p^*(q).
\end{equation*}
\end{theorem}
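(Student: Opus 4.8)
The plan is to treat this as the classical Cramér estimate for a sum of i.i.d.\ Bernoulli variables, separating the elementary exponential (Chernoff) upper bound from a matching lower bound obtained by a direct Stirling computation. Write $S_n = X_1 + \dots + X_n$ with the $X_i$ i.i.d.\ $\mathrm{Ber}(p)$, so that the cumulant generating function is $\Lambda(\lambda) := \log \E[e^{\lambda X_1}] = \log(1-p+pe^\lambda)$ and $\E[e^{\lambda S_n}] = e^{n\Lambda(\lambda)}$. A short calculation, setting $\partial_\lambda[\lambda q - \Lambda(\lambda)] = 0$ and getting $e^{\lambda^\star} = \frac{q(1-p)}{p(1-q)}$, then substituting back, shows that $\sup_{\lambda\in\R}[\lambda q - \Lambda(\lambda)]$ is exactly the function $\Lambda_p^*(q)$ from \Cref{section_ldp_results}, with the optimizer $\lambda^\star$ positive when $q>p$ and negative when $q<p$. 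This Legendre identification is the only genuine computation in the argument, and it is consistent with the degenerate cases $p\in\{0,1\}$ or $q=1$, where both sides are $+\infty$ or agree by inspection.

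For the upper tail $q\in(p,1]$ I would use Markov's inequality: for every $\lambda\geq 0$,
\[
\P(S_n \geq nq) \;\leq\; e^{-\lambda n q}\,\E[e^{\lambda S_n}] \;=\; e^{-n(\lambda q - \Lambda(\lambda))},
\]
and optimizing over $\lambda\geq0$ gives $\tfrac1n\log\P(S_n\geq nq) \leq -\sup_{\lambda\geq0}(\lambda q - \Lambda(\lambda)) = -\Lambda_p^*(q)$, the restriction $\lambda\geq0$ being harmless because $q>p$ forces $\lambda^\star>0$. For the matching lower bound I would bound $\P(S_n\geq nq)$ below by the single term $\P(S_n = m_n)$ with $m_n := \lceil nq\rceil$, write it as $\binom{n}{m_n}p^{m_n}(1-p)^{n-m_n}$, and apply Stirling's formula: $\tfrac1n\log\binom{n}{m_n} \to -q\log q-(1-q)\log(1-q)$ and $\tfrac1n\log\big(p^{m_n}(1-p)^{n-m_n}\big)\to q\log p+(1-q)\log(1-p)$, whose sum is precisely $-\Lambda_p^*(q)$. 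Hence $\liminf \tfrac1n\log\P(S_n\geq nq)\geq -\Lambda_p^*(q)$, and the two bounds give the first convergence. (One may replace the Stirling step by an exponential change of measure to the $\mathrm{Ber}(q)$ law combined with the law of large numbers under the tilted measure, if a softer argument is preferred.)

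For the lower tail $q\in[0,p)$ I would not redo the work but deduce it from the upper tail applied to the complementary sum: $n-S_n$ is binomial of parameter $(n,1-p)$, and $\P(S_n<nq)=\P(n-S_n>n(1-q))$ with $1-q>1-p$, so the first part yields $\tfrac1n\log\P(n-S_n\geq\lceil n(1-q)\rceil)\to -\Lambda_{1-p}^*(1-q)$; one then concludes since $\Lambda_{1-p}^*(1-q)=\Lambda_p^*(q)$ by the symmetry of the defining formula, after noting that replacing $>n(1-q)$ by $\geq\lceil n(1-q)\rceil$ costs nothing on the exponential scale. The only points requiring care throughout are the bookkeeping of integer parts and the endpoint cases ($q=1$, or $p\in\{0,1\}$), read with the convention that $\Lambda_p^*(q)$ may equal $+\infty$; there is no real obstacle, this being a standard result that the paper invokes purely as a tool.
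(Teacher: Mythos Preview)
Your argument is correct and follows the standard textbook route: Chernoff's exponential Markov bound for the upper estimate, a single-term Stirling (or tilting) computation for the matching lower estimate, and the symmetry $\Lambda_{1-p}^*(1-q)=\Lambda_p^*(q)$ to transfer the result to the lower tail. The bookkeeping around integer parts and the boundary cases $q=1$, $p\in\{0,1\}$ is handled appropriately.

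That said, the paper does not actually prove this statement: it is quoted as a classical result with references to Cram\'er's original work and to the Dembo--Zeitouni monograph, and is then used as a black box in the proofs of \Cref{th_upper_tail}, \Cref{prop_lower_tail}, \Cref{th_speed_equiv}, and \Cref{lem_concentration_cond}. So there is no ``paper's own proof'' to compare against; your write-up simply supplies the standard justification that the paper chose to cite rather than reproduce.
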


\medskip

\begin{proof}[Proof of \Cref{th_upper_tail}]
If $\LISt_k(\mu)=1$, there is nothing to prove. Now suppose $\LISt_k(\mu)<1$. 
To prove the lower bound, let $A$ be a closed $k$-nondecreasing subset of $\carre$ such that $\LISt_k(\mu)=\mu(A)$.
Let $\sigma_n := \perm\left(Z_1,\dots,Z_n\right)$ where $Z_1,\dots,Z_n$ are random i.i.d.~points distributed under $\mu$, and define $S_n$ as the number of $Z_i$'s lying in $A$. 
Since $S_n$ follows a binomial law of parameter $\big( n,\LISt_k(\mu) \big)
$, by \Cref{th_Cramer} and inclusion of events:
\begin{equation*}
    \liminf_{\as{n}{\infty}}\frac{1}{n}
    \log\P\big( \LIS_k(\sigma_n)\geq \alpha n \big) 
    \geq \liminf_{\as{n}{\infty}}\frac{1}{n}
    \log\P(S_n\geq \alpha n)
    \geq -\Lambda_{\LISt_k(\mu)}^*(\alpha).
\end{equation*}

\medskip

Next, let us prove the upper bound. 
The general idea is that if sampled permutations have a large $\LIS_k$ then a lot of points were sampled in a large $k$-increasing subset of $\mu$.
To find such a subset and make it deterministic, we condition on this rare event and extract a limit.
Define
\begin{equation*}
    x := -\limsup_{\as{n}{\infty}}\frac{1}{n}
    \log\P\big( \LIS_k(\sigma_n)\geq \alpha n \big)
    \quad \in[0,+\infty]
\end{equation*}
and suppose, up to extraction, that this $\limsup$ is an actual limit. 
Our goal is to prove 
\begin{equation}\label{ine_goal_upper}
    x \geq \Lambda_{\LISt_k(\mu)}^*(\alpha).
\end{equation}
If $x=+\infty$ this is trivial.
Now suppose $x<+\infty$, which implies the existence of $n_0\in\N^*$ such that for all $n\geq n_0$, $\P\big( \LIS_k(\sigma_n)\geq \alpha n \big) > 0$.
Thus for any $n\geq n_0$, we can consider random variables $Z_1^{(n)},\dots,Z_n^{(n)}$ i.i.d.~under $\mu$ conditioned to satisfy
\begin{equation*}
    \LIS_k\left( Z_1^{(n)},\dots,Z_n^{(n)} \right)
    \geq \alpha n,
\end{equation*}
where the notation $\LIS_k\left( Z_1^{(n)},\dots,Z_n^{(n)} \right)$ is shortcut for $\LIS_k\left( \perm\left( Z_1^{(n)},\dots,Z_n^{(n)} \right) \right)$.
Let $A_n$ be the maximal $k$-nondecreasing subset of $\left\{Z_1^{(n)},\dots,Z_n^{(n)}\right\}$ (if there are several such subsets, choose one with an arbitrary rule). 
Then denote by $\nu_n$ the law of $A_n$ in $\mathcal{P}_K^{k\nearrow}(\carre)$. 
Up to extraction, the sequence $(\nu_n)_{n\geq n_0}$ weakly converges to some probability measure $\nu$ on $\mathcal{P}_K^{k\nearrow}(\carre)$. 
Let $B_0$ be an arbitrary element in the support of $\nu$ and $\ell \in \left(\LISt_k(\mu),\alpha\right)$. 
Since $\mu(B_0) \leq \LISt_k(\mu) < \ell$, by monotone continuity there exists $\varepsilon>0$ such that:
\begin{equation}\label{domin_mean}
    \mu\left( B_0^\varepsilon \right) < \ell.
\end{equation}
Then by Portmanteau theorem and the fact that $B_0$ is in the support of $\nu$, we get:
\begin{equation}\label{borne_inf}
    \liminf_{\as{n}{\infty}}\nu_n\left(
    \lbrace B\in\mathcal{P}_K^{k\nearrow}(\carre) \; : \; \dH(B,B_0) < \varepsilon \rbrace
    \right) \geq \nu\left(
    \lbrace B\in\mathcal{P}_K^{k\nearrow}(\carre) \; : \; \dH(B,B_0) < \varepsilon \rbrace
    \right) > 0.
\end{equation}
However:
\begin{align*}
    &\nu_n\left(
    \lbrace B\in\mathcal{P}_K^{k\nearrow}(\carre) \; : \; \dist(B,B_0) < \varepsilon \rbrace
    \right)
    \\ \nonumber&\leq \nu_n\left(
    \lbrace B\in\mathcal{P}_K^{k\nearrow}(\carre) \; : \; B\subseteq B_0^\varepsilon \rbrace
    \right)
    \\&= \P_{\mu^{\otimes n}}\big( 
    \text{the maximal $k$-nondecreasing subset of }Z_1,\dots,Z_n\text{ is contained in }B_0^\varepsilon
    \;\, \big\vert \;\,
    \LIS_k\left( Z_1,\dots,Z_n\right) \geq \alpha n \big)
    \\&\leq \P_{\mu^{\otimes n}}\left( 
    \text{at least }\alpha n\text{ points among }Z_1,\dots,Z_n\text{ are in }B_0^\varepsilon
    \;\, \big\vert \;\,
    \LIS_k\left( Z_1,\dots,Z_n\right) \geq \alpha n
    \right)
\end{align*}
thus, writing $S_n$ for a binomial random variable of parameter $\big(n,\mu(B_0^\varepsilon)\big)$:
\begin{align*}
    \nu_n\left(
    \lbrace B\in\mathcal{P}_K(\carre) \; : \; \dist(B,B_0) < \varepsilon \rbrace
    \right)
    \leq \frac{\P( S_n \geq \alpha n )}
    {\P\big(\LIS_k(Z_1,\dots,Z_n) \geq \alpha n\big)}.
\end{align*}
Now consider another binomial random variable $S_n'$ of parameter $(n,\ell)$. 
By (\ref{domin_mean}), stochastic domination, \Cref{th_Cramer} and the fact that $\ell<\alpha$:
\begin{align*}
    \frac{1}{n}\log\P( S_n \geq \alpha n )
    \leq \frac{1}{n}\log\P( S_n' \geq \alpha n )
    \cv{n}{\infty} -\alpha\log\frac{\alpha}{\ell} - (1-\alpha)\log\frac{1-\alpha}{1-\ell}.
\end{align*}
Then, using \Cref{borne_inf} for the first equality:
\begin{align*}
    0 &= 
    \lim_{\as{n}{\infty}}\frac{1}{n}\log\nu_n\left(
    \lbrace B\in\mathcal{P}_K(\carre) \; : \; \dist(B,B_0) < \varepsilon \rbrace
    \right)
    \\&\leq\lim_{\as{n}{\infty}}\frac{1}{n}\log\P( S_n'\geq\alpha n ) - \lim_{\as{n}{\infty}}\frac{1}{n}\log\P\big(\LIS_k(Z_1,\dots,Z_n) \geq \alpha n\big)
    \\&= x - \alpha\log\frac{\alpha}{\ell} - (1-\alpha)\log\frac{1-\alpha}{1-\ell}.
\end{align*}
Since this holds for any $\ell\in\left(\LISt_k(\mu),\alpha\right)$,
we deduce (\ref{ine_goal_upper}) as desired. 
\end{proof}

\subsection{Lower tail speed}\label{section_lower_speed_proof}

The goal of this section is to establish \Cref{prop_lower_tail} and \Cref{th_speed_equiv} stating that, except in trivial cases, a lower tail large deviation principle for $\LIS(\sigma_n)/n$ where $\sigma_n\sim\sample_n(\mu)$ should indeed have speed $n$. 

\begin{proof}[Proof of \Cref{prop_lower_tail}]
Let $j\geq 1$ and $A_j$ be a maximal $j$-nondecreasing subset for $\mu$. 
Notice that all permutations $\sigma$ satisfy $\LIS_j(\sigma) \leq j\LIS(\sigma)$ by union bound.
We can then write:
\begin{multline*}
    \P\big( \LIS(\sigma_n) < \beta n \big)
    \leq \P\big( \LIS_j(\sigma_n) < j\beta n \big)
    \leq \P_{\mu^{\otimes n}}\left(
    \text{less than } j\beta n \text{ points are in } A_j
    \right)
    =\P(S_n < j\beta n)
\end{multline*}
where $S_n$ is a binomial random variable of parameter $\left( n,\LISt_j(\mu) \right)$. 
If $j$ satisfies $j\beta<\LISt_{j}(\mu)$, \Cref{th_Cramer} implies:
\begin{equation*}
    \limsup_{\as{n}{\infty}}\frac{1}{n}
    \log\P\big( \LIS(\sigma_n) < \beta n \big)
    \leq - \Lambda_{\LISt_{j}(\mu)}^*\big(j\beta\big).
\end{equation*}
Since this bound is valid for all such $j$, \Cref{prop_lower_tail} follows.
\end{proof}

In order to prove \Cref{th_speed_equiv} we recall a well known large deviation principle on empirical measures and apply it to our framework. 
See \cite{DZ98} for a proof and \cite{BDMW22} for another application to permuton theory.

\begin{theorem}[Sanov's theorem]\label{sanov}
Let $\mu\in\prob$ and $(Z_i)_{i\in\N^*}$ be a family of i.i.d.~random points distributed under $\mu$. Define for each $n\in\N^*$ the empirical measure
\begin{equation*}
    \Delta_n := \frac{1}{n}(\delta_{Z_1}+\dots+\delta_{Z_n})
\end{equation*}
Then the sequence $(\Delta_n)_{n\in\N^*}$ satisfies a large deviation principle
\begin{equation*}
    \left\{
    \begin{array}{ll}
    \text{for each open set $E$ of }\prob,
    &\underset{\as{n}{\infty}}{\liminf}\frac{1}{n}\log\P(\Delta_n\in E) \geq -\underset{\nu\in E}{\inf} D(\nu|\mu);\smallskip\\
    \text{for each closed set $F$ of }\prob,
    &\underset{\as{n}{\infty}}{\limsup}\frac{1}{n}\log\P(\Delta_n\in F) \leq -\underset{\nu\in F}{\inf} D(\nu|\mu);
    \end{array}
    \right.
\end{equation*}
with good rate function $D(\cdot\vert\mu)$, called the Kullback-Leibler divergence, defined by:
\begin{equation*}
    \text{for all }\nu\in\prob,\quad D(\nu|\mu):=
    \left\{
    \begin{array}{ll}
    \int_{\carre}\log\left(\frac{d\nu}{d\mu}\right)d\nu
    & \text{if }\nu\text{ is absolutely continuous with respect to }\mu;\\
    +\infty 
    &\text{otherwise}.
    \end{array}
    \right.
\end{equation*}
\end{theorem}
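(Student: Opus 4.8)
The plan is to deduce this classical statement from the abstract version of Cramér's theorem for i.i.d.\ sums in a topological vector space, and then to identify the resulting rate function with the relative entropy. First I would observe that $\Delta_n$ is the empirical mean of the i.i.d.\ random variables $\delta_{Z_1},\delta_{Z_2},\dots$ taking values in the space $\meas$ of finite signed measures on $\carre$, endowed with the weak topology; this is a locally convex Hausdorff topological vector space, and since $\carre$ is compact its subset $\prob$ is a compact metrizable space, so exponential tightness is automatic. The relevant logarithmic moment generating functional is, for each continuous $f\colon\carre\to\R$,
\[
    \Lambda(f):=\log\E\!\left[e^{f(Z_1)}\right]=\log\int_{\carre}e^{f}\,\d\mu ,
\]
which is finite for every such $f$ because $f$ is bounded on the compact square.

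Next I would invoke the abstract Cramér theorem (see \cite{DZ98}): given the finiteness of $\Lambda$ and the compactness above, $(\Delta_n)_{n\in\N^*}$ satisfies a large deviation principle in $\prob$ --- the upper bound extending from compact sets to all closed sets precisely because $\prob$ is itself compact --- with good rate function
\[
    I(\nu):=\sup_{f}\left(\int_{\carre}f\,\d\nu-\log\int_{\carre}e^{f}\,\d\mu\right),
\]
the supremum being over continuous $f\colon\carre\to\R$. Goodness requires only that $I$ be lower semicontinuous, which holds because it is a supremum of continuous functions, together with the compactness of $\prob$. It then remains to prove the Donsker--Varadhan identity $I(\nu)=D(\nu|\mu)$. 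The inequality $I(\nu)\le D(\nu|\mu)$ is the easy half: it is immediate from Jensen's inequality after introducing the density $g=\d\nu/\d\mu$ and changing measure, while $I(\nu)=+\infty$ when $\nu$ is not absolutely continuous with respect to $\mu$, as seen by testing against functions blowing up on a $\mu$-negligible set carrying $\nu$-mass. For the reverse inequality one approximates $\log g$ from within: with $g_N:=(g\wedge N)\vee N^{-1}$ and $f_N$ a continuous smoothing of $\log g_N$, one checks $\int f_N\,\d\nu-\log\int e^{f_N}\,\d\mu\to\int\log g\,\d\nu=D(\nu|\mu)$ by monotone and dominated convergence.

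I expect the variational identity, and specifically the lower bound $I(\nu)\ge D(\nu|\mu)$, to be the main technical obstacle: one must approximate a possibly unbounded, merely measurable log-density by bounded continuous test functions while simultaneously controlling $\int f\,\d\nu$ and $\log\int e^{f}\,\d\mu$, and separately handle the non-absolutely-continuous case. An alternative, more self-contained route avoids the abstract Cramér theorem entirely: one first establishes the result for the coarsening of $\Delta_n$ onto a fixed finite measurable partition of $\carre$ via the method of types and Stirling's formula (there the coarsened empirical measure lives in a simplex and the combinatorics are explicit), then passes to the weak topology by a Dawson--G\"artner projective-limit argument over refining partitions, identifying the limiting rate function with $D(\nu|\mu)$ through a martingale-convergence argument on conditional relative entropies; there the projective-limit step and this last identification would be the delicate points. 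Since this is a classical result, one may in any case simply cite \cite{DZ98}.
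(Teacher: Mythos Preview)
The paper does not prove this statement at all: it introduces Sanov's theorem as a ``well known large deviation principle on empirical measures'' and simply refers the reader to \cite{DZ98} for a proof. Your closing remark---that one may in any case simply cite \cite{DZ98}---is therefore exactly what the paper does, and nothing more is required here.

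That said, your sketch is a correct outline of one standard proof (the abstract Cram\'er route plus the Donsker--Varadhan identification of the rate function), and your alternative via finite partitions and a projective-limit argument is the other standard route found in \cite{DZ98}. You have also correctly identified the genuinely delicate step, namely the lower bound $I(\nu)\ge D(\nu\vert\mu)$ in the variational identity. None of this is needed for the present paper, but the plan is sound.
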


\medskip

Let $k\in\N^*$. 
By upper semi-continuity, for any $\alpha,\beta\in[0,1]$, 
$\left\{ \nu\in\prob \,:\, \LISt_k(\nu) \geq \alpha \right\}$
is closed and
$\left\{ \nu\in\prob \,:\, \LISt_k(\nu) < \beta \right\}$
is open.
Since $\LISt_k(\Delta_n)=\LIS_k(Z_1,\dots,Z_n)/n$, we deduce the following corollary:

\begin{cor}\label{cor_Sanov}
Let $\mu\in\per$, $\sigma_n\sim\sample_n(\mu)$ for each $n\in\N^*$, and $\alpha,\beta\in[0,1]$. Then:
\begin{equation*}
    \left\{
    \begin{array}{ll}
    \underset{\as{n}{\infty}}{\liminf}\frac{1}{n}\log
    \P\big( \LIS_k(\sigma_n)<\beta n \big)
    \;\geq\; -\underset{\nu\in\prob\; ;\; \LISt_k(\nu)<\beta}{\inf} D(\nu|\mu);\smallskip\\
    \underset{\as{n}{\infty}}{\limsup}\frac{1}{n}\log 
    \P\big( \LIS_k(\sigma_n)\geq\alpha n \big)
    \;\leq\; -\underset{\nu\in\prob\; ;\; \LISt_k(\nu)\geq\alpha}{\inf} D(\nu|\mu).
    \end{array}
    \right.
\end{equation*}
\end{cor}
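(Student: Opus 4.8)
The idea is to recognize both probabilities as probabilities that the empirical measure $\Delta_n=\frac1n(\delta_{Z_1}+\dots+\delta_{Z_n})$, with $Z_1,\dots,Z_n$ i.i.d.\ under $\mu$, lands in a sub- or super-level set of $\LISt_k$, and then to invoke Sanov's theorem (\Cref{sanov}). Concretely, realize $\sigma_n$ as $\perm(Z_1,\dots,Z_n)$ (the probabilities in the statement depend only on the law of $\sigma_n$). The argument rests on two facts: (a) $\LISt_k(\Delta_n)=\LIS_k(\sigma_n)/n$, so that $\{\LIS_k(\sigma_n)<\beta n\}=\{\Delta_n\in E\}$ and $\{\LIS_k(\sigma_n)\geq\alpha n\}=\{\Delta_n\in F\}$ with $E:=\{\nu\in\prob:\LISt_k(\nu)<\beta\}$ and $F:=\{\nu\in\prob:\LISt_k(\nu)\geq\alpha\}$; and (b) by upper semi-continuity of $\LISt_k$ on $\prob$ (\Cref{prop_semicon_LISt}), $F$ is closed and $E$ is open. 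Granting these, the first displayed inequality is the lower bound in \Cref{sanov} applied to the open set $E$, and the second is the upper bound applied to the closed set $F$, since $\inf_{\nu\in E}D(\nu|\mu)$ and $\inf_{\nu\in F}D(\nu|\mu)$ are exactly the infima written in the statement.

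\textbf{Fact (a).} This is the analogue of \Cref{lem_lien_lis_list} with $\Delta_n$ in place of $\mu_\sigma^\nearrow$, and the same two-sided argument works. Note first that $\Delta_n\in\prob$, so \Cref{sanov} applies and $\LISt_k(\Delta_n)$ makes sense. For one inequality: if $I_1,\dots,I_k$ are increasing subsequences of $\sigma_n$ realizing $\LIS_k(\sigma_n)$, the corresponding sets of sample points $A_j:=\{Z_i:\ i\in I_j\}$ are finite (hence closed) and nondecreasing, so $A:=A_1\cup\dots\cup A_k\in\mathcal{P}_K^{k\nearrow}(\carre)$; since almost surely the $Z_i$ have pairwise distinct coordinates, $\Delta_n(A)\geq|I_1\cup\dots\cup I_k|/n=\LIS_k(\sigma_n)/n$, giving $\LISt_k(\Delta_n)\geq\LIS_k(\sigma_n)/n$. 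Conversely, let $B=B_1\cup\dots\cup B_k\in\mathcal{P}_K^{k\nearrow}(\carre)$ realize the supremum in $\LISt_k(\Delta_n)$, which exists by \Cref{prop_semicon_LISt}; for each $i$, the indices of sample points lying in $B_i$ form an increasing subsequence $J_i$ of $\sigma_n$ (because $B_i$ is nondecreasing and coordinates are distinct), whence $\Delta_n(B)=|J_1\cup\dots\cup J_k|/n\leq\LIS_k(\sigma_n)/n$. Combining the two bounds gives the claimed identity.

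\textbf{Fact (b) and conclusion.} If $(\nu_m)_{m}$ converges weakly in $\prob$ to $\nu$ with $\LISt_k(\nu_m)\geq\alpha$ for all $m$, then \Cref{prop_semicon_LISt} gives $\LISt_k(\nu)\geq\limsup_m\LISt_k(\nu_m)\geq\alpha$, so $F$ is closed; in particular $\{\nu:\LISt_k(\nu)\geq\beta\}$ is closed for every threshold, so its complement $E$ is open. Then $\P(\LIS_k(\sigma_n)<\beta n)=\P(\Delta_n\in E)$ and $\P(\LIS_k(\sigma_n)\geq\alpha n)=\P(\Delta_n\in F)$, and the two inequalities of the corollary are immediate from the open-set (lower) and closed-set (upper) halves of the large deviation principle in \Cref{sanov}. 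I do not anticipate a genuine obstacle here: the corollary is essentially a repackaging of Sanov's theorem via the semi-continuity of $\LISt_k$, and the only points needing a line of care are the identity $\LISt_k(\Delta_n)=\LIS_k(\sigma_n)/n$ (copied from \Cref{lem_lien_lis_list}) and the elementary verification that the relevant level sets of $\LISt_k$ are respectively open and closed.
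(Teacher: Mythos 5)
Your proposal is correct and follows essentially the same route as the paper: the paper deduces the corollary from Sanov's theorem by noting that $\{\nu:\LISt_k(\nu)\geq\alpha\}$ is closed and $\{\nu:\LISt_k(\nu)<\beta\}$ is open by upper semi-continuity (\Cref{prop_semicon_LISt}), together with the identity $\LISt_k(\Delta_n)=\LIS_k(\sigma_n)/n$, which it states without proof as the empirical-measure analogue of \Cref{lem_lien_lis_list}. Your write-up simply fills in that identity with the same two-sided argument, so there is no substantive difference.
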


With the help of \Cref{cor_Sanov}, we can now prove \Cref{th_speed_equiv}.

\begin{proof}[Proof of \Cref{th_speed_equiv}]
We prove both equivalences $(i)\Longleftrightarrow(ii)$ and $(ii)\Longleftrightarrow(iii)$.
Notice that $(ii)\implies(i)$ is immediate.

\medskip

\underline{Proof of $(i)\implies(ii)$.}
We proceed by contraposition, and suppose that $\P\big(\LIS(\sigma_{n_0})<\beta n_0\big)>0$ for some $n_0\in\N^*$.
By Robinson--Schensted correspondence (or Erd\H{o}s-Szekeres lemma):
\begin{equation*}
    \text{for any }n\in\N^*\text{ and }\sigma\in\S_n,\quad
    \LIS(\sigma)\LDS(\sigma)\geq n.
\end{equation*}
Hence we get:
\begin{equation*}
    \P\big(\LDS(\sigma_{n_0})\geq k\pp1\big) \geq 
    \P\left(\LIS(\sigma_{n_0})<\frac{n_0}{k}\right) \geq
    \P\big( \LIS(\sigma_{n_0}) < \beta n_0 \big) > 0
\end{equation*}
which in particular imposes $n_0\geq k\pp1$, and then:
\begin{multline*}
    0<\P_{\mu^{\otimes n_0}}\left(\exists i_1<\dots<i_{k\p1},\; \{Z_{i_1},\dots,Z_{i_{k\p1}}\}\in\mathcal{P}_K^\searrow(\carre)\right)
    \\ \leq \genfrac(){0pt}{}{n_0}{k\pp1}
    \P_{\mu^{\otimes k\p1}}\left( \{Z_{1},\dots,Z_{k\p1}\} \in\mathcal{P}_K^\searrow(\carre)\right).
\end{multline*}
Thus, since the marginals of $\mu$ are continuous:
\begin{equation*}
    \P_{\mu^{\otimes k\p1}}\left(\{Z_{1},\dots,Z_{k\p1}\}\in\mathcal{P}_{k+1}^{\searrow\!\!\searrow}(\carre)\right) >0
\end{equation*}
where $\mathcal{P}_{k\p1}^{\searrow\!\!\searrow}(\carre)$ denotes the set of nonincreasing subsets of $\carre$ of cardinal $k\pp1$ whose elements share no common $x$- or $y$-coordinate.
As a consequence, we can define the measure $\mu'$ as the law of $(Z_1,\dots,Z_{k\p1})$ under $\mu^{\otimes k\p1}$ conditioned on the event
$\{Z_{1},\dots,Z_{k\p1}\}\in \mathcal{P}_{k+1}^{\searrow\!\!\searrow}(\carre)$.
Now take an element $(z_1,\dots,z_{k\p1})$ in the support of $\mu'$. 
There exists $\varepsilon>0$ satisfying:
\begin{equation}\label{décr_supp_boules}
    \text{for any } z_1'\in B_\d(z_1,\varepsilon),\dots,z_{k\p1}'\in B_\d(z_{k\p1},\varepsilon),\quad
    \{z_1',\dots,z_{k\p1}'\}\text{ is nonincreasing}.
\end{equation}
For all $i\in\lb 1,k\pp1\rb$ the point $z_i$ is in the support of $\mu$, therefore $m_i:=\mu\big(B_\d(z_i,\varepsilon)\big)>0$.
Now define a measure $\nu$ as:
\begin{equation*}
    \nu(\cdot) := \sum_{i=1}^{k\p1} \frac{1}{(k\p1)m_i}\mu\big( B_\d(z_i,\varepsilon)\cap\cdot\big).
\end{equation*}
This probability measure is absolutely continuous with respect to $\mu$, with bounded density
\begin{equation*}
    \frac{d\nu}{d\mu}=
    \sum_{i=1}^{k+1} \frac{1}{(k\p1)m_i}\mathbf{1}_{B(z_i,\varepsilon)}.
\end{equation*}
Hence, using the notation of \Cref{sanov}, $D(\nu\vert\mu) < +\infty$.
Moreover, by (\ref{décr_supp_boules}) and the fact that $\nu$ puts mass $1/(k\p1)$ on each ball of its support,
$\LISt(\nu)\leq {1}/({k\p1}) < \beta$.
Thus we can conclude with the first inequality of \Cref{cor_Sanov}:
\begin{equation*}
    \liminf_{\as{n}{\infty}}\frac1n \log
    \P\big( \LIS(\sigma_n)<\beta n \big)
    \geq -D(\nu|\mu) > -\infty.
\end{equation*}

\medskip

\underline{Proof of $(ii)\implies(iii)$.}
Suppose (ii) holds.
First let us show that under this hypothesis:
\begin{equation}\label{eq_1_trivial}
    \text{for all }n\in\N^*,\quad
    \P\big( \LDS(\sigma_{n}) \leq k \big) =1.
\end{equation}
For this, suppose there exists $n_0\in\N^*$ such that $\P\big( \LDS(\sigma_{n_0}) \geq k\p1 \big) >0$.
Then necessarily $n_0\geq k\p1$. Thus we can write, as before:
\begin{equation*}
    0<\P\big( \LDS(\sigma_{n_0}) \geq k\p1 \big) 
    \leq \genfrac(){0pt}{}{n_0}{k\p1}
    \P\big( \LDS(\sigma_{k\p1}) = k\p1 \big)
\end{equation*}
whence $\P\big( \LIS(\sigma_{k\p1})=1 \big) >0$.
Since $\beta(k+1)>1$ this contradicts (ii), and (\ref{eq_1_trivial}) is established.
Then by Robinson--Schensted's correspondence, $\P\big( \LIS_k(\sigma_{n})=n \big) =1$ for all $n\in\N^*$.
Using \Cref{prop_conv_LISk} we deduce:
\begin{equation*}
    \LISt_k(\mu) = \lim_{\as{n}{\infty}}
    \frac{\LIS_k(\sigma_{n})}{n} = 1 
    \quad\text{almost surely.}
\end{equation*}

\medskip

\underline{Proof of $(iii)\implies(ii)$.}
Suppose $\LISt_k(\mu)=1$ and fix $n\in\N^*$. 
By \Cref{prop_semicon_LISt} there exist nondecreasing subsets $A_1,\dots,A_k\in\mathcal{P}_K^\nearrow(\carre)$ such that $\mu(A_1\cup\dots\cup A_k)=1$. 
Thus if $Z_1,\dots,Z_n$ are i.i.d.~random points distributed under $\mu$, by pigeonhole principle there must be at least $n/k$ of them in some $A_i$, implying:
\begin{equation*}
    \P\big( \LIS(\sigma_n) \geq \beta n \big) \geq \P\big( \LIS(\sigma_n) \geq n/k \big) = 1
\end{equation*}
as desired.
\end{proof}

\subsection{Lower tail rate function}\label{section_lower_tail_proofs}

Before proving \Cref{prop_contre_exemple}, we state a simple lemma about conditional concentration of binomial laws.

\begin{lemma}\label{lem_concentration_cond}
Let $S_n$ be a binomial random variable of parameter $(n,p)$ for some $0<p<1$. Then for any $q\in(0,p)$ and $\varepsilon>0$ such that $q-\varepsilon>0$,
there exists $\delta>0$ satisfying
\begin{equation*}
    \P\left( q-\varepsilon \leq S_n/n < q \;\big\vert\; S_n/n < q  \right)
    = 1 - \exp\big( -n\delta + o(n) \big)
    \quad\text{as }\as{n}{\infty}.
\end{equation*}
Likewise for any $q\in(p,1)$ and $\varepsilon>0$ such that $q+\varepsilon<1$,
there exists $\delta>0$ satisfying
\begin{equation*}
    \P\left( q \leq S_n/n < q+\varepsilon \;\big\vert\; S_n/n \geq q  \right)
    = 1 - \exp\big( -n\delta + o(n) \big)
    \quad\text{as }\as{n}{\infty}.
\end{equation*}
\end{lemma}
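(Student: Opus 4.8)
The plan is to prove both statements by a direct Cramér-type estimate, using that conditioning a binomial variable on a large-deviation event forces it to concentrate near the boundary of that event. I will only write the first statement in detail; the second is symmetric, replacing the event $\{S_n/n<q\}$ by $\{S_n/n\geq q\}$ and swapping the roles of the two tails.

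First I would write the conditional probability as a ratio:
\begin{equation*}
    \P\left( q-\varepsilon \leq S_n/n < q \;\big\vert\; S_n/n < q  \right)
    = 1 - \frac{\P\left( S_n/n < q-\varepsilon \right)}{\P\left( S_n/n < q \right)}.
\end{equation*}
The denominator is controlled from below by \Cref{th_Cramer}: since $q\in(0,p)$, one has $\frac1n\log\P(S_n/n<q)\to-\Lambda_p^*(q)$, so $\P(S_n/n<q) = \exp(-n\Lambda_p^*(q)+o(n))$. For the numerator, if $q-\varepsilon>p$ there is nothing to do since the ratio already tends to $0$; in the relevant case $q-\varepsilon\leq p$ we again apply \Cref{th_Cramer} (with the convention $\Lambda_p^*(p)=0$ handled separately, or noting that if $q-\varepsilon=p$ then $\P(S_n/n<q-\varepsilon)\to$ a constant $\leq 1$ while the denominator is exponentially small, which still gives what we want). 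Thus $\P(S_n/n<q-\varepsilon)\leq \exp(-n\Lambda_p^*(q-\varepsilon)+o(n))$, using monotonicity of $\Lambda_p^*$ on $[q-\varepsilon,q]\subseteq(0,p]$ if one prefers an upper bound via a single comparison point. Combining, the ratio is at most $\exp\big(-n(\Lambda_p^*(q-\varepsilon)-\Lambda_p^*(q))+o(n)\big)$, and since $\Lambda_p^*$ is strictly decreasing on $[0,p]$ (it is convex, vanishes at $p$, and is strictly positive away from $p$), the exponent constant
\begin{equation*}
    \delta := \Lambda_p^*(q-\varepsilon) - \Lambda_p^*(q)
\end{equation*}
is strictly positive. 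This gives $\P( q-\varepsilon \leq S_n/n < q \mid S_n/n < q ) = 1 - \exp(-n\delta+o(n))$ as claimed. For the second statement I would repeat the argument verbatim with $\delta := \Lambda_p^*(q+\varepsilon)-\Lambda_p^*(q)>0$, using strict monotonicity of $\Lambda_p^*$ on $[p,1]$.

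The only mild subtlety — and the step I would be most careful about — is the boundary behavior of Cramér's estimate: \Cref{th_Cramer} as stated gives the exact exponential rate only for $q$ strictly on the correct side of $p$, so when $q-\varepsilon$ equals or exceeds $p$ (respectively $q+\varepsilon$ equals or drops below $p$) I should not apply it blindly to the numerator but instead note that the numerator probability is then bounded by a constant (or even tends to $1/2$ or $1$), while the denominator is exponentially small, so the ratio still vanishes exponentially fast — with rate exactly $\Lambda_p^*(q)>0$ in that regime. Writing the final $\delta$ as the positive gap between two values of the convex function $\Lambda_p^*$, and invoking its strict monotonicity on each side of $p$, cleanly packages all cases.
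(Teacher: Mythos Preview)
Your proof is correct and follows essentially the same route as the paper: write the conditional probability as $1 - \P(S_n/n<q-\varepsilon)/\P(S_n/n<q)$, apply Cram\'er's theorem to both numerator and denominator, and take $\delta = \Lambda_p^*(q-\varepsilon)-\Lambda_p^*(q)>0$. One small remark: the boundary subtlety you flag never actually arises, since the hypotheses force $0<q-\varepsilon<q<p$ (and symmetrically $p<q<q+\varepsilon<1$ in the second part), so Cram\'er applies directly with exact rate to both probabilities and your ``$\leq$'' can be replaced by ``$=$''.
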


\begin{proof}
Let us prove the first assertion, as the second one is analogous. 
Applying \Cref{th_Cramer}, we get as $\as{n}{\infty}$:
\begin{equation*}
    \P(S_n/n < q) = \exp\left( -n\Lambda_p^*(q) + o(n) \right)
    \quad\text{and}\quad
    \P(S_n/n < q-\varepsilon) = \exp\left( -n\Lambda_p^*(q-\varepsilon) + o(n) \right).
\end{equation*}
Since $\Lambda_p^*(q-\varepsilon)>\Lambda_p^*(q)$, the assertion follows directly. 
\end{proof}

\medskip

\begin{proof}[Proof of \Cref{prop_contre_exemple}]
Define the following diagonals:
\begin{equation*}
    \left\{
    \begin{array}{lll}
        D_1^{1}\text{ from }(0,0.5)\text{ to }(0.2,0.7);\smallskip\\
        D_2^{1}\text{ from }(0.2,0)\text{ to }(0.5,0.3);\smallskip\\
        D_3^{1}\text{ from }(0.5,0.7)\text{ to }(0.8,1);\smallskip\\
        D_4^{1}\text{ from }(0.8,0.3)\text{ to }(1,0.5);
    \end{array}
    \right.
    \quad\text{and}\quad
    \left\{
    \begin{array}{lll}
        D_1^{2}\text{ from }(0,0.42)\text{ to }(0.28,0.7);\smallskip\\
        D_2^{2}\text{ from }(0.28,0)\text{ to }(0.58,0.3);\smallskip\\
        D_3^{2}\text{ from }(0.58,0.7)\text{ to }(0.88,1);\smallskip\\
        D_4^{2}\text{ from }(0.88,0.3)\text{ to }(1,0.42);
    \end{array}
    \right.
\end{equation*}
then define $\mu_1$ and $\mu_2$ as the only permutons having support $\cup_{i=1}^4D_i^{1}$ and $\cup_{i=1}^4D_i^{2}$
respectively. See \Cref{fig_contre_exemple} for a representation. 
\begin{figure}
    \centering
    \includegraphics[scale=0.7]{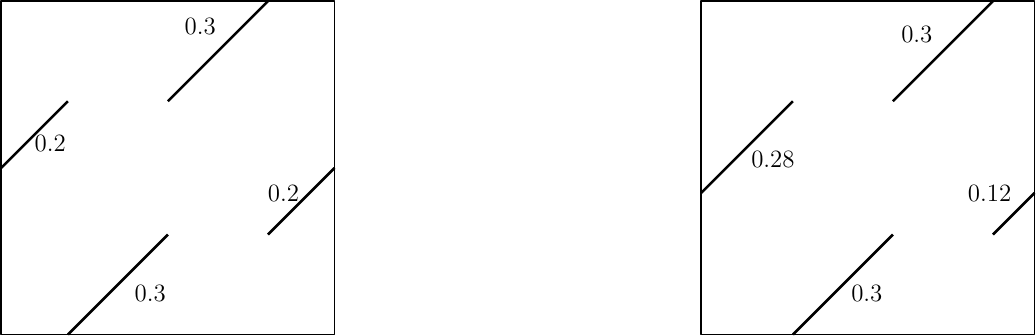}
    \caption{Representation of both permutons $\mu_1$ (on the left) and $\mu_2$ (on the right) used to prove \Cref{prop_contre_exemple}.}
    \label{fig_contre_exemple}
\end{figure}
These permutons satisfy
$\LISt(\mu_1)=\LISt(\mu_2)=0.6$,
$\LISt_2(\mu_1)=\LISt_2(\mu_2)=1$,
and in particular $\shapet(\mu_1)=\shapet(\mu_2)$.
Now fix $\beta:= 0.55$. 
Our goal is to show that
\begin{equation*}
    \limsup_{\as{n}{\infty}}\frac{1}{n}
    \log\P\big( \LIS(\tau_n) < \beta n \big) 
    < \liminf_{\as{n}{\infty}}\frac{1}{n}
    \log\P\big( \LIS(\sigma_n) < \beta n \big)
\end{equation*}
where $\sigma_n\sim\sample_n(\mu_1)$ and $\tau_n\sim\sample_n(\mu_2)$.
Let $n\in\N^*$ and, for each $j\in\{1,2\}$, $Z_1^{j},\dots,Z_n^{j}$ be random i.i.d.~points distributed under $\mu_j$. 
Also define for each  $i\in\lb1,4\rb$, $S_{n,i}^{j} := \big\vert \{Z_1^{j},\dots,Z_n^{j}\}\cap D_i^{j} \big\vert$.
This is a binomial random variable of parameter $\big(n,p_i^{j}\big)$, where
\begin{equation*}
    (p_1^{1}, p_2^{1}, p_3^{1}, p_4^{1}) = (0.2,0.3,0.3,0.2)
    \quad\text{and}\quad
    (p_1^{2}, p_2^{2}, p_3^{2}, p_4^{2}) = (0.28,0.3,0.3,0.12).
\end{equation*}
Moreover, these variables satisfy
\begin{equation}\label{somme_a_n}
    \text{almost surely for each }j\in\{1,2\},\quad \sum_{i=1}^4S_{n,i}^{j}=n.
\end{equation}
Now for each $j\in\{1,2\}$, define the events
\begin{equation*}
        A^{j} := \left\{ \LIS\big(Z_1^{j},\dots,Z_n^{j}\big) < 0.55 n \right\}
        \quad\text{and}\quad
        B^{j} := \lbrace S_{n,2}^{j} + S_{n,3}^{j} < 0.55 n \rbrace
\end{equation*}
and notice the inclusion $A^{j}\subseteq B^{j}$. 
Let us study for each $j\in\{1,2\}$ the probability of $A^{j}$ conditionally to $B^{j}$.

\bigskip

\underline{Study of $\mu_1$.} 
Since $S_{n,2}^{1}+S_{n,3}^{1}$ follows a binomial law of parameter $(n,0.6)$, \Cref{lem_concentration_cond} implies:
\begin{equation}\label{premiere_ine_cond}
    \P\left( 
    0.53 \leq \frac{S_{n,2}^{1}+S_{n,3}^{1}}{n} < 0.55 
    \:\Big\vert\: B^{1} \right) 
    = 1-\exp\big(-n\delta+o(n)\big)
    \quad\text{as }\as{n}{\infty}
\end{equation}
for some $\delta>0$ whose value might change along the proof. 
Moreover, since $S_{n,1}^{1}+S_{n,4}^{1}$ follows a binomial law of parameter $(n,0.4)$ and $B^{1} = \left\{ S_{n,1}^{1}+S_{n,4}^{1} \geq 0.45 n \right\}$ by (\ref{somme_a_n}), \Cref{lem_concentration_cond} also implies:
\begin{equation*}
    \P\left( 0.45 \leq \frac{S_{n,1}^{1}+S_{n,4}^{1}}{n} < 0.47 \:\Big\vert\: B^{1} \right) = 1-\exp\big(-n\delta+o(n)\big)
    \quad\text{as }\as{n}{\infty}.
\end{equation*}
Note that the law of $S_{n,2}^{1}$ conditional to the variable $S_{n,2}^{1}+S_{n,3}^{1}$ is binomial of parameter $\big(S_{n,2}^{1}+S_{n,3}^{1},0.5\big)$ (indeed the points appearing in $D_{n,2}^{1}\cup D_{n,3}^{1}$ can be sampled by first determining their number $S_{n,2}^{1}+S_{n,3}^{1}$ and then placing them uniformly at random). 
We can thus write:
\begin{align*}
    &\P\left(\left. 0.26\leq \frac{S_{n,2}^{1}}{n} <0.28 \:\right\vert\: B^{1} \right)
    \\&\geq \frac{\P\left( 0.26\leq \frac{S_{n,2}^{1}}{n} <0.28 \text{ and } 0.53\leq \frac{S_{n,2}^{1}+S_{n,3}^{1}}{n} <0.55 \right)}{\P(B^{1})}
    \\&\geq \P\left( \frac{S_{n,2}^{1}+S_{n,3}^{1}}{2n} -0.005\leq \frac{S_{n,2}^{1}}{n} < \frac{S_{n,2}^{1}+S_{n,3}^{1}}{2n}+0.005 \:\left\vert\: 0.53\leq \frac{S_{n,2}^{1}+S_{n,3}^{1}}{n} <0.55 \right.\right)
    \\&\quad\times \frac{\P\left( 0.53\leq \frac{S_{n,2}^{1}+S_{n,3}^{1}}{n} <0.55 \right)}{\P(B^{1})}
    \\&\geq 1-\exp\big(-n\delta+o(n)\big)
    \quad\text{as }\as{n}{\infty}
\end{align*}
for some $\delta>0$, by \Cref{th_Cramer} and (\ref{premiere_ine_cond}). 
More generally we have:
\begin{equation}\label{controles_points_contrex_pgd}
    \left\{
    \begin{array}{ll}
        \text{for }i\in\{2,3\},\quad
        \P\left(\left. 0.26\leq \frac{S_{n,i}^{1}}{n} <0.28 \:\right\vert\: B^{1} \right)
        \geq 1-\exp\big(-n\delta+o(n)\big);\smallskip\\
        \text{for }i\in\{1,4\},\quad
        \P\left(\left. 0.22\leq \frac{S_{n,i}^{1}}{n} <0.24 \:\right\vert\: B^{1} \right)
        \geq 1-\exp\big(-n\delta+o(n)\big);
    \end{array}
    \right.
\end{equation}
as $\as{n}{\infty}$ and for some $\delta>0$. 
Under the events of (\ref{controles_points_contrex_pgd}), the longest increasing subsequence is formed by the points in $D_2^1$ and $D_3^1$ (see \Cref{fig_contre_exemple}). 
Subsequently:
\begin{equation*}
    \P\Big( \LIS\left( Z_1^{1},\dots,Z_n^{1} \right) = {S_{n,2}^{1}+S_{n,3}^{1}} \:\Big\vert\: B^{1} \Big) 
    \geq 1-\exp\big(-n\delta+o(n)\big)
    \quad\text{as }\as{n}{\infty},
\end{equation*}
whence, using \Cref{premiere_ine_cond}:
\begin{equation*}
    \P(A^{1}\vert B^{1}) \geq 1-\exp\big(-n\delta+o(n)\big)
    \quad\text{as }\as{n}{\infty}.
\end{equation*}
Thanks to \Cref{th_Cramer} we finally deduce:
\begin{equation}\label{seconde_ine_cond}
    \liminf_{\as{n}{\infty}}\frac{1}{n}
    \log\P\big( \LIS(\sigma_n) < 0.55 n \big)
    \geq \liminf_{\as{n}{\infty}} \frac{1}{n}\log\left(\big(
    1-e^{-n\delta+o(n)}\big) \P(B^{1})
    \right) = -\Lambda_{0.6}^*(0.55).
\end{equation}

\medskip

\underline{Study of $\mu_2$.} 
We adapt the previous strategy.
However, a key difference here is that the law of $S_{n,1}^{2}$ conditional to the variable $S_{n,1}^{2}+S_{n,4}^{2}$ is binomial of parameter $\big(S_{n,1}^{2}+S_{n,4}^{2},0.7\big)$. 
We get:
\begin{equation*}
    \left\{
    \begin{array}{ll}
        \P\left(\left. 0.53\leq \frac{S_{n,2}^{2}+S_{n,3}^{2}}{n} <0.55 \:\right\vert\: B^{2} \right)
        \geq 1-\exp\big(-n\delta+o(n)\big);\smallskip\\
        \P\left(\left. 0.45\leq \frac{S_{n,1}^{2}+S_{n,4}^{2}}{n} <0.47 \:\right\vert\: B^{2} \right)
        \geq 1-\exp\big(-n\delta+o(n)\big);\smallskip\\
        \P\left(\left. 0.26\leq \frac{S_{n,i}^{2}}{n} <0.28 \:\right\vert\: B^{2} \right)
        \geq 1-\exp\big(-n\delta+o(n)\big)
        \quad \text{for }i\in\{2,3\};\smallskip\\
        \P\left(\left. 0.31\leq \frac{S_{n,1}^{2}}{n} <0.33 \:\right\vert\: B^{2} \right)
        \geq 1-\exp\big(-n\delta+o(n)\big);\smallskip\\
        \P\left(\left. 0.13\leq \frac{S_{n,4}^{2}}{n} <0.15 \:\right\vert\: B^{2} \right)
        \geq 1-\exp\big(-n\delta+o(n)\big);
    \end{array}
    \right.
\end{equation*}
as $\as{n}{\infty}$, for some $\delta>0$.
Under these events, the longest increasing subsequence is this time formed by the points in $D_1^2$ and $D_3^2$.
Subsequently:
\begin{equation*}
    \P\Big( \LIS\left( Z_1^{2},\dots,Z_n^{2} \right) = {S_{n,1}^{2}+S_{n,3}^{2}} \:\Big\vert\: B^{2} \Big) 
    \geq 1-\exp\big(-n\delta+o(n)\big)
    \quad\text{as }\as{n}{\infty}
\end{equation*}
whence
\begin{equation*}
    \P(A^{2}\vert B^{2}) \leq \exp\big(-n\delta+o(n)\big)
    \quad\text{as }\as{n}{\infty}.
\end{equation*}
From this, (\ref{seconde_ine_cond}) and \Cref{th_Cramer} we finally deduce:
\begin{multline*}
    \limsup_{\as{n}{\infty}}\frac{1}{n}
    \log\P\big( \LIS(\tau_n) < 0.55 n \big)
    \leq \limsup_{\as{n}{\infty}} \frac{1}{n}\log\big(
    e^{-n\delta+o(n)} \P(B^{2})\big)
    \\= -\delta-\Lambda_{0.6}^*(0.55)
    < -\Lambda_{0.6}^*(0.55)
    \leq \liminf_{\as{n}{\infty}}\frac{1}{n}
    \log\P\big( \LIS(\sigma_n) < 0.55 n \big)
\end{multline*}
as announced.
\end{proof}

\section{Proofs of the results in Sections \ref{section_tableaux_results} and \ref{section_injectivity_results}}\label{section_tableaux_proofs}

\subsection{Convergence of tableaux}\label{section_proof_cv_tableaux}

\begin{proof}[Proof of \Cref{prop_lambda_cv}]
	Recall that for any permutation $\sigma$, its subword $\sigma^{i,j}$ consists of letters with position at most $i$ and value at most $j$.
	Also recall that $\lambda_k^\sigma(i,j) = \LIS_k(\sigma^{i,j}) - \LIS_{k\m1}(\sigma^{i,j})$, and that for any permuton $\mu$, we have $\lamt_k^\mu(x,y) = \LISt_k\left( \mu\vert_{[0,x]\times[0,y]} \right) - \LISt_{k\m1}\left( \mu\vert_{[0,x]\times[0,y]} \right)$.
	Let us first establish pointwise convergence. 
    We aim to show that for any fixed $(x,y)\in\carre$, almost surely:
    \begin{equation}\label{pointwise_lambda_convergence}
        \frac{1}{n}\LIS_k\left( \sigma_n^{\lfloor nx\rfloor,\lfloor ny\rfloor} \right)
        \cv{n}{\infty}
        \LISt_k\left( \mu\vert_{[0,x]\times[0,y]} \right) .
    \end{equation}
    Let $Z_1,\dots,Z_n$ be random i.i.d.~points under $\mu$, let $\sigma_n := \perm(Z_1,\dots,Z_n)$, and define
    \begin{equation*}
        I := \lbrace i\in\lb1,n\rb \;:\; Z_i \in [0,x]\times[0,1] \rbrace
        \quad ; \quad
        J := \lbrace i\in\lb1,n\rb \;:\; Z_i \in [0,1]\times[0,y] \rbrace.
    \end{equation*}
    Then $\perm\left( (Z_i)_{i\in I\cap J} \right) = \sigma_n^{|I| , |J|}$. 
    Moreover, conditional to its size $|I\cap J|$, the permutation $\perm\left( (Z_i)_{i\in I\cap J} \right)$ follows the law $\sample_{|I\cap J|}(\nu)$ where $\nu$ is the pre-permuton induced by $\mu$ on $[0,x]\times[0,y]$. 
    As a consequence of \Cref{prop_conv_LISk} and the law of large numbers, we get:
    \begin{multline*}
        \frac{1}{n}\LIS_k\left( \sigma_n^{|I|,|J|} \right)
        =
        \frac{|I\cap J|}{n}\frac{1}{|I\cap J|}\LIS_k\left( \sigma_n^{| I|,|J|} \right)
         \cv{n}{\infty}
        \mu([0,x]\times[0,y]) \, \LISt_k\left( \nu \right)
        =
        \LISt_k\left( \mu\vert_{[0,x]\times[0,y]} \right)
    \end{multline*}
    almost surely. 
    Since the words $\sigma_n^{\lfloor nx\rfloor,\lfloor ny\rfloor}$ and $\sigma_n^{|I|,|J|}$ differ by at most $\big| \lfloor nx\rfloor- |I| \big| + \big| \lfloor ny\rfloor- |J| \big|$ letters, we have the following rough upper bound :
    \begin{equation*}
        \left| \LIS_k\left(\sigma_n^{\lfloor nx\rfloor,\lfloor ny\rfloor}\right) - \LIS_k\left(\sigma_n^{|I|,|J|}\right) \right| \leq \big| \lfloor nx\rfloor-|I| \big| + \big| \lfloor ny\rfloor-|J| \big|.
    \end{equation*}
    Using the fact that $\mu$ is a permuton, \textit{i.e.}~that $\P( Z_i \in[0,x]\times[0,1])=x$ and $\P( Z_i \in[0,1]\times[0,y])=y$, the law of large numbers yields
    \begin{equation*}
        \frac{\lfloor nx\rfloor-|I|}{n} \cv{n}{\infty} 0
        \quad ; \quad
        \frac{\lfloor ny\rfloor-|J|}{n} \cv{n}{\infty} 0
    \end{equation*}
    almost surely, and (\ref{pointwise_lambda_convergence}) follows.

    \medskip

    We can already use this pointwise convergence to deduce properties of $\lamt^\mu$.
    Let $j\in\lb1,n\rb$ and $i\in\lb1,n\m1\rb$.
    Since the P-tableau of $\sigma_n^{i+1,j}$ is obtained after Schensted-insertion of at most one letter in the P-tableau of $\sigma_n^{i,j}$, the shape of $\sigma_n^{i,j}$ is included in the shape of $\sigma_n^{i+1,j}$ and they differ by at most one box. 
    This fact is also clear from Fomin's construction described later.
    Hence for any $k\in\N^*$:
    \begin{equation*}
        \lambda_k^{\sigma_n}(i,j) \leq \lambda_k^{\sigma_n}(i\pp1,j) \leq \lambda_k^{\sigma_n}(i,j)+1.
    \end{equation*}
    Since for any $k\in\N^*$ and any $(x,y)\in\carre$:
    \begin{equation}\label{pointwise_lambda_convergence_bis}
        \frac{1}{n}\lambda^{\sigma_n}_k\left( \lfloor xn\rfloor , \lfloor yn\rfloor \right) 
        \cv{n}{\infty}
        \lamt^{\mu}_k(x,y)
    \end{equation}
    almost surely by (\ref{pointwise_lambda_convergence}), we deduce that $\lamt^\mu_k$ in nondecreasing and $1$-Lipschitz in its first variable.
    The same holds for its second variable.
    
    \medskip

    To obtain uniform convergence, we simply need to adapt the proof of Dini's theorem. 
    Fix $\epsilon>0$ and $p := \lceil 1/\epsilon \rceil$. 
    Using (\ref{pointwise_lambda_convergence_bis}), almost surely for large enough $n$ we have:
    \begin{equation*}
        \text{for any }i,j\in\lb0,p\rb,\quad
        \left\vert \frac{1}{n}\lambda_k^{\sigma_n}\big( \lfloor ni/p\rfloor,\lfloor nj/p\rfloor \big) - \lamt_k^{\mu}\big( i/p,j/p \big) \right\vert \leq \epsilon
    \end{equation*}
    Let $(x,y)\in\carre$ and $i,j\in\lb1,p\rb$ such that $(i\mm1)/p\leq x\leq i/p$ and $(j\mm1)/p\leq y\leq j/p$. Then:
    \begin{multline*}
        \lamt_k^{\mu}\big( (i\mm1)/p,(j\mm1)/p \big) - \epsilon
        \leq \frac{1}{n}\lambda_k^{\sigma_n}\big( \lfloor n(i\mm1)/p\rfloor,\lfloor n(j\mm1)/p\rfloor \big)
        \\ \leq \frac{1}{n}\lambda_k^{\sigma_n}\big( \lfloor nx\rfloor,\lfloor ny\rfloor \big)
        \leq \frac{1}{n}\lambda_k^{\sigma_n}\big( \lfloor ni/p\rfloor,\lfloor nj/p\rfloor \big)
        \leq \lamt_k^{\mu}\big( i/p,j/p \big) + \epsilon.
    \end{multline*}
    Since $\lamt^\mu_k$ in nondecreasing and $1$-Lipschitz in both variables,
    we deduce:
    \begin{equation*}
        \left| \frac{1}{n}\lambda_k^{\sigma_n}(\lfloor nx\rfloor,\lfloor ny\rfloor) - \lamt_k^\mu(x,y) \right|
        \leq \epsilon+2/p \leq 3\epsilon.
    \end{equation*}
    We proved that for any $\epsilon>0$, almost surely
    \begin{equation*}
        \limsup_{\as{n}{\infty}} \left\| 
        \frac{1}{n}\lambda_k^{\sigma_n}(\lfloor n\,\cdot\rfloor,\lfloor n\,\cdot\rfloor)
        - \lamt_k^{\mu} \right\|_\infty
        \leq 3\epsilon.
    \end{equation*}
    By restricting to $\epsilon\in\Q_+^*$ we conclude that almost surely, uniform convergence holds.
\end{proof}

\subsection{Inverse Fomin dynamics}\label{section_proofs_Fomin}

We start this section by recalling the ``edge local rules'' construction of Robinson--Schensted's correspondence, as presented by Viennot in \cite{V18}.
Let $\sigma\in\S_n$ and consider the grid $\lb0,n\rb\times\lb0,n\rb$.
For each $1\leq i,j\leq n$, fill the cell $c_{i,j} :=[ i\mm1, i ]\times[ j\mm1,j ]$ with a point if and only if $\sigma(i)=j$.
The algorithm labels the edges of each cell of the grid with nonnegative integers.
Write $S(c), W(c), N(c), E(c)$ for the south, west, north, east edge labels of a cell $c$.
We initialize the algorithm with
\begin{equation*}
    \text{for each }1\leq k\leq n,\quad
    S(c_{k,1})=0 \quad\text{and}\quad W(c_{1,k})=0
\end{equation*}
and the local rules are the following for each cell $c$:
\begin{itemize}
    \item if $S(c)=W(c)=0$ and the cell is empty then $N(c)=E(c)=0$;
    \item if $S(c)=W(c)=0$ and the cell is filled with a point then $N(c)=E(c)=1$;
    \item if $S(c)=W(c)>0$ then $N(c)=E(c)=S(c)+1$;
    \item if $S(c) \neq W(c)$ then $N(c)=S(c)$ and $E(c)=W(c)$;
\end{itemize}
see \Cref{fig_Fomin_direct_rules}. 

\begin{figure}
    \centering
    \includegraphics[scale=0.66]{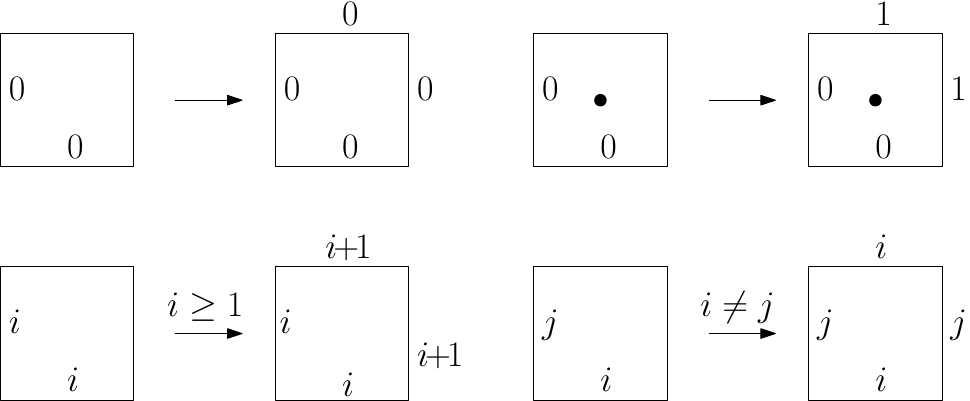}
    \caption{Illustration of Fomin's direct local rules.}
    \label{fig_Fomin_direct_rules}
\end{figure}

These edge rules are equivalent to Fomin's growth diagrams algorithm, described in Section 5.2 of \cite{S01}, which labels the vertices of each cell $c$ of the grid with diagrams $SE(c), SW(c), NW(c), NE(c)$.
This algorithm starts with empty diagrams on the south and west borders, and uses local rules to deduce for each cell $c$ the diagram $NE(c)$ from the other three diagrams and the presence or not of a permutation point.
The local rules for Fomin's growth diagrams are equivalent to Viennot's edge local rules, in the sense that the label of an edge is $k>0$ if and only if the diagrams at its endpoints differ by exactly one box in their $k$-th row, and the label is $0$ if and only if these diagrams are equal.
It is possible to prove \cite{S01} that the sequence of diagrams on the north border (resp.~east border) of the grid encodes the Q-tableau (resp.~P-tableau) of the permutation.
Subsequently one can see that the diagram labeling any vertex $(i,j)$ is the RS-shape of the word $\sigma^{i,j}$.
See \Cref{fig_Fomin_rep} for an example.

\begin{figure}
    \begin{minipage}[c]{0.45\linewidth}
        \centering
        \includegraphics[scale=0.72]{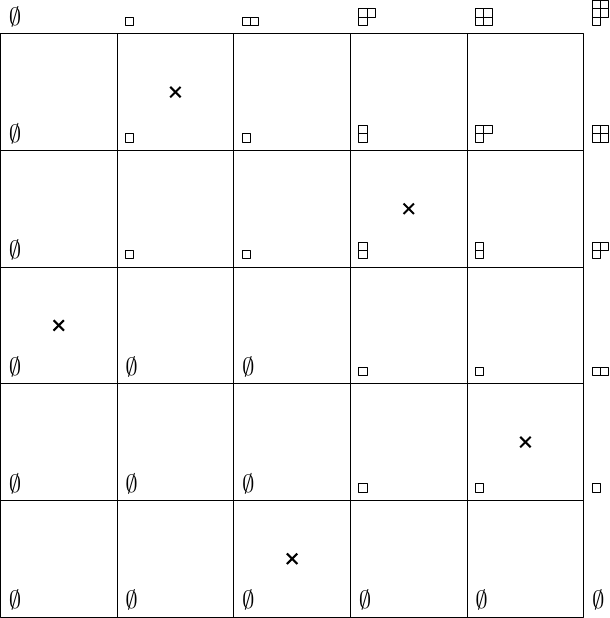}
    \end{minipage}
    \hfill
    \begin{minipage}[c]{0.45\linewidth}
        \centering
        \includegraphics[scale=0.72]{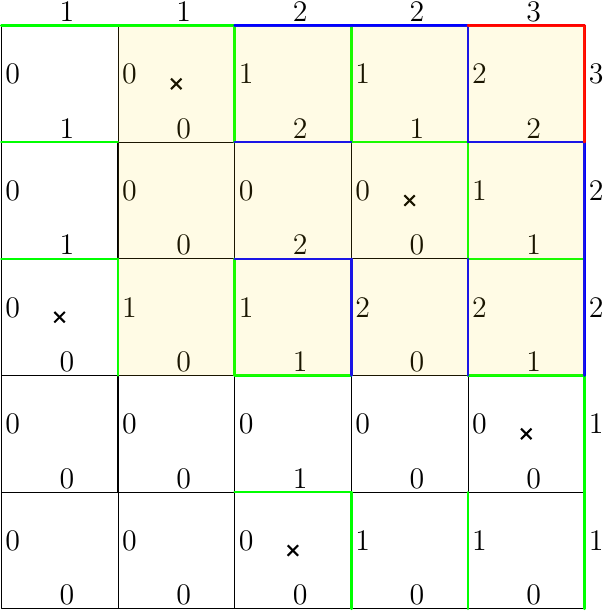}
    \end{minipage}
    \caption{Fomin's rules applied to the permutation $\sigma = 3\,5\,1\,4\,2$. 
    	On the left, diagram labels on vertices are used to represent the shapes of sub-words. 
    	On the right, integer labels on edges are used to represent the row length increments. 
    	For better readability, each integer label is associated with a color (green for $1$, blue for $2$, red for $3$). 
    	We can recover the P- and Q-tableau from the integer labels on the east and north sides of the grid:
    	$P(\sigma)$ contains the letters $1$ and $2$ in its first row, $3$ and $4$ in its second row, and $5$ in its third row (in this example, $Q(\sigma)=P(\sigma)$ because $\sigma$ is an involution).
    	A rectangle is highlighted for later reference.}
    \label{fig_Fomin_rep}
\end{figure}

We can sum up these observations with our notation: for any $0\leq i\leq i'\leq n$, $0\leq j\leq j'\leq n$ and $k\in\N^*$, the number of edges labeled $k$ on any up-right path from $(i,j)$ to $(i',j')$ is equal to $\lambda^\sigma_k(i',j')-\lambda^\sigma_k(i,j)$.

\medskip

In the previous algorithm, labels on the north and east sides of the grid read $Q(\sigma)$ and $P(\sigma)$, in the sense that the label of the $i$-th top edge (from the left) indicates which row of $Q(\sigma)$ contains the letter $i$, and the label of the $j$-th right edge (from the bottom) indicates which row of $P(\sigma)$ contains the letter $j$.
Since Robinson--Schensted's correspondence is bijective, it is possible to retrieve the permutation and the labels on the whole grid from these north and east labels.
This is done with the following edge rules, which we will refer to as \textit{Fomin's inverse local rules}:
\begin{itemize}
    \item if $N(c)=E(c)=0$, then $S(c)=W(c)=0$;
    \item if $N(c)=E(c)>0$ then $S(c)=W(c)=N(c)-1$;
    \item if $N(c) \neq E(c)$ then $S(c)=N(c)$ and $W(c)=E(c)$;
\end{itemize}
see \Cref{fig_Fomin_rules}.
This recursively reconstructs the labels on all edges of the grid, and permutation points are exactly in the cells with south/west labels $0$ and north/east labels $1$.

\begin{figure}
    \begin{minipage}{0.33\linewidth}
        \centering
        \includegraphics[scale=0.66]{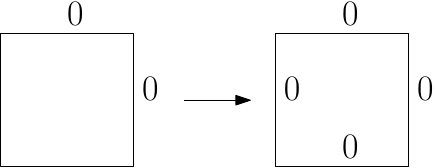}
    \end{minipage}
    \begin{minipage}{0.33\linewidth}
        \centering
        \includegraphics[scale=0.66]{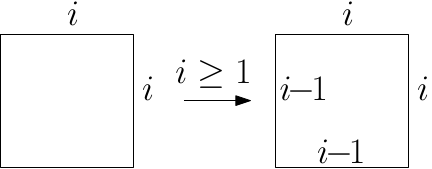}
    \end{minipage}
    \begin{minipage}{0.33\linewidth}
        \centering
        \includegraphics[scale=0.66]{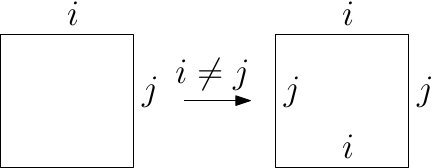}
    \end{minipage}
    \caption{Illustration of Fomin's inverse local rules.}
    \label{fig_Fomin_rules}
\end{figure}

\bigskip

Let us introduce some formalism related to these rules.
We shall study finite words on nonnegative integer letters, representing sequences of labels on vertical or horizontal lines of rectangular grids.
For instance if the edges of a rectangular grid $\lb i,i'\rb\times\lb j,j'\rb$ are labeled with integers then the associated top-word is the ``right-to-left'' sequence of labels from $(i',j')$ to $(i,j')$. 
Similarly the associated right-word is the ``top-to-bottom'' sequence of labels from $(i',j')$ to $(i',j)$. 
Associated bottom- and left-words are defined analogously.
For example on \Cref{fig_Fomin_rep} the highlighted rectangle yields top-word $3\,2\,2\,1$, right-word $3\,2\,2$, bottom-word $1\,0\,1\,0$ and left-word $0\,0\,1$.

\medskip

\begin{figure}
    \centering
    \includegraphics[scale=0.5]{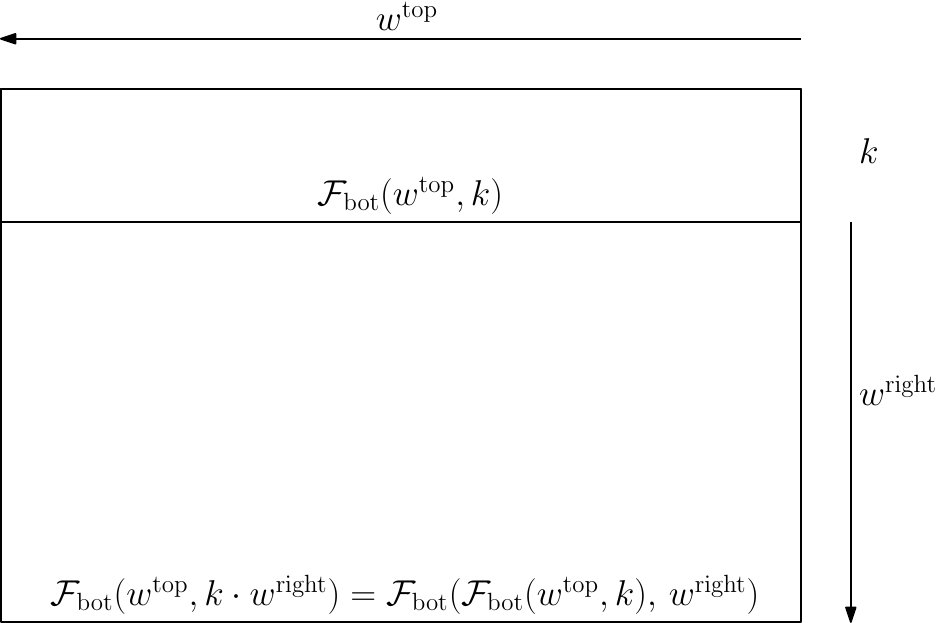}
    \caption{Illustration of Property (\ref{letter_by_letter_property}).}
    \label{fig_letter_by_letter}
\end{figure}

Fomin's inverse rules can be interpreted as a map $\Fom =\left( \Fom_\bot ,\, \Fom_\lef \right)$ with input a pair of top- and right-words and output a pair of bottom- and left-words. 
In the previous example: $\Fom(3\,2\,2\,1 , 3\,2\,2) = (1\,0\,1\,0 , 0\,0\,1)$.
Notice that the symmetry of Fomin's inverse local rules implies the following property of $\Fom$:
\begin{equation*}
    \text{for any words }w,w',\quad
    \Fom(w',w) = \left( \Fom_\lef(w,w') ,\, \Fom_\bot(w,w') \right).
\end{equation*}
If $w^\top,w^\rig$ are words we sometimes say that $\Fom_\bot(w^\top,w^\rig)$ is obtained by \textit{applying} $w^\rig$ to $w^\top$.
In most reasonings we shall apply $w^\rig$ to $w^\top$ letter by letter. This can be done since
\begin{equation}\label{letter_by_letter_property}
    \text{for any }k\in\N,\quad
    \Fom_\bot\left( w^\top ,\, k\cdot w^\rig \right) =
    \Fom_\bot\left( \Fom_\bot(w^\top,k) ,\, w^\rig \right),
\end{equation}
see \Cref{fig_letter_by_letter}.
We can therefore reformulate Fomin's inverse local rules as follows:
\begin{itemize}
    \item applying the letter $0$ to a word does not change it;
    \item applying a letter $k\geq1$ to a word containing no $k$ does not change it. 
Otherwise it changes its first $k$ into a $k\mm1$, and then applies $k\mm1$ to the rest of the word;
\end{itemize}
see \Cref{fig_apply_letter}.

\begin{figure}
    \centering
    \includegraphics[scale=0.6]{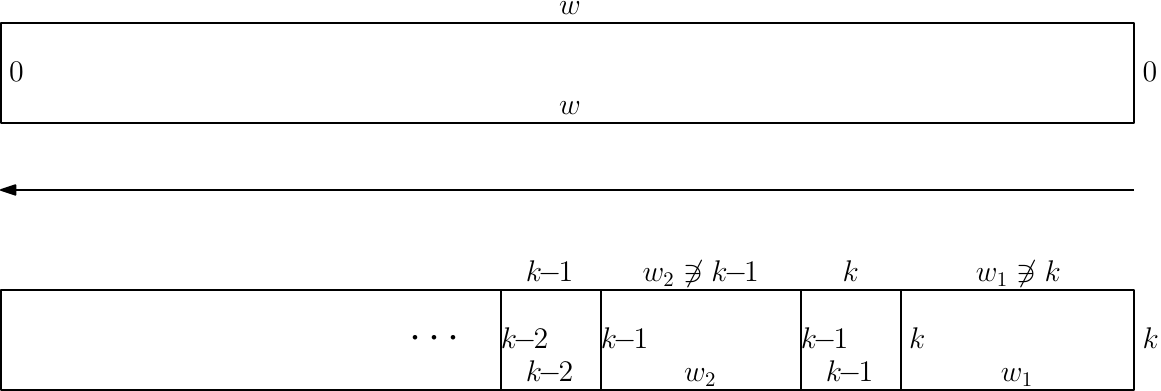}
    \caption{The effects of applying one letter to a word.}
    \label{fig_apply_letter}
\end{figure}

\medskip

If $w$ is a word and $h\in\N^*$, denote by $\#_h w$ the number of $h$'s in $w$.
Since the number of edges labeled $h$ between two vertices does not depend on the up-right path taken, we deduce the following ``mass conservation'' property of $\Fom$:
\begin{equation}\label{mass_conservation}
    \text{for any words }w^\top,w^\rig,\quad
    \#_h \Fom_\lef\left( w^\top,w^\rig \right) + \#_h w^\top
    = \#_h \Fom_\bot\left( w^\top,w^\rig \right) + \#_h w^\rig.
\end{equation}
This will be useful in \Cref{section_proofs_diff}.
We then define a pseudo-distance between words $w,w'$ by the following formula:
\begin{equation*}
    \dist_\Fom(w,w') := \sup_{h\in\N^*} \sup_{w^\rig \text{ word}}
    \left| \#_h \Fom_\bot\left( w ,\, w^\rig\right) 
    - \#_h \Fom_\bot\left( w' ,\, w^\rig\right) \right|.
\end{equation*}
The symmetry property of $\Fom$ implies that
\begin{equation*}
    \dist_\Fom(w,w') = \sup_{h\in\N^*} \sup_{w^\top \text{ word}}
    \left| \#_h \Fom_\lef\left( w^\top ,\, w \right) 
    - \#_h \Fom_\lef\left( w^\top ,\, w' \right) \right|.
\end{equation*}

Let us conclude this section by explaining informally how we shall prove \Cref{th_derivative_per}.
By \Cref{prop_lambda_cv}, $\lamt^\mu$ is approximated with $\lambda^{\sigma_n}$, whose increments are described by the edge labels of Fomin's construction.
Therefore we aim to study the number of each label in the word $\Fom_\bot\big( w_n^\top,w_n^\rig \big)$ where $w_n^\top$ and $w_n^\rig$ are edge words of $\sigma_n$ on a local rectangle.
To do this, our technique is to simplify the top- and right-words without changing ``too much'' the resulting bottom- and left-words.
More precisely, we try to control the pseudo-distance $\dist_\Fom$, whose associated equivalence relation we call ``Fomin equivalence''.
In \Cref{section_proofs_Knuth} we prove that Fomin equivalence is implied by the well-known Knuth equivalence (for which we refer to Section 2.1 in \cite{F96} and Section 3.4 in \cite{S01}).
This is done by checking that Fomin equivalence is preserved by the elementary Knuth transformations, and we can then use the fundamental fact that Knuth equivalence means equality of the P-tableaux.
Independently, thanks to the derivability condition of \Cref{th_derivative_per}, the labels of $w_n^\top$ and $w_n^\rig$ are well equidistributed.
One can see that the P-tableau of a word with bounded, equidistributed letters is similar to the P-tableau of this word's decreasing reordering.
We deduce that $w_n^\top$ and $w_n^\rig$ are ``almost'' Fomin equivalent to their decreasing reorderings; for this we need the regularity property of $\dist_\Fom$ established in \Cref{section_regularity}.
This finally allows us to approximate the local increments of $\lambda^{\sigma_n}$ with the simpler case of decreasing words, studied in \Cref{section_proofs_phi}, and we conclude the proof of \Cref{th_derivative_per} in \Cref{section_proofs_diff}.

\subsection{Regularity of inverse Fomin dynamics}\label{section_regularity}

In this section we investigate a regularity property of the map $\Fom$.
This will allow us to control how much the output words of Fomin's inverse algorithm can change
when the input words are replaced by approximations.

Fix $r\in\N^*$.
We call \textit{block} a (possibly empty) word with nondecreasing integer letters between $0$ and $r$.
If $\ell\in\N^*$ and $\mathbf{q} := (q_{i,k})_{1\leq i\leq \ell , 0\leq k\leq r}\in\N^{\ell (r\p1)}$ is an array of nonnegative integers, define the word $\block(\mathbf{q})$ as the following concatenation of $\ell$ blocks:
\begin{equation*}
    \block(\mathbf{q}) :=
    ( 0^{q_{1,0}}1^{q_{1,1}}\dots r^{q_{1,r}} )
    \dots
    ( 0^{q_{i,0}}\dots k^{q_{i,k}}\dots r^{q_{i,r}} )
    \dots
    ( 0^{q_{\ell,0}}1^{q_{r,1}}\dots r^{q_{\ell,r}} ).
\end{equation*}
Also define a pseudo-distance on such arrays:
\begin{equation*}
    \text{for any }\mathbf{q,q'}\in\N^{\ell (r\p1)},\quad
    \delta\left( \mathbf{q} , \mathbf{q}' \right) :=
    \sum_{i=1}^\ell\sum_{k=1}^r \vert q_{i,k} - q_{i,k}' \vert 
    k \left( 8k^2 \right)^{\ell-i}.
\end{equation*}

\begin{lemma}\label{lem_block_distance_decrease}
    Let $\mathbf{q} \in \N^{\ell (r\p1)}, w:=\block(\mathbf{q})$, and $w^\rig$ be a word. 
    Then there exists a distinguished $\mathbf{\tilde{q}}\in\N^{\ell (r\p1)}$ such that $\Fom_\bot( w ,\, w^\rig ) = \block(\mathbf{\tilde{q}})$.
    With a slight abuse of notation, we denote this by $\Fom_\bot( \mathbf{q} ,\, w^\rig ) = \mathbf{\tilde{q}}$.
    Moreover for any array $\mathbf{q}'\in\N^{\ell (r\p1)}$:
    \begin{equation*}
        \delta\Big( 
        \Fom_\bot\big( \mathbf{q} ,\, w^\rig \big) , 
        \Fom_\bot\big( \mathbf{q}' ,\, w^\rig \big)
        \Big) \leq
        \delta\left( \mathbf{q} , \mathbf{q}' \right).
    \end{equation*}
\end{lemma}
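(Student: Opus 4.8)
The plan is to proceed by induction on the length of $w^\rig$, using the letter-by-letter property \eqref{letter_by_letter_property}. It suffices to treat the case where $w^\rig$ is a single letter $k_0 \in \{0,1,\dots\}$ (if $k_0 > r$, applying it to $w = \block(\mathbf q)$ — which contains no letter $> r$ — changes nothing, and both assertions are trivial; so assume $0 \le k_0 \le r$). First I would prove the \textbf{structural claim}: applying one letter $k_0$ to a concatenation of blocks $0^{q_{1,0}}1^{q_{1,1}}\cdots r^{q_{1,r}} \mid \cdots \mid 0^{q_{\ell,0}}\cdots r^{q_{\ell,r}}$ again produces a concatenation of $\ell$ blocks. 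Recall the reformulated rule: applying $k_0$ to a word does nothing if the word has no $k_0$; otherwise it turns the \emph{first} $k_0$ into $k_0\!-\!1$ and then applies $k_0\!-\!1$ to the remainder. Scanning the blocks left to right, the letter being applied can only decrease over time (it goes from $k_0$ to $k_0\!-\!1$ to $\dots$), and within a single block $0^{q_{i,0}}\cdots r^{q_{i,r}}$ the sorted structure means: if the current applied letter is $h$, then either the block has no $h$ (pass through unchanged, still applying $h$), or it has some $h$'s, in which case one $h$ becomes $h\!-\!1$ — and crucially this $h\!-\!1$ lands immediately after the existing run of $(h\!-\!1)$'s, so the block stays sorted — and then we continue into the rest of the \emph{same} block applying $h\!-\!1$, which can cascade further down but always preserving sortedness inside that block. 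So each block maps to a block, giving the existence of the distinguished $\mathbf{\tilde q}$; this also pins down the bookkeeping of how each $q_{i,k}$ changes.

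Next I would track the change explicitly. Define, for the one-letter application of $k_0$, a sequence of "incoming letters" $h_0 = k_0, h_1, h_2, \dots, h_\ell$ where $h_i$ is the letter being applied as we enter block $i\!+\!1$ (equivalently, $h_{i-1}$ is what is applied to block $i$). Within block $i$, applying $h_{i-1}$ triggers a cascade: it decrements one copy of each of $h_{i-1}, h_{i-1}\!-\!1, \dots$ down to some level, so that $q_{i,h}$ decreases by $1$ and $q_{i,h-1}$ increases by $1$ for each $h$ in a contiguous descending range starting at $h_{i-1}$ (the range possibly empty if $q_{i,h_{i-1}}=0$ and we pass through; and it stops when we hit a level with no letters or reach level $0$, etc.), and $h_i$ is the letter that "exits" to the next block. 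The key quantitative point, which I would isolate as the heart of the argument, is a \textbf{one-block contraction estimate}: if $\mathbf q, \mathbf q'$ agree outside block $i$ and we apply a fixed incoming letter $h$ to block $i$ in both, producing outgoing letters $h_i, h_i'$ and updated block-arrays, then
\begin{equation*}
    \sum_{k=1}^r |\tilde q_{i,k} - \tilde q_{i,k}'|\, k \;+\; C \,|h_i - h_i'| \;\le\; \sum_{k=1}^r |q_{i,k} - q_{i,k}'|\, k
\end{equation*}
for a suitable constant, where $C$ is chosen so that a disagreement in the outgoing letter (which then gets "multiplied" by the factor $8k^2$ penalty when it propagates into block $i\!+\!1$) is still dominated. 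Working out the admissible constants is exactly where the mysterious weights $k(8k^2)^{\ell-i}$ in $\delta$ come from: the factor $(8k^2)^{\ell - i}$ provides geometric room so that the amplification incurred by passing a perturbed outgoing letter into the next block is absorbed. The inequality $\delta(\Fom_\bot(\mathbf q, k_0), \Fom_\bot(\mathbf q', k_0)) \le \delta(\mathbf q, \mathbf q')$ then follows by summing the per-block estimates from $i = \ell$ down to $i = 1$, feeding the outgoing-letter penalty from block $i$ into block $i\!-\!1$'s budget. Finally, iterating over the letters of a general $w^\rig$ via \eqref{letter_by_letter_property} gives the lemma, since composition of contractions is a contraction.

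The main obstacle I anticipate is the \textbf{one-block contraction estimate} and the associated verification that the constants $8k^2$ are large enough. One has to handle several sub-cases depending on whether each of $\mathbf q$ and $\mathbf q'$ has the relevant letters present in block $i$ (so the cascade fires or merely passes through), and on the relative sizes of the incoming letter and the perturbed entries; in each case one must check that the bound $\sum |q_{i,k}-q_{i,k}'|k$ dominates both the new in-block discrepancy and a large multiple of $|h_i - h_i'|$. A careful but elementary case analysis — using that the cascade decrements at most one copy per level and that sortedness forces the discrepancies to be "local" in the level index — should close this. The telescoping over blocks and the reduction to single letters are routine once the per-block estimate is in hand.
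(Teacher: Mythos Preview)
Your overall strategy---reduce to a single letter via \eqref{letter_by_letter_property}, show block structure is preserved, then prove a contraction---matches the paper. But two points need correction.

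First, your cascade picture is wrong. In a sorted block $0^{q_{i,0}}1^{q_{i,1}}\cdots r^{q_{i,r}}$, applying an incoming letter $h$ can trigger \emph{at most one} decrement: if $q_{i,h}>0$, the first $h$ becomes $h{-}1$, and then $h{-}1$ is applied to the remainder of the block---but that remainder consists only of letters $\geq h$, so nothing further happens inside this block. The outgoing letter is $h{-}1$ (or $h$ if $q_{i,h}=0$). There is no ``contiguous descending range'' of decrements within a block; the decrements are spread one-per-block across a strictly increasing sequence of block indices $i_k<i_{k-1}<\cdots$. This is actually simpler than what you wrote, and it is the observation the paper uses.

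Second, and more seriously, your proposed per-block contraction estimate is stated only for a \emph{fixed} common incoming letter, but once the dynamics for $\mathbf q$ and $\mathbf q'$ diverge, the incoming letters to all subsequent blocks differ, so a block-by-block telescoping does not close. The paper does not attempt to telescope. Instead it locates the \emph{first} block $i$ where the two dynamics disagree: there the common incoming letter is some $j$, and necessarily one of $q_{i,j},q_{i,j}'$ vanishes while the other is positive. This forces $|\tilde q_{i,j}-\tilde q_{i,j}'|=|q_{i,j}-q_{i,j}'|-1$, a gain of weight $j(8j^2)^{\ell-i}$ in $\delta$. The remaining damage is then bounded \emph{globally}: the change at level $j{-}1$ in block $i$ costs at most $(j{-}1)(8(j{-}1)^2)^{\ell-i}$, and across all later blocks at most $2(j{-}1)+2j\leq 4j$ entries can differ, each by at most $2$, each carrying weight at most $j(8j^2)^{\ell-(i+1)}$. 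The arithmetic
\[
-j(8j^2)^{\ell-i}+(j{-}1)(8j^2)^{\ell-i}+8j\cdot j(8j^2)^{\ell-i-1}\leq 0
\]
then gives $\delta(\tilde{\mathbf q},\tilde{\mathbf q}')\leq\delta(\mathbf q,\mathbf q')$, and this is exactly where the weight $k(8k^2)^{\ell-i}$ is calibrated. Your plan correctly guessed that the weights are there to absorb downstream amplification, but the mechanism is a single first-divergence gain versus a crude global loss bound, not a per-block telescoping.
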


This last bound means that the pseudo-distance $\delta$ can not increase after applying a word, regardless of this word.

\begin{proof}
    By induction it suffices to prove the lemma when $w^\rig = k$ is a single positive letter. 
    The word $\Fom_\bot( w,k )$ is obtained from $w$ by changing the first $k$ of some block to $k\mm1$, the first $k\mm1$ of another block to $k\mm2$, and so on.
    Since each new letter can be seen in the same block as the letter that produced it, this will allow us to write $\Fom_\bot( w,k ) = \block(\bf\tilde q)$ where $\bf\tilde q$ is obtained by doing local changes in $\bf q$.

    Denote by $h$ the number of decrements that happen when applying $k$ to $w$, and by $i_k \leq\dots\leq i_{k\m h\p 1}$ the block indices of these decrements.
    Since each block is nondecreasing, two decrements cannot happen in the same block, \textit{i.e.}~we have $i_k<i_{k\m1}<\dots<i_{k\m h\p1}$.
    We can thus write $\Fom_\bot( w ,k ) = \block(\mathbf{\tilde{q}})$ where $\mathbf{\tilde{q}}\in\N^{\ell (r\p1)}$ satisfies
    \begin{equation*}
        \tilde{q}_{i_k,k} = q_{i_k,k}\m1;\;
        \tilde{q}_{i_k,k\m1} = q_{i_k,k\m1}\p1;\quad
        \dots\quad;\quad
        \tilde{q}_{i_{k\m h\p1},k\m h\p1} = q_{i_{k\m h\p1},k\m h\p1}\m1;\;
        \tilde{q}_{i_{k\m h\p1},k\m h} = q_{i_{k\m h\p1},k\m h}\p1;
    \end{equation*}
    and everywhere else $\mathbf{q}$ and $\mathbf{\Tilde{q}}$ coincide.
    This establishes the first assertion of the lemma.

    For the second assertion write $w':=\block(\mathbf{q}')$ and $\mathbf{\tilde{q}}' := \Fom_\bot( \mathbf{q}',k )$.
    Then define analogously $h'\in\lb0,k\rb$ and $1\leq i_k'<i_{k\m1}'<\dots<i_{k\m h'\p1}' \leq \ell$.
    If the decrement indices are the same as for $\mathbf{q}$ \textit{i.e.}~$h'=h$ and $i_k=i_k'$, ..., $i_{k\m h\p1}=i_{k\m h\p1}'$, then $\delta\left( \mathbf{\tilde{q}} , \mathbf{\tilde{q}}' \right) = \delta\left( \mathbf{q} , \mathbf{q}' \right)$.
    The only possibility for the indices not to be the same is that some positive letter $j\in\lb1,k\rb$ in some block $i\in\lb1,k\rb$ of one word was decremented while the same block of the other word did not contain this letter.
    Suppose this happens and take $i$ minimal for this property. 
    Then
    \begin{equation*}
        \text{either}\quad
        \tilde{q}_{i,j} = q_{i,j}\m1 
        \text{ and }
        \tilde{q}_{i,j}' = q_{i,j}'=0
        ,\qquad\text{or}\quad
        \tilde{q}_{i,j}' = q_{i,j}'\m1 
        \text{ and }
        \tilde{q}_{i,j} = q_{i,j}=0.
    \end{equation*}
    Only one of these occurs, and by symmetry we might suppose the first one does.
    Minimality of $i$ means that applying $k$ to $w$ and $w'$ starts with the same dynamics, up until their $i$-th block where the first $j$ of $w$ becomes $j\mm1$ and $j\mm1$ is applied to the rest of $w$ while this block in $w'$ remains unchanged and $j$ is applied to the rest of $w'$.
    These changes in the $i$-th block yield the equations:
    \begin{equation}\label{changes_block_i}
        | \tilde{q}_{i,j} - \tilde{q}_{i,j}' | = | q_{i,j} - q_{i,j}' | -1 
        \quad;\quad
        | \tilde{q}_{i,j\m1} - \tilde{q}_{i,j\m1}' | \leq | q_{i,j\m1} - q_{i,j\m1}' | +1.
    \end{equation}
    In the following blocks, at most $2(j\mm1)$ values may change in $\mathbf{q}$ and at most $2j$ may in $\mathbf{q}'$.
    This yields a family of ``disagreement'' indices $\mathcal{A}\subseteq \lb i\pp1,\ell\rb \times \lb1,j\rb$ of size $|\mathcal{A}|\leq 4j$ and such that:
    \begin{equation}\label{changes_block_idem}
        \text{for any }(i',j')\in \lb1,\ell\rb \times \lb1,r\rb \setminus 
            \left( \mathcal{A} \cup \{(i,j)\} \cup \{(i,j\m1)\} \right),\quad
            |\tilde{q}_{i,j} - \tilde{q}_{i,j}'| = |q_{i,j} - q_{i,j}'|.
    \end{equation}
    On disagreement indices we use the simple bound:
    \begin{equation}\label{changes_block_disagree}
        \text{for any }(i',j')\in \mathcal{A},\quad
            | \tilde{q}_{i',j'} - \tilde{q}_{i',j'}' |
            \leq | q_{i',j'} - q_{i',j'}' | +2.
    \end{equation}
    Using \Cref{changes_block_i,changes_block_idem,changes_block_disagree} we finally deduce that:
    \begin{multline*}
        \delta\left( \mathbf{\tilde{q}} , \mathbf{\tilde{q}}' \right)
        \leq \delta\left( \mathbf{q} , \mathbf{q}' \right)
        - j\left( 8j^2 \right)^{\ell-i} 
        + (j\m1)\left( 8(j\m1)^2 \right)^{\ell-i}
        + 8j \cdot j\left( 8j^2 \right)^{\ell-i-1}
        \\\leq \delta\left( \mathbf{q} , \mathbf{q}' \right)
        - j\left( 8j^2 \right)^{\ell-i} 
        + (j\m1)\left( 8j^2 \right)^{\ell-i}
        + 8j \cdot j\left( 8j^2 \right)^{\ell-i-1}
        \leq \delta\left( \mathbf{q} , \mathbf{q}' \right)
    \end{multline*}
    as desired.
\end{proof}

\begin{cor}\label{cor_regularite_Fomin_inverse_sur_blocs}
    Let $\mathbf{q},\mathbf{q}'\in\N^{\ell (r\p1)}$ and $w:=\block(\mathbf{q}) , w':=\block(\mathbf{q}')$. The following holds:
    \begin{equation*}
        \dist_\Fom(w,w') \leq \delta(\mathbf{q},\mathbf{q}').
    \end{equation*}
\end{cor}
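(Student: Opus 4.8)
The plan is to unwind the definition of $\dist_\Fom(w,w')$ and reduce everything to an iterated application of the estimate proved in Lemma \ref{lem_block_distance_decrease}. Recall that by definition
\[
\dist_\Fom(w,w') = \sup_{h\in\N^*}\sup_{w^\rig}\left| \#_h\Fom_\bot(w,w^\rig) - \#_h\Fom_\bot(w',w^\rig)\right|,
\]
so I would fix an arbitrary positive integer $h$ and an arbitrary word $w^\rig$, and bound the corresponding quantity by $\delta(\mathbf{q},\mathbf{q}')$.

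First I would observe that since $w=\block(\mathbf{q})$ and $w'=\block(\mathbf{q}')$ are both concatenations of $\ell$ blocks, the first assertion of Lemma \ref{lem_block_distance_decrease} applies: there are arrays $\mathbf{\tilde q},\mathbf{\tilde q}'\in\N^{\ell(r\p1)}$ with $\Fom_\bot(w,w^\rig)=\block(\mathbf{\tilde q})$ and $\Fom_\bot(w',w^\rig)=\block(\mathbf{\tilde q}')$, and the second assertion gives $\delta(\mathbf{\tilde q},\mathbf{\tilde q}')\leq\delta(\mathbf{q},\mathbf{q}')$. (Strictly speaking Lemma \ref{lem_block_distance_decrease} is stated for a single letter $w^\rig=k$, so I would first note that iterating it along $w^\rig$ letter by letter — using Property (\ref{letter_by_letter_property}) — preserves the block structure and the $\delta$-contraction at each step; applying a letter $0$ does nothing, so only positive letters matter.) It then remains to compare $\#_h\block(\mathbf{\tilde q})$ with $\#_h\block(\mathbf{\tilde q}')$ in terms of $\delta(\mathbf{\tilde q},\mathbf{\tilde q}')$.

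For this last comparison, note that if $h>r$ then the count is $0$ in both words and there is nothing to prove, while if $1\leq h\leq r$ then $\#_h\block(\mathbf{\tilde q}) = \sum_{i=1}^\ell \tilde q_{i,h}$, so
\[
\left|\#_h\block(\mathbf{\tilde q}) - \#_h\block(\mathbf{\tilde q}')\right| \leq \sum_{i=1}^\ell |\tilde q_{i,h}-\tilde q_{i,h}'| \leq \sum_{i=1}^\ell\sum_{k=1}^r |\tilde q_{i,k}-\tilde q_{i,k}'|\, k\,(8k^2)^{\ell-i} = \delta(\mathbf{\tilde q},\mathbf{\tilde q}'),
\]
where the second inequality holds because each term $|\tilde q_{i,h}-\tilde q_{i,h}'|$ appears in the $\delta$-sum with a coefficient $h(8h^2)^{\ell-i}\geq 1$. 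Chaining the two inequalities yields $\left|\#_h\Fom_\bot(w,w^\rig) - \#_h\Fom_\bot(w',w^\rig)\right| \leq \delta(\mathbf{q},\mathbf{q}')$, and since $h$ and $w^\rig$ were arbitrary, taking the supremum gives $\dist_\Fom(w,w')\leq\delta(\mathbf{q},\mathbf{q}')$.

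The only mildly delicate point — the "main obstacle" if there is one — is the reduction from a general right-word $w^\rig$ to single letters: one must make sure that the distinguished array $\mathbf{\tilde q}$ produced by Lemma \ref{lem_block_distance_decrease} for one letter is again a legitimate input for the next application (i.e. that $\Fom_\bot$ of a block word along a word stays a block word with the same number $\ell$ of blocks), so that the $\delta$-contraction composes. This is exactly the content of the first assertion of Lemma \ref{lem_block_distance_decrease} together with Property (\ref{letter_by_letter_property}), so no new work is needed; everything else is the bookkeeping sketched above.
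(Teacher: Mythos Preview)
Your proof is correct and follows essentially the same approach as the paper: fix $h$ and $w^\rig$, use Lemma~\ref{lem_block_distance_decrease} to get arrays $\mathbf{\tilde q},\mathbf{\tilde q}'$ with $\delta(\mathbf{\tilde q},\mathbf{\tilde q}')\leq\delta(\mathbf{q},\mathbf{q}')$, then bound $\bigl|\sum_i \tilde q_{i,h}-\sum_i \tilde q_{i,h}'\bigr|$ by $\delta(\mathbf{\tilde q},\mathbf{\tilde q}')$. One small correction: Lemma~\ref{lem_block_distance_decrease} is in fact already stated for an arbitrary word $w^\rig$ (its \emph{proof} reduces to single letters by induction), so your parenthetical about iterating via Property~(\ref{letter_by_letter_property}) is unnecessary, though harmless.
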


\begin{proof}
    Fix $h\in\N^*$ and $w^\rig$ a word.
    Write $\mathbf{\tilde{q}} := \Fom_\bot( \mathbf{q} ,\, w^\rig )$ and $\mathbf{\tilde{q}}' := \Fom_\bot( \mathbf{q}' ,\, w^\rig )$.
    Then:
    \begin{align*}
        \left| \#_h \Fom_\bot( w ,\ w^\rig )
        - \#_h \Fom_\bot( w' ,\, w^\rig )  \right|
        = \left| \sum_{i=1}^\ell \tilde{q}_{i,h}
        - \sum_{i=1}^\ell \tilde{q}_{i,h}' \right|
        \leq \delta\left( \mathbf{\tilde{q}} , \mathbf{\tilde{q}}' \right)
        \leq \delta\left( \mathbf{q} , \mathbf{q}' \right)
    \end{align*}
    where the last inequality is given by \Cref{lem_block_distance_decrease}.
\end{proof}

\subsection{Fomin and Knuth equivalence}\label{section_proofs_Knuth}

We say that two words $w , w'$ are \textit{Fomin equivalent} or simply equivalent, denoted $w\equiv w'$, when $\dist_\Fom(w,w')=0$.
This means that applying any word to $w$ and $w'$ yields resulting words with the same amount of each non-zero label.
In particular two equivalent words have the same number of $k$'s for any $k\in\N^*$, but not necessarily the same number of $0$'s. 

\begin{lemma}\label{lem_factorisation_equivalence}
    Let $w,w',w^\rig$ be words. If $w\equiv w'$ then:
    \begin{equation*}
        \Fom_\bot\left( w ,\, w^\rig \right) \equiv \Fom_\bot\left( w' ,\, w^\rig \right).
    \end{equation*}
\end{lemma}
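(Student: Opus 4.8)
The plan is to deduce the lemma directly from the definition of $\dist_\Fom$, via an elementary composition identity for the operation of applying a word.

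\textbf{Step 1: a composition identity.} First I would record that for any words $v$, $a$, $b$,
\[
    \Fom_\bot\big( v ,\, a\cdot b \big) = \Fom_\bot\big( \Fom_\bot(v,a) ,\, b \big),
\]
where $a\cdot b$ denotes concatenation. This follows by induction on the length of $a$ from the letter-by-letter property (\ref{letter_by_letter_property}): when $a$ is the empty word both sides equal $\Fom_\bot(v,b)$ since applying the empty word changes nothing, and if $a = k\cdot a'$ then (\ref{letter_by_letter_property}) gives $\Fom_\bot(v, k\cdot(a'\cdot b)) = \Fom_\bot(\Fom_\bot(v,k), a'\cdot b)$, to which the induction hypothesis applies. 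Geometrically this merely says that stacking the rectangle computing $b$ below the one computing $a$ realizes the rectangle whose right-word is $a\cdot b$.

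\textbf{Step 2: reduction to the hypothesis.} Fix $h\in\N^*$ and an arbitrary word $u$. Applying Step 1 twice,
\[
    \#_h\,\Fom_\bot\big( \Fom_\bot(w, w^\rig) ,\, u \big) = \#_h\,\Fom_\bot\big( w ,\, w^\rig\cdot u \big),
\]
and likewise with $w$ replaced by $w'$. Now $w\equiv w'$ means exactly that $\#_h\,\Fom_\bot(w,\cdot)$ and $\#_h\,\Fom_\bot(w',\cdot)$ agree on every word and every $h$; using this with the test word $w^\rig\cdot u$ yields
\[
    \#_h\,\Fom_\bot\big( \Fom_\bot(w, w^\rig) ,\, u \big) = \#_h\,\Fom_\bot\big( \Fom_\bot(w', w^\rig) ,\, u \big).
\]
Since $h$ and $u$ were arbitrary, the supremum defining $\dist_\Fom\big(\Fom_\bot(w,w^\rig),\Fom_\bot(w',w^\rig)\big)$ vanishes, which is the claimed Fomin equivalence.

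\textbf{Main obstacle.} There is no real difficulty here; the one point that demands care is tracking the order in which the letters of a right-word act, so that the composition identity of Step 1 comes out with $w^\rig$ on the \emph{left} of $u$ (the letters nearer the top of the rectangle are applied first). Once that bookkeeping is pinned down against the conventions fixed around (\ref{letter_by_letter_property}), the rest is a routine unwinding of definitions.
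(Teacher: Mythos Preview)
Your proof is correct and is essentially identical to the paper's: the paper also observes the composition identity $\Fom_\bot\big(\Fom_\bot(w,w^\rig),w_2^\rig\big)=\Fom_\bot\big(w,w^\rig w_2^\rig\big)$ and then concludes directly from the definition of $\equiv$. You have simply spelled out the induction from (\ref{letter_by_letter_property}) that the paper leaves implicit.
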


\begin{proof}
    It suffices to observe that for any word $w^\rig_2$:
    \begin{equation*}
        \Fom_\bot\left( \Fom_\bot\left( w ,\, w^\rig \right) ,\, w_2^\rig \right)
        = \Fom_\bot\left( w ,\, w^\rig w_2^\rig \right)
    \end{equation*}
    and likewise with $w'$.
    The lemma then follows directly from definition.
\end{proof}

\begin{lemma}\label{lem_ajout_equivalence}
    If $w,w'$ are words and $i\in\N$:
    \begin{equation*}
        w \equiv w' \implies 
        i\cdot w \equiv i\cdot w'
        \text{ and }
        w\cdot i \equiv w'\cdot i
    \end{equation*}
\end{lemma}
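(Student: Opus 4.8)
The plan is to reduce both implications to the single key fact that Fomin equivalence is \emph{compatible} with applying words, which is exactly \Cref{lem_factorisation_equivalence}, together with the symmetry property of $\Fom$. The statement $w\equiv w'$ means $\dist_\Fom(w,w')=0$, i.e.\ for every $h\in\N^*$ and every word $w^\rig$, $\#_h\Fom_\bot(w,w^\rig)=\#_h\Fom_\bot(w',w^\rig)$. So I must show the same for $i\cdot w$ versus $i\cdot w'$, and for $w\cdot i$ versus $w'\cdot i$.

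For the prepending case $i\cdot w\equiv i\cdot w'$: fix a word $w^\rig$ and $h\in\N^*$. Applying $w^\rig$ to $i\cdot w$ means Fomin's inverse rules run the label $i$ through the first letter and cascade, but the crucial structural observation is that the letters of $w^\rig$ act on the \emph{last} entries of the top-word, and the leading letter $i$ is unaffected or at most turned into $i\mm1$ while the remainder of $w^\rig$ is applied to the tail. More cleanly, one can argue via the symmetric formulation: $\#_h\Fom_\bot(i\cdot w,w^\rig)$ can be rewritten using $\Fom_\lef$ applied to $w^\rig$, and since $i\cdot w$ differs from $i\cdot w'$ only in a suffix equal to $w$ vs.\ $w'$, the action of $\Fom_\lef(w^\rig,\cdot)$ on these two inputs produces outputs differing only in a suffix. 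Using mass conservation \eqref{mass_conservation} applied to the sub-rectangle carrying $w$ (resp.\ $w'$) together with $w\equiv w'$ then forces equality of $\#_h$ for all $h\in\N^*$. I expect the cleanest route is: apply \Cref{lem_factorisation_equivalence} with the word being $w^\rig$ read through $i$ first, but since $i$ is a single letter prepended to \emph{both} words identically, one shows directly that $\Fom_\bot(i\cdot w,w^\rig)$ and $\Fom_\bot(i\cdot w',w^\rig)$ have identical first-$\#_h$-relevant behavior because the sequence of letters that ``reach'' the $w$-part (resp.\ $w'$-part) is the same function of $w^\rig$ and $i$ in both cases, hence equals $\Fom_\bot(w,\text{(something)})$ vs.\ $\Fom_\bot(w',\text{(same something)})$, which agree in $\#_h$ by hypothesis.

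For the appending case $w\cdot i\equiv w'\cdot i$: here I would use the symmetry property of $\Fom$ to swap the roles of top- and right-words, turning the ``append $i$ on the right of the top-word'' operation into a ``prepend'' operation on the other side, or alternatively observe directly that when $w^\rig$ is applied to $w\cdot i$, the trailing letter $i$ only ever receives the residual label that has already cascaded through all of $w$; that residual is a deterministic function of $w$ and $w^\rig$ through quantities that are invariant under $\equiv$ (namely the $\#_h$ counts on intermediate words), so $w\equiv w'$ gives the same residual and hence the same final count. The appended $i$ contributes identically in both cases, and $\#_h\Fom_\bot$ of the length-$|w|$ prefix part agrees by hypothesis. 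Combining, $\dist_\Fom(w\cdot i,w'\cdot i)=0$.

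The main obstacle will be making precise the claim that ``the letters reaching the $w$-part are the same function of $w^\rig$ in both inputs.'' This is intuitively clear from \Cref{fig_apply_letter} — when you apply a letter $k$ to $i\cdot w$, either $i=k$ and you decrement it and then apply $k\mm1$ to $w$, or $i\neq k$ and you apply $k$ unchanged to $w$ — so the induced word applied to the tail depends only on $i$, $k$, and the order, not on $w$ itself. Iterating over the letters of $w^\rig$, the word ultimately applied to $w$ (resp.\ $w'$) depends only on $i$ and $w^\rig$, hence is the same for both, and then $w\equiv w'$ plus mass conservation \eqref{mass_conservation} for the head letter finishes it. I would write this as a short induction on the length of $w^\rig$, invoking \eqref{letter_by_letter_property} to split off one letter at a time, and for the append case invoke the symmetry property to reduce to the prepend case with top- and right-words exchanged.
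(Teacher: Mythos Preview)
Your plan for the prepending case $i\cdot w\equiv i\cdot w'$ is correct and is essentially the paper's argument: the letter that reaches the tail after passing through the head depends only on the current head value and the applied letter, so inducting on $|w^\rig|$ (via \eqref{letter_by_letter_property}) and invoking \Cref{lem_factorisation_equivalence} suffices. The appeal to mass conservation for the head is unnecessary here --- the final head letter is the same in both cases since it depends only on $i$ and $w^\rig$.

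For the appending case there is a genuine gap. First, the symmetry reduction you propose does not go through: swapping top and right turns the question into one about $\#_h\Fom_\lef(w^\top,w\cdot i)$, and decomposing this leads you to compare $\Fom_\lef(\Fom_\bot(w^\top,w),i)$ with $\Fom_\lef(\Fom_\bot(w^\top,w'),i)$. But you do not know $\Fom_\bot(w^\top,w)\equiv\Fom_\bot(w^\top,w')$ --- \Cref{lem_factorisation_equivalence} gives you the other direction $\Fom_\bot(w,\cdot)\equiv\Fom_\bot(w',\cdot)$ --- and equality of $\#_h$ counts alone does not determine the residual (e.g.\ $\Fom_\lef(12,2)=1\neq0=\Fom_\lef(21,2)$). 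Second, your direct route (``the residual is a function of $\#_h$ counts on intermediate words'') is the right idea but the key computation is missing: why is the single-letter residual $\Fom_\lef(w,j)$ determined by $\equiv$-invariant data? The paper supplies exactly this step: writing $k$ for the number of decrements when $j$ is applied to $w$, one has $\Fom_\lef(w,j)=j-k$, and since each decrement lowers the total letter-sum by one,
\[
k=\sum_{h\geq1}h\big(\#_h w-\#_h\Fom_\bot(w,j)\big).
\]
Both $\#_h w$ and $\#_h\Fom_\bot(w,j)$ are $\equiv$-invariant by hypothesis, hence so is $k$, and then your induction on $|w^\rig|$ goes through as you describe.
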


\begin{proof}
    We aim to prove that for any word $w^\rig$ the following holds: for any pair of equivalent words $w\equiv w'$, any $i\in\N$ and any $h\in\N^*$,
    \begin{equation}\label{induction_equivalence_configurations}
        \#_h \Fom_\bot\left( i\cdot w ,\, w^\rig\right) 
        = \#_h \Fom_\bot\left( i\cdot w' ,\, w^\rig\right)
        \quad\text{and}\quad
        \#_h \Fom_\bot\left( w\cdot i ,\, w^\rig\right) 
        = \#_h \Fom_\bot\left( w'\cdot i ,\, w^\rig\right).
    \end{equation}
    We do this by induction on the length of $w^\rig$. 
    If $w^\rig$ is empty, (\ref{induction_equivalence_configurations}) is immediate since equivalent words have the same amount of each positive letter. 
    Now suppose this holds for some word $w^\rig$ and let us prove it for every word $j\cdot w^\rig$ with $j\in\N$. Fix $w\equiv w'$ and $i\in\N$.

    For the first equality of (\ref{induction_equivalence_configurations}) we distinguish between two cases.
    If $i = j$ then, using Property (\ref{letter_by_letter_property}):
    \begin{equation}\label{before_equivalence}
        \Fom_\bot\left( i\cdot w ,\, j\cdot w^\rig\right)
        = \Fom_\bot\left( i\cdot w ,\, i\cdot w^\rig\right)
        = \Fom_\bot\left( (i\mm1)^+\cdot \Fom_\bot\left(w ,\, (i\mm1)^+\right) ,\, w^\rig\right)
    \end{equation}
    where we used the standard notation $(i\m1)^+ = \max(i\m1,0)$, and likewise for $w'$. 
    See the left part of \Cref{fig_ajout_equivalence} for a representation.
    \Cref{lem_factorisation_equivalence} implies $\Fom_\bot\left(w ,\, (i\mm1)^+\right) \equiv \Fom_\bot\left(w' ,\, (i\mm1)^+\right)$ and the first part of (\ref{induction_equivalence_configurations}) follows by induction.

    Otherwise $i \neq j$ and
    \begin{equation*}
        \Fom_\bot\left( i\cdot w ,\, j\cdot w^\rig\right)
        = \Fom_\bot\left( i\cdot \Fom_\bot(w,j) ,\, w^\rig\right),
    \end{equation*}
    likewise for $w'$. \Cref{lem_factorisation_equivalence} implies $\Fom_\bot(w ,\, j) \equiv \Fom_\bot(w' ,\, j)$ and the first part of (\ref{induction_equivalence_configurations}) follows once again by induction.

    \smallskip
    
    For the second equality of (\ref{induction_equivalence_configurations}), observe that applying $j$ to $w\cdot i$ decrements a number of letters in $w$, say $k\in\lb0,j\rb$, and then applies $j\m k = \Fom_\lef(w,j)$ to $i$ (resp. $w'$, $k'$).
    Therefore
    \begin{equation}\label{apply_letter_to_end}
        \Fom_\bot(w\cdot i ,\, j) =  \Fom_\bot(w ,\, j) \cdot \Fom_\bot(i ,\, j\m k)
        \quad\text{and}\quad
        \Fom_\bot(w'\cdot i ,\, j) =  \Fom_\bot(w' ,\, j) \cdot \Fom_\bot(i ,\, j\m k').
    \end{equation}
    Since $k$ is a number of decrements, it can be expressed as the sum of all letters of $w$ minus the sum of all letters of $\Fom_\bot(w,\, j)$.
    As such:
    \begin{equation*}
        k = \sum_{h\geq1} h \,\#_h w - h\,\#_h \Fom_\bot(w,j)
        = \sum_{h\geq1} h \,\#_h w' - h\,\#_h \Fom_\bot(w',j) = k'
    \end{equation*}
    where the second equality is a consequence of $w\equiv w'$.
    Finally, using (\ref{apply_letter_to_end}) and Property (\ref{letter_by_letter_property}) (see the right part of \Cref{fig_ajout_equivalence}):
    \begin{equation}\label{after_equivalence}
        \left\{
        \begin{array}{ll}
            \Fom_\bot\left( w\cdot i ,\, j\cdot w^\rig\right)
            = \Fom_\bot\left( \Fom_\bot(w ,\, j) \cdot i' ,\, w^\rig\right) ;\smallskip\\
            \Fom_\bot\left( w'\cdot i ,\, j\cdot w^\rig\right)
            = \Fom_\bot\left( \Fom_\bot(w' ,\, j) \cdot i' ,\, w^\rig\right);
        \end{array}
        \right.
    \end{equation}
    where $i'= \Fom_\bot(i,j\m k)$, and we conclude by induction.
\end{proof}

\begin{figure}
    \centering
    \begin{minipage}{0.49\linewidth}
        \centering
        \includegraphics[scale=0.56]{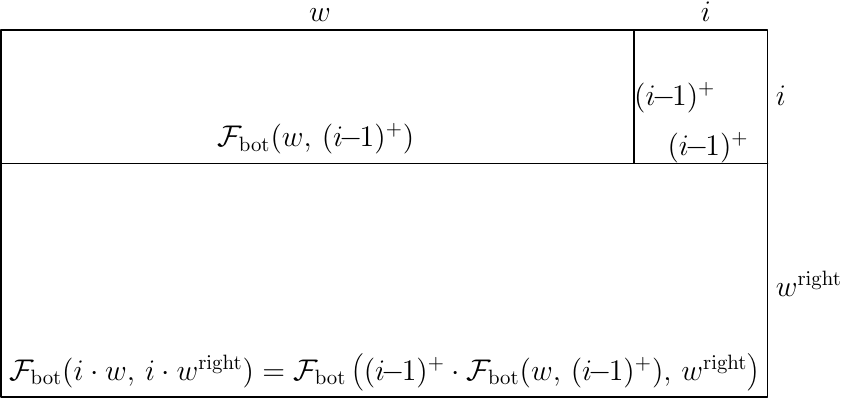}
    \end{minipage}
    \hspace{0.35em}
    \begin{minipage}{0.49\linewidth}
        \centering
        \includegraphics[scale=0.56]{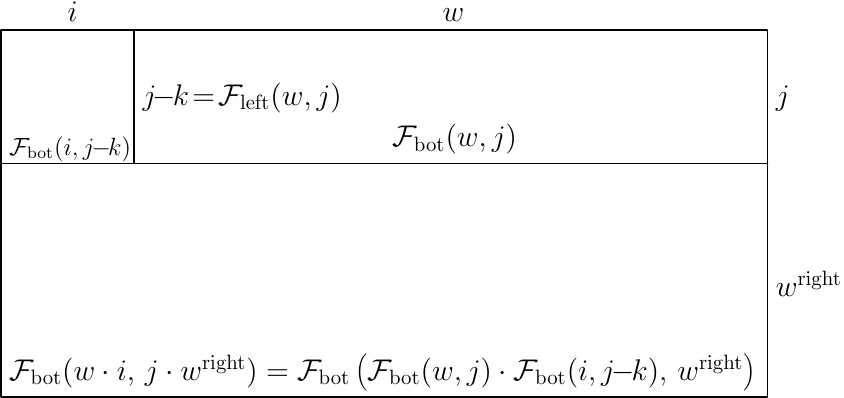}
    \end{minipage}
    \caption{Illustration of equalities (\ref{before_equivalence}) and (\ref{after_equivalence}) used to prove \Cref{lem_ajout_equivalence}.}
    \label{fig_ajout_equivalence}
\end{figure}

\begin{cor}\label{cor_concatenation_equivalence}
    For any words $w_1,w_2,w_1',w_2'$ such that $w_1 \equiv w_1'$ and $w_2 \equiv w_2'$, we have $w_1 w_2 \equiv w_1' w_2'$.
\end{cor}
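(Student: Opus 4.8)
\textbf{Proof proposal for Corollary \ref{cor_concatenation_equivalence}.}
The plan is to reduce the two-sided statement to two one-sided statements, each obtained by iterating \Cref{lem_ajout_equivalence}, and then glue them together with transitivity of $\equiv$. First I would record that $\equiv$ is genuinely an equivalence relation: $\dist_\Fom$ is defined as a supremum of absolute differences of the functionals $w\mapsto \#_h\Fom_\bot(w,w^\rig)$, so it satisfies the triangle inequality, and $w\equiv w'$ iff $\dist_\Fom(w,w')=0$; in particular $w\equiv w'$ and $w'\equiv w''$ imply $w\equiv w''$.

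Next I would prove the left-hand reduction: if $w_2\equiv w_2'$ then $u w_2\equiv u w_2'$ for every word $u$, by induction on the length of $u$. The base case $u$ empty is the hypothesis. For the inductive step, write $u = i\cdot u'$ with $i\in\N$; by the induction hypothesis $u'w_2\equiv u'w_2'$, and then the ``prepend'' half of \Cref{lem_ajout_equivalence} gives $i\cdot(u'w_2)\equiv i\cdot(u'w_2')$, i.e. $uw_2\equiv uw_2'$. Applying this with $u=w_1$ yields $w_1 w_2\equiv w_1 w_2'$.

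Symmetrically I would prove the right-hand reduction: if $w_1\equiv w_1'$ then $w_1 v\equiv w_1' v$ for every word $v$, again by induction on the length of $v$, this time writing $v = v'\cdot i$ and using the ``append'' half of \Cref{lem_ajout_equivalence} in the inductive step. Applying this with $v=w_2'$ gives $w_1 w_2'\equiv w_1' w_2'$. Combining the two displayed equivalences through transitivity of $\equiv$ yields $w_1 w_2\equiv w_1' w_2'$, as claimed. I do not expect a real obstacle here: the only points requiring care are checking that $\dist_\Fom$ obeys the triangle inequality (immediate from its definition as a sup) and keeping straight which half of \Cref{lem_ajout_equivalence} applies in each induction; the rest is routine bookkeeping.
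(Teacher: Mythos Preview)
Your proposal is correct and follows essentially the same route as the paper: iterate each half of \Cref{lem_ajout_equivalence} to obtain $w_1 w_2 \equiv w_1 w_2'$ and $w_1 w_2' \equiv w_1' w_2'$, then conclude by transitivity. The paper compresses this into a single sentence (``successive application of \Cref{lem_ajout_equivalence}''), whereas you spell out the inductions and the use of the triangle inequality for $\dist_\Fom$, but the argument is the same.
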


\begin{proof}
    Successive application of \Cref{lem_ajout_equivalence} yield both $w_1 w_2 \equiv w_1 w_2'$ and $w_1' w_2' \equiv w_1 w_2'$, whence the result.
\end{proof}

Using $0\equiv\emptyset$, this yields the following fact (which was actually quite clear from the beginning):
 
\begin{cor}\label{cor_equivalent_without_zeros}
    Any word is equivalent to itself without its $0$'s.
\end{cor}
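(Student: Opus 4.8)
The plan is to reduce everything to the single fact $0 \equiv \emptyset$ and then invoke \Cref{cor_concatenation_equivalence}, exactly as the sentence preceding the corollary suggests.

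First I would check that $0 \equiv \emptyset$, i.e.\ $\dist_\Fom(0,\emptyset) = 0$, directly from the reformulated letter-by-letter description of Fomin's inverse rules. Applying the letter $0$ to any word leaves it unchanged, and applying a positive letter $k$ to a word containing no $k$ also leaves it unchanged. Now the one-letter word $0$ contains no positive letter, and $\emptyset$ contains no letter at all; hence applying any single letter — and therefore, by Property~(\ref{letter_by_letter_property}), any word $w^\rig$ — to either of them has no effect. Thus $\Fom_\bot(0, w^\rig) = 0$ and $\Fom_\bot(\emptyset, w^\rig) = \emptyset$ for every word $w^\rig$, so $\#_h \Fom_\bot(0,w^\rig) = 0 = \#_h \Fom_\bot(\emptyset, w^\rig)$ for all $h \in \N^*$, giving $\dist_\Fom(0,\emptyset) = 0$.

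Then I would decompose an arbitrary word $w$ along its $0$'s: write $w = u_0\, 0\, u_1\, 0 \cdots 0\, u_p$ with $p = \#_0 w$ and $u_0, \dots, u_p$ the maximal $0$-free factors of $w$ (some possibly empty), so that $w$ with its $0$'s removed is precisely $u_0 u_1 \cdots u_p$. An induction on $p$ then finishes the proof: for $p \geq 1$, write $w = u_0 \cdot 0 \cdot w'$ with $w' := u_1\, 0 \cdots 0\, u_p$ having $p-1$ zeros, so $w' \equiv u_1 \cdots u_p$ by the inductive hypothesis; combining $u_0 \equiv u_0$, $0 \equiv \emptyset$, and $w' \equiv u_1 \cdots u_p$ through \Cref{cor_concatenation_equivalence} (and transitivity of $\equiv$, which holds since $\dist_\Fom$ is a pseudo-distance) yields $w \equiv u_0 u_1 \cdots u_p$.

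There is no genuine obstacle here; the one thing to keep in mind is simply that the empty word must be treated as a bona fide word both in \Cref{cor_concatenation_equivalence} and in the definition of $\equiv$, which is unproblematic. As the parenthetical remark already notes, the statement is morally clear from the outset: the $0$'s of a top-word are inert, both for the labels produced below it and for the application of any right-word.
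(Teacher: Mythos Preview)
Your proposal is correct and follows exactly the approach the paper indicates: the paper's entire proof is the sentence ``Using $0\equiv\emptyset$, this yields the following fact,'' relying implicitly on \Cref{cor_concatenation_equivalence}. You have simply spelled out the verification of $0\equiv\emptyset$ and the induction on the number of zeros that the paper leaves to the reader.
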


\begin{lemma}\label{lem_equivalence_jik_jki}
    For any integers $i<j\leq k$, $j\,i\,k \equiv j\,k\,i$.
\end{lemma}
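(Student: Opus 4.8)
This lemma is one of the two elementary Knuth transformations, and the point is to verify directly that it preserves Fomin equivalence; together with the symmetric transformation (treated the same way) and \Cref{cor_concatenation_equivalence}, this will yield that Knuth equivalence implies Fomin equivalence. The plan for the present lemma is to exhibit a family of word-pairs, containing $\{j\,i\,k,\,j\,k\,i\}$, that is stable under the basic operation of Fomin's inverse rules — "apply one letter to both members" — and whose members are always rearrangements of one another. Concretely, I would let $\mathcal{R}$ denote the set of unordered pairs $\{u,u'\}$ of words such that either $u=u'$, or $\{u,u'\}=\{j'\,i'\,k',\,j'\,k'\,i'\}$ for some integers $i'<j'\leq k'$. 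Every pair in $\mathcal{R}$ consists of two words that are permutations of each other, so $\#_h u=\#_h u'$ for all $h\in\N^*$; and since $i<j\leq k$ forces $i\neq k$, the pair $\{j\,i\,k,\,j\,k\,i\}$ belongs to $\mathcal{R}$.

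The key step is the stability claim: for every $\{u,u'\}\in\mathcal{R}$ and every letter $m\in\N$, the pair $\{\Fom_\bot(u,m),\,\Fom_\bot(u',m)\}$ again lies in $\mathcal{R}$. Granting this, Property (\ref{letter_by_letter_property}) lets me feed an arbitrary word $w^\rig$ into $\Fom_\bot$ one letter at a time, so that $\{\Fom_\bot(u,w^\rig),\,\Fom_\bot(u',w^\rig)\}\in\mathcal{R}$ for every word $w^\rig$ and every $\{u,u'\}\in\mathcal{R}$. Taking $\{u,u'\}=\{j\,i\,k,\,j\,k\,i\}$ then gives $\#_h\Fom_\bot(j\,i\,k,\,w^\rig)=\#_h\Fom_\bot(j\,k\,i,\,w^\rig)$ for all $h\in\N^*$ and all words $w^\rig$, which is exactly $\dist_\Fom(j\,i\,k,\,j\,k\,i)=0$, that is, $j\,i\,k\equiv j\,k\,i$.

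To prove the stability claim I would run a finite case analysis based on the single-letter reformulation of Fomin's inverse rules recalled before (\ref{letter_by_letter_property}): applying $0$ does nothing; applying $m\geq 1$ to a word containing no $m$ does nothing; otherwise it turns the first $m$ of the word into $m-1$ and then applies $m-1$ to the suffix after that position. The case $u=u'$ is trivial. For $\{u,u'\}=\{j'\,i'\,k',\,j'\,k'\,i'\}$ the only values of $m$ that have any effect are $i'$, $j'$ and $k'$, and within these one further splits on the degenerate subcases $j'=k'$ and $i'=j'-1$, also tracking the possibility that a $0$ gets created. In each of these finitely many configurations I would trace the rule through the three-letter word and verify that the two outputs — which again have length three and the same letter-multiset — are either equal or once more of the form $\{j''\,i''\,k'',\,j''\,k''\,i''\}$ with $i''<j''\leq k''$.

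This last case check is the only laborious part of the argument; it is entirely mechanical but must be carried out attentively, especially where a decrement cascades into the suffix or produces a $0$. Conceptually it is the computational counterpart of the classical fact that the Knuth relations are compatible with Schensted insertion, and it is precisely what makes Fomin equivalence coarse enough to be implied by Knuth equivalence.
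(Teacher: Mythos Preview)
Your approach is essentially the paper's: both reduce to a case analysis on the single letter $m$ being applied, tracking how the triple $(i',j',k')$ evolves. The paper phrases it as induction on the length of $w^\rig$, you phrase it as closure of a family $\mathcal{R}$; these are the same argument.

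There is, however, a genuine gap in your stability claim. Your $\mathcal{R}$ contains the pair $\{1\,0\,k',\,1\,k'\,0\}$ (take $i'=0$, $j'=1$, $k'\geq1$). Applying $m=1$ to both words gives
\[
\Fom_\bot(1\,0\,k',\,1)=0\,0\,k',\qquad \Fom_\bot(1\,k'\,0,\,1)=0\,k'\,0,
\]
and the pair $\{0\,0\,k',\,0\,k'\,0\}$ is neither diagonal nor of the form $\{j''\,i''\,k'',\,j''\,k''\,i''\}$ with $i''<j''\leq k''$, since that would require $j''=0$ and hence $i''<0$. So $\mathcal{R}$ as you defined it is not closed. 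You cannot avoid this by excluding $i'=0$ from $\mathcal{R}$ either, because applying $m=i'$ to a pair with $i'=1$ produces one with $i''=0$.

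The fix is exactly what the paper does: observe that when $i'=0$ the two words $j'\,0\,k'$ and $j'\,k'\,0$ already agree after deleting zeros, hence are Fomin equivalent by \Cref{cor_equivalent_without_zeros}. In your language, enlarge $\mathcal{R}$ by adjoining all pairs of length-$3$ words whose subwords of positive letters coincide; this enlarged family is easily seen to be closed (applying any $m\geq1$ preserves the positive-letter subword up to a common modification), and every pair in it still has matching positive-letter multisets. With that adjustment your argument goes through and matches the paper's.
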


\begin{proof}
    If $i=0$ then this is a direct consequence of \Cref{cor_equivalent_without_zeros}.
    Now suppose $i\geq1$.
    The strategy is the same as for the proof of \Cref{lem_ajout_equivalence}.
    We aim to prove that for any word $w^\rig$ the following holds: for any integers $i<j\leq k$ and any $h\in\N^*$,
    \begin{equation}\label{induction_jik_jki}
        \#_h \Fom_\bot\left( j\,i\,k ,\, w^\rig\right) 
        = \#_h \Fom_\bot\left( j\,k\,i ,\, w^\rig\right).
    \end{equation}
    We do this by induction on the length of $w^\rig$. 
    If $w^\rig$ is empty, (\ref{induction_jik_jki}) is immediate. 
    Now suppose this holds for some word $w^\rig$ and let us prove it for every word $m\cdot w^\rig$ with $m\in\N$. 
    
    If $m = i$ then 
    \begin{equation*}
        \Fom_\bot\left( j\,i\,k ,\, m\cdot w^\rig\right)
        = \Fom_\bot\left( j\;i\mm1\;k ,\, w^\rig\right)
        \; ; \;
        \Fom_\bot\left( j\,k\,i ,\, m\cdot w^\rig\right)
        = \Fom_\bot\left( j\;k\;i\mm1 ,\, w^\rig\right).
    \end{equation*}
    Since $i\mm1<j\leq k$, (\ref{induction_jik_jki}) follows by induction.

    If $m = j$ then 
    \begin{equation*}
        \Fom_\bot\left( j\,i\,k ,\, m\cdot w^\rig\right)
        = \Fom_\bot\left( j\mm1\;i'\;k ,\, w^\rig\right)
        \; ; \;
        \Fom_\bot\left( j\,k\,i ,\, m\cdot w^\rig\right)
        = \Fom_\bot\left( j\mm1\;k\;i' ,\, w^\rig\right)
    \end{equation*}
    where $i'= i\wedge(j\mm2)$.
    Since $i'<j\mm1\leq k$, (\ref{induction_jik_jki}) follows by induction.

    If $m = k$ we distinguish between two cases.
    If $j=k$ then $m=j$ and this case was already treated.
    If $j\leq k\mm1$ then $i\leq k\mm2$ and
    \begin{equation*}
        \Fom_\bot\left( j\,i\,k ,\, m\cdot w^\rig\right)
        = \Fom_\bot\left( j\;i\;k\mm1 ,\, w^\rig\right)
        \; ; \;
        \Fom_\bot\left( j\,k\,i ,\, m\cdot w^\rig\right)
        = \Fom_\bot\left( j\;k\mm1\;i ,\, w^\rig\right).
    \end{equation*}
    Since $i<j\leq k\mm1$, (\ref{induction_jik_jki}) follows by induction.

    Otherwise $m \notin \{i,j,k\}$ and
    \begin{equation*}
        \Fom_\bot\left( j\,i\,k ,\, m\cdot w^\rig\right)
        = \Fom_\bot\left( j\,i\,k ,\, w^\rig\right),
    \end{equation*}
    likewise for $j\,k\,i$. (\ref{induction_jik_jki}) follows once again by induction, and this concludes the proof.
\end{proof}

\begin{lemma}\label{lem_equivalence_ikj_kij}
    For any integers $i\leq j< k$, $i\,k\,j \equiv k\,i\,j$.
\end{lemma}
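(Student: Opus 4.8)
The plan is to adapt the proof of \Cref{lem_equivalence_jik_jki} almost verbatim. First I would dispose of the degenerate case $i=0$: then $0\,k\,j$ and $k\,0\,j$ both reduce to $k\,j$ after deleting their zeros, so \Cref{cor_equivalent_without_zeros} gives $0\,k\,j\equiv k\,j\equiv k\,0\,j$. From now on assume $i\geq1$ (hence automatically $j\geq1$ and $k\geq2$). The goal is to prove, by induction on the length of $w^\rig$, that
\begin{equation*}
    \#_h\Fom_\bot\big(i\,k\,j,\,w^\rig\big)=\#_h\Fom_\bot\big(k\,i\,j,\,w^\rig\big)
    \qquad\text{for all }1\leq i\leq j<k\text{ and all }h\in\N^*,
\end{equation*}
which together with the $i=0$ case is exactly the assertion $i\,k\,j\equiv k\,i\,j$. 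The base case ($w^\rig$ empty) holds because $i\,k\,j$ and $k\,i\,j$ carry the same multiset of positive letters. For the inductive step I would write $w^\rig=m\cdot w'$, apply the letter $m$ to both words using the letter-by-letter rules recalled after \eqref{letter_by_letter_property}, and then invoke the induction hypothesis on $w'$.

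The easy cases are: $m\notin\{i,j,k\}$ (including $m=0$, since $i,j,k\geq1$), where neither word changes; $m=i$, where one reads off $\Fom_\bot(i\,k\,j,i)=(i\mm1)\,k\,j$ and $\Fom_\bot(k\,i\,j,i)=k\,(i\mm1)\,j$ using $i\mm1<i\leq j<k$, so that induction applies when $i\geq2$ and the $i=0$ case combined with \Cref{lem_factorisation_equivalence} applies when $i=1$; and $m=j$ with $i<j$ (the case $i=j$ being already covered by $m=i$), where $\Fom_\bot(i\,k\,j,j)=i\,k\,(j\mm1)$ and $\Fom_\bot(k\,i\,j,j)=k\,i\,(j\mm1)$, again of the required form since $1\leq i\leq j\mm1<k$.

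The hard case — and the main obstacle — is $m=k$. Applying $k$ turns $i\,k\,j$ into $i\,(k\mm1)\,j$ when $j<k\mm1$ and into $i\,(k\mm1)\,(k\mm2)$ when $j=k\mm1$; and it turns $k\,i\,j$ into $(k\mm1)\,i\,j$ when $j<k\mm1$, into $(k\mm1)\,i\,(k\mm2)$ when $j=k\mm1>i$, and into $(k\mm1)\,(k\mm2)\,(k\mm1)$ when $j=k\mm1=i$. In the first two sub-cases the resulting pair is again of the form $\big(i'\,k'\,j',\,k'\,i'\,j'\big)$ with $1\leq i'\leq j'<k'$, so the induction hypothesis closes them. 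The third sub-case escapes this family: the two words become $(k\mm1)\,(k\mm1)\,(k\mm2)$ and $(k\mm1)\,(k\mm2)\,(k\mm1)$. But these are Fomin equivalent by \emph{the previous lemma}, namely \Cref{lem_equivalence_jik_jki} applied with parameters $(k\mm2,\,k\mm1,\,k\mm1)$ (which satisfy $k\mm2<k\mm1\leq k\mm1$); hence by \Cref{lem_factorisation_equivalence} the words obtained by applying $w'$ to each have equal counts of every positive letter, and the induction goes through.

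In short, the entire argument is a routine unwinding of Fomin's inverse rules except for the observation that the one pair of words that leaves the family $\{i\,k\,j,\,k\,i\,j:i\leq j<k\}$ is itself an instance of the other Knuth relation, already handled in \Cref{lem_equivalence_jik_jki}; this plays here the role that \Cref{cor_equivalent_without_zeros} played in the proof of \Cref{lem_equivalence_jik_jki}.
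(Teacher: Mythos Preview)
Your proposal is correct and follows essentially the same approach as the paper: induction on the length of $w^\rig$, the same case split on the incoming letter $m$, and—crucially—the same appeal to \Cref{lem_equivalence_jik_jki} (together with \Cref{lem_factorisation_equivalence}) to handle the exceptional sub-case $m=k$, $i=j=k\mm1$ where the resulting pair leaves the family. The only cosmetic difference is that the paper merges your first two $m=k$ sub-cases into one by writing $j':=j\wedge(k\mm2)$, and your closing analogy with \Cref{cor_equivalent_without_zeros} is slightly off (in \Cref{lem_equivalence_jik_jki} that corollary only handled the initial $i=0$ reduction, not an escaping pair).
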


\begin{proof}
    If $i=0$ then this is a direct consequence of \Cref{cor_equivalent_without_zeros}.
    Now suppose $i\geq1$.
    We proceed as before and aim to prove that for any word $w^\rig$ the following holds: for any integers $i\leq j< k$ and any $h\in\N^*$,
    \begin{equation}\label{induction_ikj_kij}
        \#_h \Fom_\bot\left( i\,k\,j ,w^\rig\right)
        = \#_h \Fom_\bot\left( k\,i\,j ,w^\rig\right) .
    \end{equation}
    We do this by induction on the length of $w^\rig$. 
    If $w^\rig$ is empty, (\ref{induction_ikj_kij}) is immediate.
    Now suppose this holds for some word $w^\rig$ and let us prove it for every word $m\cdot w^\rig$ with $m\in\N$. 
    
    The cases $m=i$, $m=j$ and $m \notin \{i,j,k\}$ are handled just like in the proof of \Cref{lem_equivalence_jik_jki}.
    The only subtlety comes when $m = k$.
    In this case, if $i=k\mm1$ then $j=k\mm1$ and
    \begin{equation*}
        \Fom_\bot\left( i\,k\,j , m\cdot w^\rig\right)
        = \Fom_\bot\left( k\mm1\;k\mm1\;k\mm2 , w^\rig\right)
    \end{equation*}
    while
    \begin{equation*}
        \Fom_\bot\left( k\,i\,j , m\cdot w^\rig\right)
        = \Fom_\bot\left( k\mm1\;k\mm2\;k\mm1 , w^\rig\right).
    \end{equation*}
    We can thus apply \Cref{lem_equivalence_jik_jki} to get $k\mm1\;k\mm1\;k\mm2 \equiv k\mm1\;k\mm2\;k\mm1$, and directly conclude.
    If $i < k\mm1$ then
    \begin{equation*}
        \Fom_\bot\left( i\,k\,j , m\cdot w^\rig\right)
        = \Fom_\bot\left( i\,k\mm1\,j' , w^\rig\right)
        \; ; \;
        \Fom_\bot\left( k\,i\,j , m\cdot w^\rig\right)
        = \Fom_\bot\left( k\mm1\,i\,j' , w^\rig\right)
    \end{equation*}
    where $j'=j\wedge (k\mm2)$.
    Since $i\leq j'< k$, (\ref{induction_ikj_kij}) follows by induction.
\end{proof}

\Cref{lem_equivalence_ikj_kij,lem_equivalence_jik_jki} together with \Cref{cor_concatenation_equivalence} tell us transforming within a word any triplet $j\,i\,k$ where $i<j\leq k$ into $j\,k\,i$ or $i\,k\,j$ where $i\leq j<k$ into $k\,i\,j$ preserves Fomin equivalence.
In other words, two adjacent letters can be switched if they are followed or preceded by an intermediate value, and equality cases are handled by considering that among two letters with the same value, the greatest is the one that comes last in the word.
These transformations are known as \textit{elementary Knuth transformations}. 
Two words are said to be \textit{Knuth equivalent} when they differ by successive elementary transformations.
We have thus proved:

\begin{prop}\label{prop_Knuth_implique_Fomin}
    If two finite words are Knuth equivalent then they are Fomin equivalent.
\end{prop}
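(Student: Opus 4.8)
The plan is to reduce \Cref{prop_Knuth_implique_Fomin} to the two elementary equivalences already established in \Cref{lem_equivalence_jik_jki,lem_equivalence_ikj_kij}, together with the compatibility of Fomin equivalence with concatenation. First I would record that $\equiv$ is a genuine equivalence relation on words: reflexivity and symmetry are immediate from the definition of $\dist_\Fom$, and transitivity follows from the triangle inequality
\[
\dist_\Fom(w,w'') \leq \dist_\Fom(w,w') + \dist_\Fom(w',w''),
\]
which holds because, for each fixed $h\in\N^*$ and each fixed word $w^\rig$, the quantity $|\#_h\Fom_\bot(\cdot,w^\rig) - \#_h\Fom_\bot(\cdot,w^\rig)|$ is an absolute difference and $\dist_\Fom$ is the supremum of such quantities.

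Next I would recall the definition of Knuth equivalence in the form used in the excerpt: two words are Knuth equivalent precisely when one passes from one to the other by a finite chain of elementary Knuth transformations, each of which rewrites a single contiguous three-letter factor, either $j\,i\,k \leftrightarrow j\,k\,i$ with $i<j\leq k$, or $i\,k\,j \leftrightarrow k\,i\,j$ with $i\leq j<k$ (with the convention that among two equal letters the later one is treated as the larger, so that the equality cases are covered). By \Cref{lem_equivalence_jik_jki,lem_equivalence_ikj_kij}, in either case the two three-letter factors involved are Fomin equivalent.

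It then remains to lift such a local move to the whole word. Suppose $w = u\,t\,v$ and $w' = u\,t'\,v$, where $t,t'$ are the three-letter factors of an elementary transformation, so that $t\equiv t'$ by the previous paragraph. Applying \Cref{cor_concatenation_equivalence} with the reflexive relation $u\equiv u$ gives $u\,t \equiv u\,t'$, and applying it once more with $v\equiv v$ gives $w = u\,t\,v \equiv u\,t'\,v = w'$. Hence every single elementary Knuth transformation preserves Fomin equivalence, and by transitivity so does any finite composition of them; since Knuth equivalence of two words is by definition exactly such a finite composition, the proposition follows.

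I do not expect a real obstacle here, as the substance is entirely contained in \Cref{lem_equivalence_jik_jki,lem_equivalence_ikj_kij} and \Cref{cor_concatenation_equivalence}. The only points requiring a moment of attention are verifying that $\equiv$ is transitive — handled by the triangle inequality above — and confirming that the equality cases of the two Knuth relations ($j=k$ in the first, $i=j$ in the second) are already subsumed in the statements of \Cref{lem_equivalence_jik_jki,lem_equivalence_ikj_kij}, so that no separate argument is needed.
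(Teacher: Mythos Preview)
Your proposal is correct and follows exactly the same route as the paper: the proposition is stated there as an immediate consequence of \Cref{lem_equivalence_jik_jki,lem_equivalence_ikj_kij} together with \Cref{cor_concatenation_equivalence}, with Knuth equivalence being a chain of elementary transformations. Your only additions are the explicit verification of transitivity of $\equiv$ and the check on equality cases, both of which are routine.
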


We have not been able to determine whether the converse is true or not, but we believe this could be an interesting question.
Now, we shall use the following well-known result about Knuth equivalence:

\begin{theorem}[\cite{F96,S01}]\label{th_Knuth}
    Two words are Knuth equivalent if and only if they have the same P-tableau.
\end{theorem}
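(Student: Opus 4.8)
The plan is to prove both implications through Schensted's row-insertion description of the $P$-tableau (for which I refer to \cite{F96,S01}). Write $P(w)$ for the insertion tableau of a word $w$, write $T\leftarrow x$ for the semistandard tableau produced by row-inserting a letter $x$ into a semistandard tableau $T$, and write $\mathrm{rowword}(T)=R_\ell\cdots R_1$ for the reading word obtained by concatenating the rows of $T$ from bottom to top, where $R_1$ is the top row; recall that $P(\mathrm{rowword}(T))=T$ and that $P(w\cdot b)$ is obtained from $P(w)$ by inserting the letters of $b$ one at a time. Note that Greene's theorem (\Cref{th_Greene}) only controls the \emph{shape} of $P(w)$, so a finer argument tracking the actual entries is needed.

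For "Knuth equivalent $\implies$ same $P$-tableau", I would first reduce to a single elementary transformation: since such a transformation acts inside a window $u$ of a word $w=a\,u\,b$, and the letters of $b$ are afterwards inserted in exactly the same manner whatever tableau has been built, it suffices to show that inserting the two Knuth-related triples $u$ and $u'$ into an arbitrary semistandard tableau $T$ gives the same result. I would then peel off the rows of $T$ one at a time using the standard \emph{row-bumping lemma}: inserting a short word into a single weakly increasing row produces a new row together with a sequence of bumped-out letters, and this output does not change if the inserted word is replaced by a Knuth-equivalent one, the bumped-out sequence being again Knuth-equivalent. Iterating over rows reduces the claim to the single-row case, where it becomes the finite verification that inserting $j\,i\,k$ versus $j\,k\,i$ (with $i<j\leq k$), and $i\,k\,j$ versus $k\,i\,j$ (with $i\leq j<k$), into a weakly increasing word yield identical new rows and identical bumped sequences.

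For the converse I would prove the key claim that every word $w$ is Knuth equivalent to $\mathrm{rowword}(P(w))$, by induction on $|w|$. Writing $w=u\,x$ and using the inductive hypothesis for $u$ together with the fact that Knuth equivalence is preserved under appending a fixed letter, this reduces to showing $\mathrm{rowword}(T)\cdot x\equiv_K\mathrm{rowword}(T\leftarrow x)$ for every semistandard $T$. I would establish this by a second induction, on the number of rows of $T$: if $x$ is simply appended to the top row there is nothing to prove, and otherwise inserting $x$ into the top row $R_1$ yields a new row $R_1'$ and a bumped letter $x_1$, for which a short computation with the two Knuth relations gives $R_1\cdot x\equiv_K x_1\cdot R_1'$. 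Letting $T^-$ denote $T$ with its top row deleted, one then chains
\begin{multline*}
    \mathrm{rowword}(T)\cdot x=\mathrm{rowword}(T^-)\cdot R_1\cdot x\equiv_K\mathrm{rowword}(T^-)\cdot x_1\cdot R_1'\\
    \equiv_K\mathrm{rowword}(T^-\leftarrow x_1)\cdot R_1'=\mathrm{rowword}(T\leftarrow x),
\end{multline*}
using the row-count induction applied to $T^-$ and the fact that $T\leftarrow x$ has top row $R_1'$ and lower part $T^-\leftarrow x_1$. Once the claim holds, $P(w)=P(w')$ gives at once $w\equiv_K\mathrm{rowword}(P(w))=\mathrm{rowword}(P(w'))\equiv_K w'$.

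The hard part is the same elementary but fiddly single-row bookkeeping in both directions: proving the row-bumping lemma and checking the triple relations in the first implication, and proving $R\cdot x\equiv_K x_1\cdot R'$ in the second. Everything else is induction and straightforward reductions; since the statement is classical, in practice I would simply invoke \cite{F96} or \cite{S01} for these row-level lemmas.
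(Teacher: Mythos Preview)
The paper does not prove this theorem: it is stated as a classical result and simply cited from \cite{F96,S01}, with no proof given. Your sketch is essentially the standard textbook argument found in those very references (row-insertion, the row-bumping lemma, and the induction showing every word is Knuth equivalent to the row word of its $P$-tableau), so there is nothing to compare against and your proposal is appropriate---though in the context of this paper one would simply cite the result rather than reprove it.
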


This theorem is the last puzzle piece we needed to combine our previous results into \Cref{prop_approximation_par_decreasing_si_bonnes_LIS}, which shall prove useful in the permuton setting.
First we state some key observation.

\medskip

Fix $r\in\N^*$.
Let $w$ be a word on letters between $1$ and $r$ and $P(w)$ its P-tableau.
For any $i\in\lb1,r\rb$ and $k\in\lb0,r\rb$, denote by $q_{i,k}(w)$ the number of $k$'s in the $(r\mm i\pp1)$-st row of $P(w)$
(notice that $q_{i,k}(w)=0$ whenever $i>k$).
By \Cref{th_Knuth}, the array 
$$\mathbf{q}(w) = \big(q_{i,k}(w)\big)_{1\leq i\leq r, 0\leq k\leq r} \in\N^{r(r\p1)}$$
characterizes the Knuth equivalence class of $w$.
Moreover \Cref{th_Greene} allows to describe this array in terms of nondecreasing subsequences of $w$.
Indeed, writing $w^{\leq k}$ for the sub-word of $w$ consisting of its letters with values at most $k$, recall that the P-tableau of $w^{\leq k}$ is the restriction of $P(w)$ to its values at most $k$. 
Thus $\LIS_i\left(w^{\leq k}\right)$ is the number of boxes with values at most $k$ in the first $i$ rows of $P(w)$, and we deduce:
\begin{equation}\label{lien_LIS_tableau}
    q_{r\m i\p1,k}(w) =
    \LIS_i\left(w^{\leq k}\right) - \LIS_i\left(w^{\leq k\m1}\right) -
    \LIS_{i\m1}\left(w^{\leq k}\right) + \LIS_{i\m1}\left(w^{\leq k\m1}\right).
\end{equation}
With the notation of \Cref{section_proofs_Fomin}, the word $\block\big(\mathbf{q}(w)\big)$ is the concatenation of all rows of $P(w)$, from last to first.
It is usually called the \textit{row word} of $P(w)$, and one can see that its P-tableau is $P(w)$ (see Lemma 3.4.5 in \cite{S01} or Section 2.1 in \cite{F96}).
Hence $\block\big(\mathbf{q}(w)\big)$ is a distinguished word in the Knuth equivalence class of $w$.
These facts lead to the following result:

\begin{prop}\label{prop_approximation_par_decreasing_si_bonnes_LIS}
    Fix $r\in\N^*$, $\gamma_1,\dots,\gamma_r\in\N$ and set $w_\ord := r^{\gamma_r}\dots1^{\gamma_1}$. 
    For $k\in\lb1,r\rb$ denote by $\gamma_{(1)}^{k} \geq\dots\geq \gamma_{(k)}^{k}$ the weakly decreasing order statistics of $\gamma_1,\dots,\gamma_k$. 
    Let $w$ be a word on letters between $1$ and $r$ and define for each $i\in\lb1,r\rb$:
    \begin{equation*}
        \eta_{i,k} :=
        \left| \gamma_{(1)}^{k} +\dots+ \gamma_{(i)}^{k} 
        - \LIS_i\left(w^{\leq k}\right) \right|
    \end{equation*}
    where $\gamma_{(i)}^{k} := 0$ if $i>k$. 
    Then we have the following upper bound:
    \begin{equation*}
        \dist_\Fom\left( w,w_\ord \right) \leq 
        4 r (8r^2)^r \max_{1\leq i,k\leq r}\eta_{i,k}.
    \end{equation*}
\end{prop}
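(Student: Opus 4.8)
The plan is to replace both $w$ and $w_\ord$ by canonical block-words in their Knuth classes — namely their row words — and then invoke the block regularity estimate of \Cref{cor_regularite_Fomin_inverse_sur_blocs}. First I would note that, since $w$ and $w_\ord$ have letters in $\lb1,r\rb$, their longest strictly decreasing subsequences have length at most $r$, so their $P$-tableaux have at most $r$ rows and $\mathbf{q}(w),\mathbf{q}(w_\ord)\in\N^{r(r\p1)}$. By the discussion preceding the statement, $\block(\mathbf{q}(w))$ is the row word of $P(w)$ and is therefore Knuth equivalent to $w$; likewise $\block(\mathbf{q}(w_\ord))$ is Knuth equivalent to $w_\ord$. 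By \Cref{prop_Knuth_implique_Fomin} both pairs are Fomin equivalent, \textit{i.e.}~$\dist_\Fom\big(w,\block(\mathbf{q}(w))\big)=\dist_\Fom\big(w_\ord,\block(\mathbf{q}(w_\ord))\big)=0$. Since $\dist_\Fom$ satisfies the triangle inequality (clear from its definition as a supremum, over $h$ and $w^\rig$, of differences of the integers $\#_h\Fom_\bot(\cdot,w^\rig)$), combining this with \Cref{cor_regularite_Fomin_inverse_sur_blocs} applied with $\ell=r$ gives
\begin{equation*}
    \dist_\Fom(w,w_\ord) \;\leq\; \dist_\Fom\big(\block(\mathbf{q}(w)),\block(\mathbf{q}(w_\ord))\big) \;\leq\; \delta\big(\mathbf{q}(w),\mathbf{q}(w_\ord)\big).
\end{equation*}

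It then remains to bound $\delta\big(\mathbf{q}(w),\mathbf{q}(w_\ord)\big)$. I would first observe that $w_\ord^{\leq k}=k^{\gamma_k}\dots1^{\gamma_1}$, whose weakly increasing subsequences are runs of a single letter, so that $\LIS_i\big(w_\ord^{\leq k}\big)=\gamma_{(1)}^{k}\p\dots\p\gamma_{(i)}^{k}=:\Gamma_i^{k}$ for all $1\leq i,k\leq r$, with the conventions $\Gamma_0^{k}=\Gamma_i^{0}=0$. Applying the inclusion–exclusion identity (\ref{lien_LIS_tableau}) to $w$ and to $w_\ord$ and subtracting then writes $q_{r\m i\p1,k}(w)-q_{r\m i\p1,k}(w_\ord)$ as an alternating sum of four terms of the form $\LIS_{i'}\big(w^{\leq k'}\big)-\Gamma_{i'}^{k'}$ with $i'\in\{i,i\m1\}$ and $k'\in\{k,k\m1\}$, each of modulus at most $M:=\max_{1\leq i,k\leq r}\eta_{i,k}$ (the terms with $i\m1=0$ or $k\m1=0$ vanishing). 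Hence $\big|q_{j,k}(w)-q_{j,k}(w_\ord)\big|\leq 4M$ for every $j,k\in\lb1,r\rb$, and plugging this into the definition of $\delta$ with $\ell=r$ gives the crude bound
\begin{equation*}
    \delta\big(\mathbf{q}(w),\mathbf{q}(w_\ord)\big) = \sum_{i=1}^{r}\sum_{k=1}^{r}\big|q_{i,k}(w)-q_{i,k}(w_\ord)\big|\,k\,(8k^2)^{r-i} \;\leq\; 4M\sum_{i=1}^{r}\sum_{k=1}^{r}k\,(8k^2)^{r-i} \;\leq\; 4M\,r^{3}(8r^{2})^{r-1};
\end{equation*}
since $r^{3}(8r^{2})^{r-1}\leq r\,(8r^{2})^{r}$ this closes the estimate with $\dist_\Fom(w,w_\ord)\leq 4r(8r^{2})^{r}M$, which is the claim.

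Almost all of the work is carried by the lemmas already established; the one genuinely new ingredient — and the conceptual heart of this part of the paper — is that Knuth equivalence implies Fomin equivalence (\Cref{prop_Knuth_implique_Fomin}), which is what allows one to trade the (in the intended application, random) words $w$ for their canonical block representatives before invoking \Cref{cor_regularite_Fomin_inverse_sur_blocs}. I expect no serious difficulty beyond careful bookkeeping: tracking the row reindexing $i\leftrightarrow r\m i\p1$, and checking the identity $\LIS_i\big(w_\ord^{\leq k}\big)=\Gamma_i^{k}$. The exponential weights $(8k^{2})^{\ell-i}$ built into $\delta$ in \Cref{section_regularity} are exactly what make the last, very lossy step go through.
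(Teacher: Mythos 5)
Your proposal is correct and follows essentially the same route as the paper: pass to the row words $\block(\mathbf{q}(w))$, $\block(\mathbf{q}(w_\ord))$ via Knuth equivalence and \Cref{prop_Knuth_implique_Fomin}, bound the entrywise differences of $\mathbf{q}$ using (\ref{lien_LIS_tableau}) and the identity $\LIS_i(w_\ord^{\leq k})=\gamma_{(1)}^k+\dots+\gamma_{(i)}^k$, then apply \Cref{cor_regularite_Fomin_inverse_sur_blocs}. The only cosmetic differences are that you invoke the triangle inequality for $\dist_\Fom$ where the paper states the corresponding equality, and that you write out the final summation over the weights $k(8k^2)^{r-i}$ which the paper leaves implicit; both are fine.
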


\begin{proof}
    Let $i,k\in\lb1,r\rb$. Since any nondecreasing sequence of $w_\ord$ can only have one type of letter, it is easy to see that
    \begin{equation*}
        \LIS_i\left( w_\text{ord}^{\leq k} \right) 
        = \gamma_{(1)}^{k} +\dots+ \gamma_{(i)}^{k}.
    \end{equation*} 
    Thus \Cref{lien_LIS_tableau} yields
    \begin{equation}\label{bound_q_chaos}
        \left| q_{r\m i\p1,k}(w) - q_{r\m i\p1,k}\left( w_\ord \right) \right|
        \leq \eta_{i,k} + \eta_{i,k\m1} + \eta_{i\m1,k} + \eta_{i\m1,k\m1}.
    \end{equation} 
    Since $w$ is Knuth equivalent to $\block\big(\mathbf{q}(w)\big)$ and likewise for $w_\ord$, \Cref{prop_Knuth_implique_Fomin} implies
    \begin{equation*}
        \dist_\Fom\left( w ,\, w_\ord \right)
        = \dist_\Fom\Big( \block\big(\mathbf{q}(w)\big) ,\, \block\big(\mathbf{q}(w_\ord)\big) \Big).
    \end{equation*}
    We can then apply \Cref{cor_regularite_Fomin_inverse_sur_blocs} with (\ref{bound_q_chaos}) to obtain:
    \begin{equation*}
        \dist_\Fom\left( w ,\, w_\ord \right)
        \leq \delta\big( \mathbf{q}(w) , \mathbf{q}(w_\ord) \big)
        \leq 4 r (8r^2)^r \max_{1\leq i,k\leq r}\eta_{i,k} .
        \qedhere
    \end{equation*}
\end{proof}

\subsection{Study of decreasing labels}\label{section_proofs_phi}

For any $r\in\N^*$ and $(\alpha_k)_{1\leq k\leq r} , (\beta_k)_{1\leq k\leq r} \in \N^r$, define
\begin{equation}\label{def_phi}
    \phi\big( (\alpha_k)_{1\leq k\leq r} ,\, (\beta_k)_{1\leq k\leq r} \big)
    := \beta_1 + \#_1 \Fom_\bot\left( r^{\alpha_r}\dots1^{\alpha_1} ,\, r^{\beta_r}\dots1^{\beta_1} \right).
\end{equation}
For example $\phi\big( (\alpha_1),(\beta_1) \big) = \beta_1 + \left( \alpha_1-\beta_1 \right)^+ = \alpha_1\vee\beta_1$, and
\begin{equation*}
    \phi\big( (\alpha_1,\alpha_2) , (\beta_1,\beta_2) \big)
    = \beta_1 + \left( \left( \alpha_1 - \left( \alpha_2 \wedge \beta_2 \right) \right)^+ + \left( \alpha_2 \wedge \beta_2 \right) - \beta_1 \right)^+
    = \alpha_1 \vee \beta_1 \vee \left( \alpha_2 \wedge \beta_2 \right).
\end{equation*}
Notice that the index $r$ can be omitted on $\phi$, since
\begin{equation*}
    \phi\big( (\alpha_1,\dots,\alpha_r,0,\dots,0) ,\, (\beta_1,\dots,\beta_r,0,\dots,0) \big) 
    = \phi\big( (\alpha_1,\dots,\alpha_r) ,\, (\beta_1,\dots,\beta_r) \big).
\end{equation*}
This section is devoted to the study of this function and its continuous extension to sequences of real numbers.

\begin{prop}\label{prop_Fbot_real_words_and_continuity_of_phi}
For each $r\geq1$, the function $\phi$ has a unique continuous extension from $\left(\N^r\right)^2$ to $\left(\R_+^r\right)^2$ satisfying the following homogeneity property:
    \begin{equation}\label{homogeneity_phi}
        \text{for any }c\in\R_+ 
        \text{ and }(\alpha_k)_{1\leq k\leq r} , (\beta_k)_{1\leq k\leq r} \in \R_+^{r},\quad
        \phi\big( (c\alpha_k) \,,\, (c\beta_k) \big)
        = c \phi\big( (\alpha_k) \,,\, (\beta_k) \big).
    \end{equation}
\end{prop}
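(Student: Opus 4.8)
The statement asserts two things about $\phi$ viewed as a function on $\left(\N^r\right)^2$: first, that it is homogeneous of degree $1$ with respect to \emph{integer} scaling (i.e.\ $\phi(c\alpha,c\beta)=c\,\phi(\alpha,\beta)$ for $c\in\N$), which is what lets us speak of a continuous extension at all; and second, that it is Lipschitz (or at least uniformly continuous) on its domain so that a unique continuous extension to $\left(\R_+^r\right)^2$ exists, automatically inheriting the homogeneity for all $c\in\R_+$ by density. So the proof naturally splits into (a) integer homogeneity, (b) a modulus-of-continuity / Lipschitz estimate, and (c) the standard density argument to produce and characterise the extension.

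For step (a), the key is to relate $\phi$ to the Fomin dynamics and use that scaling all multiplicities by $c$ simply ``blows up'' the corresponding Young-tableau picture by a factor $c$. Concretely, $r^{c\alpha_r}\cdots1^{c\alpha_1}$ is Knuth equivalent to a word whose $P$-tableau is obtained from that of $r^{\alpha_r}\cdots1^{\alpha_1}$ by replacing each box by a $c\times$ block in the appropriate sense; more cleanly, I would argue directly that applying $r^{c\beta_r}\cdots 1^{c\beta_1}$ to $r^{c\alpha_r}\cdots 1^{c\alpha_1}$ runs the ``apply a letter to a decreasing word'' dynamics of \Cref{section_proofs_Fomin} in lockstep, $c$ copies at a time, so that $\#_1\Fom_\bot$ scales by exactly $c$. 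Equivalently one can invoke \Cref{prop_conv_LISk}/\Cref{lem_lien_lis_list}: $\phi$ is, up to the additive $\beta_1$, a difference of $\LIS_i$'s of explicit words, and $\LIS_i$ of the $c$-fold inflation of a word on a bounded alphabet is $c$ times $\LIS_i$ of the word (increasing subsequences in a decreasing-block word use only one letter value, so this is transparent). Either route gives $\phi(c\alpha,c\beta)=c\,\phi(\alpha,\beta)$ for integer $c$, and in particular $\phi(\alpha,\beta)/\,\mathrm{lcm}$ is well defined on rational points, yielding a canonical extension to $\Q_+^r\times\Q_+^r$.

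For step (b), the Lipschitz estimate, I would use the machinery already built: by \Cref{prop_Knuth_implique_Fomin} and \Cref{cor_regularite_Fomin_inverse_sur_blocs}, replacing the decreasing word $r^{\alpha_r}\cdots1^{\alpha_1}$ by $r^{\alpha'_r}\cdots1^{\alpha'_1}$ changes $\#_1\Fom_\bot(\cdot\,,w^\rig)$ by at most $\delta(\mathbf q,\mathbf q')$, and for a decreasing word the array $\mathbf q$ is essentially the sorted profile $(\gamma^k_{(i)})$ appearing in \Cref{prop_approximation_par_decreasing_si_bonnes_LIS}; since $\LIS_i(w^{\leq k})$ of a decreasing-block word depends $1$-Lipschitz-ly on the multiplicities, one gets $|\phi(\alpha,\beta)-\phi(\alpha',\beta)|\leq C_r\,\|\alpha-\alpha'\|_1$ with $C_r = 4r(8r^2)^r$ or similar. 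The dependence on $\beta$ is even easier: $\phi$ is visibly $1$-Lipschitz in $\beta_1$ from the defining formula, and the mass-conservation identity (\ref{mass_conservation}) together with the same $\dist_\Fom$ bound controls the change when the other $\beta_k$'s move. Combining, $\phi$ is $C_r$-Lipschitz for the $\ell^1$ norm on $\left(\N^r\right)^2$.

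For step (c): a $C_r$-Lipschitz function on the dense subset $\Q_+^r\times\Q_+^r$ of $\left(\R_+^r\right)^2$ (using integer homogeneity to define it on rationals, and checking the Lipschitz bound survives this rescaling, which it does because both sides scale the same way) extends uniquely to a $C_r$-Lipschitz function on the whole cone; uniqueness of a \emph{continuous} extension is then immediate, and homogeneity (\ref{homogeneity_phi}) for arbitrary $c\in\R_+$ follows by approximating $c$ and the arguments by rationals and passing to the limit. I expect the main obstacle to be step (b): one must be careful that the $\dist_\Fom$-based bound, which is stated for \emph{fixed} $w^\rig$, transfers to a bound on $\#_1\Fom_\bot(w^\top,w^\rig)$ uniformly when $w^\rig$ itself is the decreasing word being perturbed — this requires perturbing $w^\top$ and $w^\rig$ one at a time and using the symmetry property of $\Fom$ together with (\ref{mass_conservation}) to keep both perturbations under control, rather than a single clean application of \Cref{cor_regularite_Fomin_inverse_sur_blocs}.
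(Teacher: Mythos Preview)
Your three-step plan is genuinely different from the paper's, and steps (b) and (c) are sound: the Lipschitz bound via \Cref{cor_regularite_Fomin_inverse_sur_blocs}, combined with the symmetry of $\Fom$ and mass conservation (\ref{mass_conservation}) to handle perturbations of $w^\rig$, works as you describe, and density then yields the unique continuous extension.

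The gap is step (a). Neither justification you offer for integer homogeneity is solid as written. The ``lockstep'' claim is not transparent: after applying even a single letter $r$ to $r^{c\alpha_r}\cdots1^{c\alpha_1}$ the result is no longer a decreasing-block word (a fresh $r-1$ sits before the remaining $r$'s, then a fresh $r-2$ before the $(r-1)$'s, and so on down), so the next application does not visibly run ``the same step, $c$ copies at a time''. The $\LIS$ interpretation is also not direct: $\#_1\Fom_\bot$ equals a row-length increment along an edge of some Fomin grid, but exploiting that requires producing a permutation whose top and right edge words are exactly $r^{\alpha_r}\cdots1^{\alpha_1}$ and $r^{\beta_r}\cdots1^{\beta_1}$ and whose $c$-fold dilation again carries the $c$-scaled edge words, which is essentially the homogeneity you are trying to prove.

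The paper sidesteps this entirely by computing the block-to-block dynamics explicitly. It checks that applying one nondecreasing block to another yields a block whose coefficients are given by the closed form (\ref{varphi_condition_initiale}), built only from $+$, $\wedge$, and $(\cdot)^+$, and then records the obvious recursions for concatenations of blocks. Since every operation involved is continuous and positively homogeneous of degree $1$, the extension of $\phi$ to $(\R_+^r)^2$ and the homogeneity (\ref{homogeneity_phi}) drop out simultaneously; no separate Lipschitz estimate or density argument is needed. If you want to keep your outline, the clean fix for (a) is to insert this short computation; your step (b) then becomes redundant for existence but still supplies a useful quantitative modulus of continuity that the paper's argument does not make explicit.
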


\begin{proof}
    For stability reasons, we prove this by studying the dynamics of block words as defined in \Cref{section_proofs_Fomin}.
    
    Let $r\geq1$ and $\mathbf{a} = (a_0,\dots,a_r), \mathbf{b} = (b_0,\dots,b_r)\in\N^{r+1}$.
    Define
    \begin{align}\label{varphi_condition_initiale}
        \varphi\left( \mathbf{a} , \mathbf{b} \right) 
        \nonumber &= \left( \varphi_0(\mathbf{a},\mathbf{b}) , \varphi_1(\mathbf{a},\mathbf{b}) , \dots , \varphi_{r-1}(\mathbf{a},\mathbf{b}) , \varphi_r(\mathbf{a},\mathbf{b}) \right)
        \\&:= \left(
        a_0 \p (a_1 \wedge b_1) ,\;
        \left(a_1 \m b_1\right)^+ \p (a_2 \wedge b_2) ,\;
        \dots,\;
        \left(a_{r\m1} \m b_{r\m1}\right)^+ \p (a_r \wedge b_r) ,\;
        \left(a_r \m b_r\right)^+
        \right).
    \end{align}
    One can then see that the following holds:
    \begin{equation*}\label{Fom_condition_initiale}
        \left\{
        \begin{array}{ll}
            \Fom_\bot\left( 0^{a_0}\dots r^{a_r} ,\, 0^{b_0}\dots r^{b_r} \right) :=
            0^{\varphi_0\left( \mathbf{a} ,
            \mathbf{b} \right)}
            \dots r^{\varphi_r\left( \mathbf{a} ,
            \mathbf{b} \right)} ;\\
            \Fom_\lef\left( 0^{a_0}\dots r^{a_r} ,\, 0^{b_0}\dots r^{b_r} \right) :=
            0^{\varphi_0\left( \mathbf{b} ,
            \mathbf{a} \right)}
            \dots r^{\varphi_r\left( \mathbf{b} ,
            \mathbf{a} \right)}.
        \end{array}
        \right.
    \end{equation*}
    Now let us study the effect of applying a single block to a sequence of blocks.
    For any $\ell\in\N^*$ and any array $\mathbf{q} = (q_{i,k})_{1\leq i\leq \ell,0\leq k\leq r} \in\N^{\ell (r\p1)}$, one has:
    \begin{align*}
        &\Fom_\bot\left( 0^{q_{1,0}}\dots r^{q_{1,r}} \,\dots\, 0^{q_{\ell,0}}\dots r^{q_{\ell,r}} ,\, 
        0^{b_0}\dots r^{b_r} \right)
        \\&= \Fom_\bot\left( 0^{q_{1,0}}\dots r^{q_{1,r}} ,\, 
        0^{b_0}\dots r^{b_r} \right)
        \cdot
        \Fom_\bot\Big( 0^{q_{2,0}}\dots r^{q_{2,r}} \,\dots\, 0^{q_{\ell,0}}\dots r^{q_{\ell,r}} ,\,
        \Fom_\lef\left( 0^{q_{1,0}}\dots r^{q_{1,r}} ,\, 
        0^{b_0}\dots r^{b_r} \right) \Big)
        \\&= 0^{\varphi_0\left( \mathbf{q_1} , \mathbf{b} \right)}\dots r^{\varphi_r\left( \mathbf{q_1} , \mathbf{b} \right)}
        \cdot \Fom_\bot\left( 0^{q_{2,0}}\dots r^{q_{2,r}} \,\dots\, 0^{q_{\ell,0}}\dots r^{q_{\ell,r}} ,\,
        0^{\varphi_0\left( \mathbf{b} , \mathbf{q_1} \right)}\dots r^{\varphi_r\left( \mathbf{b} , \mathbf{q_1}  \right)} \right).
    \end{align*}
    where $\mathbf{q_1} := \left( q_{1,k} \right)_{0\leq k\leq r}$.
    We see by induction that this resulting word is yet again a concatenation of $\ell$ blocks.
    With a slight abuse of notation, define $\varphi\left( \mathbf{q} , \mathbf{b} \right) = \big( \varphi_{i,k}\left( \mathbf{q} , \mathbf{b} \right) \big)_{1\leq i\leq \ell,0\leq k\leq r} := \Fom_\bot\left( \mathbf{q} , 0^{b_0}\dots r^{b_r} \right)$
    with the notation of \Cref{lem_block_distance_decrease}.
    Last calculation showed the following recursive formula for $\varphi$:
    \begin{equation}\label{varphi_recurrence_1}
        \left\{
        \begin{array}{ll}
        \varphi_{1,\cdot}\left( 
        \mathbf{q} , \mathbf{b} 
        \right)
        = \varphi\left( 
        \mathbf{q_1} , \mathbf{b}
        \right) \smallskip\\
        \big( \varphi_{i,k}\left( 
        \mathbf{q} , \mathbf{b} \right)
        \big)_{2\leq i\leq \ell, 0\leq k\leq r}
        = \varphi\left( 
        \mathbf{q} \setminus \mathbf{q_1} \,,\,
        \varphi\big( \mathbf{b} , \mathbf{q_1} \big)
        \right)
        \end{array}
        \right.
    \end{equation}
    where $\mathbf{q} \setminus \mathbf{q_1} := \left(q_{i,k}\right)_{2\leq i\leq \ell, 0\leq k\leq r}$.
    Finally for any $\ell'\in\N^*$ and $\mathbf{q}' = (q_{i,k}')_{1\leq i\leq \ell',0\leq k\leq r} \in\N^{\ell' (r\p1)}$, define $\varphi\left( \mathbf{q} , \mathbf{q}' \right) = \big( \varphi_{i,k}\left( \mathbf{q} , \mathbf{q}' \right) \big)_{1\leq i\leq \ell,0\leq k\leq r} := \Fom_\bot\big( \mathbf{q} , \block(\mathbf{q}') \big)$.
    Then $\varphi$ satisfies the following recursion if $\ell'\geq2$:
    \begin{equation}\label{varphi_recurrence_2}
        \varphi\left( \mathbf{q} , \mathbf{q}' \right)
        = \varphi\big( \varphi \left( \mathbf{q} , \mathbf{q'_1} \right) \,,\, \mathbf{q}'\setminus\mathbf{q'_1} \big).
    \end{equation}
    One advantage of this point of view is that \Cref{varphi_condition_initiale,varphi_recurrence_1,varphi_recurrence_2} can be used to continuously extend $\varphi$ from $\N^{\ell (r\p1)} \times \N^{\ell' (r\p1)}$ to $\R_+^{\ell (r\p1)} \times \R_+^{\ell' (r\p1)}$.
    Then $\phi$ can be continuously extended to $\left(\R_+^r\right)^2$ by letting, for any $r\geq 1$ and $(\alpha_k)_{1\leq k\leq r} , (\beta_k)_{1\leq k\leq r} \in\R_+^r$:
    \begin{equation*}
        \phi\big( (\alpha_k)_{1\leq k\leq r} , (\beta_k)_{1\leq k\leq r} \big)
        = \beta_1 + \sum_{i=1}^r \varphi_{i,1}( \mathbf{q}_\alpha , \mathbf{q}_\beta )
    \end{equation*}
    where $\mathbf{q}_\alpha,\mathbf{q}_\beta \in \R_+^{r(r\p1)}$ are such that $r^{\alpha_r}\dots1^{\alpha_1} = \block\left( \mathbf{q}_\alpha \right)$ and $r^{\beta_r}\dots1^{\beta_1} = \block\left( \mathbf{q}_\beta \right)$.
    From homogeneity of $\varphi$, seen in (\ref{varphi_condition_initiale}) and conserved in \Cref{varphi_recurrence_1,varphi_recurrence_2}, 
    we deduce homogeneity of $\phi$ \textit{i.e.}~Property (\ref{homogeneity_phi}).

    For the uniqueness claim, observe that \Cref{def_phi,homogeneity_phi} serve to characterize $\phi$ on each $\left(\Q_+^r\right)^2$, and continuity implies uniqueness on $\left(\R_+^r\right)^2$.
\end{proof}

Many properties of $\phi$ can be extended, by homogeneity and continuity, from $\left(\N^r\right)^2$ to $\left(\R_+^r\right)^2$.
For instance: 
\begin{equation}\label{eq_phi_one_coordinate}
\text{for any } \alpha,\beta \in \R_+ ,\quad
\phi\big( (\alpha) , (\beta) \big) = \alpha \vee \beta .
\end{equation}
One can also see that $\Fom_\bot\left( (r+1)^{\alpha_{r+1}}\, r^{\alpha_r} \dots 1^{\alpha_1} , r^{\beta_r} \dots 1^{\beta_1} \right) = (r+1)^{\alpha_{r+1}} \cdot \Fom_\bot\left( r^{\alpha_r} \dots 1^{\alpha_1} , r^{\beta_r} \dots 1^{\beta_1} \right)$ and $\Fom_\bot\left( r^{\alpha_r} \dots 1^{\alpha_1} , (r+1)^{\beta_{r+1}}\, r^{\beta_r} \dots 1^{\beta_1} \right) = \Fom_\bot\left( r^{\alpha_r} \dots 1^{\alpha_1} , r^{\beta_r} \dots 1^{\beta_1} \right)$, therefore for any $r\in\N^*$ and any $(\alpha_k)_{1\leq k\leq r} , (\beta_k)_{1\leq k\leq r} \in \R_+^{r}$:
\begin{multline}\label{eq_phi_crossed_zero}
\phi\big( (\alpha_1,\dots,\alpha_r,\alpha_{r+1}) \,,\, (\beta_1,\dots,\beta_r,0) \big)
= \phi\big( (\alpha_1,\dots,\alpha_r,0) \,,\, (\beta_1,\dots,\beta_r,\beta_{r+1}) \big)
\\= \phi\big( (\alpha_1,\dots,\alpha_r) \,,\, (\beta_1,\dots,\beta_r) \big) .
\end{multline}
We can use the last two properties of $\phi$ to prove a third one which we will use to establish \Cref{cor_gradient_lambda}.

\begin{lemma}\label{lem_phi_si_diff}
	Let $r\in\N^*$.
	Consider $(\alpha_k)_{1\leq k\leq r} , (\beta_k)_{1\leq k\leq r} \in \R_+^{r}$ such that
	\begin{equation}\label{eq_phi_si_diff}
	\phi\big( (\alpha_i)_{k\le i\le r} , (\beta_i)_{k\le i\le r} \big) = \alpha_k + \beta_k
	\end{equation}
	for all $1\le k\le r$.
	Then $\alpha_k \cdot \beta_k = 0$ for all $1\le k\le r$.
\end{lemma}

\begin{proof}
	Let us prove the lemma by induction on $r\in\N$.
	For $r=1$, \eqref{eq_phi_one_coordinate} implies $\alpha_{1}\vee\beta_{1} = \alpha_{1} + \beta_{1}$, i.e.~$\alpha_{1} \cdot \beta_{1} = 0$.
	Now let $r>1$ and assume that the lemma holds for $r-1$.
	Use \Cref{eq_phi_si_diff} for $k=r$ along with \eqref{eq_phi_one_coordinate} to get $\alpha_{r}\vee\beta_{r} = \alpha_{r} + \beta_{r}$, which implies $\alpha_{r} \cdot \beta_{r} = 0$.
	Then, thanks to \eqref{eq_phi_crossed_zero}, \Cref{eq_phi_si_diff} becomes
	\begin{equation*}
	\phi\big( (\alpha_i)_{k\le i\le r-1} , (\beta_i)_{k\le i\le r-1} \big) = \alpha_k + \beta_k
	\end{equation*}
	for all $1\le k\le r-1$.
	The proof follows by induction.
\end{proof}

Let us present one last property of $\phi$.
Notice that for any $h\in\N^*$, the function $\#_h\Fom_\bot$ does not depend on letters less than $h$ in the input words. 
Indeed, applying a letter less than $h$ can only decrement letters that are less than $h$.
Subsequently for any $r\geq h$ and $(\alpha_k)_{1\leq k\leq r} , (\beta_k)_{1\leq k\leq r} \in \N^r$:
\begin{equation*}
    \#_h \Fom_\bot\left( r^{\alpha_r}\dots1^{\alpha_1} \,,\, r^{\beta_r}\dots1^{\beta_1} \right)
    = \#_h \Fom_\bot\left( r^{\alpha_r}\dots h^{\alpha_h} \,,\, r^{\beta_r}\dots h^{\beta_h} \right).
\end{equation*}
Combined with the following ``translation-invariance'' property of Fomin's inverse rules:
\begin{equation*}
    \#_h \Fom_\bot\left( r^{\alpha_r}\dots h^{\alpha_h} \,,\, r^{\beta_r}\dots h^{\beta_h} \right)
    = \#_1 \Fom_\bot\left( (r-h+1)^{\alpha_r}\dots1^{\alpha_h} \,,\, (r-h+1)^{\beta_r}\dots1^{\beta_h} \right),
\end{equation*}
we deduce that:
\begin{equation}\label{phi_k}
    \phi\big( (\alpha_k)_{h\leq k\leq r} \,,\, (\beta_k)_{h\leq k\leq r} \big)
    = \beta_h + \#_h \Fom_\bot\left( r^{\alpha_r}\dots1^{\alpha_1} \,,\, r^{\beta_r}\dots1^{\beta_1} \right).
\end{equation}

\subsection{Continuous inverse growth rules}\label{section_proofs_diff}

The aim of this section is to prove \Cref{th_derivative_per} and \Cref{cor_gradient_lambda}, using all the previously developed tools.

\begin{proof}[Proof of \Cref{th_derivative_per}]

Fix an integer $r\in\N^*$ and a permuton $\mu$ satisfying $\LISt_r(\mu)=1$.
The function $\lamt^\mu$ is then entirely described by $\lamt^{\mu}_1,\dots,\lamt^{\mu}_r$, for which we will omit the exponent. 
Moreover $\mu$'s sampled permutations are almost surely $r$-increasing, meaning their diagrams consist of at most $r$ rows. 
The tableaux of $\sigma_n \sim \sample_n(\mu)$ are then entirely described by the functions $\lambda^{\sigma_n}_1,\dots,\lambda^{\sigma_n}_r$, for which we will also omit the exponent.

Let  $(x,y)\in(0,1]^2$ be such that the left-derivatives
\begin{equation*}
    \alpha_k := \partial^-_x \lamt_k(x,y)
    \quad\text{and}\quad
    \beta_k := \partial^-_y \lamt_k(x,y)
\end{equation*}
for $k\in\lb1,r\rb$ all exist.
Fix $s,t\geq0$, and $\epsilon>0$ small enough to have $x-t\epsilon , y-s\epsilon \geq 0$. 
According to \Cref{prop_lambda_cv}, we can and will work on an event where the convergence
\begin{equation*}
    \text{for all }k\in\lb1,r\rb,\quad
    \left\| \lamt_k - \frac{1}{n}\lambda_k\big( \lfloor\cdot\,n\rfloor , \lfloor\cdot\,n\rfloor \big)  \right\| = \on(1)
\end{equation*}
holds.
We can use this convergence along with the derivability condition of $\lamt$ to obtain decent control over the disposition of Fomin's edge labels. 
Fix an integer $p\in\N^*$ and define regular subdivisions
\begin{equation*}
    x-t\epsilon = x_p < x_{p-1} < \dots < x_0 = x
    \quad \text{and} \quad
    y-s\epsilon = y_p < y_{p-1} < \dots < y_0 = y
\end{equation*}
of respective steps $t\epsilon / p$ and $s\epsilon / p$. 
Then for any $k\in\lb1,r\rb$ and $j\in\lb1,p\rb$:
\begin{align*}
    &\lamt_k(x_{j\m1},y) - \lamt_k(x_j,y)
    \\& = \lamt_k(x,y) - (j\m1)t\epsilon\alpha_k/p + \oeps\left(\epsilon(j\m1)/p\right) - \lamt_k(x,y) + jt\epsilon\alpha_k/p + \oeps\left(\epsilon j/p\right)
    \\& = \epsilon t\alpha_k/p + \oeps(\epsilon)
\end{align*}
whence
\begin{equation}\label{lambda_on_subdivision}
     \lambda_k\big( \lfloor nx_{j\m1}\rfloor,\lfloor ny\rfloor \big) 
     - \lambda_k\big( \lfloor nx_j\rfloor,\lfloor ny\rfloor \big)
     = n \epsilon t\alpha_k/p + n\oeps(\epsilon) + \on(n).
\end{equation}
This means that edge labels of $\sigma_n$ on the north side of $\lb \lfloor n(x\m t\epsilon) \rfloor , \lfloor nx \rfloor \rb \times \lb \lfloor n(y\m s\epsilon) \rfloor , \lfloor ny \rfloor \rb$ are quite well equidistributed. 
Denote by $w_\lambda^\top,w_\lambda^\rig,w_\lambda^\bot,w_\lambda^\lef$ the words associated with the edge labels of $\sigma_n$ on this rectangular grid, as defined in \Cref{section_proofs_Fomin}. 
Write the top-word as a concatenation $w_\lambda^\top = w_{(1)}^\top \dots w_{(p)}^\top$, where each $w_{(j)}^\top$ consists of labels from $x$-coordinate $\lfloor nx_{j\m1} \rfloor$ to $\lfloor nx_j \rfloor$.
Then (\ref{lambda_on_subdivision}) translates as:
\begin{equation}\label{amounts_on_subdivision}
    \#_k w_{(j)}^\top
    = n \epsilon t\alpha_k/p + n\oeps(\epsilon) + \on(n).
\end{equation}
In particular, by summing (\ref{amounts_on_subdivision}) over $j\in\lb1,p\rb$:
\begin{equation}\label{amounts_global}
    \#_k w_\lambda^\top
    = n \epsilon t\alpha_k + n\oeps(\epsilon) + \on(n).
\end{equation}
Similar statements hold for $w_\lambda^\rig$.
Next, let us explain how to relate this property of equidistribution to \Cref{prop_approximation_par_decreasing_si_bonnes_LIS}.
In what follows, we consider that nondecreasing subsequences can only contain positive letters.

\bigskip

\noindent\underline{Claim 1}:
For any $i,k\in\lb1,r\rb$ the following holds:
\begin{equation*}\label{approximation_LIS_si_equiréparti}
    \left\vert \LIS_i\left( w_\lambda^{\top,\leq k} \right)
    - nt\epsilon \left( \alpha_{(1)}^k + \dots + \alpha_{(i)}^k \right) \right\vert
    \leq nt\epsilon r^2/p + n\oeps(\epsilon) + \on(n)
\end{equation*}
with the notation of \Cref{prop_approximation_par_decreasing_si_bonnes_LIS}.

\medskip

To see this, note that a nondecreasing subsequence of $w_\lambda^{\top, \leq k}$ reads as a subsequence of $1$'s, followed by a subsequence of $2$'s, ..., followed by a subsequence of $k$'s. 
Let $A=A_1\sqcup\dots\sqcup A_i$ be an $i$-nondecreasing subsequence (\textit{i.e.} a union of $i$ nondecreasing subsequences, which we can suppose disjoint) of $w_\lambda^{\top, \leq k}$ with maximum length.

We say $j\in\lb1,p\rb$ is ``valid'' if for each $h\in\lb1,i\rb$, the nondecreasing subsequence $A_h$ takes letters of at most one value in the subword $w_{(j)}^{\top, \leq k}$.
In this case, at best, $A$ can thus take the $i$ most represented letters in $w_{(j)}^{\top, \leq k}$.
Using (\ref{amounts_on_subdivision}), this allows to bound the number of letters picked up by $A$ in any valid subword $w_{(j)}^{\top, \leq k}$ by
\begin{equation*}
    n\epsilon t \left( \alpha_{(1)}^k + \dots + \alpha_{(i)}^k \right)/p + n\oeps(\epsilon) + \on(n).
\end{equation*}

When an index $j\in\lb1,p\rb$ is not valid, it means that some $A_h$ takes letters of at least two different values in $w_{(j)}^{\top, \leq k}$.
Since each $A_h$ is a nondecreasing subsequence, it can ``invalidate'' at most $k-1$ subwords.
Therefore at most $i(k-1)$ subwords are not valid, and in each of them $A$ can potentially take every letter.
This observation, along with the previous analysis, leads to the following upper bound:
\begin{equation*}
    \LIS_i\left( w_\lambda^{\top, \leq k} \right) \leq 
    n\epsilon t \left( \alpha_{(1)}^k + \dots + \alpha_{(i)}^k \right)
    + i(k-1) nt\epsilon /p
    + n\oeps(\epsilon) + \on(n).
\end{equation*}

Conversely we can construct an $i$-nondecreasing subsequence $B=B_1\sqcup\dots\sqcup B_{i\wedge k}$ of $w_\lambda^{\top, \leq k}$ by letting each $B_h$ take all $b_h$'s of $w_\lambda^{\top, \leq k}$, where $b_h$ is such that $\alpha_{b_h} = \alpha_{(h)}^k$.
Subsequently, using (\ref{amounts_on_subdivision}):
\begin{equation*}
    \LIS_i\left( w_\lambda^{\top, \leq k} \right) \geq 
    n\epsilon t \left( \alpha_{(1)}^k + \dots + \alpha_{(i\wedge k)}^k \right)
    + n\oeps(\epsilon) + \on(n).
\end{equation*}
This concludes the proof of Claim 1.

\bigskip

\noindent\underline{Claim 2}:
Define $w_\ord^\top := r^{\lfloor n\epsilon t\alpha_r \rfloor} \dots 1^{\lfloor n\epsilon t\alpha_1 \rfloor}$.
There exists a constant $\chi_r>0$ such that for any word $w^\rig$ and any $h\in\N^*$:
\begin{equation*}
    \left\{
    \begin{array}{ll}
    \left| \#_h \Fom_\bot\left( w_\lambda^\top , w^\rig \right)
    - \#_h \Fom_\bot\left( w_\ord^\top , w^\rig \right) \right|
    \leq \chi_r t\epsilon n/p + n\oeps(\epsilon) + \on(n) ;\vspace{0.2em}\\
    \left| \#_h \Fom_\lef\left( w_\lambda^\top , w^\rig \right)
    - \#_h \Fom_\lef\left( w_\ord^\top , w^\rig \right) \right|
    \leq \chi_r t\epsilon n/p + n\oeps(\epsilon) + \on(n).
    \end{array}
    \right.
\end{equation*}

\medskip

Indeed with the notation of \Cref{prop_approximation_par_decreasing_si_bonnes_LIS}, Claim 1 rewrites
\begin{equation*}
    \text{for any }i,k\in\lb1,r\rb,\quad
    \eta_{i,k} \leq nt\epsilon r^2/p + n\oeps(\epsilon) + \on(n)
\end{equation*}
thus, letting $\chi_r := 4r^3(8r^2)^r$:
\begin{equation*}
    \dist_\Fom\left( w_\lambda^\top,w_\ord^\top \right)
    \leq \chi_r \epsilon n/p + n\oeps(\epsilon) + \on(n),
\end{equation*}
which is the first desired inequality.
For the second inequality, use this first inequality along with \Cref{mass_conservation,amounts_global}:
\begin{align*}
    & \left| \#_h \Fom_\lef\left( w_\lambda^\top , w^\rig \right)
    - \#_h \Fom_\lef\left( w_\ord^\top , w^\rig \right) \right|
    \\& \leq \left| \#_h \Fom_\bot\left( w_\lambda^\top , w^\rig \right)
    - \#_h \Fom_\bot\left( w_\ord^\top , w^\rig \right) \right|
    + \left| \#_h w_\lambda^\top - \#_h w_\ord^\top \right|
    \\& \leq \chi_r \epsilon n/p + n\oeps(\epsilon) + \on(n).
\end{align*}
This proves Claim 2, and by symmetry the following holds as well:

\bigskip

\noindent\underline{Claim 3}:
Define $w_\ord^\rig := r^{\lfloor n\epsilon s\beta_r \rfloor} \dots 1^{\lfloor n\epsilon s\beta_1 \rfloor}$.
Then for any word $w^\top$ and any $h\in\N^*$:
\begin{equation*}
    \left\{
    \begin{array}{ll}
    \left| \#_h \Fom_\lef\left( w^\top , w_\lambda^\rig \right)
    - \#_h \Fom_\lef\left( w^\top , w_\ord^\rig \right) \right|
    \leq \chi_r s\epsilon n/p + n\oeps(\epsilon) + \on(n) ;\vspace{0.5em}\\
    \left| \#_h \Fom_\bot\left( w^\top , w_\lambda^\rig \right)
    - \#_h \Fom_\bot\left( w^\top , w_\ord^\rig \right) \right|
    \leq \chi_r s\epsilon n/p + n\oeps(\epsilon) + \on(n).
    \end{array}
    \right.
\end{equation*}

\medskip

\noindent We can now finish the proof of \Cref{th_derivative_per}. Using Claims 2 and 3 we obtain for any $k\in\N^*$:
\begin{align*}
    & \left| \#_k \Fom_\bot\left( w_\lambda^\top , w_\lambda^\rig \right) 
    - \#_k \Fom_\bot\left( w_\ord^\top , w_\ord^\rig \right) \right|
    \\& \leq \left| \#_k \Fom_\bot\left( w_\lambda^\top , w_\lambda^\rig \right) 
    - \#_k \Fom_\bot\left( w_\ord^\top , w_\lambda^\rig \right) \right|
    + \left| \#_k \Fom_\bot\left( w_\ord^\top , w_\lambda^\rig \right)
    - \#_k \Fom_\bot\left( w_\ord^\top , w_\ord^\rig \right) \right|
    \\& \leq (t+s)\chi_r \epsilon n/p + n\oeps(\epsilon) + \on(n).
\end{align*}  
Then using the equality $\lambda_k\big( \lfloor nx\rfloor,\lfloor ny\rfloor \big) - \lambda_k\big( \lfloor n(x\m t\epsilon)\rfloor,\lfloor n(y\m s\epsilon)\rfloor \big) =  \#_k \Fom_\bot\left( w_\lambda^\top , w_\lambda^\rig \right) + \#_k w_\lambda^\rig$, (\ref{amounts_global}), \Cref{prop_Fbot_real_words_and_continuity_of_phi} and Property (\ref{phi_k}) we get:
\begin{align*}
    \left| \Big( \lambda_k\big( \lfloor nx\rfloor,\lfloor ny\rfloor \big) - \lambda_k\big( \lfloor n(x\m t\epsilon)\rfloor,\lfloor n(y\m s\epsilon)\rfloor \big) \Big)
    - \phi\big( (n\epsilon t\alpha_i)_{k\leq i\leq r}, (n\epsilon s\beta_i)_{k\leq i\leq r} \big) \right|
    \\ \leq (t+s)\chi_r \epsilon n/p + n\oeps(\epsilon) + \on(n).
\end{align*}
Dividing by $n$, using Property (\ref{homogeneity_phi}) and taking $\as{n}{\infty}$ we deduce:
\begin{align*}
    \left| \left( \lamt_k(x,y) - \lamt_k(x-t\epsilon,y-s\epsilon) \right)
    - \epsilon \phi\big( (t\alpha_i)_{k\leq i\leq r}, (s\beta_i)_{k\leq i\leq r} \big) \right|
    \leq (t+s)\chi_r \epsilon /p + \oeps(\epsilon),
\end{align*}
whence:
\begin{align*}
    \limsup_{\epsilon\rightarrow0^+}
    \left| \frac{\lamt_k(x,y) - \lamt_k(x-t\epsilon,y-s\epsilon)}{\epsilon}
    -  \phi\big( (t\alpha_i)_{k\leq i\leq r}, (s\beta_i)_{k\leq i\leq r} \big)  \right|
    \leq (t+s)\chi_r /p.
\end{align*}
Since this holds for any $p\in\N^*$, this concludes the proof of \Cref{th_derivative_per}.
\end{proof}

\begin{proof}[Proof of \Cref{cor_gradient_lambda}]
For each $k\in\lb1,r\rb$, the function $\lamt_k^\mu$ is Lipschitz according to \Cref{prop_lambda_cv}.
Thanks to Rademacher's theorem, this implies that $\lamt_k^\mu$ is differentiable almost everywhere on $(0,1)^2$.
If $(x,y)$ is a point at which $\lamt_k^\mu$ is differentiable and if we write $\alpha_k := \partial^-_x \lamt_k(x,y)$ and $\beta_k := \partial^-_y \lamt_k(x,y)$, then the left-hand side of \Cref{eq_derivative_per} is simply equal to $t\alpha_k+s\beta_k$, therefore we have $\phi\big( (t\alpha_i)_{k\leq i\leq r} , (s\beta_i)_{k\leq i\leq r} \big) = t\alpha_k+s\beta_k$.
Almost everywhere on $(0,1)^2$, this equation holds for all $k\in\lb1,r\rb$.
\Cref{lem_phi_si_diff} then implies $\alpha_k \cdot \beta_k = 0$ almost everywhere for all $k$, which concludes the proof.
\end{proof}

\subsection{Proofs of injectivity results}\label{section_proofs_injectivity}

\begin{proof}[Proof of \Cref{th_decomposition}]
Let us first prove the existence of a triple satisfying those properties. 
For each $k\in\N^*$ consider $A_k\in\mathcal{P}^{k\nearrow}(\carre)$ such that $\mu(A_k)=\LISt_k(\mu)$ and $B_k\in\mathcal{P}^{k\searrow}(\carre)$ such that $\mu(B_k)=\LDSt_k(\mu)$. 
Then define:
\begin{equation*}
    A := \bigcup_{k\in\N^*}A_k,\;\;
    B := \bigcup_{k\in\N^*}B_k,\;\;
    \mu^\incr := \mu\big(A\cap\cdot\big),\;\;
    \mu^\decr := \mu\big(B\cap\cdot\big),\;\;
    \mu^\sub := \mu\left( \left(A\cup B\right)^c \cap\cdot\right).
\end{equation*}
Clearly one has
\begin{equation*}
    \LISt_\infty\big(\mu^\incr\big)
    = \mu(A)
    =\mu^\incr\left(\carre\right)
    \quad\text{and}\quad
    \LDSt_\infty\big(\mu^\decr\big)
    = \mu(B)
    =\mu^\decr\left(\carre\right)
\end{equation*}
and the measure $\mu^\sub$ is singular with respect to both $\mu^\incr$ and $\mu^\decr$. 
Then notice that the intersection between a nondecreasing and a nonincreasing subset is entirely contained within the union of a vertical and a horizontal line. 
Since $\mu$ is a pre-permuton, it puts mass zero to such an intersection. As $A$ (resp.~$B$) is a countable union of nondecreasing (resp.~nonincreasing) subsets, it implies $\mu\big( A \cap B \big) = 0$.
Thus (i) holds, and $\mu= \mu^\incr + \mu^\decr + \mu^\sub$.

Now for each $k\in\N^*$, since $\mu^\incr, \mu^\decr$ are sub-measures of $\mu$, one has 
\begin{equation*}
    \LISt_k\big(\mu^\incr\big) \leq \LISt_k(\mu)
    \quad\text{and}\quad
    \LDSt_k\big(\mu^\decr\big) \leq \LDSt_k(\mu).
\end{equation*}
Moreover
\begin{equation*}
    \LISt_k\big(\mu^\incr\big) \geq \mu^\incr(A_k) = \mu(A_k) = \LISt_k(\mu)
    \quad\text{and}\quad
    \LDSt_k\big(\mu^\decr\big) \geq \mu^\decr(B_k) = \mu(B_k) = \LDSt_k(\mu),
\end{equation*}
hence (ii) holds. Subsequently
$\LISt_\infty\big(\mu^\incr\big)=\LISt_\infty(\mu)$
{and}
$\LDSt_\infty\big(\mu^\decr\big)=\LDSt_\infty(\mu)$.

Now consider a nondecreasing subset $A'$ that is disjoint of $A$. 
For each $k\in\N^*$ the subset $A_k \cup A'$ is $(k+1)$-nondecreasing, implying
\begin{equation*}
    \LISt_{k+1}(\mu) \geq \LISt_k(\mu) + \mu(A').
\end{equation*}
As a consequence:
\begin{equation*}
    \mu(A') \leq \LISt_{k+1}(\mu) - \LISt_k(\mu) \cv{k}{\infty} 0.
\end{equation*}
In particular $\mu^\decr(A')=\mu^\sub(A')=0$. Since this holds for any nondecreasing subset disjoint of $A$ and $\mu^\decr , \mu^\sub$ are singular with respect to $\mu^\incr$, this implies:
\begin{equation*}
    \LISt\big(\mu^\decr\big) 
    = \LISt\big(\mu^\sub\big) = 0
\end{equation*}
and in particular $\lamt^\mu=\lamt^{\mu^\incr}$.
Repeating this argument with nonincreasing subsets, we deduce (iii).

\bigskip

Let us turn to the uniqueness claim. Keep the previous definition of $\mu^\incr,\mu^\decr,\mu^\sub$ and write $\mu=\nu^\incr+\nu^\decr+\nu^\sub$ where $\nu^\incr, \nu^\decr,\nu^\sub$ are finite (positive) measures satisfying
\begin{equation*}
    \LISt_\infty\big(\nu^\incr\big)
    =\nu^\incr\left(\carre\right)
    =\LISt_\infty(\mu)
    \quad\text{and}\quad
    \LDSt_\infty\big(\nu^\decr\big)
    =\nu^\decr\left(\carre\right)
    =\LDSt_\infty(\mu).
\end{equation*}
Let $C$ be a countable union of nondecreasing subsets supporting the measure $\nu^\incr$. 
Then:
\begin{equation*}
    \nu^\incr\left(\carre\right) 
    = \nu^\incr(C) 
    \leq \mu(C) \leq \LISt_\infty(\mu).
\end{equation*}
Since the first and last term are equal, we deduce equality of all terms. 
Hence the sub-measure $\nu^\incr$ of $\mu$ is given by the restriction $\nu^\incr = \mu( C \cap \cdot )$.
Moreover Property (iii) implies
\begin{equation}\label{masse_pas_ailleurs}
    \mu^\decr(C) = \mu^\sub(C) = 0,
\end{equation}
thus
\begin{equation*}
    \LISt_\infty\big(\mu\big) =
    \mu(C) = \mu^\incr(C) 
    \leq \mu^\incr\big(\carre\big) =
    \LISt_\infty\big(\mu\big)
\end{equation*}
from which we deduce
\begin{equation}\label{masse_dedans}
    \mu^\incr(C) = \mu^\incr\left(\carre\right).
\end{equation}
\Cref{masse_pas_ailleurs,masse_dedans} together imply
\begin{equation*}
    \mu^\incr = \mu( C \cap \cdot ) = \nu^\incr
\end{equation*}
as desired. 
The proof of $\mu^\decr=\nu^\decr$ works the same, and we finally deduce $\mu^\sub=\nu^\sub$.
\end{proof}

\medskip

\begin{proof}[Proof of \Cref{prop_injectivity_lambda}]
    For any $(x,y)\in\carre$:
    \begin{equation*}
        \LISt_\infty\left( \mu\vert_{[0,x]\times[0,y]} \right)
        = \mu^\incr([0,x]\times[0,y]).
    \end{equation*}
    Hence $\lamt^\mu = \lamt^\nu$ implies 
    $$\mu^\incr([0,x]\times[0,y]) = \nu^\incr([0,x]\times[0,y])$$ 
    for any $(x,y)\in\carre$, and therefore $\mu^\incr=\nu^\incr$.
\end{proof}

\medskip

The idea to prove \Cref{prop_injectivity_RS_one} is that while we could not recover $\lamt^\mu$ from its PDE together with its boundary condition $\RSt(\mu)$, there is a simple explicit solution for the last row of the tableaux.

\begin{lemma}\label{lem_solving_lamt_r}
    Let $\mu$ be a pre-permuton satisfying $\LISt_r(\mu)=1$ for some $r\in\N^*$.
    Then for any $(x,y)\in\carre$:
    \begin{equation*}
        \lamt_r(x,y) = \lamt_r(x,1)\wedge\lamt_r(1,y).
    \end{equation*}
\end{lemma}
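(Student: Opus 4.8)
The plan is to reduce the assertion to a combinatorial identity about Robinson–Schensted shapes of sub-permutations, and then prove that identity.

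\medskip

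\emph{Reduction to the discrete case.} Fix a coupling of permutations $\sigma_n\sim\sample_n(\mu)$ to which \Cref{prop_lambda_cv} applies, so that almost surely $\tfrac1n\lambda^{\sigma_n}_{k}(\lfloor\cdot\,n\rfloor,\lfloor\cdot\,n\rfloor)\to\lamt^\mu_k$ uniformly on $\carre$ for every $k$. Since $\LISt_r(\mu)=1$, \Cref{prop_semicon_LISt} provides a closed $r$-nondecreasing set $A$ with $\mu(A)=1$; hence almost surely all the sampled points fall in $A$, so for every $n$ the permutation $\sigma_n$ is $r$-increasing, i.e.\ $\LDS(\sigma_n)\le r$. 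It therefore suffices to prove that for every $\sigma\in\S_n$ with $\LDS(\sigma)\le r$ and all $i,j\in\lb0,n\rb$,
\begin{equation*}
    \lambda^\sigma_r(i,j)=\lambda^\sigma_r(i,n)\wedge\lambda^\sigma_r(n,j);
\end{equation*}
applying this with $i=\lfloor nx\rfloor$ and $j=\lfloor ny\rfloor$ (so that the right-hand side involves the two marginals $\lfloor n\cdot 1\rfloor=n$), dividing by $n$ and letting $n\to\infty$ on the good event yields the lemma.

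\medskip

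\emph{Easy direction.} By Greene's theorem (\Cref{th_Greene}), $\LDS(\sigma^{i,j})\le\LDS(\sigma)\le r$ forces $\LIS_r(\sigma^{i,j})=|\sigma^{i,j}|$, so $\lambda^\sigma_r(i,j)=|\sigma^{i,j}|-\LIS_{r-1}(\sigma^{i,j})$. If $i\le i'$ and $j\le j'$, then $\sigma^{i,j}$ is obtained from $\sigma^{i',j'}$ by deleting $|\sigma^{i',j'}|-|\sigma^{i,j}|$ letters, and each deletion lowers $\LIS_{r-1}$ by at most one (a letter lies in at most one subsequence of an optimal family of $r-1$ increasing subsequences). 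Hence $\lambda^\sigma_r(i,j)\le\lambda^\sigma_r(i',j')$, and in particular $\lambda^\sigma_r(i,j)\le\lambda^\sigma_r(i,n)$ and $\lambda^\sigma_r(i,j)\le\lambda^\sigma_r(n,j)$.

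\medskip

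\emph{Hard direction.} It remains to prove $\lambda^\sigma_r(i,j)\ge\lambda^\sigma_r(i,n)\wedge\lambda^\sigma_r(n,j)$. Since $\LDS(\sigma^{i,j})\le r$, Greene–Kleitman theory gives $\lambda^\sigma_r(i,j)=\max\{M:\LDS_M(\sigma^{i,j})=rM\}$, i.e.\ $\lambda^\sigma_r(i,j)$ is the maximal number of pairwise disjoint decreasing subsequences of length $r$ contained in $\sigma^{i,j}$. Equivalently, working through Fomin's local rules of \Cref{section_proofs_Fomin} (all edge labels are $\le r$, so no cell has $S=W=r$, and an $r$-label always passes straight through a cell; the only cells producing new $r$-labels are the \emph{$r$-creation cells} $c_{a,b}$ with $S(c_{a,b})=W(c_{a,b})=r-1$, and $r$-labels travel along the straight rays North and East of them), one obtains $\lambda^\sigma_r(i,j)=\#\{(a,b):a\le i,\ b\le j,\ c_{a,b}\text{ is an }r\text{-creation cell}\}$. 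In either language the desired inequality for all $(i,j)$ is equivalent to the assertion that the $r$-creation cells (equivalently, the "minimal" length-$r$ decreasing subsequences) form an \emph{increasing} sequence. This is the step I expect to be the main obstacle, and it is where the global hypothesis $\LDS(\sigma)\le r$ enters: if $c_{a,b},c_{a',b'}$ were $r$-creation cells with $a<a'$ and $b>b'$, then the diagrams at the vertices $(a,b)$ and $(a',b')$ each have exactly $r$ rows, producing a length-$r$ decreasing subsequence of $\sigma$ supported on positions $\le a$ and another one supported on values $\le b'$, and one must splice these two into a decreasing subsequence of length $r+1$, contradicting $\LDS(\sigma)\le r$. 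I would carry out the splicing by an exchange argument using the colouring $\sigma=C_1\sqcup\dots\sqcup C_r$ into increasing subsequences in which $C_t$ gathers the points $p$ such that the longest decreasing subsequence of $\sigma$ ending at $p$ has length $t$, together with its left–right mirror, relying on the fact that every length-$r$ decreasing subsequence of $\sigma$ is a transversal of $C_1,\dots,C_r$.
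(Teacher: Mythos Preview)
Your reduction to the discrete identity and your analysis via Fomin's edge labels are correct and are essentially the paper's approach: the paper simply packages the same observation as $\#_r\Fom_\bot(w^\top,w^\rig)=(\#_rw^\top-\#_rw^\rig)^+$ (since letters $<r$ never affect $r$'s, and labels $>r$ do not occur), which with $(i',j')=(n,n)$ immediately gives $\lambda_r(i,j)=\lambda_r(i,n)\wedge\lambda_r(n,j)$.

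There is however a genuine gap in your ``hard direction''. You correctly reduce to showing that the $r$-creation cells form an increasing sequence, but then propose to prove this by splicing two length-$r$ decreasing subsequences of $\sigma^{a,b}$ and $\sigma^{a',b'}$ into one of length $r+1$. That exchange argument is not carried out, and it is not clear it can be: two arbitrary length-$r$ decreasing subsequences, one with positions $\le a$ and values $\le b$, the other with positions $\le a'$ and values $\le b'<b$, need not interlace into a longer one. The colouring $C_1\sqcup\dots\sqcup C_r$ you mention does not obviously help either.

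The fix is already implicit in your own parenthetical remark. You noted that all labels are $\le r$, hence no cell has $S=W=r$, and every $r$-label passes straight through. Use this directly: if $c_{a,b}$ and $c_{a',b'}$ are $r$-creation cells with $a<a'$ and $b>b'$, then the eastward $r$-ray from $c_{a,b}$ gives $W(c_{a',b})=r$ and the northward $r$-ray from $c_{a',b'}$ gives $S(c_{a',b})=r$; thus $S(c_{a',b})=W(c_{a',b})=r$, forcing $N=E=r+1$, a contradiction. This one line replaces your splicing argument entirely, and is exactly the mechanism behind the paper's formula.

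One minor point: \Cref{prop_lambda_cv} is stated for permutons, while the lemma concerns pre-permutons. The paper handles this by first pushing forward to the associated permuton $\hat\mu=F_*\mu$, proving the identity for $\lamt^{\hat\mu}$, and then pulling back via $\lamt^\mu=\lamt^{\hat\mu}\circ F$. You should add this step.
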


\begin{proof}
    Denote by $F=(F_1,F_2)$ the pair of distribution functions of each marginal of $\mu$, so that the push-forward $\hat{\mu}=F_*\mu$ is a permuton. 
    Since $F$ is nondecreasing in each coordinate, we have for any $k\in\lb1,r\rb$ and $(x',y')\in\carre$:
    \begin{equation*}
        \LISt_k\left( \hat{\mu}\vert_{[0,F_1(x')]\times[0,F_2(y')]} \right)
        = \LISt_k\left( \mu\vert_{[0,x']\times[0,y']} \right).
    \end{equation*}
    Thus $\lamt^\mu = \lamt^{\hat{\mu}} \circ F$. 
    
    For each $n\in\N^*$ let $\sigma_n\sim\sample_n(\mu)$.
    As noted before obtaining Property (\ref{phi_k}), for any words $w^\top$ and $w^\rig$, the number of $r$'s in $\Fom_\bot(w^\top,w^\rig)$ does not depend on letters less than $r$ in $w^\top$ and $w^\rig$.
    Let $1\leq i\leq i'\leq n$, $1\leq j\leq j'\leq n$, and $w_n^\top,w_n^\rig$ be the top- and right-words of $\sigma_n$'s edge labels on $\lb i,i'\rb\times\lb j,j'\rb$.
    Since these words almost surely only have labels at most $r$, the previous observation yields:
    \begin{equation*}
        \#_r \Fom_\bot\left( w_n^\top,w_n^\rig \right)
        = \#_r \Fom_\bot\left( r^{\#_r w_n^\top} ,\, r^{\#_r w_n^\rig}  \right)
        = \left( \#_r w_n^\top - \#_r w_n^\rig \right)^+.
    \end{equation*}
    Then
    \begin{align*}
        \lambda_r^{\sigma_n}(i',j') - \lambda_r^{\sigma_n}(i,j)
        &= \#_r \Fom_\bot\left( r^{\#_r w_n^\top} ,\, r^{\#_r w_n^\rig}  \right) + \#_r w_n^\rig
        \\&= \#_r w_n^\top \vee \#_r w_n^\rig
        \\&= \big( \lambda_r^{\sigma_n}(i',j')-\lambda_r^{\sigma_n}(i,j') \big) \vee \big( \lambda_r^{\sigma_n}(i',j')-\lambda_r^{\sigma_n}(i',j)\big),
    \end{align*}
    and in particular when $i'=j'=n$, almost surely:
    \begin{equation*}
        \lambda_r^{\sigma_n}(i,j) 
        = \lambda_r^{\sigma_n}(i,n) \wedge \lambda_r^{\sigma_n}(n,j).
    \end{equation*}
    Using \Cref{prop_lambda_cv} we get for any $(x,y)\in\carre$:
    \begin{equation*}
        \lambda_r^{\hat{\mu}}(x,y) 
        = \lambda_r^{\hat{\mu}}(x,1) \wedge \lambda_r^{\hat{\mu}}(1,y).
    \end{equation*}
    Using the equality $\lamt^\mu=\lamt^{\hat{\mu}}\circ F$ we finally deduce for any $(x,y)\in\carre$:
    \begin{equation*}
        \lamt_r^\mu(x,y) 
        = \lamt_r^{\hat{\mu}}\big(F_1(x),F_2(y)\big) 
        = \lamt_r^{\hat{\mu}}\big(F_1(x),1\big) \wedge \lamt_r^{\hat{\mu}}\big(1,F_2(y)\big)
        = \lamt_r^{\mu}(x,1) \wedge \lamt_r^{\mu}(1,y).
        \qedhere
    \end{equation*}
\end{proof}

\medskip

\begin{proof}[Proof of \Cref{prop_injectivity_RS_one}]
    Under the first hypothesis we have $\mu^\incr\left(\carre\right) = \LISt\big(\mu^\incr\big)$. 
    Applying \Cref{lem_solving_lamt_r} to the renormalization of $\mu^\incr$, we deduce for any $(x,y)\in\carre$:
    \begin{equation}\label{se_ramener_au_bord}
        \lamt_1^\mu(x,y) = \lamt_1^{\mu^\incr}(x,y) 
        = \lamt_1^{\mu^\incr}(x,1) \wedge \lamt_1^{\mu^\incr}(1,y)
        = \lamt_1^{\mu}(x,1) \wedge \lamt_1^{\mu}(1,y).
    \end{equation}
    Since $\RSt(\mu) = \RSt(\nu)$ implies
    \begin{equation*}
        \LISt_\infty(\nu)
        = \sum_{k\geq1} \lamt_k^\nu(1,1)
        = \sum_{k\geq1} \lamt_k^\mu(1,1)
        = \lamt_1^\mu(1,1)
        = \lamt_1^\nu(1,1)
        = \LISt_1(\nu),
    \end{equation*}
    $\nu$ also satisfies \Cref{se_ramener_au_bord}. 
    Therefore $\lamt^\mu = \lamt^\nu$, and we conclude with \Cref{prop_injectivity_lambda}.
\end{proof}

\section*{Acknowledgments}

I would like to express my sincere gratitude to my supervisor Valentin Féray for his invaluable guidance and support.
I would also like to thank 
two anonymous reviewers for their suggestions that greatly improved the presentation of this paper;
Blaise Colle, Corentin Correia, Emmanuel Kammerer, and Pierrick Sieste for fruitful discussions; 
and Rémi Maréchal, whose master's thesis laid the foundation of this work by proving 
Propositions \ref{prop_semicon_LISt} and \ref{prop_conv_LISk} when $k=1$ with a different but essentially equivalent definition of $\LISt_1$.
The author is partially supported by the Program “Future Leader” of the Initiative “Lorraine Université d’Excellence” (LUE), and by the Agence Nationale de la Recherche funding CORTIPOM ANR-21-CE40-0019.

\bibliographystyle{alpha}
\bibliography{bibli}

\newcommand{\etalchar}[1]{$^{#1}$}
\begin{thebibliography}{BDMW24}

\bibitem[ABNP16]{ABNP16}
Nicolas Auger, Mathilde Bouvel, Cyril Nicaud, and Carine Pivoteau.
\newblock Analysis of algorithms for permutations biased by their number of
  records.
\newblock In {\em 27th International Conference on Probabilistic, Combinatorial
  and Asymptotic Methods for the Analysis of Algorithm AOFA 2016}, 2016.

\bibitem[AD95]{AD95}
David~J. Aldous and Persi Diaconis.
\newblock Hammersley's interacting particle process and longest increasing
  subsequences.
\newblock {\em Probability Theory and Related Fields}, 103(2):199--213, 1995.

\bibitem[ADK22]{ADK22}
Noga Alon, Colin Defant, and Noah Kravitz.
\newblock The runsort permuton.
\newblock {\em Advances in Applied Mathematics}, 139:102361, 2022.

\bibitem[BBD{\etalchar{+}}22]{BBDFGMP21}
Frédérique Bassino, Mathilde Bouvel, Michael Drmota, Valentin Féray, Lucas
  Gerin, Mickaël Maazoun, and Adeline Pierrot.
\newblock Linear-sized independent sets in random cographs and increasing
  subsequences in separable permutations.
\newblock {\em Combinatorial Theory}, 2(3), 2022.

\bibitem[BBF{\etalchar{+}}18]{BBFGP18}
Fr{\'e}d{\'e}rique Bassino, Mathilde Bouvel, Valentin F{\'e}ray, Lucas Gerin,
  and Adeline Pierrot.
\newblock {The Brownian limit of separable permutations}.
\newblock {\em The Annals of Probability}, 46(4):2134--2189, 2018.

\bibitem[BBF{\etalchar{+}}22]{BBFGMP22}
Frédérique Bassino, Mathilde Bouvel, Valentin Féray, Lucas Gerin, Mickaël
  Maazoun, and Adeline Pierrot.
\newblock Scaling limits of permutation classes with a finite specification: A
  dichotomy.
\newblock {\em Advances in Mathematics}, 405:108513, 2022.

\bibitem[BDJ99]{BDJ99}
Jinho Baik, Percy Deift, and Kurt Johansson.
\newblock On the distribution of the length of the longest increasing
  subsequence of random permutations.
\newblock {\em Journal of the American Mathematical Society}, 12(4):1119--1178,
  1999.

\bibitem[BDMW24]{BDMW22}
Jacopo Borga, Sayan Das, Sumit Mukherjee, and Peter Winkler.
\newblock Large deviation principle for random permutations.
\newblock {\em International Mathematics Research Notices}, (3):2138--2191,
  2024.

\bibitem[BDS21]{BDS21}
Jacopo Borga, Enrica Duchi, and Erik Slivken.
\newblock Almost square permutations are typically square.
\newblock {\em Annales de l’Institut Henri Poincaré, Probabilités et
  Statistiques}, 57(4):1834–1856, 2021.

\bibitem[Bor21]{B21}
Jacopo Borga.
\newblock {Random Permutations--A geometric point of view}.
\newblock {\em Preprint at arXiv:2107.09699}, 2021.

\bibitem[Bor23]{B23}
Jacopo Borga.
\newblock The skew brownian permuton: A new universality class for random
  constrained permutations.
\newblock {\em Proceedings of the London Mathematical Society},
  126(6):1842–1883, 2023.

\bibitem[BS20]{BS20}
Jacopo Borga and Erik Slivken.
\newblock Square permutations are typically rectangular.
\newblock {\em The Annals of Applied Probability}, 30(5):2196–2233, 2020.

\bibitem[CKP22]{CKP22}
Oliver Cooley, Mihyun Kang, and Oeg Pikhurko.
\newblock {On a question of Vera T.~Sós about size forcing of graphons}.
\newblock {\em Acta Mathematica Hungarica}, 168(1):1--26, 2022.

\bibitem[Cra38]{C38}
Harald Cramér.
\newblock Sur un nouveau théorème-limite de la théorie des probabilités.
\newblock {\em Colloque consacré à la théorie des probabilités, Actualités
  scientifiques et industrielles}, 736:2--23, 1938.

\bibitem[Dub23]{D23}
Victor Dubach.
\newblock Locally uniform random permutations with large increasing
  subsequences.
\newblock {\em Combinatorial Theory}, 3(3), 2023.

\bibitem[DZ95]{DZ95}
Jean-Dominique Deuschel and Ofer Zeitouni.
\newblock {Limiting curves for i.i.d. records}.
\newblock {\em The Annals of Probability}, 23(2):852--878, 1995.

\bibitem[DZ98]{DZ98}
Amir Dembo and Ofer Zeitouni.
\newblock {\em Large Deviations Techniques and Applications}, volume~38 of {\em
  Stochastic Modelling and Applied Probability}.
\newblock Springer Berlin, Heidelberg, 1998.

\bibitem[DZ99]{DZ99}
Jean-Dominique Deuschel and Ofer Zeitouni.
\newblock On increasing subsequences of i.i.d. samples.
\newblock {\em Combinatorics, Probability and Computing}, 8(3):247–263, 1999.

\bibitem[ET22]{ET22}
Gábor Elek and Gábor Tardos.
\newblock Convergence and limits of finite trees.
\newblock {\em Combinatorica}, 42(6):821--852, 2022.

\bibitem[FMN20]{FMN20}
Valentin Féray, Pierre-Loïc Méliot, and Ashkan Nikeghbali.
\newblock {Graphons, permutons and the Thoma simplex: three mod-Gaussian moduli
  spaces}.
\newblock {\em Proceedings of the London Mathematical Society},
  121(4):876--926, 2020.

\bibitem[FRL23]{FR-L23}
Valentin F{\'e}ray and Kelvin Rivera-Lopez.
\newblock The permuton limit of random recursive separable permutations.
\newblock {\em Preprint at arXiv:2306.04278}, 2023.

\bibitem[Ful96]{F96}
William Fulton.
\newblock {\em Young Tableaux: With Applications to Representation Theory and
  Geometry}.
\newblock London Mathematical Society Student Texts. Cambridge University
  Press, 1996.

\bibitem[GGKK15]{GGKK15}
Roman Glebov, Andrzej Grzesik, Tereza Klimošová, and Daniel Král'.
\newblock Finitely forcible graphons and permutons.
\newblock {\em Journal of Combinatorial Theory, Series B}, 110:112--135, 2015.

\bibitem[Gre74]{G74}
Curtis Greene.
\newblock {An extension of Schensted's theorem}.
\newblock {\em Advances in Math.}, 14:254--265, 1974.

\bibitem[Gr{\"u}23]{G23trees}
Rudolf Gr{\"u}bel.
\newblock A note on limits of sequences of binary trees.
\newblock {\em Discrete Mathematics \& Theoretical Computer Science}, 25, 2023.

\bibitem[Ham72]{H72}
John~M. Hammersley.
\newblock A few seedlings of research.
\newblock In {\em Proc. Sixth Berkeley Symp. Math. Statist. and Probability},
  volume~1, pages 345--394, 1972.

\bibitem[HKM{\etalchar{+}}13]{HKMRS13}
Carlos Hoppen, Yoshiharu Kohayakawa, Carlos~Gustavo Moreira, Balázs Ráth, and
  Rudini {Menezes Sampaio}.
\newblock Limits of permutation sequences.
\newblock {\em Journal of Combinatorial Theory, Series B}, 103(1):93--113,
  2013.

\bibitem[HR20]{HR17}
Jan Hladký and Israel Rocha.
\newblock Independent sets, cliques, and colorings in graphons.
\newblock {\em European Journal of Combinatorics}, 88:103108, 2020.

\bibitem[Jan11]{J11}
Svante Janson.
\newblock Poset limits and exchangeable random posets.
\newblock {\em Combinatorica}, 31(5):529--563, 2011.

\bibitem[KP13]{KP12}
Daniel Král and Oleg Pikhurko.
\newblock Quasirandom permutations are characterized by 4-point densities.
\newblock {\em Geometric and Functional Analysis}, 23(2):570--579, 2013.

\bibitem[LS77]{LS77}
Benjamin~F. Logan and Larry~A. Shepp.
\newblock {A variational problem for random Young tableaux}.
\newblock {\em Advances in Mathematics}, 26(2):206--222, 1977.

\bibitem[LS06]{LS06}
László Lovász and Balázs Szegedy.
\newblock Limits of dense graph sequences.
\newblock {\em Journal of Combinatorial Theory, Series B}, 96(6):933--957,
  2006.

\bibitem[Muk15]{M15}
Sumit Mukherjee.
\newblock Fixed points and cycle structure of random permutations.
\newblock {\em Electronic Journal of Probability}, 21:1--18, 2015.

\bibitem[Rom15]{R15}
Dan Romik.
\newblock {\em The Surprising Mathematics of Longest Increasing Subsequences}.
\newblock Institute of Mathematical Statistics Textbooks (4). Cambridge
  University Press, 2015.

\bibitem[Sag01]{S01}
Bruce~E. Sagan.
\newblock {\em The Symmetric Group: Representations, Combinatorial Algorithms,
  and Symmetric Functions}.
\newblock Graduate Texts in Mathematics. Springer New York, 2001.

\bibitem[Sj{\"o}23]{S22}
Jonas Sj{\"o}strand.
\newblock Monotone subsequences in locally uniform random permutations.
\newblock {\em Annals of Probability}, 51(4):1502--1547, 2023.

\bibitem[{\'S}ni14]{S14}
Piotr {\'S}niady.
\newblock {Robinson-Schensted-Knuth Algorithm, jeu de taquin, and Kerov-Vershik
  measures on infinite tableaux}.
\newblock {\em SIAM Journal on Discrete Mathematics}, 28(2):598--630, 2014.

\bibitem[Vie18]{V18}
Xavier Viennot.
\newblock {Growth diagrams and edge local rules}.
\newblock In {\em {GASCom 2018 Random and Exhaustive Generation of
  Combinatorial Structures}}, Athens, Greece, 2018.

\bibitem[VK77]{VK77}
Anatoli~M. Vershik and Sergueï~V. Kerov.
\newblock {Asymptotics of the Plancherel measure of the symmetric group and the
  limit form of Young tableaux}.
\newblock {\em Soviet Math. Dokl.}, 18:527--531, 1977.

\bibitem[VK81]{VK81}
Anatoli~M. Vershik and Sergueï~V. Kerov.
\newblock Asymptotic theory of characters of the symmetric group.
\newblock {\em Functional Analysis and its Applications}, 15:246--255, 1981.

\bibitem[VK86]{VK86}
Anatoli~M. Vershik and Sergueï~V. Kerov.
\newblock {The characters of the infinite symmetric group and probability
  properties of the Robinson-Schensted-Knuth algorithm}.
\newblock {\em SIAM J. Algebraic Discrete Methods}, 7:116--124, 1986.

\end{thebibliography}

\end{document}